\newtheorem{lemma}{Lemma}[section]
\newtheorem{prop}[lemma]{Proposition}
\newtheorem{theo}[lemma]{Theorem}
\newtheorem{rem}[lemma]{Remark}
\newtheorem{coro}[lemma]{Corollary}
\DeclareMathOperator{\divv }{div}
\DeclareMathAlphabet{\mathdutchcal}{U}{dutchcal}{m}{n}
\newcommand{\calp}{\ensuremath\mathdutchcal{p}}
\newcommand{\calq}{\ensuremath\mathdutchcal{q}}
\newcommand{\cals}{\ensuremath\mathdutchcal{s}}
\newcommand{\calf}{\ensuremath\mathdutchcal{f}}
\newcommand{\calg}{\ensuremath\mathdutchcal{g}}
\newcommand{\calw}{\ensuremath\mathdutchcal{w}}
\newcommand{\calu}{\ensuremath\mathdutchcal{u}}
\newcommand{\calx}{\ensuremath\mathdutchcal{x}}
\newcommand{\calv}{\ensuremath\mathdutchcal{v}}
\newcommand{\calr}{\ensuremath\mathdutchcal{r}}
\begin{document}

\title[Incompressible limit for a fluid mixture]{Incompressible limit for a fluid mixture}

\subjclass[2010]{76M45, 35Q30, 76D05, 76N06, 76T30}	
\keywords{Multicomponent fluid, incompressibility, low Mach-number limit, relative entropy}

\author[P.-E.~Druet]{Pierre-Etienne Druet}
\address{Weierstrass Institute, Mohrenstr. 39, 10117 Berlin, Germany}
\email{pierre-etienne.druet@wias-berlin.de}

\date{\today}

\begin{abstract}
In this paper we discuss the incompressible limit for multicomponent fluids in the isothermal ideal case. Both a direct limit-passage in the equation of state and the low Mach-number limit in rescaled PDEs are investigated. Using the relative energy inequality, we obtain convergence results for the densities and the velocity-field under the condition that the incompressible model possesses a sufficiently smooth solution, which is granted at least for a short time. Moreover, in comparison to single-component flows, uniform estimates and the convergence of the pressure are needed in the multicomponent case because the incompressible velocity field is not divergence-free. We show that certain constellations of the mobility tensor allow to control gradients of the entropic variables and yield the convergence of the pressure in L1.  
\end{abstract}
\maketitle

\setcounter{tocdepth}{2}
\tableofcontents

\section{Introduction}

The equation of state of a pure fluid usually expresses the density $\varrho = \hat{\varrho}(T,p)$ or the specific volume $1/\varrho = \hat{\calv}(T,p)$ by constitutive functions $\hat{\varrho}$, $\hat{\calv}$ of temperature and pressure. Primarily, incompressibility is a property of this relation defined via $\partial_p\hat{\varrho} = 0$ or $\partial_p \hat{\calv} = 0$. Here ''zero'' means that the pressure variations occurring in the empirical physical system are too small to allow for substantial changes of the mass density. 
For the isothermal viscous case, the balance equations are the compressible Navier-Stokes equations which, in the pressure-velocity formulation, read
\begin{align}\begin{split}\label{I1}
\partial_p\hat{\rho}(T,p) \, (\partial_t p + v \cdot \nabla p) = & - \hat{\rho}(T,p) \, \divv v \, ,\\
\hat{\rho}(T,p) \,(\partial_t v + (v \cdot \nabla) v) +\nabla p = & \divv \mathbb{S} + \hat{\varrho}(T,p) \, b \, ,
\end{split}
\end{align}
and $\partial_p\hat{\varrho} \rightarrow 0$ yields the incompressible Navier-Stokes equations with constant density for the main variables $p, \, (v_1,v_2,v_3)$. Due to the lack of estimates on the pressure-field, it is not possible to study this limit with rigorous mathematical methods by the present state of the art.

Very often too, ''incompressible'' is taken as synonymous for dynamic volume conservation $\divv v = 0$, which in fact is a consequence of $\partial_p\hat{\rho} = 0$ and of the conservation of mass. However, the condition $\divv v = 0$ does not imply that $\varrho$ is constant, and it has also no impact on the equation of state of the fluid which is a material property\footnote{Solutions to the so-called density-dependent (incompressible) Navier-Stokes equations (see \cite{lionsfilsI}, Ch.\ 2) satisfy $\divv v = 0$ and $\partial_t\varrho + v\cdot \nabla \varrho=0$ independently. In this problem, the number of variables increases from the four standard variables $(p, \, v_1,v_2,v_3)$ to five: $(\varrho, \, p, \, v_1,v_2,v_3)$. It seems unclear whether this model can be reached as some asymptotic limit of the Navier-Stokes equations}. Finally, incompressibility is related to the Mach-number ${\rm Ma} = \sqrt{\varrho}|v|/\sqrt{p}$ or ${\rm Ma} = |v|/\sqrt{\partial_{\rho}p}$. In applications, a flow is called a low Mach-number flow if, after rescaling of position, time, and all variables and data occurring in the Navier-Stokes equations, it obeys
\begin{align}\begin{split}\label{I2}
 \partial_t \varrho + \divv (\varrho \, v) = & 0\, ,\\
 \varrho \, (\partial_t v+(v \cdot \nabla) v) + \frac{1}{\epsilon^2} \, \nabla p = & \divv \mathbb{S} + \varrho \, b \, ,
 \end{split}
\end{align}
with a global Mach-number ${\rm Ma} = \epsilon \ll 1$. Unlike \eqref{I1}, the limit $\epsilon \rightarrow 0$ for \eqref{I2} can be rigorously studied within the framework of weak solutions, which was first established in the pioneering work \cite{lionsmasmoudi}.
The density and the velocity field converge as $\epsilon \rightarrow 0$ and one recovers a weak solution to the incompressible Navier-Stokes equations with constant density. No uniform bounds are available for the pressure, but it can suitably be eliminated since the incompressible velocity field is solenoidal.
Typically, the transformed variables $\varrho$ and $p$ in \eqref{I2} obey an effective equation of state like
$ \varrho  = (\epsilon^2 \, p+1))^{\frac{1}{\gamma}} =: \hat{\varrho}^{\epsilon}(p)$. Here $\gamma >1$ is some constant and, evidently, we have $\partial_p \hat{\varrho}^{\epsilon} \rightarrow 0$ for $\epsilon \rightarrow 0$. This shows that the low Mach-number limit in rescaled PDEs is profoundly related to the limit \eqref{I1}.

The motivation for the present paper is providing a first rigorous study of the incompressible limit for a fluid mixture of $N > 1$ components. For simplicity, the fluid mixture is assumed homogeneous, Newtonian and ideal. In particular, it is volume-additive and therefore it obeys the equation of state 
\begin{align*}
 \sum_{i=1}^N \rho_i \, \hat{\calv}_i(T, \, p) = 1 \, ,
\end{align*}
where $\rho_{1}, \ldots,\rho_N$ are the partial mass densities of the components and $\hat{\calv}_i(T,p) = 1/\hat{\rho}_i(T,p)$ denotes the specific volumes of the $i^{\rm th}$component \emph{as pure substance}. For instance, if each of the components is itself an incoompressible fluid, the density of the pure substance is constant near the reference temperature $T = T^{\rm R}$ and the reference pressure $p = p^{\rm R}$ of the system, hence
\begin{align}\label{I3}
\sum_{i=1}^N \rho_i \, \hat{\calv}_i(T, \, p) = \sum_{i=1}^N \rho_i \, \hat{\calv}_i(T^{\rm R}, \, p^{\rm R}) = \sum_{i=1}^N \frac{\rho_i}{\hat{\rho}_i^{\rm R}} = 1 \, .
\end{align}
We see that, in the very common case that the mixed fluids have different densities, the mass density $\varrho := \sum_{i=1}^N \rho_i$ of the mixture is in general not constant. Hence there are profound differences between the single-component and the multicomponent case concerning incompressibility. In a mixture which is incompressible in the sense of the equation of state, $\divv v = 0$ does not follow from the continuity equation. For this reason, certain authors distinguish between incompressibility and quasi-incompressibility: \cite{lowe}, \cite{feima16}.

The definition of incompressibility as $\partial_p \hat{\varrho} = 0$ has an important impact on the thermodynamic modelling of fluids. In particular, for the non-isothermal context, the question whether incompressibility forbids thermal expansion is widely studied: \cite{mueller}, \cite{bechtel}, \cite{gouin}, \cite{bothedreyer} for the single-component case. For the multicomponent case, asymptotic free energies have been studied in \cite{bothedreyerdruet}, exhibiting further interesting consequences of the independence of the equation of state on pressure.

In the present paper, we shall perform the limit in the PDEs in the spirit of \cite{lionsmasmoudi} and show the convergence to the incompressible or quasi-incompressible mixture model for the isothermal case. Let us recall that the results of Lions and Masmoudi were later refined and extended to the non-isothermal case, giving a justification for the Boussinesq approximation in a rigorous framework: see a.o.\ \cite{feinov07}, \cite{feinov09}, \cite{feinov13}, and the book \cite{feinovbook}. Likewise, extension of our results to the multicomponent non-isothermal case is certainly possible for the non-isothermal ideal model exposed in \cite{drumaxmix}. Interestingly, it is not to be expected that the Boussinesq approximation is valid in the multicomponent case for a dense mixture: See \cite{bothedreyerdruet}. Thus, the non-isothermal limit raises acute questions and should be studied with priority too. However, the notion of global weak solution to the energy or entropy equation has to be relaxed to variational sub-solution, as done for the single-component case in \cite{feinov07}. This is associated with yet more technicalities, which we would like to avoid in this first investigation. So we prefer to postpone sketching the full picture to a forthcoming paper. Another major challenge, that we do not tackle here, concerns dealing with the incompressible limit of \emph{reactive mixtures}. Mathematically, chemical reactions are difficult to be handled in a weak solution context, due among others to the exponential dependence of the reaction rates on pressure. More essentially, chemical reactions strongly affect the volume of a mixture: We refer to \cite{bothedreyerdruet} for more thoughts on this topic.

From the viewpoint of the mathematical method, our results are mainly based on the relative entropy inequality, here a relative {\it energy} inequality since the system is assumed isothermal. This method was applied also in the context of numerical approximation, see \cite{fisherentropy}, and it nowadays the main tool to study comparison issues in the context of PDEs with entropy structure. In essence, we follow the road-map of the book \cite{feinovbook} (see also \cite{feinovws12}, \cite{feinov13} and \cite{feijinnov}) to study singular thermodynamic limits in fluid mechanics. However, there are important differences to the single-component case. The most significant one is the necessity to discuss pressure bounds. For mixtures, the incompressible velocity field is not solenoidal, the terms involving the pressure in the relative energy functional do not vanish, and must be proved to converge with independent methods.

Incompressible fluid mixtures in the sense of constant density have been studied in several papers, among others: see \cite{bothesoga}, \cite{pekarsamohyl} for examples in the modelling direction, and \cite{chenjuengel}, \cite{mariontemam} for mathematical analysis. For an ideal mixture, constant density occurs in the incompressible case either if all mixed substances possess the same density, or if $N-1$ components are dilute in a dominant fluid. From the viewpoint of the present investigation, these are very particular cases. The incompressible model with \eqref{I3} is considered in \cite{mills} for a binary mixture and, with the special case of perfect gases in which $\hat{\calv}_i(T,p) = RT/(p \, M_i)$, in the section 3.3 of \cite{giovan} devoted to multicomponent low Mach-number flows. Let us emphasise that in this latter reference, the focus is not on the mathematical investigation of the asymptotics. In the paper \cite{majdasethian}, a low Mach-number limit for binary gas mixtures was derived in the context of combustion. Due to this particular context, the resulting models are less general than the ones in \cite{giovan}. Some mathematical well-posedness analysis of imcompressible mixture models without constant density was performed in few papers, among them \cite{feima16}, \cite{druetmixtureincompweak} for weak solutions, and \cite{bothedruetincompress} for strong solutions. The low Mach-number asymptotics for binary reactive flows in a rigorous spirit was dealt with in \cite{feipet10}, under rather strong restrictions on the admissible constitutive equations and with a phenomenological Fickian diffusion law. Assuming a dilute scaling, that is, the fraction of one species is of the same order as the Mach-number, the authors recover the Boussinesq approximation for a mixture. For a full characterization of the limit in the non-isothermal case, showing in particular that the Boussinesq--approximation is valid only for such dilute mixtures, we refer to \cite{bothedreyerdruet}.
For further references on the low Mach-number limit of multicomponent flows, we refer to \cite{giovan}, Ch.\ 3.3 and \cite{feipet10}.

Our road map in the present paper: In the sections \ref{PDES} and \ref{IKK}, we set up the multicomponent flow models for an isothermal ideal fluid mixture and, in particular, for the incompressible case. Then, in the section \ref{MATH1} we expose the mathematical assumptions on the data of the models, and the convergence statements for the multicomponent counterparts of \eqref{I1}, \eqref{I2} (see the Theorems \ref{heuriheura}, \ref{rigolo} and \ref{weaktoweak}). The proof are provided in the sections \ref{heuri} and \ref{RIGOR}.
The paper possesses a relatively long appendix, where we have collected some prerequisites for the proofs.

\section{PDE description of isothermal multicomponent fluids}\label{PDES}

We consider an isothermal liquid consisting of $N > 1$ different chemical substances that mix homogeneously, for instance a mixture of $N$ liquids. The thermodynamic state of the fluid is described by the vector of the partial mass densities $\rho = (\rho_1, \ldots,\rho_N)$ of the substances. Recall that the partial mass density $\rho_i$ is the mass of the substance ${\rm A}_i$ per unit volume of the mixture. The mass density of the fluid is defined as $\varrho = \sum_{i=1}^N \rho_i$ and is also called the total mass density.

The evolution of the partial mass densities $\rho_1, \ldots, \rho_N$ and of the velocity field $v = (v_1, \, v_2, \, v_3)$ is described by the following system of partial differential equations:
\begin{align}
\label{mass} \partial_t \rho_i + \divv(\rho_i \, v + J^i) =&  0 \quad \text{ for } \quad i=1, \ldots, N \, ,\\
\label{momentum} \partial_t (\varrho \, v) + \divv( \varrho \, v \otimes v - \mathbb{S} ) + \nabla p =&  \varrho \, b \, .
\end{align}
In addition to the main variables, we encounter here the diffusion fluxes $J^1, \ldots, J^N$, the viscous stress tensor $\mathbb{S}$, the thermodynamic pressure $p$, and the gravitational acceleration $b$. 

For the diffusion fluxes, we consider the Fick--Onsager or Maxwell--Stefan closure equations, while we assume that the viscous stress tensor is Newtonian:
\begin{align}
\label{DIFFUSFLUX} J^i = & - \sum_{j=1}^N M_{ij} \, \nabla \frac{\mu_j}{T} \quad \text{ for } \quad i = 1,\ldots,N \, , \\
\label{STRESS} \mathbb{S} = & 2 \, \eta \, (\nabla v)_{\text{sym}} + \lambda \, \divv v \, \mathbb{I}  \, .
\end{align}
Here $\mu_1, \ldots,\mu_N$ are the chemical potentials. In this paper, we restrict our considerations to so called ideal mixtures for which, by definition, 
\begin{align}\label{muideal}
\mu_i = g_i(T, \, p) + \frac{RT}{M_i} \, \ln x_i \, ,
\end{align}
with the gas constant $R$ and, for each component ${\rm A}_i$, the Gibbs free enthalpy $g_i$ which is a function of temperature and pressure, and the constant molar mass $M_i >0$. Denoting by $\hat{\upsilon}_i(T,p)$ the specific volume, and by $\hat{\rho}_i(T,p)$ the mass density of the $i^{\rm th}$ component at temperature $T$ and pressure $p$, we have $$\partial_pg_i(T,p) = \hat{\upsilon}_i(T,p) = 1/\hat{\rho}_i(T,p) \, .$$ Moreover, $-T \, \partial^2_Tg_i =  c_p^i$ is the heat capacity at constant pressure, while the heat capacity at constant volume of the pure substance is $-T \, [\partial^2_T g_i - (\partial^2_{p,T} g_i)^2/\partial^2_p g_i] = c_{\upsilon}^i$.
In particular, it follows that $p \mapsto g_i(T,p)$ is concave. 
We refer for instance to \cite{drumaxmix}, Section 7 for a construction of a simple thermodynamically consistent example.

In \eqref{muideal}, the mole fraction $x_i$ is related to the main variables via
\begin{align}\label{molefraction}
 x_i = \hat{x}_i(\rho) = \frac{\rho_i}{M_i \, \sum_{j=1}^N (\rho_j/M_j)} \quad \text{ implying that } \quad  \sum_{i=1}^N x_i = 1 \, . 
 \end{align}
The pressure $p$ is related to the main variables via the equation of state
\begin{align}\label{EOS}
 \sum_{i=1}^N \rho_i \, \partial_pg_i(T, \, p) = 1 \, ,
\end{align}
which expresses the volume-additivity of ideal mixtures. This equation  defines the pressure implicitly as a function of $T$ and $\rho$. Throughout the paper, we denote this function by $\hat{p}(T, \, \rho)$ or also, ignoring the temperature which is only a parameter, by $\hat{p}(\rho)$ 

With the constitutive relations \eqref{DIFFUSFLUX}, \eqref{STRESS}, \eqref{muideal} and \eqref{molefraction}, \eqref{EOS}, the PDEs \eqref{mass} and \eqref{momentum} constitute a closed system for the variables $\rho_1, \ldots, \rho_N$ and $v_1, \, v_2, \, v_3$.

For the mathematical and numerical analysis, and for the study of stability issues concerning isothermal systems, the Helmholtz free energy arises as the natural thermodynamic potential. Here it is assumed that the Helmholtz free energy of the system possesses the density
\begin{align}\label{FE}
\varrho \psi = f(T, \, \rho_1, \ldots, \rho_N) \, 
\end{align} 
with a certain constitutive function $f$ of the temperature $T$ - a constant in the present context - and of the partial mass densities. The choice of $f$ consistent with \eqref{muideal}, \eqref{EOS} is 
\begin{align}\label{FEhere}
 f(T, \, \rho) = \sum_{i=1}^N \rho_i \, g_i\big(T, \, \hat{p}(T,\rho)\big) - \hat{p}(T,\rho) + RT \, \sum_{i=1}^N \frac{\rho_{i}}{M_i} \, \ln \hat{x}_i(\rho) \, .
\end{align}
The chemical potentials and the pressure then obey
\begin{align}\label{CHEMPOT}
 \mu_{j} = & \partial_{\rho_j} f(T, \, \rho_1, \ldots,\rho_N) \quad \text{ for } \quad j=1,\ldots,N \, ,\\
\label{GIBBSDUHEM} p = & -f(T, \rho_1, \ldots,\rho_N) + \sum_{i=1}^N \rho_i \, \mu_i \, , \quad \text{-- Gibbs--Duhem equation.}
\end{align}
The thermodynamic diffusivities $M_{ij} = M_{ij}(T, \rho_1, \ldots,\rho_N)$ in \eqref{DIFFUSFLUX} constitute a symmetric positive semidefinite matrix subject to
\begin{align}\label{MCONSTRAINT}
\sum_{i=1}^N M_{ij}(T,\rho_1,\ldots,\rho_N) = 0 \quad \text{ for all } \quad j = 1,\ldots, N \text{ and all } \rho_1, \ldots, \rho_N > 0 \, .
\end{align}
Note that the theory of irreversible processes originally postulates fluxes of the form
\begin{align}\label{DIFFUSPRIME}
 J^i = -\sum_{j=1}^{N-1} \widetilde{M}_{ij}(T,\rho) \, \nabla (\mu_j-\mu_N) \quad \text{ for } \quad i = 1,\ldots,N-1 \, \quad \text{ and } \quad  J^{N} := - \sum_{i=1}^{N-1} J^i \, ,
\end{align}
and the $N-1 \times N-1$ matrix $\{\widetilde{M}_{ij}\}$ is actually called the Onsager--operator. Singling out a particular dominant species (the ''solvent'') with index $N$ might be meaningful in many applications. However we prefer, in the present paper, using the form \eqref{DIFFUSFLUX} which, as shown for instance in \cite{bothedruetMS}, is completely equivalent. 
For simplicity, the viscosity coefficients in \eqref{STRESS} are assumed constant, and are required to satisfy
\begin{align*}
\eta > 0, \quad \lambda+\frac{2}{3} \, \eta \geq 0 \, . 
\end{align*}

{\bf Incompressible fluid mixture.} The volume-additive fluid mixtures are subject to the equation of state \eqref{EOS}, which we can also express as
\begin{align}\label{EOSVA}
 \upsilon = \sum_{i=1}^N M_i \, x_i \, \partial_pg_i(T, \, p) =: \hat{\upsilon}(T,\, p,\, x_1, \ldots,x_N) \, ,
\end{align}
where $\upsilon$ denotes the molar volume of the mixture, and $\hat{\upsilon}(T,p,x)$ is its constitutive representation as a function of temperature, pressure and the mole fractions. Here we use $x = (x_1,\ldots,x_N)$ as a useful abbreviation for the vector of mole fractions.

Following \cite{bothedreyerdruet}, we call a multicomponent fluid incompressible if its molar volume\footnote{Due to the relation $1/\varrho = \upsilon/(\sum_i M_i \,x_i)$, the definition of incompressibility is not affected by the choice of the molar or specific volume.} is independent on pressure, hence
\begin{align}\label{DEFOFINCOMP}
\partial_p \hat{\upsilon}(T, \, p, \, x_1, \ldots,x_N) = 0 \, \quad \Longleftrightarrow \quad \sum_{i=1}^N M_i \, x_i \, \partial_{p}^2g_i(T, \, p) = 0\, .
\end{align}
According to this notion\footnote{The fluids characterised by \eqref{DEFOFINCOMP}, which we here call incompressible, were also called \emph{quasi-incompressible} elsewhere: See \cite{lowe}, \cite{feima16}. In dynamic situations, fluid flows subjects to \eqref{DEFOFINCOMP} can conserve mass without satisfying $\divv v = 0$.}, and recalling that each $g_i$ is concave in $p$, mixtures characterised by the volume-additivity \eqref{EOSVA} are thus incompressible under the following condition: For all $i = 1, \ldots, N$, either the $i^{\rm th}$ component is incompressible -- this means that $\partial_p \hat{\upsilon}^i = \partial^2_pg_i = 0$ --, or this constituent is dilute in the mixture -- this means that $x_i \ll 1$. However, since we want to allow here not only for dilute solutions, but also for mixtures of fluids where no species needs being dilute, the only possibility to fulfill \eqref{DEFOFINCOMP} is to require $\partial^2_pg_i = 0$ for $i = 1,\ldots,N$. Hence, each constituent is itself incompressible and its Gibbs energy $g_i$ is affine in $p$.

The definition \eqref{DEFOFINCOMP} affects the thermodynamic states and the constitutive equations for the chemical potentials in a special way.
We introduce the specific volumes at reference temperature $T^{\rm R} = T$ and pressure $p^{\rm R}$ via
\begin{align}\label{defbarupsiloni}
\calv_i = \partial_pg_i(T^{\rm R}, \, p^{\rm R}) = \frac{1}{\hat{\rho}_i(T^{\rm R}, \, p^{\rm R})} \quad \text{ for } \quad i = 1,\ldots,N \, .
\end{align}
Then, in view of \eqref{EOS} the mass densities in an incompressible fluid are subject to the side-condition
\begin{align}\label{CONSTRAINT} \sum_{i=1}^N \rho_i \, \calv_i = & 1 \quad \Big( \text{ equiv.\ with } \quad\varrho = \frac{\sum_{i=1}^N M_i \, x_i}{\sum_{i=1}^N M_i \, x_i \, \calv_i}\Big) \, .
\end{align}
Unlike \eqref{EOS}, it is to note that this constraint cannot be satisfied by appropriate choice of $p$. Hence, in the incompressible case, the pressure remains an independent variable in the PDEs. In other words, the main variables are $T$ (here constant), $\rho_1, \ldots,\rho_N$ and $p$. 

Due to the fact that $g_i$ is affine in $p$ with slope $\calv_i$, the chemical potentials are given by
\begin{align}\label{CHEMPOTincom}
 \mu_i := p \, \calv_i + \frac{RT}{M_i} \, \ln x_i \quad \text{ for } \quad i = 1, \ldots, N \, . 
\end{align}
Using a singular Helmholtz free energy function, it is also possible to generalise the expressions \eqref{FE}, \eqref{CHEMPOT}, \eqref{GIBBSDUHEM} to the incompressible case. We define
 \begin{align} \label{limitFE} f^{\infty}(T, \, \rho) := \begin{cases} 
RT \, \sum_{i=1}^N \frac{\rho_i}{M_i} \, \ln \hat{x}_i(\rho) & \text{ for } \sum_{i=1}^N \rho_i \, \calv_i = 1 \, ,\\
+\infty & \text{ otherwise. }
\end{cases}
\end{align}
It can be shown (see \cite{druetmixtureincompweak}, \cite{bothedruetincompress}, \cite{bothedreyerdruet}) that \eqref{CHEMPOTincom} is equivalent to
\begin{align*}
\mu \in \partial_{\rho} f^{\infty}(T,\rho) \, , \quad p = - f^{\infty}(T,\rho) + \sum_{i=1}^N \rho_i \, \mu_i \, ,
\end{align*}
where $\partial_{\rho}$ is the subdifferential. This provides a direct generalisation of \eqref{CHEMPOT}, \eqref{GIBBSDUHEM}.

In the incompressible case, the variables are $(T,p,\rho)$, and it is necessary to state the pressure-dependence of the phenomenological coefficients explicitly. Hence, the diffusion fluxes are written as
\begin{align}
\label{DIFFUSFLUXimcomp} J^i = & - \sum_{j=1}^N M_{ij}(T,\,p, \, \rho_1,\ldots,\rho_N) \, \nabla \frac{\mu_j}{T} \quad \text{ for } \quad i = 1,\ldots,N \, ,
\end{align}
However, in this paper, we shall restrict for simplicity to the particular case that there is no substantial pressure-dependence. Then the $M_{ij}$'s are regular functions depending only on $\rho_1,\ldots,\rho_N$ for both the compressible and incompressible case. 
Another important remark is that, throughout this paper, we shall assume that there are at least two different (reference) specific volumes $\calv_i \neq \calv_j$ which, in vector notations, means that $\calv$ is not parallel to $1^N$. The case $\calv = \calv_0 \, 1^N$ with $\calv_0 > 0$ - which means that all components of the mixture possess the same density at reference conditions - is very particular. The incompressibility condition reduces to a constant mass density, and we observe a complete de-coupling of the systems \eqref{mass}, \eqref{momentum}. The momentum equations reduce to the incompressible Navier-Stokes equations for a viscous fluid. In fact, this limit must be studied with the same methods as the single-component case. This means that a convergence result for the pressure is not to be expected by present state of the art.

\section{Incompressibility: $\partial_p\upsilon = 0$ vs.\ ${\rm Ma} = 0$}\label{IKK} Consider in \eqref{muideal} a sequence $g^{m} = g^m(T,p)$ with $m \in \mathbb{N}$. The function $- \partial_pg^m_i \, \partial^2_pg^m_i$ is called (isothermal) compressibility of the substance ${\rm A}_i$. Finite compressibility means that $\partial^2_pg^m < 0$. Assume that for $m \rightarrow \infty$ the function $g^{m}_i$ converge in a suitable sense to a $g^{\infty}_i = g^{\infty}_i(T,p)$ which is affine in $p$.
In the paper \cite{bothedreyerdruet}, we have proved in a very general setting that the indexed Helmholtz potentials $f^m$ of the form \eqref{FEhere} Gamma-converge to a singular limit $f^{\infty}$ with the structure \eqref{limitFE}.

The main question of the present paper concerns the behaviour not of the thermodynamic structures, but of solutions to the PDEs \eqref{mass}, \eqref{momentum} where the constitutive model is indexed by $m$. If $(\rho^{m}, \, v^{m})$ are solutions to the PDE system \eqref{mass}, \eqref{momentum} with constitutive relations relying on \eqref{muideal} do these solutions converge as $m \rightarrow +\infty$, to a solution to the incompressible model with \eqref{CONSTRAINT} \eqref{CHEMPOTincom} and \eqref{DIFFUSFLUXimcomp}?

Now, we must distinguish between the incompressible limit according to the physical definition \eqref{DEFOFINCOMP} and the low Mach-number limit in rescaled PDEs.\\

{\bf Stable incompressible fluid phase.}  The physical statement of incompressibility presupposes that the temperature, pressure and composition of the fluid remain in a range where the compressibility function of the mixture $-\partial_p\upsilon/\upsilon = -\sum_i \partial^2_pg_i(p) \, \rho_i$ is very small compared to some empirical measure. Hence it is possible to replace the free enthalpies $g_i$'s by affine functions. 
 
 In order to perform this first variant of the limit in the isothermal context, we assume that
 \begin{align}\label{firstvariat}
 g_i^m(T, \, p) \rightarrow \calv_i \, p \quad \text{ for } m \rightarrow \infty, \quad \text{ for all } p \in ]p_1, \, p_2[
 \end{align}
 where $0 < p_1 < p_2 \leq +\infty$ is a ''reasonable range'' of pressures for which the incompressible phase remains stable.

 From the viewpoint of rigorous mathematical asymptotics, the limit based on \eqref{firstvariat} is very difficult to perform on the original PDE system. To the present state of knowledge, no methods are available for proving that solutions to the PDEs \eqref{mass}, \eqref{momentum} possess a globally in time bounded pressure, or that the densities remain strictly positive, let alone for proving that the solutions remains confined to the real domain of the state space where thermodynamic stability makes sense. However, it is possible to study rigorously a second variant of the limit, for certain scalings of the PDEs \eqref{mass}, \eqref{momentum}.\\

{\bf Low Mach-number limit in rescaled PDEs.} The complete scaling procedure is described in the Appendix, Section \ref{RESCALE}. With a reference time $t^{\rm R}$ and a reference length $L^{\rm R}$, we introduce the normalised domain $Q^{\rm R} := \{(\bar{x}, \, \bar{t}) \, : \, \bar{x} = x/L^{\rm R} \, , \, \bar{t} = t/t^{\rm R}, \, (x,t) \in \Omega \times ]0,\, \bar{\tau}[\}$. Rescaling of the PDEs \eqref{mass}, \eqref{momentum} yields
\begin{align}\label{massrescfin}
 \partial_{\bar{t}} \bar{\rho}_i + \overline{\divv} \Big(\bar{\rho}_i \, \bar{v} -  \sum_{j=1}^N \bar{M}_{ij} \, \bar{\nabla} \bar{\mu}_j\Big) =& 0 \, ,\\\label{momentumrescfin}
\bar{\varrho} \, ( \partial_{\bar{t}} \bar{v}  + \bar{v} \cdot \bar{\nabla} \bar{v}) +\overline{\divv} \, \overline{\mathbb{S}}(\bar{\nabla} \bar{v}) + \bar{\nabla} \bar{p}_{\Delta} =& -  \bar{\varrho} \, e^3 \, .
\end{align}
The bars on functions, fields and differential operators denote a renormalised/dimensionless quantity. The relative pressure $\bar{p}_{\Delta}$ is obtained from the thermodynamic pressure by two steps: The pressure is first normalised, then the deviations from reference pressure are rescaled by the Mach-number squared, to obtain that 
\begin{align*}
 \bar{p}_{\Delta}(\bar{x}, \, \bar{t}) =  \frac{1}{{\rm Ma}^2} \, \Big(\frac{p(L^{\rm R} \, \bar{x}, \, t^{\rm R} \, \bar{t})}{p^{\rm R}}-1\Big) \, , \quad \text{ where } \quad  {\rm Ma} = \frac{v^{\rm R}}{c^{\rm R}} \, ,
 \end{align*}
 with the mean pressure $p^{\rm R}$, the mean modulus of velocity of the fluid $v^{\rm R}$ and the mean variation of pressure over density at fixed temperature and composition $c^{\rm R}$, which possesses the same magnitude as the speed of sound in the medium.\footnote{In fact the rescaling procedure uses the special choice $c^{\rm R} = p^{\rm R}/\varrho^{\rm R}$, where $\varrho^{\rm R}$ is the average density.} 
We use the notation $\bar{p}_{\Delta}$ to hint at the fact that pressure \emph{variations} are rescaled rather than of the pressure itself.

Introducing $m := {\rm Ma}^{-2}$, the constitutive choices for \eqref{massrescfin}, \eqref{momentumrescfin} are given by 
\begin{align}\label{MUIRESC}
& \bar{\mu}_i = \hat{\mu}^{m}_i(\bar{p}_{\Delta},\bar{x}_i) :=  \bar{g}_i^m(\bar{p}_{\Delta}) + \frac{1}{\bar{M}_i} \, \ln \bar{x}_i \, ,\\
\text{ with } \quad & \bar{g}_i^m(\bar{p}_{\Delta}) =  m \, \Big(\bar{g}_i\big(1+\frac{\bar{p}_{\Delta}}{m}\big) - \bar{g}_i(1)\Big) \, ,\label{ginormalised}
\end{align}
with rescaled masses $\bar{M}_i = M_i(v^{\rm R})^2/(RT)$, and with the rescaled free energy functions 
\begin{align}\label{bargipi}
\bar{g}_i(\cdot) = \frac{g_i(p^{\rm R} \, \cdot)}{g^{\rm R}} \quad \text{ where } \quad g^{\rm R} := \frac{\varrho^{\rm R}}{p^{\rm R}} \, .
\end{align}
Within these definitions, it is also to note that a rescaled equation of state for the pressure is valid in the form $\bar{p}_{\Delta} = \hat{\pi}^m(\bar{\rho}) \, ,$ where $\hat{\pi}^{m}$ is the implicit function defined by the normalised equation of state
\begin{align}\label{EOSnormalised}
 \sum_{i=1}^N (\bar{g}^{m}_i)^{\prime}(\pi) \, \bar{\rho}_i = 1 \quad \Longleftrightarrow \quad \pi = \hat{\pi}^{m}(\bar{\rho}_1,\ldots,\bar{\rho}_N) \, .
\end{align}
Next, we compare
the rescaled PDEs \eqref{massrescfin}, \eqref{momentumrescfin} and constitutive equations \eqref{MUIRESC} and \eqref{EOSnormalised} with the original \eqref{mass}, \eqref{momentum}, \eqref{muideal}, \eqref{EOS}, and see that the structures are identical, up to the difference that the pressure is replaced by the ''normalised pressure variations'' $\bar{p}_{\Delta}$. In the formal limit of $m \rightarrow \infty$, the equation \eqref{ginormalised} implies that
\begin{align}\label{CONVFORMALE}
 \bar{g}_i^m(\bar{p}_{\Delta}) = \int_{0}^1 \bar{g}_i^{\prime}\Big(1+ \frac{\theta}{m} \, \bar{p}_{\Delta} \Big) \, d\theta \, \bar{p}_{\Delta} \longrightarrow  \bar{g}_i^{\prime}(1) \, \bar{p}_{\Delta} = \varrho^{\rm R} \, g_i^{\prime}(p^{\rm R}) \, \bar{p}_{\Delta} = \varrho^{\rm R} \,\calv_i \, \, \bar{p}_{\Delta} \, ,
\end{align}
with the reference mass density $\varrho^{\rm R}$. This shows that the (formal) limit of the functions $\bar{g}^m_i$ for $m\rightarrow \infty$ is the affine function $\bar{g}^{\infty}_i(\bar{p}_{\Delta}) = \bar{\calv}_i \, \bar{p}_{\Delta}$ with $\bar{\calv}_i := \varrho^{\rm R} \, \calv_i$. Thus, the expected limit of the rescaled system is formally identical with the incompressible system, with the important difference that, now, the constitutive equations make sense independently of the sign or the boundedness of $\bar{p}_{\Delta}$.
As already shown in the single-component case by the pioneering work \cite{lionsmasmoudi}, the rescaling approach allows to make sense of the incompressibile limit in a rigorous mathematical framework. To study stability issues on the rescaled system, note  that we can introduce a rescaled free energy function via
\begin{align}\label{FEm}
 \bar{f}^m(\bar{\rho}) := \sum_{i=1}^N \bar{g}_i^m(\hat{\pi}^m(\bar{\rho})) \, \bar{\rho}_i - \hat{\pi}^m(\bar{\rho}) + \sum_{i=1}^N \frac{\bar{\rho}_i}{\bar{M}_i} \, \ln \frac{\bar{\rho}_i/\bar{M_i}}{\sum_{j=1}^N  (\bar{\rho}_j/\bar{M_j})}\, ,
\end{align}
and then can verify that $\partial_{\bar{\rho}_i} \bar{f}^m(\bar{\rho}) = \bar{\mu}_i =  \hat{\mu}^{m}_i(\hat{\pi}^m(\bar{\rho}), \, \hat{x}_i(\bar{\rho}))$.

\section{Notations, assumptions and statement of the results}\label{MATH1}
\addtocontents{toc}{\protect\setcounter{tocdepth}{1}}

{\bf Notations:} We define
$ \mathbb{R}^N_+ := \{\rho \in \mathbb{R}^N \, : \, \rho_1,\ldots,\rho_N > 0\}$ and
\begin{align*}
\overline{\mathbb{R}^N_{+}} := \{\rho \in \mathbb{R}^N_+ \, : \, \rho_1,\ldots,\rho_N \geq 0\} \, .
\end{align*}
For $x \in \mathbb{R}^N$, and $1 \leq \calp < +\infty$, the $\calp-$norm of $x$ is $|x|_\calp = (\sum_{i=1}^N |x_i|^\cap)^{1/\calp}$ and $|x|_{\infty} = \max_{i=1,\ldots,N} |x_i|$. We moreover denote $\max x = \max_{i=1,\ldots,N} x_i$ and $\min x = \min_{i=1,\ldots,N} x_i$.

We let $\mathcal{P}: \, \mathbb{R}^{N} \rightarrow \{1^N\}^{\perp}$ (orthogonal complement of the vector $1^N = (1, \ldots,1) \in \mathbb{R}^N$) denote the orthogonal projection
\begin{align*}
 \mathcal{P} \xi := \xi - \frac{1}{N} \, \sum_{i=1}^N \xi_i \, 1^N \quad \text{ for } \quad \xi \in \mathbb{R}^N \, .
\end{align*}
For $\rho \in \mathbb{R}^N_+$ the mass and mole fractions are given by
\begin{align}\begin{split}\label{LESFRACS}
 & y_i = \hat{y}_i(\rho) = \frac{\rho_{i}}{\sum_{j=1}^N \rho_j}, \quad x_i = \hat{x}_i(\rho) = \frac{\rho_{i}}{M_i \, \sum_{j=1}^N (\rho_j/M_j)} \, , \\
& \text{ with the relationship }\quad  y_i = \frac{M_i}{\sum_{j=1}^N M_j \, x_j} \, x_i \, .
\end{split}
\end{align}
We will make use of the function
\begin{align}\label{KFUKK}
 k(\rho) = RT \, \sum_{i=1}^N \frac{\rho_i}{M_i} \, \ln \hat{x}_i(\rho) \, , 
\end{align}
and of its rescaled variant $\bar{k}(\bar{\rho}) = \sum_{i=1}^N \frac{\bar{\rho}_i}{\bar{M}_i} \, \ln \hat{x}_i(\bar{\rho})$. These functions are positively homogeneous. In particular, $Dk(\rho) \cdot \rho = k(\rho)$ and $D^2k(\rho) \, \rho = 0$. In order to study the behaviour of the function $\hat{\pi}^m$ introduced in \eqref{EOSnormalised}, we introduce a function $\hat{\bar{p}}(\bar{\rho}) = \widehat{p}(\bar{\rho})$ via
\begin{align}\label{EOSnormalised2}
 \sum_{i=1}^N \bar{g}_i^{\prime}(\pi) \, \bar{\rho}_i = 1 \quad \Longleftrightarrow \quad \pi = \widehat{p}(\bar{\rho}_1,\ldots,\bar{\rho}_N) \, ,
\end{align}
where $\bar{g}$ are the normalised functions of \eqref{bargipi}. Then, \eqref{EOSnormalised} and \eqref{EOSnormalised2} characterise
\begin{align}\label{piiswidehatp}
 \hat{\pi}^m(\bar{\rho}) = m \, (\widehat{p}(\bar{\rho})-1) \, .
\end{align}
Moreover, in the context of mathematical proofs, we shall denote the reference pressure by $p^0$ ($=p^{\rm R}$).

We assume throughout the paper that $\Omega \subset \mathbb{R}^3$ is a bounded domain with Lipschitz boundary. We denote by $\bar{\tau} >0$ the final time, and $Q = Q_{\bar{\tau}} = \Omega \cup ]0,\bar{\tau}[$ is the space-time cylinder in $\mathbb{R}^4$. For a Lebesgue measurable subset $E$ of $\Omega$ (of $Q$), we denote by $|E|$ its three-dim.\ (four-dim.) Lebesgue-measure, and by $E^{\rm c}$ the complement $\Omega \setminus E$ (the complement $Q \setminus E$).

Several usual function spaces shall occur. For $1\leq \calp \leq \infty$ the Lebesgue-spaces $L^\calp(\Omega)$ and $L^\calp(0,\bar{\tau})$ are well-known. For
$1\leq \calp,\calq < +\infty$, the Lebesgue space $L^{\calq,\calp}(Q)$ consists of all integrable functions $\calu: \, Q \rightarrow \mathbb{R}$ such that $\int_{0}^{\bar{\tau}} (\int_{\Omega} |\calu(x,\tau)|^{\calq}dx)^{\calp/\calq} \, d\tau$ is finite and, for $\calp = +\infty$, such that $\text{ess\,sup}_{0 < t < \bar{\tau}}\int_{\Omega} |\calu(x,\tau)|^{\calq}dx < +\infty$. We also use the evolution-space variants $L^{\calp}(0;\bar{\tau}; \, L^{\calq}(\Omega))$ of these spaces. For the norms, we adopt the either the usual full denotation or the following abbreviations: The norm on $L^\calp(\Omega)$ is denoted $|\cdot|_{\calp}$, while the norm on $L^{\calq,\calp}(Q)$ is denoted $\|\cdot\|_{\calp,\calq}$ and $\|\cdot\|_{\calp}$ for $\calp = \calq$. 

Moreover we encounter the Sobolev spaces $W^{1,\calq}(\Omega)$, $W^1_{\calp}(Q)$, $W^{1,0}_{\calp}(Q) = L^{\calp}(0,\bar{\tau}; \, W^{1,\calp}(\Omega))$ and $W^{2,1}_{\calp}(Q) =   W^1_{\calp}(0,\bar{\tau}; \, L^{\calp}(\Omega))\cap L^{\calp}(0,\bar{\tau}; \, W^{2,\calp}(\Omega))$. All definitions are well-known from standard monographs. The norm are denoted by $|\cdot|_{W^{1,\calp}}$, $\|\cdot\|_{L^{\calq}W^{1,\calq}}$, etc.\ For an integrable function $\calu: \, Q\rightarrow \mathbb{R}$, we denote by $(\calu)_M\in L^1(0,\bar{\tau})$ the function $(\calu)_M := \frac{1}{|\Omega|} \, \int_{\Omega} \calu(x,\cdot) \, dx$.

%

\subsection{Assumptions for the free energy functions}

We adopt the viewpoint that the thermodynamic pressure is in essence positive, and simplify the discussion by assuming that the functions $g_1(T, \cdot), \ldots,g_N(T,\cdot)$ occurring in \eqref{muideal} are defined on the whole positive real line. Motivated by requirements of thermodynamic stability, we assume that $(g_1, \ldots,g_N)$ is subject to: 
\begin{labeling}{(A44)}
\item[(A1)] $g_i \in C^2(]0, \, +\infty[)$;
\item[(A2)] $g_i^{\prime}(p) > 0$ and $ g_i^{\prime\prime}(p) < 0$ for all $p > 0$; 
\item[(A3)] $\lim_{p \rightarrow 0} g^{\prime}_i(p) = + \infty$ and $\lim_{p \rightarrow +\infty} g^{\prime}_i(p) = 0$;
\end{labeling}
The conditions (A2) express for each constituent the requirement that: first its density $\hat{\rho}_i(p) = 1/g_i^{\prime}(p)$ is positive (first condition), and second that its volume $g_i^{\prime}(p)$ is a strictly decreasing function on pressure (second condition). The first of the conditions in (A3) means that the lower-pressure threshold corresponds to infinite volume, and the second one that infinite pressure leads to volume zero. 

For the weak solution analysis, we shall have to moreover require specific growth behaviour for small and large arguments:
\begin{labeling}{(A44)}
\item[(A4)] There are positive constants $\bar{c}_1 \leq \bar{c}_2$ and $\bar{s} > p^0$ such that
\begin{align*}
0< \bar{c}_1 \leq g_i^{\prime}(p) \, p \leq \bar{c}_2\quad \text{ for all }\quad 0 < p < \bar{s} \, .
\end{align*}
\item[(A5)] There are $\alpha_1, \ldots, \alpha_N \geq \beta > 1$ such that
\begin{align*}
 \alpha_i \, p \, g_i^{\prime}(p) \geq g_i(p) \geq \beta \, p \, g_i^{\prime}(p) \quad \text{ for all }\quad p \geq \bar{s} \, .
\end{align*}
\item[(A6)] There is $\bar{c}_3 > 0$ such that $p \, |g^{\prime\prime}_i(p)| \leq \bar{c}_3 \, g_i^{\prime}(p)$ for all $p>0$, $i = 1,\ldots,N$. 
\end{labeling}
For small and moderate arguments, (A4) implies that $g_i$ grows like $ \ln p $ and, for large arguments, (A5) implies that $g_i$ grows faster than $ p^{1/\alpha_i}$ with $\alpha_i > 1$.
The Helmholtz free energy is defined by \eqref{FEhere}. Under the assumptions (A1), (A2) it can be shown that $f = f^m$ is a strictly convex function on $\mathbb{R}^N_+$, which express the thermodynamic stability.
Under the assumptions (A4), (A5) there are $c_0,  \, c_1 > 0$ depending only on the constants occuring in (A4), (A5) such that
\begin{align}\label{FEGROWTH}
 f(\rho) \geq c_0 \, |\rho|^{\gamma} - c_1 \quad \text{ with } \quad \gamma := \max_{i= 1,\ldots,N} \alpha_i/(\max_{i= 1,\ldots,N} \alpha_i-1) \, .
\end{align}
Using (A4), (A5) and $g^{\prime}(\hat{p}(\rho)) \cdot \rho = 1$ we can moreover show that
\begin{align}\label{la-bas}
\bar{c}_1 \, \varrho &\leq \hat{p}^m(\rho) \leq \bar{c}_2 \,\varrho \, &  \text{ for } \quad 0 < \hat{p}(\rho)  < \bar{s} \, , \\
\label{plarge12}
\left(\frac{\beta\, \min g^{\prime}(\bar{s})}{\max \alpha} \right)^{\gamma} \, \bar{s} \, \varrho^{\gamma} \leq&  \hat{p}(\rho) \leq \left(\frac{\max\alpha\, \max g^{\prime}(\bar{s})}{\beta} \right)^{\frac{\beta}{\beta-1}} \, \bar{s} \, \varrho^{\frac{\beta}{\beta-1}}&  \text{ for }\quad  \hat{p}(\rho) \geq  \bar{s} \, .
\end{align}
These statements and other growth properties of the free energy functions independently on the Mach-number are established in the appendix, Proposition \ref{UNIFF1}.

Next considering the rescaling procedure, we
let $m = 1,2,3,\ldots$ and define $\bar{g}^m$ according to \eqref{ginormalised}, with $\bar{g}$ from \eqref{bargipi}. We see that $\bar{g}^m$ is defined in the interval $]-m, \, +\infty[$. Moreover,
\begin{align*}
( \bar{g}_i^m)^{\prime}(\pi)= \partial_p\bar{g}_i\Big(1 + \frac{\pi}{m}\Big) \quad \text{ for all } \quad \pi \in ]-m, \, + \infty[ \, .
\end{align*}
Thus, if the underlying functions $\bar{g}_1 , \ldots, \bar{g}_N$ satisfy the asssumptions (A1)-(A3), then 
\begin{labeling}{(A'44)}
\item[($\bar{\rm A}$1)] $\bar{g}_i^m \in C^2(]-m, \, +\infty[)$;
\item[($\bar{\rm A}$2)] $(\bar{g}_i^m)^{\prime}(\pi) > 0$ and $ (\bar{g}_i^m)^{\prime\prime}(\pi) < 0$ for all $\pi> -m$; 
\item[($\bar{\rm A}$3)] $\lim_{\pi \rightarrow -m} (\bar{g}^m_i)^{\prime}(\pi) = + \infty$ and $\lim_{\pi \rightarrow +\infty} (\bar{g}^m_i)^{\prime}(\pi) = 0$;
\end{labeling}
For the rescaled model, the free energy function is discussed in Appendix, Prop.\ \ref{UNIFF2}.

\subsection{Convergence assumptions}

In order to study the incompressible limit, we will consider two situations.

In the first situation, we assume in \eqref{muideal} that the functions $g_i$ are indexed by a large parameter, hence $g_i = g_i^m$ with $m \in \mathbb{N}$. Moreover, for all $i = 1,\ldots,N$ and $m = 1,2,\ldots$ the function $g^m_i$ is subject to the assumptions (A1)--(A3). 
For the incompressible limit in its physical sense, we define the affine functions $g^{\infty}_i(p) = \calv_i \, p$ and, for a certain range $0<p_1 < p_2 < +\infty$ (containing the reference value $p^0$) in which thermodynamic stability is valid, we assume that
\begin{align}\label{CONVER1}
 g_i^m \longrightarrow g^{\infty} \quad \text{ in } \quad C^2([p_1,p_2]) \, .
\end{align}

\vspace{0.2cm}

In the second situation, we consider for each $m \in \mathbb{N}$ the rescaled system \eqref{massrescfin}, \eqref{momentumrescfin} with constitutive equations \eqref{MUIRESC}, where functions $\bar{g}_1, \ldots,\bar{g}_N$ are fixed and satisfy the assumptions (A). We obtain the effective functions $\bar{g}^m_i$ applying the rescaling step \eqref{ginormalised}, \eqref{bargipi}.
Then, the function $\bar{g}^m_i$ satisfies (${\rm \bar{A}}$1),  ($\rm \bar{A}$2), ($\rm \bar{A}$3). We easily show that (cf.\ \eqref{CONVFORMALE})
\begin{align}\label{CONVER2}
 \bar{g}_i^m \longrightarrow \bar{g}^{\infty} \quad \text{ in } \quad C^2(K) \quad \text{ for all compact sets } \quad  K \subset \mathbb{R}  \, ,
\end{align}
with the affine limit $\bar{g}^{\infty}(\pi) = \bar{\calv}_i \, \pi$ for $\pi \in \mathbb{R}$. Here, $\bar{\calv}$ is the rescaled vector of specific volumes $\bar{\calv} = \varrho^{\rm R} \, \calv$.

\subsection{Assumptions for the mobility tensor}

For $\{M_{ij}\}$ (or the rescaled $\{\bar{M}_{ij}\}$), we adopt the following assumptions (B): 
\begin{labeling}{(B44)}
\item[(B1)] $ M_{ij} = M_{ij}(T,\cdot) \in C^{0,1}(\overline{\mathbb{R}^N_{+}})$ for all $ i,j=1,\ldots,N$;
\item[(B2)] For all $\rho_1,\ldots,\rho_N > 0$, $M_{ij}(\rho) = M_{ji}(\rho)$ for all $i\neq j$ and $\sum_{j=1}^N M_{ij}(\rho) = 0$;
\item[(B3)] There is a positive function $\lambda_0 \in C(\mathbb{R}^N_+)$ such that $\sum_{i,j=1}^N M_{ij}(\rho) \xi_j \, \xi_i \geq \lambda_0(\rho) \, |\mathcal{P}\xi|^2$ for all $\rho \in \mathbb{R}^N_+, \, \xi \in \mathbb{R}^N$;
\end{labeling}
These assumptions are natural and they also include the Maxwell--Stefan (short: M--S) choice of the mobility tensor. Exemplarily, if all M--S interaction coefficients are equal, then it is well--known that
\begin{align}\label{maxstefbase}
 M_{ij}(\rho) = d \, \rho_i \, (\delta^i_j - \rho_j) \, ,
\end{align}
which might serve as a toy example. The general condition for a system with Maxwell-Stefan diffusion consists, according to the paper \cite{bothedruetMS}, in the property
\begin{align}\label{maxstefreg}
d_0(p) \, \mathdutchcal{P}^{\sf T} \, R \, \mathdutchcal{P} \leq M(\rho) \leq d_1(p) \,  \mathdutchcal{P}^{\sf T} \, R \, \mathdutchcal{P} \, ,
\end{align}
where $0 < d_0 \leq d_1$ are functions of pressure (and temperature), $R := \text{diag}(\rho_1,\ldots,\rho_N)$ and $\mathdutchcal{P} := \mathbb{I} - 1^N \otimes \hat{y}(\rho)$. Obviously, the toy model \eqref{maxstefbase} occurs for $d_0 = d = d_1$ implying that $M = \mathdutchcal{P}^{\sf T} \, R \, \mathdutchcal{P}$.

In the context of weak solutions in which, to the present best knowledge the densities do not need remaining uniformly positive, we shall need to reinforce the assumption (B3). This will be stated explicitly stated in the theorems. 

As a last remark concerning the mobilities and the other phenomenological coefficients, 
note that it is possible, and even more precise, to conceptualise $M_{ij}$ as a function of the temperature $T$, the pressure $p$ and the mole fractions $x_1, \ldots,x_N$ instead of the main variables as done in (B1). See in this respect also \eqref{DIFFUSFLUXimcomp}. This question has an important impact on the discussions on incompressibility (See the remark \ref{lesprecisions}, \eqref{cetteremarquela}).

\subsection{Solution concept for the PDEs}\label{SolConc}

In order to investigate the low Mach--number asymptotics while simplifying the technical mathematical discussions as much as possible, we shall blend out possible external influences on the physical system. We consider insulating boundary conditions for the fluxes and no velocity slip:\begin{align}\label{NOPENET}
J^i \cdot \nu(x) = 0 \text{ for } i = 1,\ldots,N, \qquad v = 0\, , \quad \text{ on } \partial \Omega \times ]0,\bar{\tau}[ \, .
\end{align}
We consider only the relaxation of the system starting from certain (non-equilibrium) initial data
\begin{align}\label{INIT}
\rho(x, \, 0) = \rho^{0,m}(x), \quad v(x, \, 0) = v^0(x) \quad \text{ for } \quad x \in \Omega \, .
\end{align}
The case $v(x,0) = v^{0,m}(x)$ with indexed initial data for the velocity can be easily treated too.

For $m \in \mathbb{N}$, we call (IBVP$^m$) the initial-boundary-value-problem \eqref{mass}, \eqref{momentum} with constitutive equations \eqref{DIFFUSFLUX}, \eqref{STRESS}, \eqref{muideal}, and initial and boundary conditions \eqref{NOPENET}, \eqref{INIT}. Hereby, we assume in \eqref{muideal} that $g_i = g_i^m$ with $m \in \mathbb{N}$ where, for all $i = 1,\ldots,N$ and $m = 1,2,\ldots$ the function $g^m_i$ is subject to the assumptions (A).

For $m \in \mathbb{N}$, we might consider the rescaled IBVP \eqref{massrescfin}, \eqref{momentumrescfin} with constitutive equations, \eqref{MUIRESC}. In this case, the initial and boundary conditions \eqref{NOPENET}, \eqref{INIT} are also rescaled. Whenever we need to separately refer the rescaled problem, we shall denote it by ($\overline{\text{ IBVP}}^m$).

We call $(\rho, \, v) = (\rho^m, \, v^m)$ a weak solution to (IBVP$^m$) if the initial-boundary-value-problem is satisfied in the usual sense of distributions and, moreover, the following energy inequality
\begin{align}\begin{split}\label{DISSIP}
 & \int_{\Omega} \Big( \frac{\varrho(x,t)}{2} \, |v(x,t)|^2 + f^m(\rho(x,t))\Big) \, dx + \int_0^t \int_{\Omega} \mathbb{S}(\nabla v) \, : \, \nabla v +\zeta^{\rm Diff} \, dxd\tau \\
 & \qquad \leq  \int_{\Omega} \Big( \frac{\varrho_0^m(x)}{2} \, |v^0(x)|^2 + f^m(\rho^{0,m}(x))\Big) \, dx + \int_0^t\int_{\Omega} \varrho \, b \cdot v \, dxd\tau
 \end{split}
\end{align}
is valid for all $0 \leq t \leq \bar{\tau}$, where $\zeta^{\rm Diff}$ is the entropy dissipation of diffusion, and we moreover defined $\varrho_0^m(x) := \sum_{i=1}^N \rho^{0,m}_i(x)$.
Depending on the regularity of the weak solution and the assumption on the data, we will have different representations of $\zeta^{\rm Diff}$. If the solution were smooth and the densities $\rho_1,\ldots,\rho_N$ uniformly positive, then
\begin{align*}
\zeta^{\rm Diff} =  - J \, : \, \nabla \mu = \sum_{i,j=1}^N M_{ij}(\rho) \, \nabla \mu_i \cdot \nabla \mu_j \geq 0 \, .
\end{align*}
Let us next specify minimal requirements in order that all integrals involved in the definition of a weak solution are making sense. 
Using \eqref{FEGROWTH} with $\zeta^{\rm Diff} \geq 0$, the minimal regularity of weak solutions can be read off from \eqref{DISSIP}:
\begin{align}\begin{split}\label{NC}
 \rho \in L^{\gamma,\infty}(Q_{\bar{\tau}}; \, \mathbb{R}^N) \quad \text{ and } \quad 
 v \in  L^2(0,\bar{\tau}; W^{1,2}_0(\Omega; \, \mathbb{R}^3)) \text{ with } \sqrt{\varrho} \, v \in L^{2,\infty}(Q_{\bar{\tau}}; \, \mathbb{R}^3) \, ,
 \end{split}
\end{align}
with $\gamma > 1$ from \eqref{FEGROWTH}. Recall moreover that $\rho = (\rho_1, \ldots,\rho_N)$ is non-negative.

However, the properties \eqref{NC} -- which reflect the typical regularity for compressible Navier--Stokes equations -- do not allow to make sense of diffusion terms: At first, the chemical potentials $\mu_i = \hat{\mu}_i(\rho)$ obey \eqref{muideal} and are not defined on sets where a mole fraction $\hat{x}_i(\rho)$ vanishes. At second we need an information on gradients. 

Two different attempts to introduce appropriate ''diffusive variables'', allowing to define global-in-time weak solutions, were presented in \cite{dredrugagu20}, \cite{druetmaxstef}. In both cases, the condition (B3) for the mobility tensor needs being reinforced. Since these ideas are of technical nature, we shall recall them in the appendix of the paper, in Section \ref{ShortSurvey}, for readers interested in this kind of discussions. 
In the present introductory context, let us simplify the technical problems by considering the case of uniformly positive weak solutions, those satisfying
\begin{align}\label{UNIFORMPOS}
 \inf_{i=1,\ldots,N} \rho_i(x, \, t) \geq s_0 > 0 \quad  \text{ for almost all } \quad (x,t) \in Q_{\bar{\tau}} \, .
\end{align}
Then, exploiting (B3), we get
\begin{align*}
- \sum_{i=1}^N J^i \cdot \nabla \mu_i = \sum_{i,j=1}^N M_{ij}(\rho) \, \nabla \mu_i \cdot \nabla \mu_j \geq \Big(\inf_{\rho_1,\ldots,\rho_N \geq s_0}\lambda_0(\rho)\Big) \, \,\,|\mathcal{P}\nabla \mu|^2 \, ,
\end{align*}
and, in addition to \eqref{NC}, the inequality \eqref{DISSIP} motivates the regularity
\begin{align}\label{NC+}
\nabla \mathcal{P} \hat{\mu}(\rho) \in L^2(Q_{\bar{\tau}}; \, \mathbb{R}^N) \quad \text{ and }\quad \sum_{i,j=1}^N M_{ij}(\rho) \, \nabla \hat{\mu}_i(\rho) \cdot \nabla \hat{\mu}_j(\rho) \in L^1(Q_{\bar{\tau}})  \, . 
\end{align}
As a consequence of (B1), the entries of the mobility matrix satisfy
\begin{align}\label{b1prime}
 |M_{ij}(\rho)| \leq |M_{ij}(0)| + \sup_{r\in\mathbb{R}_N^+} |\partial_r M_{ij}(r)| \, |\rho| \leq \bar{\lambda} \, (1+|\rho|) \, .
\end{align}
Using the Cauchy-Schwarz inequality, we then show that
\begin{align*}
 |J^i| \leq &  M_{ii}^{\frac{1}{2}}(\rho) \,  \Big(\sum_{i,j=1}^N M_{ij}(\rho) \, \nabla \hat{\mu}_i(\rho) \cdot \nabla \hat{\mu}_j(\rho)\Big)^{\frac{1}{2}}\\
 \leq & \bar{\lambda}^{\frac{1}{2}} \, \sqrt{1+|\rho|} \, \Big(\sum_{i,j=1}^N M_{ij}(\rho) \, \nabla \hat{\mu}_i(\rho) \cdot \nabla \hat{\mu}_j(\rho)\Big)^{\frac{1}{2}} \, ,
\end{align*}
and with the help of \eqref{NC}, \eqref{NC+} and H\"older's inequality, we can see that the diffusion fluxes satisfy a bound in $L^{2\gamma/(1+\gamma),2}(Q_{\bar{\tau}}; \, \mathbb{R}^{N\times 3})$. The regularity of positive weak solutions to (IBVP$^m$) hence consists of the conditions \eqref{NC}, \eqref{NC+}. 

Note that, in order to fully exploit the multicomponent character, it would be more precise to state the regularity of $\rho$ in the Orlicz class generated by the free energy function. The $\gamma-$growth in \eqref{NC} is usual in the analytical context of single-component compressible Navier--Stokes equations but it is only the worst case scenario for mixtures.
\\

We shall call (IBVP$^{\infty}$) the initial-boundary-value-problem \eqref{mass}, \eqref{momentum}, \eqref{NOPENET} with \eqref{STRESS} for the Newtonian stress, and incompressible constitutive equations \eqref{CHEMPOTincom}, \eqref{DIFFUSFLUXimcomp} for the fluxes. The initial condition \eqref{INIT} is replaced by
\begin{align*}
 \rho(x, \, 0) = \rho^{0,\infty}(x) \quad \text{ for } \quad x \in \Omega \, ,
\end{align*}
where $\rho^{0,\infty}$ is an incompressible initial state subject to \eqref{CONSTRAINT}.

In this paper, only strong solutions $(\rho^{\infty}, \, p^{\infty}, \, v^{\infty})$ to (IBVP$^{\infty}$) shall be considered. A particularity to note is that, in the incompressible model, the thermodynamic diffusivity $M_{ij}$ and the viscosity coefficient $\eta$ and $\lambda$ are independent on pressure, and the boundary conditions also do not involve the pressure. As a consequence, the pressure field of a solution to (IBVP$^{\infty}$) is determined only up to an arbitrary function of time. In order to rule out this multiplicity, we adopt a special way to prescribe the mean-value via
\begin{align}\label{CONVEN}
\int_{\Omega} p^{\infty}(x, \, t) \, dx \overset{!}{=} - RT \, \sum_{i=1}^N \, \frac{\eta_i}{M_i} \, \int_{\Omega} \, \ln \hat{x}_i(\rho^{\infty}(x,t)) \, dx \, ,
\end{align}
with real numbers $\eta_1, \ldots,\eta_N$ satisfying $\sum_{i=1}^N \eta_i \, \calv_i = 1$ and, if the vectors $\calv$ and $1^N$ are not parallel, also $\sum_{i=1}^N \eta_i = 0$. This condition implies that a certain linear combination of the chemical potentials has mean-value zero over $\Omega$, which turned out convenient for the analysis of (IBVP$^{\infty}$)in \cite{druetmixtureincompweak}, \cite{bothedruetincompress}.
Overall, concerning the resolvability of the incompressible model (IBVP$^{\infty}$), we adopt the following list (C) of assumptions:
\begin{labeling}{(C44)}
\item[(C)] There exists a sufficiently smooth solution $(\rho^{\infty}, \, p^{\infty}, \, v^{\infty})$ with strictly positive densities to (IBVP$^{\infty}$) on some interval $]0,\,\bar{\tau}[$, moreover satisfying \eqref{CONVEN}, and such that 
\begin{gather*}
 \rho^{\infty}_1, \ldots,\rho^{\infty}_N, \, \,  p^{\infty}  \in W^{1}_{\infty}(Q_{\bar{\tau}}), \qquad v^{\infty} \in W^{2,1}_{\calp}(Q_{\bar{\tau}}; \, \mathbb{R}^3) \quad \text{ with }\quad \calp \geq 6 \, , \\[0.1cm]
  \min_{(x,t) \in \overline{Q_{\bar{\tau}}}, \, i = 1,\ldots,N} \rho^{\infty}_i(x,t) \geq r_0 > 0 \, .
\end{gather*}
\end{labeling}
We remark that, owing to the Theorem \ref{EXISSTRONGINFTY}, the property (C) can be verified locally in time if the data are smooth enough.

\subsection{Convergence results}

We shall at first state, in Theorem \ref{heuriheura}, a convergence result for the scenario \eqref{CONVER1}. Here the approximate solutions are subject to additional \emph{a priori} assumptions, which express the physical consistency of the pressure-field.
Due to this, the result of Theorem \ref{heuriheura} can not be applied to \emph{available} weak solutions, and its value is mainly heuristic. However, it has the advantage to allow us introducing the main estimate in a more comfortable way widely free of technicalities. In order to simplify the proof, we shall adopt the normalisation assumptions
\begin{align}\label{NORMIN}
g^m(p^0) =0, \quad \partial_pg^m(p^0) = \calv \quad \text{ for all } m \in \mathbb{N} \, ,
\end{align}
In general, $g^m_1(p^0), \ldots,g^m_N(p^0)$ are constants related to the entropy and the enthalpy of the species under reference thermodynamic conditions $(T,p^0)$, while $\partial_pg^m(p^0)$ are the specific volumes under reference conditons. Thus, \eqref{NORMIN}$_2$ is quite natural, while the reader can easily verify by himself that the Theorem remains valid for general $g^m(p^0)$ such that $\sup_m |g^m(p^0)| < +\infty$.
\begin{theo}\label{heuriheura}
Suppose that for all $m \in \mathbb{N}$, the functions $g_1^m,\ldots,g_N^m$ satisfy the growth and regularity assumptions ${\rm (A1)-(A3)}$ and the normation assumptions \eqref{NORMIN},
that $\{M_{ij}\}$ satisfies ${\rm (B)}$, and that ${\rm (C)}$ is valid for \emph{(IBVP$^{\infty}$)}. Suppose that, for every $m > 1$, the vector $(\rho^{m}, \, v^{m})$ is a weak solution with the regularity \eqref{NC}, \eqref{NC+} to \emph{(IBVP$^m$)}, and that $g^m \rightarrow g^{\infty}$ for $m \rightarrow \infty$ as in \eqref{CONVER1} with two fixed thresholds $0< p_1 \leq  p^0 \leq  p_2 < +\infty$.
Assume moreover that the pressure $p^m = \hat{p}^m(\rho^m)$ is confined to the interval of convergence of $\{g^m\}$, that is,
\begin{align}\label{TurevesII}
 p_1 \leq p^m(x,t) \leq p_2 \quad \text{ for all } m \in \mathbb{N}, \text{ for almost all } (x,t) \in Q_{\bar{\tau}} \, .
\end{align}
Suppose that $v^0 \in W^{2-2/\calp}_\calp(\Omega; \, \mathbb{R}^3)$ and that $\rho^{0,m} \in L^{\infty}(\Omega; \, \mathbb{R}^N)$ satisfies $\min_i \rho^{0,m}_i \geq s_0 > 0$ and the following conditions:
\begin{enumerate}[(a)]
\item\label{E0bounded} The initial energy is uniformly bounded, that is
\begin{align*}
 \sup_{m\in\mathbb{N}}\int_{\Omega} \Big( \frac{\varrho_0^m(x)}{2} \, |v^0(x)|^2 + f^m(\rho^{0,m}(x))\Big) \, dx < + \infty \, ,
\end{align*}
\item With the constants $p_1,p_2$ of  \eqref{TurevesII}: $p_1 \leq \hat{p}^m(\rho^{0,m}(x)) \leq p_2$ for all $x \in \Omega$, $m \in \mathbb{N}$;
\item\label{E0converges} $\rho^{0,m} \rightarrow \rho^{\infty}(\cdot,0)$ in $L^1(\Omega; \, \mathbb{R}^N)$ as $m \rightarrow +\infty$.
\end{enumerate}
Then, $\rho^{m} \rightarrow \rho^{\infty}$ in $  L^1(Q_{\bar{\tau}}; \, \mathbb{R}^N)$, and $v^{m} \rightarrow v^{\infty}$ in $L^1(Q_{\bar{\tau}}; \, \mathbb{R}^3)$. Moreover, if $\calv$ is not parallel to $1^N$, then with $\eta_1,\ldots,\eta_N$ chosen according to \eqref{CONVEN}, there is a sequence $\bar{\zeta}^m$ of functions of time such that $\sup_m \|\bar{\zeta}^m\|_{L^{\infty}(0,\bar{\tau})} < +\infty$ and $p^m + \bar{\zeta}^m\rightarrow p^{\infty}$ in $L^1(Q_{\bar{\tau}})$. 
\end{theo}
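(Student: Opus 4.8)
The strategy is a relative-energy argument à la Feireisl--Novotný, adapted to the multicomponent setting. The central object is the relative energy functional
\begin{align*}
 \mathcal{E}(\rho,v\,|\,\rho^{\infty},v^{\infty}) := \int_{\Omega} \Big(\tfrac{\varrho}{2}|v-v^{\infty}|^2 + f^m(\rho) - Df^m(\rho^{\infty})\cdot(\rho-\rho^{\infty}) - f^m(\rho^{\infty})\Big)\,dx,
\end{align*}
measuring the distance of the weak solution $(\rho^m,v^m)$ to the smooth incompressible solution $(\rho^{\infty},v^{\infty})$ (with the latter used as test state). First I would derive, from the weak formulation of \eqref{mass}, \eqref{momentum} and the energy inequality \eqref{DISSIP}, a \emph{relative energy inequality} controlling $\mathcal{E}(t)$ plus the accumulated dissipation $\int_0^t\int_{\Omega}\mathbb{S}(\nabla(v^m-v^{\infty})):\nabla(v^m-v^{\infty}) + \zeta^{\mathrm{Diff}}$ by $\mathcal{E}(0)$ plus a remainder $\int_0^t \mathcal{R}^m$. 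The remainder collects: (i) the plugging of the smooth test functions $(\rho^{\infty},v^{\infty})$ into the PDEs, which produces residuals measured by the $W^1_\infty$, resp.\ $W^{2,1}_{\calp}$ regularity from (C); (ii) convective and viscous cross terms, handled by Gronwall once they are bounded by $\mathcal{E}$ plus a small multiple of the dissipation; and crucially (iii) \emph{pressure terms} $\int_{\Omega}(p^m - p^{\infty})\,\divv v^{\infty}$, which, unlike the single-component case, do \emph{not} vanish because $\divv v^{\infty}\neq 0$.

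The main work, and the main obstacle, is controlling the pressure remainder. Here is where hypothesis \eqref{TurevesII} does the heavy lifting: since $p_1\le p^m\le p_2$ lies in the convergence interval, and $g^m\to g^{\infty}$ in $C^2([p_1,p_2])$ by \eqref{CONVER1}, the equation of state $\sum_i (g^m_i)'(p^m)\rho^m_i=1$ and its limit $\sum_i \calv_i \rho^\infty_i = 1$ can be compared directly: one gets $\sum_i \calv_i \rho^m_i \to 1$ in a suitable sense and, more importantly, a pointwise bound $|p^m|\le p_2$ giving $\{p^m\}$ bounded in $L^\infty(Q_{\bar\tau})$. The plan is to split $\int_\Omega (p^m-p^\infty)\divv v^\infty$ using the equation of state: write $p^m - p^\infty = (p^m - \hat p^\infty(\rho^m)) + (\hat p^\infty(\rho^m) - p^\infty)$ where $\hat p^\infty(\rho) := $ (the affine-in-$g$ pressure, i.e.\ solving $\sum_i \calv_i \rho_i = 1$ is degenerate, so instead one uses the Gibbs--Duhem representation $p = -f^m(\rho) + Df^m(\rho)\cdot\rho$ and its limit). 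The first difference is $o(1)$ uniformly by the $C^2$-convergence \eqref{CONVER1} on $[p_1,p_2]$; the second is a function of $\rho^m - \rho^\infty$ and is absorbed, via convexity of $f^\infty$ and a Taylor estimate, into $\mathcal{E}$ itself plus a term controlled by $\int_\Omega (p^m-p^\infty)(\rho^m-\rho^\infty)$-type quantities already present in the relative energy. One must be careful that the singular limit potential $f^\infty$ is $+\infty$ off the constraint set $\{\sum_i\calv_i\rho_i=1\}$; the bound \eqref{TurevesII} together with the EOS forces $\rho^m$ asymptotically onto this set, and the ``constraint defect'' $\sum_i\calv_i\rho^m_i - 1$ is quantitatively small (of order $\sup_{[p_1,p_2]}|( g^m_i)' - \calv_i|\cdot|\rho^m|$, itself controlled once $|\rho^m|$ is bounded in $L^{\gamma,\infty}$ via \eqref{NC}).

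With the remainder estimated, Gronwall's lemma applied to the relative energy inequality, using $\mathcal{E}(0)\to 0$ (from assumptions (a)--(c): uniform initial energy, initial pressure in $[p_1,p_2]$, and $\rho^{0,m}\to\rho^\infty(\cdot,0)$ in $L^1$, plus $C^2$-convergence of $g^m$ to deduce $f^m(\rho^{0,m}) - Df^m(\rho^\infty_0)\cdot(\rho^{0,m}-\rho^\infty_0) - f^m(\rho^\infty_0)\to 0$), yields $\sup_{t}\mathcal{E}(t)\to 0$ as $m\to\infty$. From $\mathcal{E}(t)\to 0$ one extracts: $\sqrt{\varrho^m}(v^m - v^\infty)\to 0$ in $L^{2,\infty}$, hence (using the lower density bound from (C) on the support where $\varrho^\infty>0$, plus a Poincaré/Korn argument on the dissipation term for $v^m - v^\infty$ in $L^2 W^{1,2}_0$) $v^m\to v^\infty$ in $L^1(Q_{\bar\tau})$; and the Bregman-distance part of $\mathcal{E}$ controls $\|\rho^m - \rho^\infty\|_{L^1}$ via uniform convexity estimates for $f^m$ on sublevel sets (using (A1)--(A2) and \eqref{FEGROWTH}), giving $\rho^m\to\rho^\infty$ in $L^1(Q_{\bar\tau};\mathbb{R}^N)$. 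Finally, for the pressure convergence when $\calv\not\parallel 1^N$: $\{p^m\}$ is bounded in $L^\infty$ by \eqref{TurevesII}, so up to subsequence $p^m \rightharpoonup^* \bar p$; passing to the limit in the momentum equation \eqref{momentum} tested against smooth divergence-containing fields identifies $\nabla \bar p = \nabla p^\infty$ in distributions, hence $\bar p = p^\infty + \zeta(t)$ for some $\zeta\in L^\infty(0,\bar\tau)$; the normalisation \eqref{CONVEN} and the convergence $\rho^m\to\rho^\infty$ pin down the mean value up to the bounded sequence $\bar\zeta^m := (p^\infty - p^m)_M$-type correction, and a compactness/Aubin--Lions argument upgrades the weak-$*$ convergence of $p^m + \bar\zeta^m$ to strong $L^1(Q_{\bar\tau})$ convergence (using that the oscillation defect of $p^m$ is controlled by $\|\rho^m-\rho^\infty\|_{L^1}$ and the $C^2$-closeness of the EOS). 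The delicate point throughout is keeping every pressure-related term either $o(1)$ uniformly (via \eqref{TurevesII}+\eqref{CONVER1}) or absorbable into $\mathcal{E}$ plus dissipation; this is precisely why the \emph{a priori} bound \eqref{TurevesII} is imposed and why the theorem is labelled heuristic.
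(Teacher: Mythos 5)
Your relative-energy skeleton (Bregman functional, Gronwall, $\mathcal{E}^m(0)\to0$ from (a)--(c), Korn plus dissipation for $v^m\to v^\infty$, uniform convexity of $f^m$ on the pressure range $[p_1,p_2]$ for $\rho^m\to\rho^\infty$) is essentially the paper's argument. One caution on the remainder: the paper never needs $p^m-p^\infty$ to be small or absorbable. After integrating by parts with the continuity equations, the only pressure-carrying terms are $(\partial_t\rho^\infty+\divv(\rho^\infty v^\infty))\cdot(g^m(p^m)-\calv\,p^m)$ and $(\rho^m\cdot\calv-1)(\partial_t p^\infty+v^\infty\cdot\nabla p^\infty)$ plus a boundary-in-time term, and both $g^m(p^m)-\calv\,p^m$ and $\rho^m\cdot\calv-1$ tend to zero \emph{uniformly} by \eqref{TurevesII} and \eqref{CONVER1}. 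Your sketch, which splits $p^m-p^\infty$ via a (degenerate) limit pressure map or Gibbs--Duhem and hopes to absorb the difference into $\mathcal{E}$, is circular as written, since no smallness of $p^m-p^\infty$ is available before the theorem is proved; you should reorganize the inequality as above.

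The genuine gap is the strong $L^1$ convergence of the pressure. Your route --- $L^\infty$ bound from \eqref{TurevesII}, weak-$*$ limit, identification of $\nabla\bar p=\nabla p^\infty$ through the momentum balance, then an ``Aubin--Lions / oscillation defect'' upgrade using $\|\rho^m-\rho^\infty\|_{L^1}$ and the $C^2$-closeness of the equations of state --- does not close. The maps $\hat p^m$ degenerate as $m\to\infty$: the equation of state only yields $|\rho^m\cdot\calv-1|\le\varrho^m\sup_{[p_1,p_2]}|\partial_pg^m-\calv|$, so in the limit wildly oscillating pressures in $[p_1,p_2]$ are compatible with densities arbitrarily close to $\rho^\infty$; nearness of $\rho^m$ to $\rho^\infty$ does \emph{not} control pressure oscillations (this is exactly the incompressible degeneracy), and neither a time-derivative nor a spatial-gradient bound on $p^m$ is available for Aubin--Lions. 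The momentum equation alone gives only weak convergence, which is precisely what the paper's Remark after Theorem \ref{heuriheura} says happens when $\calv\parallel 1^N$. The missing ingredient is the diffusion dissipation: the relative energy inequality also forces $\int_Q M(\rho^m)\nabla(\mu^m-\mu^\infty):\nabla(\mu^m-\mu^\infty)\to0$, hence $\nabla\mathcal{P}(\mu^m-\mu^\infty)\to0$ in $L^2$. Choosing $\eta\perp 1^N$ with $\eta\cdot\calv=1$ (possible exactly because $\calv\not\parallel 1^N$, and consistent with \eqref{CONVEN}), one has $p^m=\eta\cdot\mu^m-\eta\cdot\big((RT/M)\ln\hat x(\rho^m)+h^m\big)$ with $h^m:=g^m(p^m)-\calv\,p^m\to0$ uniformly; the mean-free part of $\eta\cdot\mu^m$ converges strongly in $L^2(0,\bar\tau;W^{1,2}(\Omega))$ by Poincar\'e and the dissipation bound, the logarithmic term converges by the strong convergence and positivity of the densities, and the mean value is shifted into $\bar\zeta^m:=-(\eta\cdot\mu^m)_M$. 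Without invoking this gradient information on the entropic combination of chemical potentials, your argument can only deliver weak-$*$ convergence of the pressure, not the claimed strong $L^1$ convergence.
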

\begin{rem}
If $\calv$ and $1^N$ are parallel, then the weak convergence $p^m \rightharpoonup p^{\infty}$ in $L^2(Q_{\bar{\tau}})$ is a consequence of \eqref{momentum} and the strong convergence of $\{\rho^m\}$ and $\{v^m\}$, but we are not able to show a strong convergence result.
\end{rem}

At second, in Theorem \ref{rigolo}, we state an entirely rigorous convergence statement valid for the rescaled problems ($\overline{\text{ IBVP}}^m$) under the convergence \eqref{CONVER2} of the free energy functions.
In order to obtain pressure bounds we have to reinforce the assumption (B3) as follows:
\begin{labeling}{(C44)}
\item[(B3$^{\prime}$)] The function $\lambda_0$ in (B3) is bounded away from zero. In other words, there is $\lambda_0 > 0$ such that $\sum_{i,j=1}^N \bar{M}_{ij}(\rho) \, \xi_i \, \xi_j \geq \lambda_0 \, |\mathcal{P} \xi|^2$ for all $\rho \in \overline{\mathbb{R}^N_{+}}$, $\xi \in \mathbb{R}^N$.
\end{labeling}
As shown in \cite{dredrugagu20} (see Appendix, Th.\ \ref{EXIWEAK}), (B3$^\prime$) allows to construct weak solutions in the natural class \eqref{NC}, \eqref{NC+}. For this result, in addition to (A1)-(A6), we adopt a simplifying technical assumption concerning the functions $\bar{g}_1, \ldots, \bar{g}_N$:
\begin{labeling}{(C44)}
\item[(A$^{\prime}$)] There is a permutation $\{k_1, \ldots,k_N\}$ of $\{1,\ldots,N\}$ such that $$\bar{g}^{\prime}_{k_1}(s) < \ldots < \bar{g}^{\prime}_{k_N}(s)\quad \text{ for all } \quad  s > 0 \, ,$$
\end{labeling}
which means that the mass densities of the species are strictly ordered, with the ordering independent on pressure. Since $\bar{\calv}_i = \bar{g}^{\prime}_i(1)$, (A$^{\prime}$) implies in particular that the species under reference conditions have all different specific volumes. This is the interesting case in practice.
\begin{theo}\label{rigolo}
For the free energy functions $\bar{g}$, we assume ${\rm (A)}$ with $\gamma \geq 9/5$ and ${\rm (A^{\prime})}$. Assume ${\rm (B1)}$, ${\rm (B2)}$, and ${\rm (B3^{\prime})}$ for the matrix $\{M_{ij}\}$, and ${\rm (C)}$ for the solution $(\rho^{\infty},p^{\infty},v^{\infty})$ of the incompressible model. 
we let $\calw_0^m := \frac{1}{|\Omega|} \, \int_{\Omega} \varrho^{0,m}(x) \, dx$, and we assume that $\bar{\varrho}_{\min} < \inf_m \calw_0^m$ and $\sup_m \calw_0^m < \bar{\varrho}_{\max}$. 
Suppose that the initial data satisfy \eqref{E0bounded}, \eqref{E0converges} as in Theorem \eqref{heuriheura} and moreover
\begin{align*}
\limsup_{m \rightarrow \infty} m \, \|\widehat{p}(\bar{\rho}^{0,m}) - 1\|_{L^2(\Omega^{\rm R})}^2 = 0 \, ,
\end{align*}
where $\widehat{p}$ is the function defined in the normalised equation of state \eqref{EOSnormalised2}.
Suppose that, for every $m > 0$, the vector $(\bar{\rho}^{m}, \, \bar{v}^{m})$ is a weak solution with the regularity \eqref{NC}, \eqref{NC+} to the rescaled compressible problem \emph{($\overline{\text{ IBVP}}^m$)}. 
Then, $\bar{\rho}^{m} \rightarrow \rho^{\infty}$ in $  L^1(Q^{\rm R}; \, \mathbb{R}^N)$, and $\bar{v}^{m} \rightarrow v^{\infty}$ in $L^1(Q^{\rm R}; \, \mathbb{R}^3)$. Moreover, there is a sequence $\{\bar{\zeta}^m\}$ of functions of time such that $\sup_m \|\bar{\zeta}^m\|_{L^1(0,\bar{\tau})} < +\infty$ and such that $\bar{p}^m_{\Delta} + \bar{\zeta}^m \longrightarrow p^{\infty}$ in $L^1(Q^{\rm R})$.
\end{theo}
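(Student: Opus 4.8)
The plan is to run the relative-energy method of the book \cite{feinovbook}, adapted to the multicomponent setting, using the smooth incompressible solution $(\rho^{\infty},p^{\infty},v^{\infty})$ as the test profile. Let me first introduce the relative energy functional
\begin{align*}
 \mathcal{E}(\bar{\rho}^m,\bar{v}^m \mid \rho^{\infty},v^{\infty})(t) := \int_{\Omega^{\rm R}} \Big( \frac{\bar{\varrho}^m}{2}\, |\bar{v}^m - v^{\infty}|^2 + \bar{f}^m(\bar{\rho}^m) - D\bar{f}^m(\rho^{\infty})\cdot(\bar{\rho}^m - \rho^{\infty}) - \bar{f}^m(\rho^{\infty})\Big)\, dx\, ,
\end{align*}
where $\bar{f}^m$ is the rescaled free energy from \eqref{FEm}. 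The first step is to derive, from the weak formulation of \eqref{massrescfin}, \eqref{momentumrescfin} and the energy inequality \eqref{DISSIP}, a \emph{relative energy inequality}: $\mathcal{E}(t)$ plus the dissipation terms (the viscous term $\int \mathbb{S}(\nabla(\bar{v}^m - v^{\infty})):\nabla(\bar{v}^m-v^{\infty})$ and the diffusion dissipation controlling $|\mathcal{P}\nabla\bar{\mu}^m|^2$ via (B3$'$)) is bounded by $\mathcal{E}(0)$ plus a remainder $\int_0^t \mathcal{R}(\tau)\,d\tau$. The remainder collects the terms measuring how far $(\rho^{\infty},v^{\infty})$ is from being an exact solution of the \emph{compressible} system at level $m$: convective defects, the viscous-stress cross terms, the body-force terms, and—crucially—the pressure terms $\int (\bar{p}_\Delta^m - \hat{\pi}^m(\rho^{\infty}))\,\divv v^{\infty}$ and the analogous chemical-potential terms arising because $\divv v^{\infty}\neq 0$ in the mixture. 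Because the $\bar{g}_i^m$ are concave with $\bar{g}_i^m \to \bar{\calv}_i\,\pi$ in $C^2_{\mathrm{loc}}$ by \eqref{CONVER2}, one shows $\bar{f}^m(\rho) - \bar{f}^{\infty}(\rho) \to 0$ locally uniformly and that the "potential-energy" part of $\mathcal{E}$ degenerates, as $m\to\infty$, to (a multiple of) $m\,\|\widehat{p}(\bar{\rho}^m)-1\|_{L^2}^2$ plus $\bar{f}^{\infty}$-relative-entropy; the hypothesis on the initial data makes $\mathcal{E}(0)\to 0$.

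The second—and technically heaviest—step is to close the Gronwall argument, i.e.\ to bound $\int_0^t \mathcal{R}$ by $C\int_0^t \mathcal{E} + o(1)$. The standard terms (convective, viscous cross, gravity) are handled exactly as in \cite{feinovbook} using (C) and the regularity $v^{\infty}\in W^{2,1}_{\calp}$ with $\calp\geq 6$, splitting the density into the regions $\{\bar\varrho^m$ near $\varrho^{\rm R}\}$ and its complement and using the $\gamma\geq 9/5$ growth from \eqref{FEGROWTH} for the far region. The genuinely multicomponent difficulty is the pressure remainder $\int (\bar{p}^m_\Delta - \hat\pi^m(\rho^\infty))\,\divv v^\infty\,dx$: unlike the single-component case this does not vanish and is not a priori small, so one must show it is controlled by $\mathcal{E}$ plus a term tending to zero. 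Here one uses the Gibbs–Duhem structure \eqref{GIBBSDUHEM}/\eqref{FEm}: write $\bar{p}^m_\Delta = -\bar{f}^m(\bar\rho^m) + \bar\rho^m\cdot D\bar{f}^m(\bar\rho^m)$ and split it as a "relative pressure" (quadratic in $\bar\rho^m-\rho^\infty$, absorbed by $\mathcal{E}$) plus $\hat\pi^m(\rho^\infty)$-type terms which, after an integration by parts transferring $\divv v^\infty$ onto the equation satisfied by $\rho^\infty$, combine with the chemical-potential/diffusion remainders and cancel to leading order against the incompressible momentum equation \eqref{momentum} with the constraint \eqref{CONSTRAINT}. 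I expect this cancellation, together with the $L^1$-pressure estimate obtained from (B3$'$)—which gives $\mathcal{P}\nabla\bar\mu^m \in L^2$ uniformly and hence, via \eqref{CHEMPOTincom}-type relations and the ordering hypothesis (A$'$) ensuring $\calv$ not parallel to $1^N$, a uniform bound on $\nabla \bar{p}^m_\Delta$ in some $L^q$—to be the crux of the proof; this is exactly the "control gradients of the entropic variables, convergence of the pressure in $L^1$" advertised in the abstract.

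Finally, the Gronwall lemma applied to the relative energy inequality yields $\sup_{0\leq t\leq \bar\tau}\mathcal{E}(t) \to 0$ as $m\to\infty$. From the coercivity of $\mathcal{E}$—convexity of $\bar{f}^m$ (hence of the relative part) and the kinetic term—this gives $\bar\varrho^m|\bar v^m - v^\infty|^2 \to 0$ and $\bar f^m$-relative-entropy $\to 0$ in $L^\infty(0,\bar\tau;L^1)$. Combining with the uniform lower bound $\bar\varrho^m \geq \bar\varrho_{\min}$ available on the "good" set (and an argument showing the "bad" set has vanishing measure, using $\gamma\geq 9/5$), one upgrades this to $\bar v^m \to v^\infty$ and $\bar\rho^m \to \rho^\infty$ in $L^1(Q^{\rm R})$. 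For the pressure, the uniform gradient bound plus the mean-value normalisation \eqref{CONVEN} let one extract, via a Poincaré/Nečas-type argument, that $\bar p^m_\Delta - (\bar p^m_\Delta)_{\Omega^{\rm R}}$ converges; setting $\bar\zeta^m := (p^\infty)_{\Omega^{\rm R}} - (\bar p^m_\Delta)_{\Omega^{\rm R}}$ (a function of time, uniformly bounded in $L^1(0,\bar\tau)$ because the chemical-potential side of \eqref{CONVEN} is controlled by the convergence of $\rho^m$) yields $\bar p^m_\Delta + \bar\zeta^m \to p^\infty$ in $L^1(Q^{\rm R})$, completing the proof.
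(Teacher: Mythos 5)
Your overall framework (relative energy with the smooth incompressible solution as test state, coercivity, Gronwall, $\mathcal{E}(0)\to 0$ from the initial-data hypothesis) coincides with the paper's, but the step you yourself identify as the crux — control and convergence of the pressure — contains a genuine gap. You claim that (B3$^{\prime}$) and the dissipation give ``a uniform bound on $\nabla \bar p^m_{\Delta}$ in some $L^q$'', from which a Poincar\'e/Ne\v{c}as argument would yield the $L^1$ convergence. This is unfounded: the entropy dissipation only controls $\nabla \mathcal{P}\mu^m$, i.e.\ the gradients of the entropic variables $q^m=\eta\cdot\mu^m$ with $\eta\perp 1^N$. Taking $\eta$ with $\eta\cdot\bar{\calv}=1$ one gets $\eta\cdot\mu^m=\bar p^m_{\Delta}+\eta\cdot\big(\bar g^m(\bar p^m_{\Delta})-\bar{\calv}\,\bar p^m_{\Delta}\big)+\eta\cdot\frac{1}{\bar M}\ln\hat x(\rho^m)$, so the pressure gradient is inseparably mixed with logarithmic terms that degenerate wherever a partial density vanishes (which cannot be excluded for weak solutions), and with the nonlinear defect $\bar g^m(\pi)-\bar{\calv}\pi$; no gradient bound on $\bar p^m_{\Delta}$ alone follows, and the paper never establishes one. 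The actual mechanism is different: pointwise bounds on $P_m(\varrho,q)$ and on $\tilde P_m=P_m-q_{N-1}$ in terms of $q^{\prime}$, $|q|^2/m$ and $\ln(1/\min\{\bar\varrho_{\max}-\varrho,\varrho-\bar\varrho_{\min}\})$ (Lemma \ref{pressandg}, which crucially uses (A$^{\prime}$) and the basis choice $\xi^{N-1}=\bar{\calv}$), a Bogovskii/duality argument testing the momentum equation with a constructed field $\eta^m$ solving $\divv\eta^m=L(\varrho^m)-(L(\varrho^m))_M$ to obtain a uniform $L^1(Q)$ bound on $\tilde\pi^m=\pi^m-q^m_{N-1}$ (Lemma \ref{L1bound}), the convergence $\|q^m\|_{L^2}/\sqrt m\to 0$ and $\bar g^m(\pi^m)-\bar{\calv}\pi^m\to 0$ in $L^1$ on moderate-density sets (Corollary \ref{l1boundandtozero}, which is where the hypothesis $m\|\widehat p(\bar\rho^{0,m})-1\|_{L^2}^2\to 0$ enters), and finally dominated convergence for $\pi^m_*=\tilde\pi^m+q^m_{N-1}-(q^m_{N-1})_M$ with $\bar\zeta^m=-(q^m_{N-1})_M$. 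Your proposed ``cancellation to leading order against the incompressible momentum equation'' does not substitute for these estimates.

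A second, related gap: for weak solutions the relative energy inequality itself cannot be written as in the smooth/positive case, since $D\bar k(\rho^m)$ and the chemical potentials are undefined where densities vanish. The paper's Proposition \ref{calculFO} therefore carries an additional error functional supported on the sets $\Omega^{\rm c}_{a_0,b_0}(t)$ and $\Omega_{a_0,b_0}(t)\setminus B_{s_0,+}(t)$, and closing the Gronwall argument requires precisely the weighted pressure estimate on the bad density sets (Proposition \ref{pressla}) and the control of the logarithmic terms there (Lemma \ref{better}), both absorbed at the price of a small multiple of the dissipation. Your plan of ``splitting the density into regions and using the $\gamma\ge 9/5$ growth'' handles the convective remainders but does not address these pressure and chemical-potential error terms, which are the genuinely multicomponent obstruction the theorem is about.
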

\begin{rem}\label{lesprecisions}
\begin{enumerate}[(i)]
\item Weak solutions to \emph{($\overline{\text{ IBVP}}^m$)} exist globally (see Th.\ \ref{EXIWEAK}). Moreover under the conditions of Theorem \ref{EXISSTRONGINFTY}, the problem \emph{(IBVP$^{\infty}$)} possesses a unique classical local solution on an interval $]0,t^*[$ so that assumption $\rm (C)$ is valid on this interval;
\item In the case of Maxwell-Stefan diffusion (cf.\ \eqref{maxstefbase}, \eqref{maxstefreg}), the condition $\rm (B3^{\prime})$ is known to be violated. In this case, we cannot expect the class \eqref{NC+} and must find alternative ways of making sense of the diffusion flux. A result in this direction is to find in the paper \cite{druetmaxstef}. This case is yet more technical and cannot be treated with available methods, as a uniform bound for the pressure is missing even in $L^1$;
\item In both theorems, the functions $\bar{\zeta}^m(t)$ are needed in order to match the correct mean--value of the limit pressure in accordance with the condition \eqref{CONVEN};
\item \label{cetteremarquela} It is fundamental for this result that the phenomenological coefficients $M_{ij}$ are independent on pressure, that is, they depend only on $\rho_1, \ldots,\rho_N$ with the smoothness in $\rm (B1)$. We expect being able to treat the case $M_{ij} = M_{ij}^m(p, \, x_1, \ldots,x_N)$ only if, among other restrictions, $\partial_pM^m_{ij}$ has compact support and if $\partial_p M^m_{ij}$ converges to zero uniformly for $m \rightarrow \infty$.
\end{enumerate}
\end{rem}
Finally, let us remark that it is also possible in this context to remove the assumption (C) entirely, and prove the convergence of weak solutions to weak solutions. However, the pressure-field of weak solutions to the incompressible model is affected by a defect measure as explained in the Appendix, Section \ref{ShortSurvey}.
\begin{theo}\label{weaktoweak}
Assume ${\rm (A)}$ with $\gamma \geq 9/5$, ${\rm (A^{\prime})}$ and ${\rm (B1)}$, ${\rm (B2)}$, and ${\rm (B3^{\prime})}$.
Suppose that the initial data satisfy \eqref{E0bounded}.
Suppose that, for every $m > 0$, the vector $(\bar{\rho}^{m}, \, \bar{v}^{m})$ is a weak solution to the rescaled compressible problem \emph{($\overline{\text{ IBVP}}^m$)}. 
Then, there is a weak solution $(\rho^{\infty}, \, p^{\infty}+d\kappa, \, v^{\infty})$ $\text{with defect measure}$ to \emph{( IBVP$^{\infty}$)}, and $\bar{\rho}^{m_k} \rightarrow \rho^{\infty}$ in $  L^1(Q^{\rm R}; \, \mathbb{R}^N)$, and $\bar{v}^{m_k} \rightarrow v^{\infty}$ in $L^1(Q^{\rm R}; \, \mathbb{R}^3)$ for a subsequence $\{m_k\}_{k\in\mathbb{N}}$. 
Moreover, there is a sequence $\{\bar{\zeta}^m\}$ of functions of time such that $\sup_m \|\bar{\zeta}^m\|_{L^1(0,\bar{\tau})} < +\infty$ and such that $\bar{p}_{\Delta}^m + \bar{\zeta}^m \overset{{\rm ACP}}{\longrightarrow} p^{\infty}$. 
\end{theo}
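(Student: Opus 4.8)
The plan is to run the weak-sequential-stability programme of \cite{feinovbook} on the rescaled compressible system \eqref{massrescfin}--\eqref{momentumrescfin}, with two new features relative to the single-component theory: the dissipation hypothesis ${\rm (B3^{\prime})}$ must replace the smooth comparison solution of ${\rm (C)}$ (absent here) by genuine a priori bounds on $\nabla\mathcal{P}\bar{\mu}^m$, and the rescaled pressure can be controlled only in $L^1$, which is exactly what produces a defect measure. By Theorem \ref{EXIWEAK} we may take $(\bar{\rho}^m,\bar{v}^m)$ in the class \eqref{NC}, \eqref{NC+}. First I would collect the uniform estimates: writing \eqref{DISSIP} for the rescaled problem, using \eqref{E0bounded} together with $\zeta^{\rm Diff}\ge 0$, gives $\bar{v}^m$ bounded in $L^2(0,\bar{\tau};W^{1,2}_0(\Omega^{\rm R};\mathbb{R}^3))$, $\sqrt{\bar{\varrho}^m}\,\bar{v}^m$ bounded in $L^{2,\infty}(Q^{\rm R};\mathbb{R}^3)$, and, by \eqref{FEGROWTH}, $\bar{\rho}^m$ bounded in $L^{\gamma,\infty}(Q^{\rm R};\mathbb{R}^N)$; exploiting ${\rm (B3^{\prime})}$ as in \eqref{NC+} gives in addition a bound for $\nabla\mathcal{P}\bar{\mu}^m=\nabla\mathcal{P}\hat{\mu}(\bar{\rho}^m)$ in $L^2(Q^{\rm R};\mathbb{R}^N)$ and for the diffusion dissipation in $L^1(Q^{\rm R})$. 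Using the structure of $\bar{f}^m$ from \eqref{FEm} and the coercivity behind the Gamma-convergence of \cite{bothedreyerdruet} (Proposition \ref{UNIFF2}), in the form $\bar{f}^m(\bar{\rho})\ge \bar{k}(\bar{\rho})+c\,m\,\dist\!\big(\bar{\rho},\{\textstyle\sum_i\bar{\calv}_i\rho_i=1\}\big)^2-C$ with $c,C$ independent of $m$, the energy bound then forces $\dist\!\big(\bar{\rho}^m,\{\sum_i\bar{\calv}_i\rho_i=1\}\big)\le C\,m^{-1/2}$ in $L^{2,\infty}(Q^{\rm R})$, equivalently $\widehat{p}(\bar{\rho}^m)\to 1$; this is what will enforce the incompressibility constraint \eqref{CONSTRAINT} in the limit.

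The heart of the argument is the uniform pressure estimate, and I expect it, rather than the ensuing compactness, to be the genuine obstacle. Here I would test the momentum equation \eqref{momentumrescfin} against the Bogovskii potential of the mean-free part of $\bar{\varrho}^m$; using the bounds just obtained --- this is where $\gamma\ge 9/5$ is spent, to keep the convective term $\bar{\varrho}^m\bar{v}^m\otimes\bar{v}^m$ and the time-derivative $\partial_{\bar{t}}(\bar{\varrho}^m\bar{v}^m)$ under control --- one gets a uniform bound for $\int_{Q^{\rm R}}\bar{p}^m_{\Delta}\,\bar{\varrho}^m\,dx\,d\bar{t}$, and then, via the normalised equation of state \eqref{EOSnormalised} and the strict ordering ${\rm (A^{\prime})}$ of the $(\bar{g}^m_i)^{\prime}$ on the sets where $\bar{p}^m_{\Delta}$ stays bounded, a uniform bound for $\bar{p}^m_{\Delta}-\bar{\zeta}^m$ in $L^1(Q^{\rm R})$, where $\bar{\zeta}^m(t)$ is the spatial mean fixed so as to be compatible with \eqref{CONVEN}. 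Only $L^1$ survives: on the set where $\bar{p}^m_{\Delta}$ is large the constitutive map $\pi\mapsto\big((\bar{g}^m_i)^{\prime}(\pi)\big)_{i}$ degenerates --- all its entries tend to $0$ by ${\rm (A3)}$ --- so the effective-viscous-flux improvement to $L^{\gamma+\theta}$ that holds for each fixed $m$ cannot be made uniform in $m$, and this loss is precisely the origin of the pressure defect measure.

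For compactness I would argue as follows. From \eqref{mass} and the flux bound underlying \eqref{NC}, \eqref{NC+}, the $\partial_{\bar{t}}\bar{\rho}^m_i$ are bounded in some $L^r(0,\bar{\tau};W^{-1,r}(\Omega^{\rm R}))$; combined with the $L^2$ bound on $\nabla\mathcal{P}\bar{\mu}^m$ and the ordering ${\rm (A^{\prime})}$ --- which lets one recover the gradients of the composition variables from $\nabla\mathcal{P}\bar{\mu}^m$ away from the degenerate high-pressure regime --- an Aubin--Lions argument yields a subsequence with $\bar{\rho}^{m_k}\to\rho^{\infty}$ in $L^1(Q^{\rm R};\mathbb{R}^N)$ and a.e., the limit lying on the affine set $\{\sum_i\bar{\calv}_i\rho_i=1\}$ by the first step. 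Unlike the single-component low-Mach limit the density genuinely varies in the limit, but the diffusion bound supplies the compactness of the composition directly, so no separate oscillation-defect analysis of the density appears to be needed. The momentum equation bounds $\partial_{\bar{t}}(\bar{\varrho}^m\bar{v}^m)$ in a negative-order space, and since, by the first step, $\bar{\varrho}^m$ is asymptotically bounded away from $0$ and from $\infty$ near the constraint set, the standard compactness argument for compressible Navier--Stokes --- Aubin--Lions for $\bar{\varrho}^m\bar{v}^m$ followed by division by $\bar{\varrho}^m$ --- gives $\bar{v}^{m_k}\to v^{\infty}$ in $L^p(Q^{\rm R};\mathbb{R}^3)$ for some $p>1$, and a.e.

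It then remains to pass to the limit. Using the a.e.\ convergence of $\bar{\rho}^{m_k}$, the continuity ${\rm (B1)}$ of the mobilities, the identity $\sum_j\bar{M}_{ij}=0$ --- so that the flux $-\sum_j\bar{M}_{ij}(\bar{\rho}^m)\nabla\bar{\mu}^m_j=-\sum_j\bar{M}_{ij}(\bar{\rho}^m)\nabla\mathcal{P}\bar{\mu}^m_j$ involves only the weakly convergent gradient --- and the uniform convergence $(\bar{g}^m_i)^{\prime}\to\bar{\calv}_i$ from \eqref{CONVER2}, the mass equations \eqref{massrescfin} pass to the limit with fluxes \eqref{DIFFUSFLUXimcomp} and chemical potentials \eqref{CHEMPOTincom}, the pressure in those relations being the absolutely continuous part $p^{\infty}$ of the weak-$*$ limit of $\bar{p}^m_{\Delta}-\bar{\zeta}^m$ in $\mathcal{M}(\overline{Q^{\rm R}})$, normalised through \eqref{CONVEN}. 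In the momentum equation every term converges by the first and third steps except $\nabla\bar{p}^m_{\Delta}$; since $\bar{p}^{m_k}_{\Delta}-\bar{\zeta}^{m_k}$ is bounded in $L^1$, along a further subsequence it converges weakly-$*$ to $p^{\infty}\,dx\,d\bar{t}+d\kappa$ with a concentrated defect measure $\kappa$, so the limit momentum balance holds with pressure $p^{\infty}+d\kappa$ in the sense of Section \ref{ShortSurvey}; this is also the asserted convergence $\bar{p}^m_{\Delta}+\bar{\zeta}^m\overset{\rm ACP}{\longrightarrow}p^{\infty}$. The limiting initial datum is the $L^{\gamma}$-weak limit of $\{\rho^{0,m}\}$, which by \eqref{E0bounded} and the coercivity of the first step satisfies \eqref{CONSTRAINT}, and the energy inequality for the limit follows from the Gamma-convergence of $\bar{f}^m$ (\cite{bothedreyerdruet}) together with the weak lower semicontinuity of the dissipation terms.
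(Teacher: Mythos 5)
Your global strategy (energy bounds plus ${\rm (B3^{\prime})}$, a Bogovskii-type pressure estimate, compactness, limit passage with a defect pressure) is the right one, and it is in fact what the paper sketches, deferring the final limit passage to Sections 8--11 of \cite{druetmixtureincompweak}. But two of your central steps do not hold as stated. First, the coercivity $\bar{f}^m(\bar{\rho})\ge \bar{k}(\bar{\rho})+c\,m\,\dist(\bar{\rho},\{\bar{\calv}\cdot\rho=1\})^2-C$ with $m$-independent constants is false under the standing assumption $\gamma\ge 9/5$: for large $|\bar{\rho}|$ one has $\bar{f}^m-\bar{k}\sim m\,\bar{\varrho}^{\gamma}$ while $m\,\dist^2\sim m\,\bar{\varrho}^{2}$, and since $\gamma<2$ no fixed $C$ absorbs the difference; accordingly no $m^{-1/2}$ rate in $L^{2,\infty}$ is available. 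The fact you actually need, $\|\bar{\rho}^m\cdot\bar{\calv}-1\|_{L^{1,\infty}}\to0$, is true but is obtained by the measure-theoretic splitting of Lemma \ref{Bogovencorelui} (sets where $\pi^m<-m\bar{\theta}$, where $|\widehat{p}(\bar{\rho}^m)-1|>a$), not from a quadratic lower bound.

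The more serious gap is the pressure estimate, which you yourself identify as the crux. Testing \eqref{momentumrescfin} with the Bogovskii potential of $\bar{\varrho}^m-(\bar{\varrho}^m)_M$ controls only $\int \bar{p}^m_{\Delta}\,(\bar{\varrho}^m-(\bar{\varrho}^m)_M)$, not $\int \bar{p}^m_{\Delta}\,\bar{\varrho}^m$ (the spatial mean of the pressure is precisely the uncontrolled quantity), and converting such a weighted bound into an $L^1(Q)$ bound on $\bar{p}^m_{\Delta}-\bar{\zeta}^m(t)$ is not a matter of reading the equation of state ``where the pressure stays bounded'': $\bar{p}^m_{\Delta}=P_m(\bar{\varrho}^m,\bar{q}^m)$ depends on the composition and is of size $m$, of either sign, off the constraint set. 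The paper's mechanism uses (i) monotonicity of $P_m$ in $\varrho$; (ii) the uniform bounds of Lemma \ref{pressandg}, which control not $P_m$ but $\tilde{P}_m=P_m-q_{N-1}$ at intermediate densities by logarithmic terms, $|q^{\prime}|$ and $|q|^2/m$ --- which is why the $L^1$ bound (Lemma \ref{L1bound}) is proved for $\tilde{\pi}^m=\pi^m-q^m_{N-1}$ and the functions $\bar{\zeta}^m$ arise from $(q^m_{N-1})_M$ via Lemma \ref{uamamoto}, not as ``the spatial pressure mean fixed by \eqref{CONVEN}''; (iii) a truncation $L(\varrho^m)$, the renormalised continuity equation and the hypothesis $\bar{\varrho}_{\min}<\inf_m\calw^m_0\le\sup_m\calw^m_0<\bar{\varrho}_{\max}$ (assumed in Theorem \ref{rigolo} and needed here as well) to keep $(L(\varrho^m))_M(t)$ strictly inside $(\bar{\varrho}_{\min},\bar{\varrho}_{\max})$, so the logarithmic bounds do not blow up; and (iv) Lemma \ref{Meslemma} to exploit $\|\bar{\rho}^m\cdot\bar{\calv}-1\|_{L^{1,\infty}}\to0$. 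The special role of the entropic coordinate $q_{N-1}$ (the $\bar{\calv}$-component of $\mu$) is entirely absent from your plan, and without it the claimed $L^1$ bound does not follow. Finally, the density compactness is not a direct Aubin--Lions application to $\bar{\rho}^m$: only $\nabla\mathcal{P}\bar{\mu}^m$ (equivalently $\nabla\bar{q}^m$) is controlled, not $\nabla\bar{\rho}^m$, and vacuum is not excluded pointwise, so one needs the product/compensated-compactness arguments of \cite{druetmixtureincompweak} (time regularity of $\bar{\rho}^m$ against space regularity of nonlinear functions of $\bar{q}^m$, combined with the constraint convergence), which is exactly the part of the proof the paper delegates.
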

The remainder of the paper is devoted to proving these theorems. We begin with the physical limit of Theorem \ref{heuriheura} which, being partly formal, is simpler. For the sake of clarity, we will moreover restrict in the main text to proving the theorem under the additional asssumption that the approximate solutions have uniformly positive densities as in \eqref{UNIFORMPOS}. We show in the Appendix, Section \ref{SStormII}, how to remove the positivity assumption, but this is associated with rather technical discussions.
In the Section \ref{RIGOR}, we show the complete result of Theorem \ref{rigolo}.

\addtocontents{toc}{\protect\setcounter{tocdepth}{2}}
\section{Convergence for solutions with physically consistent pressure-field}\label{heuri}

Let $f^m = f^m(T,\cdot)$ be the free energy function \eqref{FEhere} with $g = g^m$. If the latter satisfies the assumptions (A), it is shown in Lemma \ref{UNIFF1} that $f^m$ is a co--finite function of Legendre type on $\mathbb{R}^N_+$ satisfying the condition \eqref{FEGROWTH}. With $\hat{p}^m(\rho)$ defined as the unique root to the equation \eqref{EOS}, and $k(\rho)$ being the abbreviation \eqref{KFUKK}, the following identities are valid:
\begin{align}
\label{gradient}\hat{\mu}_i^m(\rho) = & \partial_{\rho_i} f^m(\rho) = g_i^m(\hat{p}^m(\rho)) + \frac{RT}{M_i} \, \ln \hat{x}_i(\rho) = g_i^m(\hat{p}(\rho)) + \partial_{\rho_i}k(\rho) \,,\\
\label{Hessian} \partial^2_{\rho_i,\rho_j}f^m(\rho) = & - \frac{\partial_pg_i^m(\hat{p}^m(\rho)) \, \partial_p g_j^m(\hat{p}^m(\rho)) }{\sum_{k=1}^N \rho_k \, \partial^2_pg_k^m(\hat{p}^m(\rho))} + \frac{RT}{M_iM_j \, \hat{n}(\rho)} \, \left(\frac{\delta^i_j}{\hat{x}_i(\rho)} -1\right) \, ,
 \end{align}
 where we abbreviated $\hat{n}(\rho) = \sum_{i=1}^N (\rho_i/M_i)$. 
We assume that $(\rho, \, v) := (\rho_1, \ldots,\rho_N, \, v_1, \, v_2, \, v_3)$ is a weak solution vector with the regularity \eqref{NC} for (IBVP$^m$). We let $(r, \, u)$ with $r: \, \overline{Q_{\bar{\tau}}} \mapsto \mathbb{R}^N_+$ and $u: \, \overline{Q_{\bar{\tau}}} \mapsto \mathbb{R}^3$ be smooth vector fields. For $t \geq 0$ we define the relative energy functional
\begin{align*}
\Big(\mathcal{E}^m(\rho, \, v \, | \, r, \, u)\Big)(t) := \int_{\Omega} \Big(\frac{ \varrho}{2} \, |v-u|^2 + f^m(\rho) - f^m(r) - \hat{\mu}^m(r) \cdot (\rho - r)\Big) \, dx \, .
\end{align*}
In this formula, we use the abbreviation $\hat{\mu}^m(r) := Df^m(r) = \nabla_r f^m(r)$. 
The regularity of weak solutions guarantees that $\mathcal{E}^m(t)$ is finite for all $t$. An equivalent expression is
\begin{align}\label{in1}
 & \Big(\mathcal{E}^m(\rho, \, v \, | \, r, \, u)\Big)(t) \nonumber\\
 & =  \int_{\Omega} \Big( \varrho \, \frac{|v|^2}{2} + f^m(\rho)\Big) \, dx + \int_{\Omega}\Big( \varrho \, \frac{|u|^2}{2} - \varrho \, u\cdot v - f^m(r) - \hat{\mu}^m(r) \cdot (\rho-r) \Big) \, dx\, .
\end{align}
We moreover introduce the (relative) viscous dissipation functional via
\begin{align*}
 \mathcal{D}^{\rm Visc}(t) = & \Big(\mathcal{D}^{\rm visc}(v \, |  \, u)\Big)(t) := \int_{\Omega} \mathbb{S}(\nabla (v-u)) \, :\, \nabla (v-u) \, dx \,.
 \end{align*}
The section is divided into two parts. In the first subsection, we state the relative energy inequality, and in the second, we provide the stability estimate and the proof of Theorem \ref{heuriheura}.

\subsection{Relative energy inequality and stability estimate}

We here restrict to state the natural form of the energy inequality for positive weak solutions.
Meanwhile, relative energy/entropy inequalities are used extensively in the literature, so that the proof ideas are sufficiently well-known. We refer to \cite{feinovbook} for systematical developments. The proof is therefore to be found in the appendix, Section \ref{SStorm}, together with more refined versions of the inequality valid for non-necessarily positive weak solutions.
\begin{prop}\label{calculheuri}
Let $(\rho^m, \, v^m)$ be a weak solution to \emph{(IBVP$^m$)} satisfying \eqref{NC}, \eqref{UNIFORMPOS} and \eqref{NC+}. Let $(\rho^{\infty}, \, p^{\infty}, \, v^{\infty})$ satisfy \emph{(IBVP$^{\infty}$)} and $\rm (C)$. We define $\mu^{\infty} = p^{\infty} \, \calv + (RT/M)  \, \ln \hat{x}(\rho^{\infty})$ and $\mathcal{E}^m(t) := (\mathcal{E}^m(\rho^m, \, v^m \, | \, \rho^{\infty}, \, v^{\infty}))(t)$.
Then, for all $t \in ]0,\bar{\tau}[$,
\begin{align*}
& \mathcal{E}^m(t) + \int_{0}^t (\mathcal{D}^{\rm Visc}(v^m \, | \, v^{\infty}))(\tau) d\tau + \int_0^t\int_{\Omega}  M(\rho^m) \, \nabla \mathcal{P}(\mu^m-\mu^{\infty}) \, : \, \nabla \mathcal{P}(\mu^m-\mu^{\infty}) \, dxd\tau\\
\leq & \int_0^t\int_{\Omega} (\partial_t \rho^{\infty} + \divv (\rho^{\infty}\, v^{\infty})) \cdot ( g^m(p^m)- p^m \, \calv) - (\rho^m \cdot \calv - 1) \, (\partial_t p^{\infty} + v^{\infty} \cdot \nabla p^{\infty}) \, dxd\tau \\
& + 
\mathcal{E}^m(0) + \int_{\Omega} p^{\infty}(x,\cdot) \, (\rho^m(x,\cdot) \cdot \calv-1) \, dx \Big|_0^t +  \int_{0}^t \mathcal{R}^m(\tau) \, d\tau \, ,
\end{align*}
with a quadratic remainder term $\mathcal{R}^{m}(t) = \sum_{i=1}^4 \mathcal{R}^{m,i}(t)$ defined by
\begin{align*}
\mathcal{R}^{m,1}(t) := & \int_{\Omega} \Big((\varrho^m - \varrho^{\infty}) \, (\partial_t v^{\infty} +(v^m\cdot \nabla )v^{\infty}) + \varrho^{\infty} \, [(v^m-v^{\infty})\cdot \nabla ]v^{\infty}\Big) \cdot (v^{\infty}-v^m) \, dx\\
\mathcal{R}^{m,2}(t) := & \int_{\Omega} (v^{\infty}-v^m) \, (\rho^m - \rho^{\infty}) \, : \,  (\nabla \mu^{\infty} + 1^N \otimes b)\, dx \\
 \mathcal{R}^{m,3}(t) := & -\int_{\Omega} (M(\rho^m) - M(\rho^{\infty})) \nabla \mu^{\infty} \cdot \nabla (\mathcal{P}\mu^m-\mu^{\infty}) \, dx\\
\mathcal{R}^{m,4}(t) := & \int_{\Omega} (\partial_t \rho^{\infty}+\divv( \rho^{\infty} \, v^{\infty})) \cdot \big(Dk(\rho^m) - Dk(\rho^{\infty}) - D^2k(\rho^{\infty}) \, (\rho^m - \rho^{\infty}) \big)\, dx \, .
\end{align*}
\end{prop}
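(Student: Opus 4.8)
\textbf{Proof strategy for Proposition \ref{calculheuri}.}

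The plan is to derive the relative energy inequality by differentiating the functional $\mathcal{E}^m(t)$ along the weak solution $(\rho^m,v^m)$, using the smooth field $(\rho^\infty,v^\infty)$ as a test function, and then reorganising all terms so that the ``good'' terms (kinetic energy gap, viscous dissipation, diffusion dissipation) appear on the left and everything else is either a boundary term, a term vanishing because $(\rho^\infty,p^\infty,v^\infty)$ solves (IBVP$^\infty$), or a manifestly quadratic remainder. First I would start from the energy inequality \eqref{DISSIP} for the weak solution, which controls $\int_\Omega(\tfrac{\varrho^m}{2}|v^m|^2+f^m(\rho^m))\,dx$ plus the dissipation integrals. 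Then I would compute the evolution of the remaining pieces of $\mathcal{E}^m$ as written in \eqref{in1}, namely $\int_\Omega(\tfrac{\varrho^m}{2}|v^\infty|^2 - \varrho^m u\cdot v^\infty - f^m(\rho^\infty) - \hat\mu^m(\rho^\infty)\cdot(\rho^m-\rho^\infty))\,dx$. Since $(\rho^\infty,v^\infty)$ is smooth, each term here can be differentiated in time and the factors $\rho^m$, $\varrho^m v^m$ can be replaced using the weak formulations of \eqref{mass} and \eqref{momentum}, with $v^\infty$, $\hat\mu^m(\rho^\infty)$, $\tfrac12|v^\infty|^2$ etc.\ as admissible test functions (here the regularity in (C), in particular $v^\infty \in W^{2,1}_{\calp}$ with $\calp\geq 6$ and $\rho^\infty,p^\infty\in W^1_\infty$, is exactly what makes these test functions legitimate).

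The key algebraic step is the bookkeeping. For the momentum part, the standard manipulation (as in \cite{feinovbook}) produces the kinetic relative energy $\tfrac{d}{dt}\int_\Omega \tfrac{\varrho^m}{2}|v^m-v^\infty|^2$, the viscous relative dissipation $\mathcal{D}^{\rm Visc}$, the pressure work $-\int_\Omega (p^m - \text{something})\divv v^\infty$, the body-force term, and the quadratic convective remainder $\mathcal{R}^{m,1}$. For the chemical/diffusion part, I would use the gradient identity \eqref{gradient}, $\hat\mu^m_i(\rho) = g_i^m(\hat p^m(\rho)) + \partial_{\rho_i}k(\rho)$, splitting $\hat\mu^m(\rho^\infty)$ accordingly; testing the continuity equations against $\hat\mu^m(\rho^\infty)$ and against the limit chemical potential $\mu^\infty = p^\infty\calv + (RT/M)\ln\hat x(\rho^\infty)$, and subtracting, yields the diffusion dissipation term $\int M(\rho^m)\nabla\mathcal{P}(\mu^m-\mu^\infty):\nabla\mathcal{P}(\mu^m-\mu^\infty)$ on the left (using (B2), (B3) and the projection structure), plus the remainders $\mathcal{R}^{m,2}$ (from the coupling of the diffusive flux with $v^\infty$ and $b$), $\mathcal{R}^{m,3}$ (from replacing $M(\rho^m)$ by $M(\rho^\infty)$ in a cross term, controlled by (B1)), and $\mathcal{R}^{m,4}$ (the second-order Taylor remainder of $k$ at $\rho^\infty$, which appears because $Df^m$ is only $C^1$-ishly close to the incompressible $Df^\infty$ and $D^2k(\rho^\infty)\rho^\infty = 0$ by homogeneity is used). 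The Gibbs--Duhem relation \eqref{GIBBSDUHEM} and the fact that for the incompressible model $p^\infty = -f^\infty(\rho^\infty)+\rho^\infty\cdot\mu^\infty$ is what lets me combine the $f^m(\rho^\infty)$-derivative with the pressure-work term and produce the first line of the right-hand side, namely $(\partial_t\rho^\infty+\divv(\rho^\infty v^\infty))\cdot(g^m(p^m)-p^m\calv) - (\rho^m\cdot\calv-1)(\partial_t p^\infty + v^\infty\cdot\nabla p^\infty)$; note that $\partial_t\rho^\infty + \divv(\rho^\infty v^\infty)$ is \emph{not} zero in the mixture case (only $\sum_i\rho^\infty_i\calv_i=1$ is constrained), so this term genuinely survives, unlike in the single-component setting.

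The boundary term $\int_\Omega p^\infty(\rho^m\cdot\calv - 1)\,dx\big|_0^t$ arises from integrating by parts in time the product of $p^\infty$ with the constraint defect $\rho^m\cdot\calv-1$, which is the mixture-specific substitute for the vanishing of $\divv v^\infty$. The main obstacle, as I see it, is precisely the careful separation of the $g^m(\hat p^m(\rho^m))$-dependent terms: one must add and subtract the affine incompressible surrogate $p^m\calv$ in the right places so that what remains on the right-hand side is either the explicitly displayed ``consistency'' line (which will later be shown small using the convergence \eqref{CONVER1} and the pressure confinement \eqref{TurevesII}) or a genuinely quadratic remainder; getting the signs right in the diffusion dissipation so that the full (non-projected) dissipation $M(\rho^m)\nabla(\mu^m-\mu^\infty):\nabla(\mu^m-\mu^\infty)$ can be replaced by its projected lower bound via (B3) without losing a term, and handling the $\zeta^{\rm Diff}$ from \eqref{DISSIP} consistently with the decomposition $\mu = g(\hat p) + Dk$, is the technically delicate core. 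The convexity/Legendre-type property of $f^m$ from Lemma \ref{UNIFF1} guarantees the Bregman-distance part of $\mathcal{E}^m$ is nonnegative, which is needed only to make the inequality meaningful, not for its derivation. Since all of this is by now standard technology, the actual write-up (deferred to the appendix, Section \ref{SStorm}) would consist mainly of organising these computations and verifying the test-function admissibility under (C).
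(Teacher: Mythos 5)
Your strategy is correct and coincides with the paper's own proof in Appendix Section \ref{SStorm}: the relative energy is differentiated by testing the weak formulations of \eqref{mass}, \eqref{momentum} with $v^{\infty}$, $\tfrac12|v^{\infty}|^2$ and $\hat{\mu}^m(\rho^{\infty})$, the dissipation inequality \eqref{DISSIP} is added, the chemical potential is split as $\hat{\mu}^m=g^m(\hat{p}^m)+Dk$ with $D^2k(\rho^{\infty})\rho^{\infty}=0$, and the pressure work is rewritten through the constraint defect $\rho^m\cdot\calv-1$, yielding exactly the displayed remainders and the boundary term. The only cosmetic imprecision is attributing the merging of the free-energy and pressure-work terms to Gibbs--Duhem; in the paper this step rests on the normalisation \eqref{NORMIN} (so that $\hat{\mu}^m(\rho^{\infty})=Dk(\rho^{\infty})$ and $\mu^{\infty}-\hat{\mu}^m(\rho^{\infty})=p^{\infty}\calv$) together with $\rho^{\infty}\cdot\calv=1$, which gives $\eta^{\infty}\cdot\calv=\divv v^{\infty}$ and hence the consistency term $\eta^{\infty}\cdot(g^m(p^m)-\calv\,p^m)$.
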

Next we want to estimate each part of the right-hand side in Prop.\ \ref{calculheuri}, showing either that it tends to zero, or that it is controlled by the relative energy functional.\\

\paragraph{\bf Some elementary bounds.}

For ease of writing, we drop the index $m$ on the state in this section. Since the incompressible state $\rho^{\infty}$ is subject to the constraint $\sum_{i=1}^N \rho^{\infty}_i(x,t) \, \calv_i = 1$, it obviously follows that
\begin{align}\label{rhominmax}
 \varrho_{\min} := \frac{1}{\max_{i=1,\ldots,N} \calv_i} \leq \varrho^{\infty}(x,t) \leq \frac{1}{\min_{i=1,\ldots,N} \calv_i} =: \varrho_{\max} \, \quad \text{ for all } \quad (x,t) \in Q_{\bar{\tau}} \, .
\end{align}
Moreover, with an index $i_0$ such that $\calv_{i_0} = \min \calv$ ($\calv_{i_0} = \max \calv$), we have
\begin{align*}
 \frac{1}{\varrho^{\infty}} - \calv_{i_0} = \sum_{i\neq i_0} \hat{y}_i(\rho^{\infty}) \, (\calv_i-\calv_{i_0}) \, .
\end{align*}
Since in (C) we assume that the strong solution $\rho^{\infty}$ possesses uniformly positive densities, all larger than $r_0>0$ it follows that
\begin{align*}
 \left|\frac{1}{\varrho^{\infty}(x,t)} - \calv_{i_0}\right| \geq \inf_{\calv_i\neq\calv_j} |\calv_i - \calv_j| \, (N-1) \, \frac{r_0}{\varrho_{\max}} \, .
\end{align*}
Thus, we even can find constants $r_{\min} > \varrho_{\min}$ and $r_{\max} < \varrho_{\max}$ such that
\begin{align}\label{rminmax}
 r_{\min} \leq \varrho^{\infty}(x,t) \leq r_{\max} \quad \text{ for all } \quad (x,t) \in Q_{\bar{\tau}} \, .
\end{align}
Note that at each $(x,t) \in Q$, $$f^m(\rho) - f^m(\rho^{\infty}) - \hat{\mu}^m(\rho^{\infty}) (\rho-\rho^{\infty}) = \frac{1}{2} \, D^2f^m(\theta \, \rho + \rho^{\infty})\, (\rho-\rho^{\infty})\cdot(\rho-\rho^{\infty}) \, ,$$
with a $\theta \in [0,1]$.
Invoking the Lemma \ref{UNIFF1} of the appendix, there is $\lambda_1 > 0$ such that
 \begin{align*}
\sum_{i,j = 1}^N D^2_{ij}f^m(\rho) \xi_i \, \xi_j \geq
 \lambda_{1}\, |\xi|^2 \quad \text{ for all } \xi \in \mathbb{R}^N \text{ and all } \rho \, \,\,\text{s.t.\ } \, p_1 \leq \hat{p}^m(\rho) \leq p_2 \, .
\end{align*}
Under the assumptions of Theorem \ref{heuriheura} it follows that
\begin{align}\label{FEPOSDEF}
f^m(\rho) - f^m(\rho^{\infty}) - \hat{\mu}^m(\rho^{\infty}) (\rho-\rho^{\infty}) \geq \frac{\lambda_1}{2} \, |\rho-\rho^{\infty}|^2 \, .
\end{align}
For the proof of Theorem \ref{heuriheura}, we can rely on the boundedness of pressure. In view of $\partial_pg^m(\hat{p}^m(\rho)) \cdot \rho = 1$, we have
\begin{align*}
\frac{1}{\max \partial_pg^m(\hat{p}^m(\rho))} \leq \varrho \leq \frac{1}{\min \partial_pg^m(\hat{p}^m(\rho))} \, , 
\end{align*}
and since we assume that $\{g^m\}$ converges in $C^2([p_1,p_2])$ this implies that
\begin{align}\label{rhocompact}
a_0:= \inf_{m \in \mathbb{N}, \, s\in [p_1,p_2]} \frac{1}{\max \partial_pg^m(s)} \leq \varrho \leq \sup_{m \in \mathbb{N}, \, s\in [p_1,p_2]} \frac{1}{\min \partial_pg^m(s)} =: b_0\, . 
\end{align}
We proceed with estimating the remainders in the relative energy inequality of Prop.\ \ref{calculheuri}.

\paragraph{\bf Estimate of $\mathcal{R}^{m,1}$}

By means of H\"older's inequality and \eqref{FEPOSDEF}
\begin{align*}
 \int_{\Omega} (\varrho-\varrho^{\infty}) \, \partial_t v^{\infty} \cdot (v^{\infty}-v) \, dx \leq & \sqrt{N}  \, |\partial_t v^{\infty}|_{L^{3}} \,|v^{\infty} - v|_{L^6(\Omega)} \, \, \left(\int_{\Omega} |\rho - \rho^{\infty}|^2 \, dx\right)^{\frac{1}{2}} \\
 \leq& \sqrt{\frac{2N}{\lambda_1}} \,  |\partial_t v^{\infty}|_{L^{3}} \,|v^{\infty} - v|_{L^6} \,   (\mathcal{E}^m(t))^{\frac{1}{2}} \, ,
\end{align*}
Next we have the Korn inequality and the Sobolev embedding theorem
\begin{align}\label{KKorn}
 \int_{\Omega} \mathbb{S}(\nabla(v^{\infty} - v)) \, : \, \nabla(v^{\infty} - v) \, dx \geq c \,\|v^{\infty} - v\|_{W^{1,2}_0(\Omega)}^2 \geq c^{\prime} \,\|v^{\infty} - v\|_{L^6(\Omega)}^2\, .
\end{align}
Thus, by means of the Young inequality, we can easily prove for $0<\epsilon < 1$ arbitrary that
\begin{align}\label{menschenae}
 \left|\int_{\Omega} (\varrho-\varrho^{\infty}) \, \partial_t v^{\infty} \cdot(v^{\infty} - v) \, dx\right| \leq & \epsilon \, \mathcal{D}^{\rm Visc}(v^m \, | \, v^{\infty})(t) + \frac{C^2}{\epsilon}\,  |\partial_t v^{\infty}|_{L^{3}}^2  \, \mathcal{E}^m(t) \, ,
\end{align}
where $C$ depends on $\Omega$ via $c^{\prime}$ and on $p_1,p_2$ via $\lambda_1$. With $(v \cdot \nabla) v^{\infty}$ in the place of $\partial_t v^{\infty}$, use of $|v \nabla v^{\infty}|_{L^3} \leq |v|_{L^6} \, |\nabla v^{\infty}|_{L^6} \leq c \, |\nabla v|_{L^2}|\nabla v^{\infty}|_{L^6}$ helps finding that
\begin{align}\label{menschenae2}\left|\int_{\Omega} (\varrho - \varrho^{\infty}) \, (v\cdot \nabla )v^{\infty} \cdot(v^{\infty} - v) \, dx\right| \leq &  \epsilon \,\mathcal{D}^{\rm Visc}(v^m \, | \, v^{\infty})(t) \nonumber\\
& + \frac{C^2}{\epsilon} \, |\nabla v^{\infty}|_{L^{6}}^2 \, |\nabla v|_{L^2}^2 \, \mathcal{E}^m(t) \, , 
\end{align}
Since $\varrho(x,t) \geq a_0$ owing to \eqref{rhocompact},
\begin{align}\label{menschenae3}
& \left| \int_{\Omega} \varrho^{\infty} \, \big((v^{\infty}-v) \cdot \nabla\big) v^{\infty} \cdot(v^{\infty} - v) \, dx\right| \leq  \frac{r_{\max}}{\sqrt{a_0} }\, \int_{\Omega} |\nabla v^{\infty}| \,|v^{\infty} - v| \, |\sqrt{\varrho} \,(v^{\infty} - v)|\, dx\nonumber\\
& \quad \leq \frac{r_{\max}}{\sqrt{a_0}} \, |\nabla v^{\infty}|_{L^{3}} \,|v^{\infty} - v|_{L^6} \, (2\,\mathcal{E}^m(t))^{\frac{1}{2}} \leq   \epsilon \, \mathcal{D}^{\rm Visc}(v^m \, | \, v^{\infty})(t) + \frac{C}{\epsilon} \,  |\nabla v^{\infty}|_{L^{3}}^2   \, \mathcal{E}^m(t) \, .
\end{align}
Overall, the first remainder obeys
\begin{align}\label{gouldenae1}
 |\mathcal{R}^{m,1}| \leq \epsilon \, \mathcal{D}^{\rm Visc}(v^m \, | \, v^{\infty})(t) + \frac{C}{ \epsilon} \, \psi^m(t) \, \mathcal{E}^m(t) \, ,
\end{align}
in which $\|\psi^m\|_{L^1(0,\bar{\tau})} \leq \| \partial_tv^{\infty}\|_{L^{3,2}(Q)}^2 + \|\nabla v^{\infty}\|_{L^{6,\infty}(Q)}^2 \, (\sup_m \|\nabla v^m\|_{L^{2}(Q)}^2 + \|\nabla v^{\infty}\|_{L^{2}(Q)}^2)$.

\paragraph{\bf Estimate of $\mathcal{R}^{m,2}$.}

The contribution 
$ \int_{\Omega}(v^{\infty} - v) \, (\rho -\rho^{\infty}) \, : \, (\nabla \mu^{\infty}+1^N \otimes b) \, dx$ to the remainder exhibits the same structure as the first integral of $\mathcal{R}^{m,1}$, where $\nabla \mu^{\infty}$ and $1^N \otimes b$ play the part of $1^N \otimes \partial_t v^{\infty}$. Hence, repeating the arguments used above in this case, we obtain that
\begin{align}\label{gouldenae2}
\left|\mathcal{R}^{m,2}\right| \leq  & \epsilon \,\mathcal{D}^{\rm Visc}(v^m \, : \, v^{\infty})(t) + \frac{C}{\epsilon} \,  (|\nabla \mu^{\infty}|_{L^3}^2+|b|_{L^3}^2) \, \mathcal{E}^m(t) \, ,
\end{align}
In order to estimate the remaining parts, we must discuss additional positivity questions for the densities and the eigenvalues of mobility tensor.

\paragraph{\bf Estimate of $\mathcal{R}^{m,3}$}

At this stage, to first show a way widely free of technicalities, we simply assume that all densities are uniformly positive according to \eqref{UNIFORMPOS}, in which $s_0$ is independent on $m$. We show later how to remove this assumption.
Then, in view of (B3), the mobility matrix $M(\rho)$ possesses $N-1$ strictly positive eigenvalues, and the remaining zero eigenvalue is associated with the vector $1^N$. We call $\lambda_{\min}(\cdot)$ ($\lambda_{\max}(\cdot)$) the smallest strictly positive eigenvalue (the largest eigenvalue) of $M(\cdot)$. Then, using also \eqref{b1prime}, 
\begin{align*}
 \lambda_{\max}(\rho) \leq \bar{\lambda} \, (1+|\rho|) \, , \qquad 
 \lambda_{\min}(\rho) \geq \inf_{\min r \geq s_0} \lambda_0(r) = \bar{\lambda}_0 > 0 \, ,
\end{align*}
with $\lambda_0$ from (B3) and a positive constant $\bar{\lambda}$. 
If the densities are positive and \eqref{UNIFORMPOS} hold, the chemical potentials $\mu_i = g_{i}(\hat{p}^m(\rho)) + RT/M_i \, \ln \hat{x}_i(\rho)$ are defined everywhere in the domain. We denote $M^{\dagger}(\rho)$ the Moore-Penrose pseudo--inverse of $M(\rho)$. Since $M$ is symmetric, we have $M^{\dagger}M = \mathcal{P} = MM^{\dagger}$. Then, using the Cauchy-Schwarz inequality and the Young inequality
\begin{align*}
& \quad | (M(\rho) - M(\rho^{\infty})) \, \nabla \mu^{\infty} \cdot \nabla (\mu-\mu^{\infty})| = | M^{\dagger}(M(\rho) - M(\rho^{\infty})) \, \nabla \mu^{\infty} \cdot M(\rho)\nabla (\mu-\mu^{\infty})| \\
& \leq \Big(M(\rho) \,M^{\dagger}(M(\rho) - M(\rho^{\infty})) \, \nabla \mu^{\infty} \cdot M^{\dagger}(M(\rho) - M(\rho^{\infty})) \, \nabla \mu^{\infty}\Big)^{\frac{1}{2}}\\
& \qquad \times (M(\rho) \, \nabla (\mu-\mu^{\infty}) \cdot \nabla (\mu-\mu^{\infty}))^{\frac{1}{2}}\\
& \leq \|M^{\dagger}(\rho)\|^{\frac{1}{2}} \, 
\|M(\rho) - M(\rho^{\infty})\|_{\infty}  \, |\nabla \mathcal{P}\mu^{\infty}| \, (M(\rho) \, \nabla (\mu-\mu^{\infty}) \cdot \nabla (\mu-\mu^{\infty}))^{\frac{1}{2}}\\
& \leq \frac{1}{4 \, \epsilon \lambda_{\min}(\rho)}  \, \| M(\rho) - M(\rho^{\infty})\|^2_{\infty} \, |\nabla \mathcal{P}\mu^{\infty}|^2 + \epsilon \, M(\rho) \, \nabla (\mu-\mu^{\infty}) \cdot \nabla (\mu-\mu^{\infty}) \, .
\end{align*}
Since $\rho \mapsto M(\rho)$ is Lipschitz continuous on $\mathbb{R}^N_+$ according to (B1), calling the Lipschitz constant $\|\partial M\|_{L^{\infty}}$ we have $\| M(\rho) - M(\rho^{\infty})\| \leq \|\partial M\|_{L^{\infty}} \, |\rho - \rho^{\infty}|$. Therefore, using the equivalence of all norms on $\mathbb{R}^N$,
\begin{align}\label{menschenae4}
 & | (M(\rho) - M(\rho^{\infty})) \, \nabla \mu^{\infty} \cdot \nabla (\mu-\mu^{\infty})|\nonumber\\
&  \leq  \frac{c_N \, \|\partial M\|^2_{\infty}}{4\,\epsilon \, \bar{\lambda}_0}  \, |\rho - \rho^{\infty}|^2_{\infty} \, |\nabla\mathcal{P} \mu^{\infty}|^2 + \epsilon \, M(\rho) \, \nabla (\mu-\mu^{\infty}) \cdot \nabla (\mu-\mu^{\infty}) \, .
\end{align}
Overall, the estimation of $\mathcal{R}^{m,3}$ yields
\begin{align}\label{gouldenae3}
 & \int_{\Omega} |(M(\rho) - M(\rho^{\infty})) \, \nabla \mu^{\infty} \cdot \nabla (\mu - \mu^{\infty})| \, dx \nonumber\\
 & \leq  \epsilon \, \int_{\Omega} M(\rho) \, \nabla (\mu - \mu^{\infty}) \cdot \nabla (\mu - \mu^{\infty}) \, dx + \frac{C}{\epsilon} \, |\nabla \mathcal{P} \mu^{\infty}|_{L^{\infty}}^2 \, \mathcal{E}^m(t)\, ,
\end{align}
where we remark that $C$ essentially depends on the constant $s_0$ of \eqref{UNIFORMPOS} and the constants $p_1,p_2$ of Theorem \ref{heuriheura}.

\paragraph{\bf Estimate of $\mathcal{R}^{m,4}$} 

Recall that $k(\rho)$ is given by \eqref{KFUKK}.
Thus $Dk(\rho) = (RT/M) \, \ln \hat{x}(\rho)$ make sense only if all densities are strictly positive. Here we show a straightforward estimate relying again on \eqref{UNIFORMPOS}. Later we will provide an alternative to treat this remainder for weak solutions in general. For positive densities, we easily find that
\begin{align*}
& | Dk(\rho) - Dk(\rho^{\infty}) - D^2k(\rho^{\infty}) (\rho -\rho^{\infty})| \leq \sup_{\theta \in ]0,1[} |D^2k(\theta \, \rho + (1-\theta) \, \rho^{\infty})|_{\infty} \, |\rho-\rho^{\infty}|^2 \, .
\end{align*}
We recall that $D^2_{ij}k(r) = (RT/M_iM_j \, \hat{n}(r)) \, (\delta^i_j/\hat{x}_i(r) -1)$, allowing to show that
\begin{align*}
& |D^2_{ij}k(\theta \, \rho + (1-\theta) \, \rho^{\infty})| \leq  \frac{RT \, \max M}{(\min M)^2 \, \min \{\rho_i,\rho^{\infty}_i\}} \leq \frac{RT\max M}{(\min M)^2\min \{s_0,r_0\}}\, ,
\end{align*}
where we assume $\min \rho \geq s_0$ and $\min \rho^{\infty} \geq r_0$. 
It follows that
\begin{align}\label{gouldenae4}
 & |\mathcal{R}^{m,4}| \leq c\, |\partial_t \rho^{\infty} + \divv(\rho^{\infty} \, v^{\infty})|_{L^{\infty}} \, \mathcal{E}^m(t) \, . 
\end{align}

\subsection{The convergence argument}\label{convarg}

We collect the estimates \eqref{gouldenae1}, \eqref{gouldenae2}, \eqref{gouldenae3} and \eqref{gouldenae4}. Writing now again $\rho^m$, $v^m$ etc., for all $0 < t \leq \bar{\tau}$, they yield
\begin{align}\label{jelacite}
 & \mathcal{E}^m(t) \leq  \mathcal{E}^m(0) +  \int_{\Omega} p^{\infty}(x,\cdot) \, (\rho^m(x,\cdot) \cdot \calv - 1)\, dx\Big|^t_0 \nonumber\\
 & + \int_{0}^t \int_{\Omega} (\partial_t \rho^{\infty} + \divv (\rho^{\infty}v^{\infty})) \cdot (g^m(p^m) - \calv \, p^m) - (\rho^m \cdot \calv-1) \, (\partial_t p^{\infty} + v^{\infty} \cdot\nabla p^{\infty}) dxd\tau\nonumber\\
&  - (1-2\epsilon)\, \int_{0}^t \int_{\Omega} \mathbb{S}(\nabla (v-v^{\infty})) \, : \, \nabla (v-v^{\infty}) + M(\rho) \nabla (\mu - \mu^{\infty}) : \, \nabla (\mu - \mu^{\infty}) \, dx\nonumber\\
 & + \frac{C}{\epsilon} \, \int_{0}^t \psi^m(\tau) \, \mathcal{E}^m(\tau) \, d\tau \, .
\end{align}
Here the function $\psi^m$ obeys
\begin{align}\label{jelaciteII}
  \|\psi^m\|_{L^1(0,\bar{\tau})} \leq & \| \partial_tv^{\infty}\|_{L^{3,2}}^2 + \|\nabla v^{\infty}\|_{L^{6,\infty}}^2 \, (\sup_m\|\nabla v^m\|_{L^{2}}^2 + \|\nabla v^{\infty}\|_{L^{2}}^2) +  \|\nabla \mu^{\infty}\|_{L^{3,2}}^2 \nonumber \\
&+  \|b\|_{L^{3,2}}^2+ \|\nabla \mathcal{P} \mu^{\infty}\|_{L^{\infty,2}}^2  +  \|\partial_t\rho^{\infty} +\divv(\rho^{\infty} \, v^{\infty})\|_{L^{\infty,1}} \, .
\end{align}
From the Gronwall Lemma it follows that $\mathcal{E}^m(t) \leq |A^m(t)| \, \exp(\sup_m \|\psi^m\|_{L^1(0,\bar{\tau})})$, in which
\begin{align*}
& \quad  A^m(t) := \mathcal{E}^m(0)+ \int_{\Omega} p^{\infty}(x,\cdot) \, (\rho^m(x,\cdot) \cdot \calv-1) \, dx\Big|^t_0 \\
& +  \int_{0}^t \int_{\Omega} (\partial_t \rho^{\infty} + \divv (\rho^{\infty}v^{\infty})) \cdot (g^m(p^m) - \calv \, p^m) - (\rho^m \cdot \calv-1) \, (\partial_t p^{\infty} + v^{\infty} \cdot\nabla p^{\infty}) dxd\tau\, .
\end{align*}
In order to prove that $A^m$ tends to zero, we first show that the initial relative energy $\mathcal{E}^m(0)$ tends to zero. Theorem \ref{heuriheura} assumes that $v^m(0) = v^0 = v^{\infty}(0)$. Moreover $\hat{p}^m(\rho^{0,\infty}) = p^0$ and $g^m(p^0) = 0$ imply that $f^m(\rho^{0,\infty}) = -p^0 + k(\rho^{0,\infty})$ and that $Df^m(\rho^{0,\infty}) = Dk(\rho^{0,\infty})$, hence
\begin{align}\label{InitialE}
 \mathcal{E}^m(0) =  &\int_{\Omega} f^m(\rho^{0,m}(x)) - f^m(\rho^{0,\infty}(x)) - \hat{\mu}^m(\rho^{0,\infty}(x)) \cdot (\rho^{0,m}(x) - \rho^{0,\infty}(x)) \, dx \nonumber\\ =  & \int_{\Omega} g^m( \hat{p}^m(\rho^{0,m})) \cdot \rho^{0,m} -   (\hat{p}^m(\rho^{0,m})-p^0) \, dx \\
 & + \int_{\Omega} k(\rho^{0,m}) - k(\rho^{0,\infty}) - Dk(\rho^{0,\infty}) \, (\rho^{0,m}-\rho^{0,\infty}) \, dx \, . \nonumber
\end{align}
Since $k$ is fixed and, by assumption, $\rho^{0,\infty}$ is uniformly positive, the second member in \eqref{InitialE}, right-hand side is easily shown to converge to zero, since we assume that $\rho^{0,m} \rightarrow \rho^{0,\infty}$ in $L^1(\Omega)$. As to the first member, we define $p^{0,m} := \hat{p}^m(\rho^{0,m})$ satisfying $\partial_{p}g^{m}(p^{0,m}) \cdot \rho^{0,m} = 1$. The assumption $g^m(p^0) = 0$ can be used to show that
\begin{align*}
 & g^m( \hat{p}^{0,m}) \cdot \rho^{0,m} -   (p^{0,m}-p^0) = (g^m(p^{0,m}) - g^m(p^0) - \partial_pg^{m}(p^{0,m}) \, (p^{0,m}-p^0)) \cdot \rho^{0,m} \\
 & \quad =  \int_0^1 \partial_pg^{m}(p^0 + \lambda \, p^{0,m}) - \partial_pg^{m}(p^{0,m}) \,d\lambda \cdot\rho^{0,m} \, (p^{0,m}-p^0)  \, .
\end{align*}
Since in Theorem \ref{heuriheura}, we assume that $p_1 \leq p^{0,m} \leq p_2$, we easily show that
\begin{align*}
\int_{\Omega} g^m(p^{0,m}) \cdot \rho^{0,m} -   (p^{0,m}-p^0) \, dx\leq 2 \, \sup_m |\varrho^{0,m}|_{L^{1}}\,(p_2-p_1) \, \sup_{s \in [p_1,p_2]} |\partial_pg^m(s)-\calv| \, ,
\end{align*}
which tends to zero since $\partial_pg^m \rightarrow \calv$ uniformly on $[p_1, \, p_2]$. Next, use of $\partial_pg_i^m(p^0) = \calv_i$ and $\sum_{i=1}^N \rho_i^m \, \partial_pg^m_{i}(p^m) = 1$ yields
\begin{align*}
 1 - \calv \cdot \rho^m = \sum_{i=1}^N (\partial_pg_i^{m}(p^m) - \calv_i) \, \rho^m_i \quad \text{ and } \quad
 | 1 - \calv \cdot \rho^m | \leq |\partial_pg^{m}(p^m) - \calv|_{\infty} \, \varrho^m \, .
\end{align*}
Since $p_1 \leq p^m(x,t) \leq p_2$, we have
$| 1 - \calv \cdot \rho^m | \leq \sup_{s\in[p_1,p_2]} |\partial_pg^m(s) - \calv| \, b_0$ with $b_0$ from \eqref{rhocompact}. Thus $\calv \cdot \rho^m \longrightarrow 1$ strongly in $L^{\infty}(Q_{\bar{\tau}})$ which implies that
\begin{align*}
\sup_{0<t<\bar{\tau}} \int_{\Omega} |p^{\infty}(x,t)| \, |\rho^m(x,t)\cdot \calv-1|\, dx \leq \|p^{\infty}\|_{L^{1}(Q_{\bar{\tau}})} \, \|\rho^m\cdot \calv-1\|_{L^{\infty}} \, ,\\
\int_{0}^{\bar{\tau}}\int_{\Omega} |\rho^m \cdot \calv-1| \, |\partial_t p^{\infty}  + v^{\infty} \cdot \nabla p^{\infty}| \, dxd\tau \leq \|\partial_tp^{\infty} + v^{\infty}\cdot\nabla p^{\infty}\|_{L^{1}} \, \|\rho^m \cdot \calv-1\|_{L^{\infty}} \, ,
\end{align*}
and both terms tend to zero. Similarly, the assumptions of Theorem \ref{heuriheura} allow to show that 
\begin{align}\label{htozero}
 |g^m(p^m) - \calv \, p^m| \leq \sup_{p_1 \leq s \leq p_2} |g^m(s) - \calv \, s| \rightarrow 0 \, 
\end{align}
and thus also $\int_{0}^t \int_{\Omega} |\partial_t\rho^{\infty} + \divv (\rho^{\infty}v^{\infty})| \, |g^m(p^m) - \calv \, p^m| \, dxd\tau$ tends to zero.

Overall $\limsup_{m\rightarrow \infty} \sup_{0<t<\bar{\tau}} |A^m(t)| = 0$, hence $\limsup_{m\rightarrow \infty} \sup_{0<t<\bar{\tau}} \mathcal{E}^m(t) = 0$. Using \eqref{jelacite} with $t = \bar{\tau}$ also
\begin{align}\label{SandXitozero}
\limsup_{m\rightarrow \infty}  \int_0^{\bar{\tau}} \mathcal{D}^{\rm Visc}(v^m\,|\,v^{\infty})(\tau) \, d\tau + \int_{Q_{\bar{\tau}}}M(\rho^m) \nabla (\mu^m - \mu^{\infty}) : \, \nabla (\mu^m - \mu^{\infty}) \, dxd\tau = 0 \, .
\end{align}
Let us verify the convergence claims in
Theorem \ref{heuriheura}. Since $\sup_t \mathcal{E}^m(t) \rightarrow 0$, we easily show that $\rho^m \rightarrow \rho^{\infty}$ in $L^{1,\infty}(Q_{\bar{\tau}})$, while \eqref{SandXitozero} imply that $v^m \rightarrow v^{\infty}$ in $L^2W^{1,2}$. We next turn proving a convergence result for $\{p^m\}$. Under the assumption \eqref{UNIFORMPOS}, we can introduce 
\begin{align}\label{MuM}
\mu^m =& \frac{RT}{M} \, \ln \hat{x}(\rho^m) + g^m(p^m) = \frac{RT}{M} \, \ln \hat{x}(\rho^m) + h^m +\calv \, p^m \, ,
 \end{align}
with $h^m := g^m(p^m) - \calv \, p^m$, which we have shown in \eqref{htozero} to converge to zero in $L^{\infty}(Q)$.
Let $\eta\in \mathbb{R}^N$ be the vector of \eqref{CONVEN} satisfying $\eta \cdot \calv = 1$ and, if $\calv \not\parallel 1^N$, also $\eta \cdot 1^N = 0$. Multiplication in \eqref{MuM} with $\eta$ yields $ \eta\cdot\mu^m = \eta \cdot ((RT/M) \, \ln \hat{x}(\rho^m) + h^m) + p^m $.
We subtract the mean-value $(\eta\cdot\mu^m)_M = \int_{\Omega} (\eta\cdot\mu^m)_M \, dx/|\Omega|$ on both sides of this identity, we define $\bar{\zeta}^m :=  -(\eta\cdot\mu^m)_M$, $p^m_* :=  p^m +\bar{\zeta}^m$, and $\zeta^m := \eta\cdot\mu^m - (\eta\cdot\mu^m)_M$, and we get
\begin{align}\label{grouldei}
p^m_* = \zeta^m - \eta \cdot \Big(\frac{RT}{M} \, \ln \hat{x}(\rho^m) + h^m\Big) \, .
\end{align}
Note that $\{\zeta^m\}$ possesses zero mean-value over $\Omega$ for all $t$. Since $\eta$ is perpendicular to $1^N$, we can estimate $|\nabla (\eta \cdot (\mu-\mu^{\infty}))| \leq |\eta| \, |\nabla \mathcal{P}(\mu-\mu^{\infty})|$. Employing \eqref{SandXitozero}, we show that $\nabla \zeta^m \rightarrow \eta\cdot\nabla \mu^{\infty}$ in $L^2(Q)$. Hence 
$\zeta^m \rightarrow \eta \cdot \mu^{\infty} - (\eta\cdot \mu^{\infty})_M$ in $L^2(0,\bar{\tau}; \, W^{1,2}(\Omega))$, and \eqref{grouldei} shows that
\begin{align*}
 p^m_* \longrightarrow \eta\cdot\mu^{\infty} - \eta\cdot \frac{RT}{M} \, \ln \hat{x}(\rho^{\infty}) - (\eta\cdot\mu^{\infty})_M= p^{\infty} - (\eta\cdot\mu^{\infty})_M  \quad \text{ in } L^2(Q) \, .
\end{align*}
Since $(\eta \cdot \mu^{\infty})_M = 0$ due to (C), the latter shows that $p^m_* \rightarrow p^{\infty}$ in $L^2(Q)$.

The proof of Theorem \ref{heuriheura} is now complete under the additional uniform positivity assumption \eqref{UNIFORMPOS}. At some technical cost, this condition can be removed as explained in the Appendix, Section \ref{SStorm}.\\

In the second part of the paper, we investigate the convergence of weak solutions under relaxing  the \emph{a priori} condition \eqref{TurevesII} on pressure control. 

\section{Rigorous convergence for rescaled weak solutions}\label{RIGOR}

In this section we prove the convergence to the incompressible limit for certain rescaled weak solutions to the problems ($\overline{\text{ IBVP}}^m$). At the difference of the previous section, it is possible to show that these solutions exist globally in time.
The main results available on the local- and global-in-time resolvability of the problems (IBVP), (IBVP$^{\infty}$) are recalled in the appendix, Section \ref{ShortSurvey}. Readers might be interested to consult this short survey before starting with the present paragraph which is more technical than the previous one.

Here we begin by showing, in the section \ref{PressureUniform}, that if we reinforce the assumptions on the mobility tensor to (B3$^\prime$), the sequence of ''pressures'' $\{\bar{p}^m_{\Delta}\}$ associated with the rescaled weak solutions satisfy a bound in $L^1$ independently on $m$. Then, a modified form of the relative energy inequality is shown to be valid in Section \ref{RelIntIneqWeak}. It allows to prove the convergence of weak solutions under the assumptions \eqref{CONVER2} and (C).

\subsection{Technical estimates for the pressure-field}\label{PressureUniform}

We commence by stating the existence of weak solutions together with some natural bounds.

\subsubsection{Existence of weak solutions}\label{ENTROPICVAR}

We let $\xi^1,\ldots,\xi^{N-1}, \xi^N$ with $\xi^N := (1,1, \ldots,1) = 1^N$ be a basis of $\mathbb{R}^N$, and let $\eta^1,\ldots, \eta^{N}$ denote the dual basis. Then, the vectors $\eta^1, \ldots,\eta^{N-1}$ constitute a basis of the orthogonal complement of the vector $1^N$ in $\mathbb{R}^N$. A rectangular ''projection'' matrix $\Pi \in \mathbb{R}^{N \times N-1}$ with the vectors $\xi^1, \ldots, \xi^{N-1}$ as columns is further introduced via
\begin{align}\label{Pi}
\Pi := [\xi^1,\ldots,\xi^{N-1}] \, .
\end{align}
One introduces the new variables (entropic variables, diffusive variables)
\begin{align}\label{qplouc}
q_i := \sum_{j=1}^N \eta^i_j \, \mu_j \quad \text{ for } \quad i = 1,\ldots,N-1 \, ,
\end{align}
and reformulate the problem (IBVP) using the change of variables
\begin{align}\label{MV1}
(\rho_1, \ldots,\rho_N) \quad \longleftrightarrow \quad (\varrho, \, q_1, \ldots,q_{N-1}) \, .
\end{align}
The proof that there is a bijection between these variables uses the definition of the chemical potentials \eqref{CHEMPOT}. If the free energy function $\rho \mapsto f(\rho)$ is a so-called \emph{co-finite function of Legendre type} on $\mathbb{R}^N_+$, which is the key assumption in the investigations \cite{bothedruet}, \cite{dredrugagu20}, the equations \eqref{CHEMPOT} can be inverted by means of the convex conjugate function $$f^*(w) := \sup_{r \in \mathbb{R}^N_+} \{w \cdot r - f(r)\}  \quad \text{ for } \quad w \in \mathbb{R}^N\, ,  $$ and \eqref{CHEMPOT} is equivalent with
 \begin{align}\label{inverted}
  \rho_i = \partial_{w_i} f^*(\mu_1, \ldots,\mu_N) \quad \text{ for } \quad i = 1,\ldots,N \, . 
 \end{align}
 Using the definition \eqref{qplouc} of $q$, we next express the vector $\mu$ in new coordinates via $$\mu = \sum_{i=1}^{N-1} q_i \, \xi^i + \mu \cdot \eta^N \, 1^N = \Pi \,q +\mu\cdot\eta^N \, 1^N \, .$$ Summing up over $i = 1,\ldots,N$ in \eqref{inverted} implies that $ \varrho = 1^N \cdot \partial_{w} f^*( \Pi \, q + \mu \cdot \eta^N \, 1^N)$. This defines implicitly $\mu \cdot \eta^N = \mathscr{M}(\varrho, \, q_1, \ldots, q_{N-1})$
 with a function $\mathscr{M}$ of $\varrho$ and $q$. Now, all thermodynamic quantities can be introduced as functions of the variables $\varrho$ and $q_1, \ldots, q_{N-1}$, among others
\begin{align}\label{newpress}
 p = f^*(\mu) = f^*\Big(\Pi q + \mathscr{M}(\varrho,q) \, 1^N \Big) =: P(\varrho, \, q) \quad \text{ pressure, }\\
 \label{newpartdens} \rho = \partial_w f^*\Big(\Pi q + \mathscr{M}(\varrho, \,q) \, 1^N \Big) =: \mathscr{R}(\varrho, \, q) \quad \text{ densities.}
\end{align}
For the ideal model \eqref{muideal}, we have the equivalent representation:
\begin{align}\begin{split}\label{lesidentes}
 p =& \hat{p}(\rho_1, \ldots, \rho_N) = P(\varrho, \, q) \, , \\
 \mu = g(p) + \frac{RT}{M} \, \ln x =& g(\hat{p}(\rho)) + \frac{RT}{M} \, \ln \hat{x}(\rho) =  \Pi q + \mathscr{M}(\varrho, q) \, 1^N \, ,
 \end{split}
\end{align}
where $\hat{p}$ is the function introduced in \eqref{EOS} and $\hat{x}(\rho)$ is defined in \eqref{LESFRACS}.\\

Let us next consider rescaled problems ($\overline{\text{ IBVP}}^m$). 
In this case, the change of variables is performed using the free energy function $\bar{f}^m$ of \eqref{FEm}, with the data $\bar{g}_1^m, \ldots, \bar{g}_N^m$ from \eqref{ginormalised}.

We switch from the main variables to the entropic variables \eqref{MV1} as just explained. After this procedure, the rescaled pressure $\bar{p}_{\Delta}$ associated with a solution $(\bar{\varrho}, \, \bar{q})$ to ($\overline{\text{ IBVP}}^m$) obeys a representation
\begin{align}\label{RescaledbigP}
 \bar{p}_{\Delta} = P_m(\bar{\varrho}, \, \bar{q}_1, \ldots,\bar{q}_{N-1}) = \hat{\pi}^m(\bar{\rho}) \, .
\end{align}
We recover the full vector of chemical potentials as $\bar{\mu} = \Pi \bar{q} + \bar{\mathscr{M}}^m(\bar{\varrho}, \, \bar{q}) \, 1^N$, the mass densities as $\bar{\rho} = (\nabla_{\bar{\rho}} \bar{f}^m)^{-1}(\bar{\mu}) = \nabla_{\bar{\mu}} (\bar{f}^m)^*(\bar{\mu})$ and, by construction, it also follows that
\begin{align}\label{rescaledpot}
 \bar{\mu}_i = \hat{\mu}^{m}_i\big(\bar{p}_{\Delta}, \, \bar{x}\big) = \bar{g}_i^m(\bar{p}_{\Delta}) + \frac{1}{\bar{M}_i} \, \ln \bar{x}_i \, .
\end{align}
Applying the existence result of Appendix, Prop.\ \ref{EXIRESC}, we see that the rescaled variables $(\bar{\varrho}^m, \, \bar{q}^m, \, \bar{v}^m)$ provide a weak solution to ($\overline{\text{ IBVP}}^m$). Under the assumption (B3$^{\prime}$), we also obtain the following uniform bounds:
\begin{align}\begin{split}\label{lesbounds}
 \sup_{m \in \mathbb{N}} \|\bar{\varrho}_m\|_{L^{\gamma,\infty}(Q^{\rm R})} < + \infty, \quad \sup_{m\in\mathbb{N}} \|\bar{\nabla} \bar{v}^m\|_{L^2(Q^{\rm R})} < +\infty \, , \\
  \sup_{m \in \mathbb{N}} \|\sqrt{\bar{\varrho}_m} \, \bar{v}^m\|_{L^{2,\infty}(Q^{\rm R})} < + \infty, \quad 
  \sup_{m \in \mathbb{N}} \|\bar{\nabla} \bar{q}^m\|_{L^2(Q^{\rm R})} < +\infty \, , \\
  \sup_{m \in \mathbb{N}} \sum_{i=1}^{N-2} \|\bar{q}^{m}_i\|_{L^{2}(Q^{\rm R})} < +\infty, \qquad   \sup_{m \in \mathbb{N}} \frac{1}{\sqrt{m}} \, \|\bar{q}^m\|_{L^{2}(Q^{\rm R})} < +\infty \, .
 \end{split}
\end{align}
Moreover, Lemma \ref{Bogovencorelui} shows that
\begin{align}\label{ilstendent1}
 \limsup_{m\rightarrow \infty} \|\rho^m \cdot \bar{\calv} - 1\|_{L^{1,\infty}(Q^{\rm R})} = 0 \, .
\end{align}
Now, the crucial task is to obtain bounds on the sequence of rescaled pressure variations $\{\bar{p}^m_{\Delta}\}$. Due to the fact that we rescale with the large parameter $m$, this bound has to be discussed independently.
In order to simplify the discussions from the viewpoint of notations, we shall from now drop the bars on the state variables and differential operators - while keeping it on the constitutive functions. Hence we write $\varrho$ and $q$ instead of $\bar{\varrho}$ and $\bar{q}$, etc.\ For similar reasons, we redefine $\Omega = \Omega^{\rm R}$ and $Q = Q^{\rm R}$, $\bar{\tau} = \bar{\tau}/t^{\rm R}$, etc.\
Whenever we assume that the functions $\bar{g}_1, \ldots, \bar{g}^N$ satisfy the assumptions (A), we use $p^0 = 1$ therein. We shall write $\pi^m$ instead of $p^m_{\Delta}$.

\subsubsection{A uniform property of $P_m$.}

Several properties of the change of variables \eqref{MV1} and the fields $P$ and $\mathscr{R}$ of \eqref{newpress}, \eqref{newpartdens} were investigated in \cite{dredrugagu20}, but the estimates established there are not independent of the parameter $m$ and cannot be used directly for the incompressible limit. It is our aim to next we prove some uniform estimates for the present context. 

Throughout this section we restrict to a special choice of $\{\xi^1, \ldots, \xi^N\}$ where, in addition to $\xi^N = 1^N$, we require that $\xi^{N-1} = \bar{\calv}$. This is possible if $\bar{\calv}$ is not parallel to $1^N$ which is the common case.
We let $\{\eta^1, \ldots, \eta^N\}$ be the dual basis. Then $\eta^{N-1} \cdot \bar{\calv} = 1$. As exhibited in \cite{druetmixtureincompweak}, \cite{bothedruetincompress} the component $q_{N-1}$ plays a very special part in the analysis of the incompressible model. For a  vector in $X \in \mathbb{R}^{N-1}$, we hence shall employ the notation $X =(X^{\prime}, \, X_{N-1})$ with $X^{\prime} = (X_1,\ldots,X_{N-2})$.

For fixed functions $\bar{g}_1, \ldots, \bar{g}_N$ satisfying (A1)-(A3), recall that \eqref{EOSnormalised2}, \eqref{piiswidehatp} allows to introduce a function $\widehat{p}$ such that $\sum_{i=1}^N\bar{g}_i^{\prime}(\widehat{p}(\rho)) \, \rho_i = 1$. This implies, in particular, that
\begin{align*}
 \min \bar{g}^{\prime}(\widehat{p}(\rho)) \leq \frac{1}{\varrho} \leq \max \bar{g}^{\prime}(\widehat{p}(\rho)) \quad \text{ and } \quad [\max \bar{g}^{\prime}]^{-1}(1/\varrho) \leq \widehat{p}(\rho) \leq  [\min \bar{g}^{\prime}]^{-1}(1/\varrho) \, .
\end{align*}
For states such that $0 < a \leq \varrho \leq b < +\infty$, we therefore have
\begin{align}\label{gabiskrank3}
\widehat{p}_0(1/b):= [\max \bar{g}^{\prime}]^{-1}(1/b) \leq \widehat{p}(\rho) \leq  [\min \bar{g}^{\prime}]^{-1}(1/a) =: \widehat{p}_1(1/a) \, .
\end{align}
\begin{lemma}\label{pressandg}
Assume that $ \bar{g}$ satisfies ${\rm (A1)-(A3)}$ and ${\rm (A^{\prime})}$, and define $\bar{g}^m \in C^2(]-m, \, +\infty[)$ via \eqref{ginormalised}. 
Then the function $P_m$ of \eqref{RescaledbigP} belong to $C^1(]0,+\infty[ \times  \mathbb{R}^{N-1})$ and it is strictly increasing in the first variable. In addition, there are constants $C_1,C_2 >0$ independent on $m$ such that
\begin{align*}
|P_m(\varrho, \, q)| \leq & C_1  \, \Big(\ln \frac{1}{\min\{\bar{\varrho}_{\max} -\varrho, \, \varrho - \bar{\varrho}_{\min}  \}} +|q|\Big) \, ,\\
|P_m(\varrho, \, q) - q_{N-1}| \leq &  C_2 \, \Big(\Big[\ln \frac{1}{\min\{\bar{\varrho}_{\max} -\varrho, \, \varrho - \bar{\varrho}_{\min}  \}}\Big]^2 +|q^{\prime}|+ \frac{|q|^2}{m}\Big) \, ,
\end{align*}
for all $q \in \mathbb{R}^{N-1}$ and for all $\varrho$ subject to $\bar{\varrho}_{\min} = 1/ \max \bar{\calv} < \varrho < 1/\min \bar{\calv} = \bar{\varrho}_{\max}$. 
\end{lemma}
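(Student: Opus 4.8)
The plan is to exploit the explicit structure of the ideal model recalled in \eqref{lesidentes}, \eqref{rescaledpot}. For a rescaled weak solution, the relation $\bar{\mu} = \Pi \bar{q} + \bar{\mathscr{M}}^m(\bar{\varrho}, \bar{q}) \, 1^N$ together with $\bar{\mu}_i = \bar{g}_i^m(\pi) + \bar{M}_i^{-1} \ln \bar{x}_i$ (where $\pi = P_m(\varrho,q)$) gives, after contracting with the dual vector $\eta^{N-1}$ (which satisfies $\eta^{N-1}\cdot 1^N = 0$ and $\eta^{N-1}\cdot\bar{\calv} = 1$), an identity of the form
\begin{align*}
 q_{N-1} = \eta^{N-1}\cdot\bar{\mu} = \sum_{i=1}^N \eta^{N-1}_i\,\bar{g}_i^m(\pi) + \sum_{i=1}^N \frac{\eta^{N-1}_i}{\bar{M}_i}\,\ln \bar{x}_i \, .
\end{align*}
The first step is to control $\pi = P_m(\varrho,q)$ itself. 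Since $\pi$ solves the normalised equation of state $\sum_i (\bar{g}_i^m)'(\pi)\,\rho_i = 1$, and by \eqref{piiswidehatp} one has $\pi = m(\widehat{p}(\rho)-1)$ with $\widehat{p}$ the $m$-independent function of \eqref{EOSnormalised2}, the bounds \eqref{gabiskrank3} give $\widehat{p}_0(1/b) \le \widehat{p}(\rho) \le \widehat{p}_1(1/a)$ on $a \le \varrho \le b$; but near the endpoints $\bar{\varrho}_{\min}, \bar{\varrho}_{\max}$ one must be more careful, because $a \to \bar{\varrho}_{\min}$ forces $\widehat{p}(\rho) \to \infty$. Here assumption (A$^{\prime}$) (strict ordering $\bar{g}'_{k_1} < \dots < \bar{g}'_{k_N}$) and the growth assumptions (A4)--(A6) must be used to quantify how fast: the claim is that $\widehat{p}(\rho) - 1$ is comparable to (a power of) $\min\{\bar{\varrho}_{\max}-\varrho,\varrho-\bar{\varrho}_{\min}\}^{-1}$ only through the logarithm, since $\bar{g}^m_i$ grows at most polynomially in $\pi$ while $\pi$ itself grows polynomially in the distance-to-endpoint. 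Combining $\bar{g}^m_i(\pi) = m(\bar{g}_i(1+\pi/m) - \bar{g}_i(1))$, monotonicity, and (A4), one extracts $|\bar{g}^m_i(\pi)| \lesssim \ln(1/\mathrm{dist}) + |q|$ — the $|q|$ entering because $\pi$ depends on $q$ through $\bar{\mathscr{M}}^m$ and the mole fractions $\bar x_i$, which in turn are controlled by $|q|$ via the inverse of $\nabla \bar f^m$. Feeding this into the identity for $q_{N-1}$, and using $|\ln \bar{x}_i| \lesssim |q|$ (the standard estimate for the entropic change of variables, cf.\ \cite{dredrugagu20}), yields the first claimed bound $|P_m(\varrho,q)| \le C_1(\ln(1/\mathrm{dist}) + |q|)$, since $|P_m| = |\pi|$.

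For the second, sharper estimate one isolates the leading term: since $\eta^{N-1}\cdot\bar{\calv}=1$ and $\bar{g}_i^{m,\infty}(\pi) = \bar{\calv}_i \pi$ is the formal limit \eqref{CONVFORMALE}, one writes
\begin{align*}
 P_m(\varrho,q) - q_{N-1} = -\sum_{i=1}^N \eta^{N-1}_i\,\bigl(\bar{g}_i^m(\pi) - \bar{\calv}_i\,\pi\bigr) - \sum_{i=1}^N \frac{\eta^{N-1}_i}{\bar{M}_i}\,\ln\bar{x}_i \, .
\end{align*}
The remainder $\bar{g}_i^m(\pi) - \bar{\calv}_i\pi = m\bigl(\bar{g}_i(1+\pi/m)-\bar{g}_i(1)-\bar g_i'(1)\pi/m\bigr)$ is, by Taylor's theorem with the second-order remainder and assumption (A6) bounding $p|\bar g_i''(p)|$ by $\bar g_i'(p)$, of size $O(\pi^2/m)$ — which by the first estimate is $O((\ln(1/\mathrm{dist}) + |q|)^2/m) \lesssim (\ln(1/\mathrm{dist}))^2/m + |q|^2/m$; one must check that the cross term $\ln(1/\mathrm{dist})\cdot|q|/m$ is absorbed, e.g.\ by Young's inequality into $(\ln(1/\mathrm{dist}))^2$ plus $|q|^2/m$, noting $\ln(1/\mathrm{dist})/m \le \ln(1/\mathrm{dist})$. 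For the logarithmic term $\sum_i \frac{\eta^{N-1}_i}{\bar{M}_i}\ln\bar{x}_i$, the point is that it involves only the mole fractions, hence the reduced variables $q^{\prime} = (q_1,\dots,q_{N-2})$ and $\varrho$ — not $q_{N-1}$ — because $q_{N-1}$ has been cancelled on the left; more precisely the combination $\sum_i \eta^{N-1}_i \bar M_i^{-1}\ln\bar x_i$ can be expressed through $\eta^{N-1}\cdot(\bar\mu - g(\pi))$ in a way that only picks up $q^{\prime}$ and a function of $\varrho$ that is $O(\ln(1/\mathrm{dist}))$. This gives the bound $C_2\bigl((\ln(1/\mathrm{dist}))^2 + |q^{\prime}| + |q|^2/m\bigr)$.

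The regularity statement ($P_m \in C^1$, strictly increasing in $\varrho$) follows from the implicit function theorem applied to $\sum_i (\bar g_i^m)'(\pi)\rho_i = 1$ with $\rho = \mathscr{R}(\varrho,q)$: by ($\bar{\rm A}$2) the derivative $\sum_i(\bar g_i^m)''(\pi)\rho_i < 0$ is nonzero, and monotonicity in $\varrho$ comes from $\partial_\varrho \rho_i \ge 0$ along the fibre together with $(\bar g_i^m)' > 0$, exactly as in \cite{dredrugagu20}; the $m$-independence of the constants is the new content. I expect the \textbf{main obstacle} to be the quantitative control of $\widehat{p}(\rho)$ — equivalently of $\pi$ — near the density endpoints $\bar{\varrho}_{\min},\bar{\varrho}_{\max}$ uniformly in $m$: one must show that only a logarithmic blow-up occurs and that the $m$-dependent factor $m$ in $\pi = m(\widehat p(\rho)-1)$ is correctly compensated when passing through $\bar g_i^m$, and this is where (A$^{\prime}$), (A4) and (A6) have to be combined with some care. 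The estimate $|\ln \bar x_i| \lesssim |q|$ for the entropic variables, while standard, also needs to be invoked with $m$-uniform constants, which relies on the structure of $\bar f^m$ near the Gamma-limit $f^\infty$.
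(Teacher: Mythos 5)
Your proposal gets the easy algebra right (the identity $P_m(\varrho,q)-q_{N-1}=-\sum_i\eta^{N-1}_i\,(\bar g^m_i(\pi)-\bar{\calv}_i\pi)-\sum_i\eta^{N-1}_i\bar M_i^{-1}\ln\bar x_i$, the Taylor bound $|\bar g^m(\pi)-\bar{\calv}\pi|\lesssim\pi^2/m$ on a compact pressure range, and the Young absorption of the cross term), but the core of the lemma — the first estimate $|P_m(\varrho,q)|\le C_1(\ln(1/\min\{\bar\varrho_{\max}-\varrho,\varrho-\bar\varrho_{\min}\})+|q|)$ with constants uniform in $m$ — is not actually proved, and the route you sketch for it cannot work as written. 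It is circular: you propose to bound $\bar g^m_i(\pi)$ by $\ln(1/\mathrm{dist})+|q|$, but any such bound presupposes control of $\pi=P_m(\varrho,q)$, which is precisely what is to be shown. The auxiliary claim you lean on, $|\ln\bar x_i|\lesssim|q|$, is false: at bounded $q$ the mole fractions become exponentially small as $\varrho$ approaches $\bar\varrho_{\min}$ or $\bar\varrho_{\max}$ (the mixture must become nearly pure in the densest, resp.\ lightest, species), and this is exactly why the logarithm of the distance to the endpoints appears in the statement; only the projected combination $\mathcal{P}(\bar M^{-1}\ln x+\bar g^m(\pi))=\mathcal{P}\Pi q$ is controlled by $|q|$. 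Your diagnosis of the ``main obstacle'' is also off: for $\bar\varrho_{\min}<\varrho<\bar\varrho_{\max}$ the quantity $\widehat p(\rho)=1+\pi/m$ stays in a fixed compact interval $[\widehat p_0,\widehat p_1]$ independent of $m$ and of the distance to the endpoints (this is \eqref{gabiskrank3}); nothing blows up there, $\widehat p$ does not tend to $+\infty$ as $\varrho\to\bar\varrho_{\min}$, and the growth conditions (A4)--(A6) are not the source of the logarithm.

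What is missing is the actual mechanism, which combines three ingredients. First, one works with \emph{differences} of chemical potentials, $\mu_i-\mu_{k_1}$ (resp.\ $\mu_i-\mu_{k_N}$), so that the unknown $1^N$-component $\bar{\mathscr M}^m$ drops out and only $\sqrt2\,|\mathcal{P}\mu|\le c|q|$ (resp.\ $\sqrt2\,|\Pi'q|$ after subtracting $\bar{\calv}\,q_{N-1}$) enters, cf.\ \eqref{imple}, \eqref{impleprime}. Second, assumption (A$'$) on the compact interval $[\widehat p_0,\widehat p_1]$ yields a uniform gap $d_0>0$ (see \eqref{D0}) with $\bar g^m_{i}(\pi)-\bar g^m_{k_1}(\pi)\ge d_0\,\pi$ for $\pi\ge0$, and symmetrically with $k_N$ for $\pi\le0$; this is the coercivity that converts the chemical-potential difference into a bound on $\pi$ itself, and it is never articulated in your proposal. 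Third, the equation of state $\sum_i y_i\,\partial_\pi\bar g^m_i(\pi)=1/\varrho$ gives a \emph{lower} bound on the largest mole fraction among the species $i\ne k_1$ (resp.\ $i\ne k_N$) of order $\epsilon_0\sim\min\{\bar\varrho_{\max}-\varrho,\,\varrho-\bar\varrho_{\min}\}$, and $-\ln x_{i_1}\lesssim\ln(1/\epsilon_0)$ is the sole source of the logarithm. The same three steps (together with $\ln x_{k_1}\le0$ and the concavity inequality $\bar g^m(\pi)\le\bar{\calv}\pi$ to fix signs) are what justify your assertion that the weighted log-term in the second estimate is controlled by $\ln(1/\mathrm{dist})+|q'|$; as stated, your justification (``it can be expressed through $\eta^{N-1}\cdot(\bar\mu-g(\pi))$'') merely reintroduces $q_{N-1}$ and proves nothing. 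Without these steps the claimed $m$-uniform constants, and in particular the splitting into $|q'|$ versus $|q|^2/m$, are not established.
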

\begin{proof}
 The fact that for all $q \in \mathbb{R}^{N-1}$, the function $\varrho \mapsto P_m(\varrho, \, q)$ is strictly increasing on $\mathbb{R}_+$, was proved in \cite{dredrugagu20}. We next prove in two steps the bound for $|P_m|$.

 Given $(\varrho, \, q) \in \mathbb{R}_+ \times \mathbb{R}^{N-1}$, we recall \eqref{lesidentes} and define $\mu := \Pi q + \bar{\mathscr{M}}^m(\varrho, \, q)$ and $\rho = (\nabla_{\bar{\rho}} \bar{f}^m)^{-1}(\mu)$. With $x = \hat{x}(\rho)$ and $\pi = P_m(\varrho, \, q) = \hat{\pi}^{m}(\rho)$, we then get $\mu = \bar{g}^m(\pi) + 1/\bar{M} \, \ln x$, according to \eqref{rescaledpot}.
Recall that $\sum_{i=1}^N\bar{g}_i^{\prime}(1+\pi/m) \, \rho_i = 1$ characterises $ 1+\pi/m = \widehat{p}(\rho)$, where $\widehat{p}$ is the pressure-function introduced in \eqref{piiswidehatp}. Therefore, \eqref{gabiskrank3} implies that $\widehat{p}_0(\min \bar{\calv}) \leq \widehat{p}(\rho) \leq  \widehat{p}_1(\max \bar{\calv})$ whenever $1/\max \bar{\calv} \leq |\rho|_1 \leq 1/\min \bar{\calv}$. Thus, under this restriction, we find that $\widehat{p}_0 \leq  1 + \pi/m \leq \widehat{p}_1$. We let $k_1$ ($k_N$) be an index such that $\bar{g}^{\prime}_{k_1}= \min  \bar{g}^{\prime}$ ($\bar{g}^{\prime}_{k_N} = \max  \bar{g}^{\prime}$) according to the assumption (A$^{\prime}$), allowing to define a constant
\begin{align}\label{D0}
 d_0 := \min\Big\{\inf_{i\neq k_1,  \, 1 \leq s \leq \widehat{p}_1} \bar{g}_{i}^{\prime}(s)-\bar{g}_{k_1}^{\prime}(s), \, \inf_{i\neq k_N, \, \widehat{p}_0 \leq s \leq 1} \bar{g}_{k_N}^{\prime}(s)-\bar{g}_{i}^{\prime}(s)\Big\} > 0\, .
\end{align}
{\bf Upper bound for $P_m$:} For all $i \neq k_1$ the definition of $\mu$ yields
$\bar{g}^m_i(\pi) - \bar{g}^m_{k_1}(\pi) = 1/M_{k_1} \, \ln x_{k_1} - 1/M_i \, \ln x_i + \mu \cdot (e^i - e^{k_1})$, implying that
\begin{align}\label{imple}
 \bar{g}^m_i(\pi) - \bar{g}^m_{k_1}(\pi) \leq - \frac{1}{M_i} \, \ln x_i + \sqrt{2} \, |\mathcal{P}\mu| \, .
\end{align}
Let $y_i = \hat{y}_i(\rho)$ (cf.\ \eqref{LESFRACS}). Since $\pi = \hat{\pi}^m(\rho)$, the definition \eqref{EOSnormalised} of the rescaled pressure yields
\begin{align*}
 \sum_{i=1}^N y_i \, (\partial_{\pi}\bar{g}^m_{i}(\pi) - \partial_{\pi}\bar{g}_{k_1}^{m}(\pi)) = \frac{1}{\varrho} - \partial_{\pi}\bar{g}_{k_1}^{m}(\pi) \, .
\end{align*}
In view of (A$^{\prime}$) and since $ \partial_{\pi} \bar{g}^m_i(\pi) = \bar{g}^{\prime}_i(1+\pi/m)$, we can rely on the fact that $\partial_{\pi}\bar{g}_{k_1}^{m}(\pi) = \min \partial_{\pi}\bar{g}^{m}(\pi)$. 
We fix an index $i_1$ such that $y_{i_1} = \max_{i\neq k_1} y_i$, the latter implies that
\begin{align*}
 \frac{1}{\varrho} - \partial_{\pi}\bar{g}_{k_1}^{m}(\pi) \leq (N-1) \, y_{i_1} \, (\partial_{\pi}\bar{g}_{k_N}^{m}(\pi) - \partial_{\pi}\bar{g}_{k_1}^{m}(\pi)) \, .
\end{align*}
Since $y_{i_1} \leq \frac{\max M}{\min M}  \, x_{i_1}$ we get
\begin{align*}
 x_{i_1} \geq \frac{\min M}{\max M\, (N-1) \, \varrho} \, \frac{1 -\varrho \, \partial_{\pi}\bar{g}_{k_1}^{m}(\pi)}{\partial_{\pi}\bar{g}_{k_N}^{m}(\pi) - \partial_\pi\bar{g}_{k_1}^{m}(\pi)} \, .
\end{align*}
We choose $i = i_1$ in \eqref{imple}, and we now obtain that
\begin{align}\label{QQ}
\bar{g}^m_{i_1}(\pi) - \bar{g}^m_{k_1}(\pi) \leq & - \frac{1}{M_{i_1}} \, \ln \Big(\frac{\min M}{\max M \, (N-1) \, \varrho}  \, \frac{1 - \varrho\, \partial_\pi \bar{g}_{k_1}^{m}(\pi)}{\partial_\pi \bar{g}_{k_N}^{m}(\pi) - \partial_\pi \bar{g}_{k_1}^{m}(\pi)} \Big) + \sqrt{2} \, |\mathcal{P} \mu| 
=: & S\, .
\end{align}
Since $\bar{g}_i^m(1) = 0$ for $i=1,\ldots,N$ due to \eqref{ginormalised},
\begin{align*}
 \bar{g}^m_{i}(\pi) =& \bar{g}^m_i(\pi) - \bar{g}^m_i(0) = \int_{0}^1 \partial_{\pi}\bar{g}_i^m(\theta \, \pi) \, d\theta \, \pi = \int_{0}^1 \bar{g}_i^{\prime}\Big(1+\frac{\theta}{m} \, \pi \Big) \, d\theta \, \pi \, ,
\end{align*}
and it follows that $\bar{g}^m_{i_1}(\pi) - \bar{g}^m_{k_1}(\pi) = \int_{0}^1 (\bar{g}_{i_1}^{\prime}-\bar{g}_{k_1}^{\prime})(1+\theta\, \pi/m) \, d\theta \, \pi$.
For $\pi \geq 0$ (equivalent to $\widehat{p} \geq 1$) we see that $\bar{g}^m_{i_1}(\pi) - \bar{g}^m_{k_1}(\pi) \geq d_0  \, \pi$ with the constant of \eqref{D0}. We combine the latter with \eqref{QQ} and, using also Young's inequality, we obtain that $\pi \leq  S/d_0$. 

Now let us estimate the quantity $S$ of \eqref{QQ}.
If we assume that $1/\max \bar{\calv} < \varrho < 1/\min \bar{\calv}$, then with
\begin{align*}
 \epsilon_0 := \frac{1}{\varrho} \, \frac{1-\varrho \, \min \bar{\calv}}{\max \bar{\calv} - \min \bar{\calv}}\, ,
\end{align*}
we have $1/\varrho = \min  \bar{\calv} \, (1-\epsilon_0) + \max \bar{\calv} \, \epsilon_0$. If $\pi \geq 0$, then $ \bar{g}_k^{\prime}(1+ \pi/m) \leq \bar{g}_k^{\prime}(1) = \bar{\calv}_k$, thus
\begin{align*}
\varrho \leq \frac{1}{\partial_\pi \bar{g}_{k_1}^{m}(\pi) \, (1-\epsilon_0) + \partial_\pi \bar{g}_{k_N}^{m}(\pi) \, \epsilon_0}  \,\quad 
\text{ yielding also } \quad \varrho \, \frac{\partial_\pi \bar{g}_{k_N}^{m}(\pi) - \partial_\pi \bar{g}_{k_1}^{m}(\pi)}{1-\varrho \, \partial_\pi \bar{g}_{k_1}^{m}(\pi)} \leq \frac{1}{\epsilon_0} \, .
\end{align*}
This allows to bound
\begin{align*}
S \leq  \frac{1}{\min M} \, \ln\Big( \frac{ \max M\, (N-1)}{\min M \, \epsilon_0} \Big) 
 + \sqrt{2} \, |\mathcal{P} \mu| \, .
\end{align*}

{\bf Lower bound for $P_m$:}
For the lower bound we argue similarly. In the same way as \eqref{imple}, we prove that $\bar{g}^m_i(\pi) - \bar{g}^m_{k_N}(\pi) \leq - 1/M_i \, \ln x_i + \sqrt{2} \, |\mathcal{P}\mu|$.
Again, the definition of the pressure implies that
\begin{align*}
 \sum_{i\neq k_N} y_i \, (\partial_{\pi} \bar{g}_{i}^{m}(\pi)-\partial_\pi \bar{g}_{k_N}^{m}(\pi)) = \frac{1}{\varrho} - \partial_\pi \bar{g}_{k_N}^{m}(\pi) \, ,
\end{align*}
and, adapting the definition of $i_1$ so that $y_{i_1} = \max_{i\neq k_N} y_i$ we obtain that
\begin{align*}
 y_{i_1} \geq \frac{1}{(N-1) \, \varrho} \, \frac{\partial_\pi \bar{g}_{k_N}^{m}\, \varrho-1}{\partial_\pi \bar{g}_{k_N}^{m} - \partial_\pi \bar{g}_{k_1}^{m}} \, . 
\end{align*}
It follows that
\begin{align*}
\bar{g}^m_{i_1}(\pi) - \bar{g}^m_{k_N}(\pi) \leq - \frac{1}{M_{i_1}} \, \ln \Big(\frac{\min M}{\max M \, (N-1) \, \varrho}  \, \frac{\varrho\, \partial_\pi \bar{g}_{k_N}^{m}(\pi) - 1}{\partial_\pi \bar{g}_{k_N}^{m}(\pi) - \partial_\pi \bar{g}_{k_1}^{m}(\pi)} \Big) + \sqrt{2} \, |\mathcal{P} \mu| \, .
\end{align*}
Now if $\pi \leq 0$ ($\widehat{p} \leq 1$), then 
\begin{align*}
 \bar{g}^m_{i_1}(\pi) - \bar{g}^m_{k_N}(\pi) =&  \int_{0}^1 (\bar{g}_{i_1}^{\prime}-g_{k_N}^{\prime})\Big(1+\frac{\theta}{m} \, \pi \Big) \, d\theta \, \pi
 \geq d_0 \, |\pi| \, .
\end{align*}
We finish as for the upper bound.
Next we discuss the combination $P_m(\varrho,q) - q_{N-1} =: \tilde{P}_m(\varrho,q)$. 

{\bf Bounds for $\tilde{P}_m$:}
Starting from \eqref{rescaledpot}, we also get
 \begin{align}\begin{split}\label{rescaledpot1}
  \mu_i - \bar{\calv}_i \, q_{N-1} = & \frac{1}{M_i} \, \ln x_i  +\bar{g}_i^m(\pi) - \bar{\calv}_i \, \pi + \bar{\calv}_i \, \tilde{\pi} \, .
\end{split}
 \end{align}
For all $i \neq k_1$, \eqref{rescaledpot1} yields
\begin{align*}
(\bar{\calv}_i - \bar{\calv}_{k_1}) \, \tilde{\pi} = & \bar{g}^m_{k_1}(\pi) - \bar{\calv}_{k_1} \, \pi - (\bar{g}^m_{i}(\pi) - \bar{\calv}_{i} \, \pi)  + 1/M_{k_1} \, \ln x_{k_1} - 1/M_i \, \ln x_i \\
& + (\mu - \bar{\calv} \, q_{N-1}) \cdot (e^i - e^{k_1}) \, . 
\end{align*}
Note that $\ln x_{k_1} \leq 0$ and that, due to the concavity of $\bar{g}$,
\begin{align}\label{gabiskrank}
\bar{g}^m(\pi) - \bar{\calv} \, \pi = m \, \Big(\bar{g}\Big(1+\frac{\pi}{m}\Big) - \bar{g}(1) - \bar{g}^{\prime}(1) \, \frac{\pi}{m}\Big)   \leq 0 \, .
\end{align}
Since $(\mu - \bar{\calv} \, q_{N-1}) \cdot (e^i - e^{k_1}) = \Pi^{\prime}q \cdot (e^i - e^{k_1})$, with $\Pi^{\prime} q = \sum_{k=1}^{N-2} q_k \, \xi^k = \Pi (q^{\prime},0)$,
\begin{align}\label{impleprime}
(\bar{\calv}_i - \bar{\calv}_{k_1}) \, \tilde{\pi} \leq - (\bar{g}^m_{i}(\pi) - \bar{\calv}_{i} \, \pi) - \frac{1}{M_i} \, \ln x_i + \sqrt{2} \, |\Pi^{\prime} q| \, .
\end{align}
We choose $i = i_1$ in \eqref{impleprime}, and we now obtain that
\begin{align}\label{QQprime}
(\bar{\calv}_{i_1} - \bar{\calv}_{k_1}) \, \tilde{\pi} \leq & - (\bar{g}^m_{i_1}(\pi) - \bar{\calv}_{i_1} \, \pi) - \frac{1}{M_{i_1}} \, \ln \Big(\frac{\min M}{\max M \, (N-1) \, \varrho}  \, \frac{1 - \varrho\, \partial_\pi \bar{g}_{k_1}^{m}(\pi)}{\partial_\pi \bar{g}_{k_N}^{m}(\pi) - \partial_\pi \bar{g}_{k_1}^{m}(\pi)} \Big)\nonumber \\
& + \sqrt{2} \, |\Pi^{\prime} q|  \, .
\end{align}
The term with the logarithm on the right-hand side of \eqref{QQprime} can be estimate as in the case of \eqref{QQ}. Recalling the identity \eqref{gabiskrank} we also have
\begin{align}\label{gabiskrank2}
 \bar{g}^m(\pi) - \bar{\calv} \, \pi = \int_0^1\int_0^{\theta} \bar{g}^{\prime\prime}\Big(1+\lambda \, \frac{\pi}{m}\Big) \, d\lambda d\theta \, \frac{\pi^2}{m} \, .
\end{align}
Thus, exploiting the inequalities \eqref{gabiskrank3}, we find that $|\bar{g}^m(\pi) - \bar{\calv} \, \pi| \leq \sup_{\widehat{p}_0 < s < \widehat{p}_1} |\bar{g}^{\prime\prime}(s)| \, (\pi)^2/m$.
Since $\pi = P_m(\varrho, \, q)$, it can be estimated using the first claim. We obtain that
\begin{align*}
 \pi^2 \leq C^2_1 \, \Big(\ln \frac{1}{\min\{\bar{\varrho}_{\max} - \varrho, \, \varrho -\bar{\varrho}_{\min}\}} + |q|\Big)^2 \, .
\end{align*}
Overall, \eqref{QQprime} implies that
\begin{align*}
d_0 \,  \tilde{\pi} \leq  & C^2_1 \, \frac{\sup_{\widehat{p}_0 < s < \widehat{p}_1} |\bar{g}^{\prime\prime}(s)|}{m}  \, \Big(\ln \frac{1}{\min\{\bar{\varrho}_{\max} - \varrho, \, \varrho -\bar{\varrho}_{\min}\}} + |q|\Big)^2  \\ 
 &+ C \, \Big(\ln \frac{1}{\min\{\bar{\varrho}_{\max} - \varrho, \, \varrho -\bar{\varrho}_{\min}\}} + |\Pi^{\prime} q|\Big)  \, .
\end{align*}
We prove the lower bound for $\tilde{\pi}$ similarly.
\end{proof}

\subsubsection{Controlling $\{\pi^m\}$ for type-I weak solutions}

In this section, we begin with proving a uniform $L^1-$bound for the sequence of pressures. It turns out easier to obtain a bound for the functions $\tilde{\pi}^m := \pi^m - q^m_{N-1}$ as for $\pi^m$ directly. Note that $\tilde{\pi}^m = \tilde{P}_m(\varrho, \, q) = P_m(\varrho, \, q) - q^m_{N-1}$ corresponds to the nonlinear part $P_{\infty}$ of the pressure function (cf.\ \eqref{newpartdensincomp} that occurs in the analysis of the incompressible model: \cite{druetmixtureincompweak}, \cite{bothedruetincompress}.

Then, we are able to show that the crucial quantity $|\bar{g}^m(\pi^m) - \bar{\calv} \, \pi^m|$ in the relative energy inequality tends to zero on certain subsets.
We begin the proof of these statements with a useful preliminary.
\begin{lemma}\label{Meslemma}
Assume that $(\rho^m, \, v^m)$ is a weak solution to \emph{($\overline{\text{ IBVP}}^m$)}. For $0< a < b < +\infty$ and $t \in ]0,\bar{\tau}[$ we let $\Omega_{a,b}(t) :=\{x \, : \, a \leq \varrho^m(x,t) \leq b\}$. If $ a < \bar{\varrho}_{\min} = 1/\max\bar{\calv}$ and $b>\bar{\varrho}_{\max} = 1/\min\bar{\calv}$ then 
\begin{align*}
\inf_{0<t<\bar{\tau}} |\Omega_{a,b}(t)| > |\Omega| -  \frac{\|\rho^m \cdot \bar{\calv}-1\|_{L^{1,\infty}}}{\min\{b/\bar{\varrho}_{\max}-1, \, 1- a/\bar{\varrho}_{\min}\}} \quad \text{ for all } \quad m \in \mathbb{N} \, . 
\end{align*}
\end{lemma}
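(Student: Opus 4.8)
This is a purely measure-theoretic statement: the weak-solution hypothesis on $(\rho^m,v^m)$ is used only through the fact that $\rho^m$ is a nonnegative measurable field with $\rho^m\cdot\bar{\calv}-1\in L^{1,\infty}(Q)$. The plan is to produce a pointwise two-sided bound sandwiching $\rho^m\cdot\bar{\calv}$ between multiples of the total density $\varrho^m=\sum_{i=1}^N\rho^m_i$, deduce from it that $|\rho^m\cdot\bar{\calv}-1|$ is bounded below by a fixed positive constant off the set $\Omega_{a,b}(t)$, and then integrate. For Step~1, at any $(x,t)$ with $\varrho^m(x,t)>0$ write $\rho^m\cdot\bar{\calv}=\varrho^m\sum_{i=1}^N\hat{y}_i(\rho^m)\,\bar{\calv}_i$ with mass fractions $\hat{y}_i(\rho^m)=\rho^m_i/\varrho^m\ge0$ summing to $1$ (cf.\ \eqref{LESFRACS}); hence $\sum_i\hat{y}_i(\rho^m)\bar{\calv}_i$ is a convex combination of $\bar{\calv}_1,\dots,\bar{\calv}_N$, so it lies in $[\min\bar{\calv},\max\bar{\calv}]=[1/\bar{\varrho}_{\max},1/\bar{\varrho}_{\min}]$. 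This gives
\[
 \frac{\varrho^m}{\bar{\varrho}_{\max}}\ \le\ \rho^m\cdot\bar{\calv}\ \le\ \frac{\varrho^m}{\bar{\varrho}_{\min}}\qquad\text{a.e.\ in }Q,
\]
the inequality being trivially valid also where $\varrho^m=0$.

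\textbf{Step 2 (lower bound off $\Omega_{a,b}$).} Set $\delta:=\min\{b/\bar{\varrho}_{\max}-1,\ 1-a/\bar{\varrho}_{\min}\}$, which is $>0$ precisely because $a<\bar{\varrho}_{\min}$ and $b>\bar{\varrho}_{\max}$. If $\varrho^m(x,t)>b$, Step~1 yields $\rho^m\cdot\bar{\calv}-1>b/\bar{\varrho}_{\max}-1\ge\delta$; if $\varrho^m(x,t)<a$, it yields $1-\rho^m\cdot\bar{\calv}>1-a/\bar{\varrho}_{\min}\ge\delta$. Since $\Omega\setminus\Omega_{a,b}(t)=\{\varrho^m(\cdot,t)<a\}\cup\{\varrho^m(\cdot,t)>b\}$, we conclude that $|\rho^m(x,t)\cdot\bar{\calv}-1|>\delta$ for a.e.\ $x\in\Omega\setminus\Omega_{a,b}(t)$, for a.e.\ $t$.

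\textbf{Step 3 (integration).} If $|\Omega\setminus\Omega_{a,b}(t)|>0$, integrating the strict pointwise bound of Step~2 over $\Omega\setminus\Omega_{a,b}(t)$ gives
\[
 \delta\,\bigl(|\Omega|-|\Omega_{a,b}(t)|\bigr)\ <\ \int_{\Omega\setminus\Omega_{a,b}(t)}|\rho^m\cdot\bar{\calv}-1|\,dx\ \le\ \int_\Omega|\rho^m(x,t)\cdot\bar{\calv}-1|\,dx\ \le\ \|\rho^m\cdot\bar{\calv}-1\|_{L^{1,\infty}(Q)},
\]
whence $|\Omega_{a,b}(t)|>|\Omega|-\|\rho^m\cdot\bar{\calv}-1\|_{L^{1,\infty}(Q)}/\delta$ for a.e.\ such $t$; when $|\Omega\setminus\Omega_{a,b}(t)|=0$ the same inequality holds because its right-hand side is $<|\Omega|=|\Omega_{a,b}(t)|$ (the degenerate case $\|\rho^m\cdot\bar{\calv}-1\|_{L^{1,\infty}(Q)}=0$ need not be considered, since by Step~1 it would force $\varrho^m\in[\bar{\varrho}_{\min},\bar{\varrho}_{\max}]\subset\,]a,b[$ a.e., i.e.\ $\Omega_{a,b}(t)=\Omega$ a.e., and the assertion is then trivial). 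Taking the infimum over $t\in\,]0,\bar{\tau}[$ yields the claim, and the bound is uniform in $m$ since $\delta$ depends only on $a,b,\bar{\calv}$.

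\textbf{Main difficulty.} There is essentially no analytic obstacle here; the only point requiring some care is the \emph{strict} inequality, which is why Step~2 is phrased with the strict bound $>\delta$ — available because $\varrho^m$ is \emph{strictly} above $b$, resp.\ strictly below $a$, on $\Omega\setminus\Omega_{a,b}(t)$ — rather than $\ge\delta$, and why the trivial sub-cases (null complement, or vanishing compressibility defect) are separated out before passing to the infimum.
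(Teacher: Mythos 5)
Your proof is correct and follows essentially the same route as the paper: the pointwise sandwich $\min\bar{\calv}\,\varrho^m \leq \rho^m\cdot\bar{\calv} \leq \max\bar{\calv}\,\varrho^m$, the resulting lower bound $|\rho^m\cdot\bar{\calv}-1|\geq \min\{b/\bar{\varrho}_{\max}-1,\,1-a/\bar{\varrho}_{\min}\}$ on $\Omega_{a,b}^{\rm c}(t)$, and a Chebyshev-type integration. The only (immaterial) difference is your extra fuss over strictness; the paper itself only derives the non-strict bound $\sup_t|\Omega_{a,b}^{\rm c}(t)|\leq k_1^{-1}\|\rho^m\cdot\bar{\calv}-1\|_{L^{1,\infty}}$, which is all that is ever used.
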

\begin{proof}
For the choices $0 < a < \bar{\varrho}_{\min}$ and $b > \bar{\varrho}_{\max}$, we can show that
\begin{align*} 
 \rho^m \cdot \bar{\calv}  \geq \min \bar{\calv} \, \varrho^m \geq \frac{b}{\bar{\varrho}_{\max}} > 1 \quad \text{ for } \quad \varrho^m \geq b \, , \\
 \rho^m \cdot \bar{\calv}  \leq \max \bar{\calv} \, \varrho^m \leq \frac{a}{\bar{\varrho}_{\min}} < 1 \quad \text{ for } \quad \varrho^m \leq a \, .
\end{align*}
Therefore, $| \rho^m \cdot \bar{\calv} - 1| \geq k_1 := \min\{b/\bar{\varrho}_{\max}-1, \, 1- a/\bar{\varrho}_{\min}\}$ on $\Omega_{a,b}^{\rm c}(t)$. This implies that
\begin{align*}
\sup_{0 < t < \bar{\tau}} |\Omega_{a,b}^{\rm c}(t)| \leq k_1^{-1} \, \|\rho^m \cdot \bar{\calv} - 1\|_{L^{1,\infty}} \, .
\end{align*}
\end{proof}
The pressure bound in $L^1$ for incompressible systems is expected (see \cite{feima16}, \cite{druetmixtureincompweak}). Here we show that it is valid uniformly for the approximating sequences.
\begin{lemma}\label{L1bound}
We assume that $\bar{g}$ satisfies all assumptions of Lemma \ref{pressandg}.
Assume that $(\rho^m, \, v^m)$ is a weak solution to \emph{($\overline{\text{ IBVP}}^m$)}, where the mobility tensor satisfies ${\rm (B1)}$, ${\rm (B2)}$ and ${\rm (B3^\prime})$. We let $\calw_0^m := \frac{1}{|\Omega|} \, \int_{\Omega} \varrho^{0,m}(x) \, dx$, and we assume that $\bar{\varrho}_{\min} < \inf_m \calw_0^m$ and $\sup_m \calw_0^m < \bar{\varrho}_{\max}$. Then, the sequence $\{\tilde{\pi}^m\}$ is uniformly bounded in $L^1(Q)$.
\end{lemma}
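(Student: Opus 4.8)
The plan is to derive the bound from the momentum equation \eqref{momentumrescfin} by testing it with a Bogovskii--type vector field built out of $\tilde\pi^m$, and then to turn the resulting identity into an $L^1$--estimate by an absorption argument; the inputs are Lemma \ref{pressandg}, Lemma \ref{Meslemma}, the uniform bounds \eqref{lesbounds}, the convergence \eqref{ilstendent1}, the energy inequality \eqref{DISSIP}, and the coercivity of $\bar f^m$ from Proposition \ref{UNIFF2}. Concretely, let $\mathcal{B}_{\Omega}\colon\{g\in L^s(\Omega):(g)_M=0\}\to W^{1,s}_0(\Omega;\mathbb{R}^3)$ be the Bogovskii operator ($\divv\mathcal{B}_\Omega g=g$, $\|\mathcal{B}_\Omega g\|_{W^{1,s}_0}\le c_s\|g\|_{L^s}$), fix $s>3$ and $\psi\in C^\infty_c(]0,\bar\tau[)$, put $b_k(\sigma):=\sign(\sigma)\,\mathbf 1_{\{|\sigma|>k\}}$, and use $\varphi^m:=\psi(t)\,\mathcal{B}_\Omega\big[b_k(\tilde\pi^m)-(b_k(\tilde\pi^m))_M\big]$ in the weak formulation of \eqref{momentumrescfin} (the time regularity required by the $\partial_{\bar t}(\bar\varrho\bar v)$--term is recovered by the usual mollification in $t$). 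Since $|b_k|\le1$ we get $\|\varphi^m\|_{L^\infty(Q)}+\|\nabla\varphi^m\|_{L^s(Q)}\le C$ uniformly in $m$ and $k$. Writing $\pi^m=\tilde\pi^m+q^m_{N-1}$ and \emph{keeping} the gradient on $q^m_{N-1}$ rather than integrating it by parts -- this is the whole point of working with $\tilde\pi^m$, because $\int_Q\nabla q^m_{N-1}\cdot\varphi^m$ is controlled by $\|\nabla q^m\|_{L^2(Q)}\|\varphi^m\|_{L^2(Q)}$ whereas $\int_Q q^m_{N-1}\divv\varphi^m$ would not be, $q^m_{N-1}$ being only bounded in $\tfrac1{\sqrt m}L^2$ -- one arrives at
\begin{align*}
\int_0^{\bar\tau}\!\psi\!\int_\Omega\tilde\pi^m\,b_k(\tilde\pi^m)\,dx\,dt=\int_0^{\bar\tau}\!\psi\,\big(b_k(\tilde\pi^m)\big)_M\!\int_\Omega\tilde\pi^m\,dx\,dt+\mathcal{I}^m_k\,,
\end{align*}
where $\mathcal{I}^m_k$ collects the inertial, viscous, gravitational and $\nabla q^m_{N-1}$--contributions.

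Next I would bound $\mathcal{I}^m_k$ by $C$, uniformly in $m$ and $k$, using \eqref{lesbounds}: the viscous term by $\|\nabla\bar v^m\|_{L^2(Q)}\|\nabla\varphi^m\|_{L^2(Q)}$; the gravity and $\partial_{\bar t}(\bar\varrho\bar v)$--terms by $\|\bar\varrho^m\|_{L^{\gamma,\infty}}$, $\|\sqrt{\bar\varrho^m}\bar v^m\|_{L^{2,\infty}}$ and $\|\varphi^m\|_{L^\infty}$ (with $\varphi^m$ continuous up to $\bar t=0,\bar\tau$); the $\nabla q^m_{N-1}$--term as above; and the convective term $\int_Q(\bar\varrho^m\bar v^m\otimes\bar v^m):\nabla\varphi^m$ by $\|\bar\varrho^m|\bar v^m|^2\|_{L^{s'}(Q)}\|\nabla\varphi^m\|_{L^s(Q)}$, where $\bar\varrho^m\bar v^m\otimes\bar v^m$ is bounded in $L^{s'}(Q)$ for some $s'>1$ \emph{precisely because $\gamma\ge9/5$} (standard interpolation $\bar v^m\in L^2(0,\bar\tau;W^{1,2})\cap L^\infty(0,\bar\tau;L^{2\gamma/(\gamma+1)})\hookrightarrow L^{10/3}(Q)$ together with $\bar\varrho^m\in L^{\gamma,\infty}(Q)$, as in the compressible Navier--Stokes literature). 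This is the only place the hypothesis $\gamma\ge9/5$ is used in this lemma.

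The delicate step is the first term on the right and the negative part of $\tilde\pi^m$. Split $Q=\{\bar\varrho_{\min}<\varrho^m<\bar\varrho_{\max}\}\cup E^m$. On the first set Lemma \ref{pressandg} applies; inspecting its proof one sees that the logarithmic singularity of $\tilde\pi^m$ near $\varrho=\bar\varrho_{\max}$ pushes only the \emph{positive} part up, while near $\varrho=\bar\varrho_{\min}$ the negative part satisfies $\tilde\pi^m_-\le C\log\big(1/(\varrho^m-\bar\varrho_{\min})\big)\le C-C\log\min_{j\ne i^*}x^m_j$, with $i^*$ the dominant (lightest) species. The entropy dissipation in \eqref{DISSIP} together with ${\rm(B3^{\prime})}$ gives $\|\nabla\mathcal{P}\mu^m\|_{L^2(Q)}\le C$, hence by Poincar\'e $\mathcal{P}\mu^m-(\mathcal{P}\mu^m)_M$ is bounded in $L^2(Q)$, which yields a uniform bound for $\int_Q(-\log x^m_j)\,dx\,dt$ for each $j$ (densities being positive a.e.); combined with the terms $|(q^m)'|\in L^2(Q)$ and $\tfrac1m|q^m|^2\in L^1(Q)$ from \eqref{lesbounds} this makes $\tilde\pi^m_-$ bounded in $L^1$ on $\{\bar\varrho_{\min}<\varrho^m<\bar\varrho_{\max}\}$. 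On $E^m=\{\varrho^m\le\bar\varrho_{\min}\}\cup\{\varrho^m\ge\bar\varrho_{\max}\}$, which by Lemma \ref{Meslemma} and \eqref{ilstendent1} has measure tending to $0$ uniformly in $t$, I would bound $\tilde\pi^m=m(\widehat p(\rho^m)-1)-q^m_{N-1}$ via the representation \eqref{piiswidehatp}, the $m$--uniform coercivity/growth of $\bar f^m$ (Prop.\ \ref{UNIFF2}), the energy bound $\int_\Omega\bar f^m(\rho^m)\le C$ (from \eqref{DISSIP}, \eqref{E0bounded}) and conservation of total mass $\int_\Omega\varrho^m=|\Omega|\,\calw^m_0$, so that the factor $m$ is absorbed. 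Hence $\|\tilde\pi^m_-\|_{L^1(Q)}\le C$ uniformly; in particular $(\tilde\pi^m)_M$ is bounded below, and using $|(b_k(\tilde\pi^m))_M|\le k^{-1}\|\tilde\pi^m(\cdot,t)\|_{L^1(\Omega)}$ and $\int_\Omega\tilde\pi^m=\int_\Omega\tilde\pi^m_+-\int_\Omega\tilde\pi^m_-$, the mean--value term is $\le C+Ck^{-1}\|\tilde\pi^m_+\|_{L^1(Q)}$.

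Collecting these, since $\tilde\pi^m\,b_k(\tilde\pi^m)=|\tilde\pi^m|\,\mathbf 1_{\{|\tilde\pi^m|>k\}}$, one gets for every $k>0$
\begin{align*}
\int_{\{|\tilde\pi^m|>k\}}|\tilde\pi^m|\,dx\,dt\le C+\frac{C}{k}\,\|\tilde\pi^m_+\|_{L^1(Q)}\,,\qquad\|\tilde\pi^m_+\|_{L^1(Q)}\le k\,|Q|+C+\frac{C}{k}\,\|\tilde\pi^m_+\|_{L^1(Q)}\,,
\end{align*}
with $C$ independent of $m,k$; choosing $k$ large to absorb the last term and then optimising in $k$ gives $\|\tilde\pi^m_+\|_{L^1(Q)}\le C$, and with the bound on $\tilde\pi^m_-$ the claim $\sup_m\|\tilde\pi^m\|_{L^1(Q)}<+\infty$ follows. (That $\tilde\pi^m\in L^1(Q)$ for fixed $m$, needed to run this, comes from the existence theory, Prop.\ \ref{EXIRESC}.) The hard part is the third step: the nonlinear pressure part genuinely has a logarithmic blow--up as $\varrho^m$ reaches the endpoints of $(\bar\varrho_{\min},\bar\varrho_{\max})$, and only a \emph{one--sided} and merely $L^1$ (not $L^2$) control of it is available from the entropy dissipation -- this is exactly why the conclusion is an $L^1$--bound and why the momentum equation must be invoked rather than a direct pointwise estimate through Lemma \ref{pressandg}; the subordinate difficulty is the estimate on $E^m$, where Lemma \ref{pressandg} is unavailable and the penalty factor $m$ has to be paid for out of the free energy.
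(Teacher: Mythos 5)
Your overall strategy (dualising against the momentum equation through a Bogovskii lift, and keeping the gradient on $q^m_{N-1}$ rather than integrating it by parts) is the right one and coincides with the paper's at that level, but the specific test function you choose creates genuine gaps. First, you build the lift from $b_k(\tilde\pi^m)-(b_k(\tilde\pi^m))_M$, i.e.\ from the pressure itself. To use it in the weak momentum balance you must make sense of the term with $\bar\varrho^m\bar v^m\cdot\partial_t\varphi^m$, and Lemma \ref{BOGS} requires the datum $\calg$ to satisfy $\partial_t\calg=\divv\calf$ with $\calf$ in a Lebesgue space. For $\calg=L(\varrho^m)-(L(\varrho^m))_M$ this structure is supplied by the renormalised continuity equation \eqref{Guldes}; for $b_k(\tilde\pi^m)$ there is no transport equation, the pressure of a weak solution has no time regularity at all, and mollification in $t$ does not produce an admissible test field with $m$- and $k$-uniform bounds. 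Second, your treatment of the bad set $E^m=\{\varrho^m\le\bar\varrho_{\min}\}\cup\{\varrho^m\ge\bar\varrho_{\max}\}$ (``the factor $m$ is absorbed'' by the free energy) does not work: the energy bound controls $m\,\rho^m\cdot\big(\bar g(\widehat p(\rho^m))-\bar g(1)-\bar g'(\widehat p(\rho^m))\,(\widehat p(\rho^m)-1)\big)$, a Bregman-type distance which is quadratic in $\widehat p-1$ near $\widehat p=1$ and grows only like $\widehat p^{1/\max\alpha}$ (sublinear) for large $\widehat p$; it therefore never dominates $|\pi^m|=m\,|\widehat p(\rho^m)-1|$ on $E^m$ with an $m$-independent constant, and $m\,|E^m(t)|$ is not bounded either (Lemma \ref{Meslemma} and \eqref{ilstendent1} only give $|E^m(t)|\to0$). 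Relatedly, your absorption of the mean-value term is circular: with only $\|\tilde\pi^m_-\|_{L^1(Q)}\le C$ in hand, the Chebyshev bound $|(b_k(\tilde\pi^m))_M|\le (k|\Omega|)^{-1}\|\tilde\pi^m(\cdot,t)\|_{L^1(\Omega)}$ turns that term into a quantity quadratic in the per-time-slice $L^1$ norm you are trying to estimate, not into $Ck^{-1}\|\tilde\pi^m_+\|_{L^1(Q)}$. A telltale sign is that your argument never uses the hypothesis $\bar\varrho_{\min}<\inf_m\calw_0^m$, $\sup_m\calw_0^m<\bar\varrho_{\max}$, which is essential in the actual proof.

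The paper avoids all of this by lifting a truncation of the \emph{density}: with $\calg=L(\varrho^m)-(L(\varrho^m))_M$, the renormalised continuity equation provides the $\partial_t\calg=\divv\calf$ structure needed for Lemma \ref{BOGS}, and the assumption on $\calw_0^m$ together with \eqref{ilstendent1} keeps the spatial mean $\sigma_m(t)=(L(\varrho^m))_M(t)$ a fixed distance $\alpha_0/2$ inside $(\bar\varrho_{\min},\bar\varrho_{\max})$. Then, instead of estimating $\tilde\pi^m$ on $E^m$ by the energy, one uses the monotonicity of $\varrho\mapsto\tilde P_m(\varrho,q)$ to write $\tilde\pi^m\,(L(\varrho^m)-\sigma_m)=|\tilde P_m(\varrho^m,q^m)-\tilde P_m(\sigma_m,q^m)|\,|L(\varrho^m)-\sigma_m|+\tilde P_m(\sigma_m,q^m)\,(L(\varrho^m)-\sigma_m)$; the comparison term $\tilde P_m(\sigma_m,q^m)$ is controlled in $L^1$ by Lemma \ref{pressandg} (since $\sigma_m$ lies in the good interval) together with \eqref{lesbounds}, while on the set where $|L(\varrho^m)-\sigma_m|\ge\alpha_0/4$ the weight is bounded below, so the momentum duality directly yields the $L^1$ bound for $|\tilde P_m(\varrho^m,q^m)|$ there; on the complementary set $\varrho^m$ itself lies in the good interval and Lemma \ref{pressandg} applies pointwise. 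To salvage your route you would have to replace the pressure-based truncation by such a density-based one and reinstate the $\calw_0^m$ hypothesis; as written, the proof does not close.
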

\begin{proof}
For $s >0$ and a fixed $b > \max\{\bar{\varrho}_{\max}, \, \sup_{m} |\varrho^{0,m}|_{L^{\infty}}\}$ we let \begin{align*}
L(s) := \begin{cases} 2b - \frac{b^2}{s} & \text{ if } s \geq b\\
          s & \text{ for } 0 \leq s < b
          \end{cases}
          \end{align*}
          We can verify that $L \in C^1(\overline{\mathbb{R}_+})$ and $L(s) - s \, L^{\prime}(s) = 2b\, (1-b/s) \, \chi_{[b, \, +\infty[}(s)$.
The assumptions imply that the total mass density $\varrho^m$ is a renormalised solution to the continuity equation. Then the following identities are valid in the sense of distributions:
\begin{align}\begin{split}\label{Guldes}
\partial_t L(\varrho^m) + \divv(L(\varrho^m) \, v^m) = (L(\varrho^m) - \varrho^m \, L^{\prime}(\varrho^m)) \, \divv v^m \, \quad \text{ over } \quad Q_{\bar{\tau}} \, , \\
 \partial_t \Big(L(\varrho^m) \Big)_M = \frac{1}{|\Omega|} \, \int_{\Omega} (L(\varrho^m) - \varrho^m \, L^{\prime}(\varrho^m)) \, \divv v^m \, dx \quad \text{ over } \quad (0, \, \bar{\tau})\, .
 \end{split}
\end{align}
Since $L(\varrho^m(0)) = L(\varrho^{0,m}) = \varrho^{0,m}$, the latter and Lemma \ref{Meslemma} imply that
\begin{align*}
 \Big|\Big(L(\varrho^m) \Big)_M(t) - \calw_0^m\Big|  = & \frac{1}{|\Omega|} \,  \Big| \int_{0}^t \int_{\Omega} (L(\varrho^m) - \varrho^m \, L^{\prime}(\varrho^m)) \, \divv v^m \, dx d\tau\Big|\\
 \leq & \frac{2b}{|\Omega|} \,  \|\divv v^m\|_{L^2} \, \Big(\int_0^t |\{x \, : \, \varrho^m(x,\tau) \geq b\}| \, d\tau\Big)^{\frac{1}{2}}\\
 \leq & \frac{2\,b \, t^{\frac{1}{2}}\,  \sup_m\|\divv v^m\|_{L^2}}{|\Omega| \, \min\{b/\bar{\varrho}_{\max}-1, \, 1-a/\bar{\varrho}_{\min}\}^{\frac{1}{2}}}  \, \|\rho^m \cdot \bar{\calv} - 1\|_{L^{1,\infty}}^{\frac{1}{2}} \, .
\end{align*}
We let $\alpha_0 := \min\{\bar{\varrho}_{\max} - \sup_m\calw_0^m, \, \inf \calw_0^m-\bar{\varrho}_{\min}\}$. In view of \eqref{ilstendent1} and \eqref{lesbounds} we can choose $m_1 \in \mathbb{N}$ such that
\begin{align*}
\frac{2\,b \, t^{\frac{1}{2}}\,  \sup_m\|\divv v^m\|_{L^2} }{|\Omega| \, \min\{b/\bar{\varrho}_{\max}-1, \, 1-a/\bar{\varrho}_{\min}\}^{\frac{1}{2}}} \, \|\rho^m \cdot \bar{\calv} - 1\|_{L^{1,\infty}}^{\frac{1}{2}} \leq \frac{\alpha_0}{2} \, \text{ for all } m \geq m_1 \, ,
\end{align*}
with the consequence that
\begin{align*}
\calw_0^m - \frac{\alpha_0}{2} \leq \Big(L(\varrho^m) \Big)_M(t) \leq \calw_0^m \quad \text{ for all } \quad t \in ]0,\bar{\tau}[, \, m \geq m_1 \, . 
\end{align*}
With the abbreviaton $\sigma_m(t) := \Big(L(\varrho^m) \Big)_M(t)$ the latter implies that
\begin{align}\label{ResRes}
 \min\{\bar{\varrho}_{\max} - \sigma_m(t), \, \sigma_{m}(t) - \bar{\varrho}_{\min}\} \geq \frac{\alpha_0}{2} \quad \text{ for all } \quad  0 < t < \bar{\tau}, \, m \geq m_1 \, .
\end{align}
We apply the result of Lemma \ref{BOGS} with $\calg = L(\varrho^m) - (L(\varrho^m))_M$ and, recalling \eqref{Guldes}, with $\calf := - L(\varrho^m) \, v^m -\nabla \phi^m$ where $\phi^m$ is a solution to the Neumann problem for
\begin{align*}
 -\Delta \phi^m =  (L(\varrho^m) - \varrho^m \, L^{\prime}(\varrho^m)) \, \divv v^m - \frac{1}{|\Omega|} \, \int_{\Omega} (L(\varrho^m) - \varrho^m \, L^{\prime}(\varrho^m)) \, \divv v^m \, dx \, .
\end{align*}
We easily show that $\|\calf\|_{L^{6,2}(Q_{\bar{\tau}})} \leq c_b \, \sup_m \|v^m\|_{L^2W^{1,2}}$. Hence, for $1 \leq \calp < +\infty$ arbitrary, we can find a solution to $\divv \eta^m = \calg$ such that
\begin{align}\label{etambounds}
 \sup_m \|\eta^m\|_{L^{\infty}(0,\bar{\tau}; \, W^{1,\calp}(\Omega))} \leq c_{\calp,b} \, , \quad \sup_m \|\partial_t \eta^m\|_{L^{6,2}(Q_{\bar{\tau}})} \leq c_b \, \sup_m \|v^m\|_{L^2(0,\bar{\tau}; \, W^{1,2}(\Omega))} \, .
\end{align}
We insert $\eta^m$ in the momentum balance equations. We obtain that
\begin{align*}
 & \int_{Q_t} \pi^m \, \divv \eta^m \, dxd\tau = \int_{\Omega} \varrho^m \, v^m \cdot \eta^m \, dx\Big|^t_0 \\
 & \quad 
+ \int_{Q_t} \{ - \varrho^m \, v^m \, (\partial_t \eta^m + (v^m \cdot \nabla) \eta^m) + \mathbb{S}(\nabla v^m) \, : \, \nabla \eta^m + \varrho^m \, \eta^m_3\} \, dxd\tau
 \, . 
\end{align*}
Using the uniform bounds \eqref{lesbounds} and \eqref{etambounds}, we can show that all integrals on the right-hand side are uniformly bounded. We want to spare the details here. Similar calculations will be performed in the proof of Prop.\ \ref{pressla}. Hence $\sup_m |\int_{Q_t} \pi^m \, \divv \eta^m \, dxd\tau | < +\infty$. Moreover
\begin{align*}
 \int_{Q_t} \tilde{\pi}^m \, \divv \eta^m \, dxd\tau = \int_{Q_t} (\pi^m - q^m_{N-1}) \, \divv \eta^m \, dxd\tau = \int_{Q_t} \pi^m \, \divv \eta^m + \nabla q^{m}_{N-1} \cdot \eta^m \, dxd\tau \, ,
\end{align*}
implies together with $\sup_m \|\nabla q^m\|_{L^2} < +\infty$ that also $\sup_m |\int_{Q_t} \tilde{\pi}^m \, \divv \eta^m \, dxd\tau | < +\infty$.
Using the property that $0< \sigma_m(t) < \bar{\varrho}_{\max}$ implies that $\sigma_m(t) = L(\sigma_m(t))$, we next see that
\begin{align*}
&  \int_{Q_t} \tilde{\pi}^m \, \divv \eta^m \, dxd\tau   = \int_{Q_t} \tilde{P}_m(\varrho^m, \, q^m) \, (L(\varrho^m) -L(\sigma_m)) \, dxd\tau\\
=& \int_{Q_t} |\tilde{P}_m(\varrho^m, \, q^m)-\tilde{P}_m(\sigma_m, \, q^m)| \, |L(\varrho^m) - \sigma_m| + \tilde{P}_m(\sigma_m, \, q^m) \, (L(\varrho^m) - \sigma_m) \, dxd\tau \, ,
\end{align*}
where we used that $s \mapsto \tilde{P}_m(s,\cdot)$ is nondecreasing. Hence, it follows that
\begin{align*}
 \int_{Q_t} \tilde{\pi}^m \, \divv \eta^m \, dxd\tau   \geq \int_{Q_t} |\tilde{P}_m(\varrho^m, \, q^m)| \, |L(\varrho^m) - \sigma_m| - 2\,  |\tilde{P}_m(\sigma_m, \, q^m)| \, |L(\varrho^m) -\sigma_m| \, dxd\tau \, .
\end{align*}
Due to \eqref{ResRes} and Lemma \ref{pressandg},  $|\tilde{P}_m(\sigma_m, \, q^m)| \leq C_2 \, (\ln^2 (2/\alpha_0) + |(q^{m})^{\prime}| + |q^m|^2/m)$. Hence, it follows that
\begin{align*}
 \int_{Q_t} \tilde{\pi}^m \, \divv \eta^m \, dxd\tau   \geq & \int_{Q_t} |\tilde{P}_m(\varrho^m, \, q^m)| \, |L(\varrho^m) - \sigma_m| \, dxd\tau\\
 &- c\,  \sup_m (1+ \|(q^{m})^{\prime}\|_{L^1}+\frac{1}{m} \|q^m\|_{L^2}^2)\, .
\end{align*}
We let $\Omega^*(t) := \{x \, :\, L(\varrho^m(x,t)) - \sigma_m(t)| \geq \alpha_0/4\}$. Then we have shown that
\begin{align*}
 \int_{Q_t} \tilde{\pi}^m \, \divv \eta^m \, dxd\tau   \geq \frac{\alpha_0}{4} \, \int_{0}^t \int_{\Omega^*(\tau)} |\tilde{P}_m(\varrho^m, \, q^m)| \, dxd\tau- C\, ,
 \end{align*}
implying that $\sup_m \int_0^t\int_{\Omega^*(\tau)} |\tilde{P}_m(\varrho^m, \, q^m)| dxd\tau < +\infty$. If, otherwise, $
|L(\varrho^m) - \sigma_m| < \alpha_0/4$, then \eqref{ResRes} implies that $|L(\varrho^m) - \calw_0^m| \leq 3\alpha_0/4$. Hence $L(\varrho^m) \in ]\bar{\varrho}_{\min}, \, \bar{\varrho}_{\max}[$. This implies that $\bar{\varrho}_{\min} < \varrho^m < \bar{\varrho}_{\max}$ and that $\min\{\bar{\varrho}_{\max}-\varrho^m, \, \varrho^m - \bar{\varrho}_{\min}\} \geq \alpha_0/4$. Then, Lemma \ref{pressandg} yields
\begin{align*}
 |\tilde{P}_m(\varrho^m, \, q^m)| \leq c \, \Big(\ln \frac{4}{\alpha_0} + |(q^{m})^{\prime}| + \frac{1}{m} \, |q^m|^2\Big) \, \quad \text{ in } \quad \Omega\setminus \Omega^*(t) \, .
\end{align*}
We integrate this inequality over $\Omega\setminus \Omega^*(t) $ and the claim follows using \eqref{lesbounds}.
\end{proof}
The strategy of so-called Bogowski estimates is well-known. We here apply the Th.\ 11.17 of \cite{feinovbook}. References to original literature are to find therein.
\begin{lemma}\label{BOGS} 
Assume that $\Omega$ is a bounded domain of class $\mathscr{C}^{0,1}$. Let $1 < \calp < +\infty$, $1 \leq \calr \leq +\infty$ and $\calg \in L^{\calp,\calr}(Q_{\bar{\tau}})$ satisfy $(g)_{M}(t) \equiv 0$ for almost all $0 < t < \bar{\tau}$. Moreover, we assume that $\partial_t\calg = \divv \calf$ in the sense of distributions, with a field $\calf \in L^{\calq,\cals}(Q; \, \mathbb{R}^3)$ for some $1 < \calq < +\infty$, $1 \leq s \leq \infty$. Then, the problem $\divv \eta = \calg$ in $Q_{\bar{\tau}}$ such that $\eta = 0$ on $\partial \Omega \times (0,\bar{\tau})$ possesses a solution such that, with a constant $c$ depending only on $\Omega$ and the involved exponents, the following estimates are valid: $\|\eta\|_{L^{\calr}W^{1,\calp}} \leq c \, \|\calg\|_{L^{\calp,\calr}}$ and $\|\partial_t \eta\|_{L^{\calq,\cals}} \leq c \, \|\calf\|_{L^{\calq,\cals}}$.
\end{lemma}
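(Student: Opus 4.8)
The plan is to deduce the statement from the classical (time-independent) Bogovskii construction, applied slicewise in time. First I would recall that on a bounded Lipschitz domain $\Omega\subset\mathbb{R}^3$ there is a single linear operator $\mathcal{B}=\mathcal{B}_\Omega$ — the Bogovskii operator, given by an explicit singular-integral formula on a star-shaped decomposition of $\Omega$ — enjoying the two properties recorded in \cite{feinovbook}, Th.\ 11.17 and the references therein: (i) for every $1<\calp<\infty$ it maps $L^\calp_0(\Omega):=\{g\in L^\calp(\Omega):\int_\Omega g\,dx=0\}$ boundedly into $W^{1,\calp}_0(\Omega;\mathbb{R}^3)$, with $\divv\mathcal{B}[g]=g$; and (ii) for every $1<\calq<\infty$, if $g=\divv\calf$ with $\calf\in L^\calq(\Omega;\mathbb{R}^3)$ and $\calf\cdot\nu=0$ on $\partial\Omega$, then $\mathcal{B}[g]\in L^\calq(\Omega;\mathbb{R}^3)$ with $\|\mathcal{B}[g]\|_{L^\calq}\le c\,\|\calf\|_{L^\calq}$, the constant $c$ depending only on $\Omega$ and the exponent.

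Next I would set $\eta(x,t):=\mathcal{B}[\calg(\cdot,t)](x)$. Since $\calg\in L^{\calp,\calr}(Q_{\bar\tau})$, the map $t\mapsto\calg(\cdot,t)$ is Bochner measurable with values in $L^\calp(\Omega)$, and the hypothesis $(g)_M(t)\equiv 0$ forces $\calg(\cdot,t)\in L^\calp_0(\Omega)$ for a.e.\ $t$; hence $\eta(\cdot,t)$ is well defined, $\eta(\cdot,t)\in W^{1,\calp}_0(\Omega;\mathbb{R}^3)$, $\divv\eta(\cdot,t)=\calg(\cdot,t)$, and $\eta=0$ on $\partial\Omega\times(0,\bar\tau)$. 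Property (i) gives $\|\eta(\cdot,t)\|_{W^{1,\calp}}\le c\,\|\calg(\cdot,t)\|_{L^\calp}$ for a.e.\ $t$, and taking the $L^\calr$-norm in time (the essential supremum if $\calr=\infty$) yields the first estimate $\|\eta\|_{L^\calr W^{1,\calp}}\le c\,\|\calg\|_{L^{\calp,\calr}}$.

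For the time derivative I would exploit that $\mathcal{B}$ is linear and bounded, hence commutes with integration in $t$: for $\varphi\in C_c^\infty(0,\bar\tau)$ one has $\int_0^{\bar\tau}\eta(\cdot,t)\,\varphi'(t)\,dt=\mathcal{B}\bigl[\int_0^{\bar\tau}\calg(\cdot,t)\,\varphi'(t)\,dt\bigr]=-\mathcal{B}\bigl[\int_0^{\bar\tau}\partial_t\calg(\cdot,t)\,\varphi(t)\,dt\bigr]$, where the distributional identity $\partial_t\calg=\divv\calf$ on $Q_{\bar\tau}$ was used. By continuity of $\mathcal{B}$ this equals $-\int_0^{\bar\tau}\mathcal{B}[\divv\calf(\cdot,t)]\,\varphi(t)\,dt$, so $\partial_t\eta=\mathcal{B}[\divv\calf]$ in the sense of $L^\calq(\Omega)$-valued distributions in time. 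Property (ii) then gives $\|\partial_t\eta(\cdot,t)\|_{L^\calq}\le c\,\|\calf(\cdot,t)\|_{L^\calq}$ for a.e.\ $t$, and the $L^\cals$-norm in time produces $\|\partial_t\eta\|_{L^{\calq,\cals}}\le c\,\|\calf\|_{L^{\calq,\cals}}$.

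The only genuinely delicate point is the interface between the static operator and the time variable, namely applicability of property (ii): one needs $\calf(\cdot,t)$ to have vanishing normal trace for a.e.\ $t$ (so that $\divv\calf(\cdot,t)$ is in the natural domain of $\mathcal{B}$ and the $L^\calq$ bound holds). In the general formulation this should be read into the hypothesis ``$\partial_t\calg=\divv\calf$''; in the concrete use in Lemma~\ref{L1bound} it is automatic, since there $\calf=-L(\varrho^m)\,v^m-\nabla\phi^m$ with $v^m=0$ on $\partial\Omega$ and $\nabla\phi^m\cdot\nu=0$ on $\partial\Omega$ by the Neumann problem defining $\phi^m$. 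All remaining points — measurability and integrability of $t\mapsto\eta(\cdot,t)$, pulling $\mathcal{B}$ through the Bochner integral, the endpoint cases $\calr=\infty$ and $\cals=\infty$ — are routine consequences of the boundedness and linearity of $\mathcal{B}$, and for the static Bogovskii estimates themselves I would simply refer to \cite{feinovbook}.
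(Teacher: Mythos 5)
Your argument is correct and is essentially the standard proof of the result the paper invokes: the paper gives no proof of Lemma~\ref{BOGS}, simply citing Th.\ 11.17 of \cite{feinovbook}, whose content is exactly the slicewise application of the static Bogovskii operator together with its negative-norm estimate that you carry out. Your remark that the $L^{\calq}$-bound for $\partial_t \eta$ requires the vanishing normal trace of $\calf$ for a.e.\ $t$ — implicit in the hypothesis $\partial_t\calg = \divv \calf$ as used here, and automatic in the application in Lemma~\ref{L1bound} since $v^m$ vanishes on $\partial\Omega$ and $\phi^m$ solves a Neumann problem — is an accurate and useful precision.
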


\subsubsection{Preliminaries for the convergence theorem}

In order to prove convergence for terms involving the pressure, we shall need a little bit more than the $L^1-$bound. In particular, we must show in which sense we can expect that $\bar{g}^m(\pi^m) - \bar{\calv} \, \pi^m$ tends to zero, and we must establish that the $L^1-$norm of the pressure tends to zero over sets where the mass density is small or large. These points require further technical work. We begin with a useful preliminary.
\begin{lemma}\label{Philemma}
We adopt the assumptions of Lemma \ref{pressandg} and of Lemma \ref{Meslemma}.
For $(x,t) \in Q$, we define $\Phi^m(x,t) := - \rho^m(x,t) \cdot \int_0^1 \bar{g}^{\prime\prime}\Big(1+\theta\,\pi^m(x,t)/m\Big) \, d\theta$. Then $\Phi^m$ is nonnegative, $\{\Phi^m\}$ is bounded in $L^{\infty}$ and the following properties are valid:
\begin{enumerate}[(i)]
\item\label{prop1} Letting $ 0< \widehat{p}_0 < \widehat{p}_1$ be as in \eqref{gabiskrank3}, for all $0 < a < b < +\infty$ all $0 < t < \bar{\tau}$
\begin{align*}
 \Phi^m(x,t) \geq a \, \inf\{|\bar{g}_i^{\prime\prime}(s)| \, : \,  \widehat{p}_0(1/b) < s < \widehat{p}_1(1/a), \,  i = 1,\ldots,N\} \quad \text{ over } \quad \Omega_{a,b}(t) \, ,
\end{align*}
\item\label{prop2} There is $m_1 \in \mathbb{N}$ such that $\inf_{m \geq m_1, \, 0 < t < \bar{\tau}} \int_{\Omega} \Phi^m(x,t) \, dx > 0$.
\end{enumerate}
\end{lemma}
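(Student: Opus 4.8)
The plan is to first rewrite $\Phi^m$ so that it depends on the state only through the scalar $\widehat{p}(\rho^m)$. Since $\pi^m=\hat{\pi}^m(\rho^m)=m(\widehat{p}(\rho^m)-1)$ by \eqref{piiswidehatp}, we have $1+\theta\pi^m/m=(1-\theta)+\theta\,\widehat{p}(\rho^m)$, and differentiating $\theta\mapsto\bar{g}_i^{\prime}(1+\theta\pi^m/m)$ gives, for each $i$,
\[
\int_0^1 \bar{g}_i^{\prime\prime}\!\big(1+\theta\pi^m/m\big)\,d\theta=\frac{m}{\pi^m}\big(\bar{g}_i^{\prime}(\widehat{p}(\rho^m))-\bar{\calv}_i\big).
\]
Summing against $\rho_i^m$ and using the normalised equation of state $\sum_i\rho_i^m\,\bar{g}_i^{\prime}(\widehat{p}(\rho^m))=1$ from \eqref{EOSnormalised2},
\[
\Phi^m=\frac{1}{\widehat{p}(\rho^m)-1}\sum_{i=1}^N\rho_i^m\big(\bar{\calv}_i-\bar{g}_i^{\prime}(\widehat{p}(\rho^m))\big)=\sum_{i=1}^N w_i^m\,\Psi_i\big(\widehat{p}(\rho^m)\big),\qquad w_i^m:=\rho_i^m\,\bar{g}_i^{\prime}(\widehat{p}(\rho^m)),
\]
where $\Psi_i(p):=\big|\bar{\calv}_i-\bar{g}_i^{\prime}(p)\big|\big/\big(\bar{g}_i^{\prime}(p)\,|p-1|\big)$. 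Here $\Psi_i\ge0$ because $\bar{g}_i^{\prime}$ is strictly decreasing (by (A2) numerator and denominator change sign together at $p=1$), and $\sum_i w_i^m=1$ with $w_i^m\ge0$. Hence $0\le\Phi^m\le\max_i\Psi_i(\widehat{p}(\rho^m))$; in particular $\Phi^m$ is nonnegative (also immediate from $-\bar{g}_i^{\prime\prime}>0$ and $\rho_i^m\ge0$). The $L^{\infty}$-bound is thereby reduced to the purely scalar claim $\sup_{p>0}\Psi_i(p)<+\infty$.

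\textbf{Boundedness of $\Psi_i$.} Next one checks $\Psi_i$ is bounded on $]0,+\infty[$ in three regimes. It extends continuously through $p=1$, where $|\bar{\calv}_i-\bar{g}_i^{\prime}(p)|/|p-1|\to|\bar{g}_i^{\prime\prime}(1)|$ and $\bar{g}_i^{\prime}(p)\to\bar{\calv}_i>0$, so $\Psi_i(1)=|\bar{g}_i^{\prime\prime}(1)|/\bar{\calv}_i$; thus $\Psi_i$ is continuous and bounded on every compact subset of $]0,+\infty[$. As $p\to0^+$, (A3) gives $\bar{g}_i^{\prime}(p)\to+\infty$, so $\Psi_i(p)=(1-\bar{\calv}_i/\bar{g}_i^{\prime}(p))/(1-p)\to1$. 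For $p\ge\bar{s}$ (recall $\bar{s}>1$), assumption (A5) yields $p\,\bar{g}_i^{\prime}(p)\ge\bar{g}_i(p)/\alpha_i\ge\bar{g}_i(\bar{s})/\alpha_i>0$ (since $\bar{g}_i$ is increasing and $\bar{g}_i(\bar{s})\ge\beta\,\bar{s}\,\bar{g}_i^{\prime}(\bar{s})>0$), whence $\bar{g}_i^{\prime}(p)(p-1)\ge(1-1/\bar{s})\,\bar{g}_i(\bar{s})/\alpha_i$ and therefore $\Psi_i(p)\le\bar{\calv}_i\big/\big(\bar{g}_i^{\prime}(p)(p-1)\big)$ is bounded uniformly in $p\ge\bar{s}$. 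Combining the three regimes gives $\sup_{p>0}\Psi_i(p)<+\infty$, hence $\sup_m\|\Phi^m\|_{L^{\infty}(Q)}\le\max_i\sup_{p>0}\Psi_i(p)<+\infty$.

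\textbf{The pointwise lower bound \eqref{prop1}.} For this I would work directly with the original integral. Fix $0<a<b<+\infty$ (in the application $a<\bar{\varrho}_{\min}$, $b>\bar{\varrho}_{\max}$, so the interval below contains the reference value $1$; in general enlarge it to its convex hull with $\{1\}$). On $\Omega_{a,b}(t)$ we have $a\le\varrho^m\le b$, so by \eqref{gabiskrank3} the value $\widehat{p}(\rho^m)$ lies in the compact interval $[\widehat{p}_0(1/b),\widehat{p}_1(1/a)]\subset\,]0,+\infty[$, and then $1+\theta\pi^m/m=(1-\theta)\cdot1+\theta\,\widehat{p}(\rho^m)$, being a convex combination of $1$ and $\widehat{p}(\rho^m)$, lies in the same interval for all $\theta\in[0,1]$. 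Since $\bar{g}_j^{\prime\prime}$ is continuous and strictly negative there, $-\bar{g}_j^{\prime\prime}(1+\theta\pi^m/m)\ge c_{a,b}:=\inf\{|\bar{g}_j^{\prime\prime}(s)|:\widehat{p}_0(1/b)<s<\widehat{p}_1(1/a),\ j=1,\dots,N\}>0$, and therefore on $\Omega_{a,b}(t)$
\[
\Phi^m=\sum_{i=1}^N\rho_i^m\int_0^1\big(-\bar{g}_i^{\prime\prime}(1+\theta\pi^m/m)\big)\,d\theta\;\ge\;c_{a,b}\sum_{i=1}^N\rho_i^m=c_{a,b}\,\varrho^m\;\ge\;a\,c_{a,b},
\]
which is \eqref{prop1}.

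\textbf{The uniform integral bound \eqref{prop2}, and the main obstacle.} Finally, pick $a<\bar{\varrho}_{\min}$ and $b>\bar{\varrho}_{\max}$. Lemma \ref{Meslemma} gives $|\Omega_{a,b}(t)|>|\Omega|-k_1^{-1}\|\rho^m\cdot\bar{\calv}-1\|_{L^{1,\infty}}$ with $k_1=\min\{b/\bar{\varrho}_{\max}-1,\,1-a/\bar{\varrho}_{\min}\}>0$, while \eqref{ilstendent1} yields $\|\rho^m\cdot\bar{\calv}-1\|_{L^{1,\infty}}\to0$; hence there is $m_1$ with $|\Omega_{a,b}(t)|\ge|\Omega|/2$ for all $t\in\,]0,\bar{\tau}[$ and $m\ge m_1$. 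Since $\Phi^m\ge0$ on $\Omega$ and $\Phi^m\ge a\,c_{a,b}$ on $\Omega_{a,b}(t)$ by \eqref{prop1}, integration gives $\int_{\Omega}\Phi^m(x,t)\,dx\ge a\,c_{a,b}\,|\Omega_{a,b}(t)|\ge\tfrac12\,a\,c_{a,b}\,|\Omega|>0$ for all such $t,m$, which is \eqref{prop2}. The only genuinely delicate part of the whole argument is the uniform $L^{\infty}$-bound, and within it the control of $\Psi_i(p)$ as $p\to+\infty$, where the growth condition (A5) — which keeps $p\,\bar{g}_i^{\prime}(p)$ bounded away from zero — is indispensable; everything else is elementary once the scalar reformulation $\Phi^m=\sum_i w_i^m\Psi_i(\widehat{p}(\rho^m))$ is available.
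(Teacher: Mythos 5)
Your proof is correct, and for the nonnegativity, the lower bound \eqref{prop1} and the integral bound \eqref{prop2} it coincides with the paper's argument (convex combination $1+\theta\pi^m/m$ of $1$ and $\widehat{p}(\rho^m)$ confined to a compact interval via \eqref{gabiskrank3}, then Lemma \ref{Meslemma} together with \eqref{ilstendent1} to keep $|\Omega_{a,b}(t)|\geq |\Omega|/2$ for $m\geq m_1$); your explicit remark that the interval must be enlarged to contain $p=1$ when $a,b$ are not chosen around $[\bar{\varrho}_{\min},\bar{\varrho}_{\max}]$ is a small point the paper leaves implicit. Where you genuinely diverge is the uniform $L^{\infty}$-bound. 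The paper argues pointwise: it invokes (A6) in the form $p\,|\bar{g}_i^{\prime\prime}(p)|\leq \bar{c}_3\,\bar{g}_i^{\prime}(p)$, then splits into $\widehat{p}(\rho^m)\gtrless\bar{s}$ and uses (A4), (A5) through \eqref{la-bas} and \eqref{plarge12} to compare $\varrho^m$ with powers of $\widehat{p}(\rho^m)$. You instead integrate the inner integral exactly, which via the equation of state gives the closed form $\Phi^m=(\rho^m\cdot\bar{\calv}-1)/(\widehat{p}(\rho^m)-1)$ (consistent with \eqref{Phiidentity}) written as the convex combination $\sum_i w_i^m\,\Psi_i(\widehat{p}(\rho^m))$ with the fixed, $m$-independent scalar functions $\Psi_i(p)=|\bar{\calv}_i-\bar{g}_i^{\prime}(p)|/(\bar{g}_i^{\prime}(p)\,|p-1|)$, and then bound $\sup_{p>0}\Psi_i$ using only monotonicity, continuity through $p=1$, and (A5) at and above $\bar{s}$ (where $p\,\bar{g}_i^{\prime}(p)\geq \bar{g}_i(\bar{s})/\alpha_i>0$); I checked the sign-matching of numerator and denominator and the endpoint/removable-singularity issues at $p=1$, $p\to 0^+$ and $p\to\infty$, and they are all handled correctly. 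The trade-off: the paper's route is a direct pointwise estimate in the spirit of its other growth lemmas but needs (A4) and (A6); your route reduces the bound to a one-variable calculus statement about the fixed functions $\bar{g}_i$, dispenses with (A6) and (A4) entirely (only (A1), (A2), (A5) enter), and makes the constant fully explicit — a mild but genuine sharpening, since (A4)–(A6) are assumed elsewhere anyway. Both proofs share the same benign blind spot on the vacuum set $\{\varrho^m=0\}$, where $\widehat{p}(\rho^m)$ is not defined and $\Phi^m$ must be read as $0$, so this is not a gap attributable to you.
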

\begin{proof}
We directly obtain that $\Phi^m$ is positive due to the concavity of $\bar{g}$. 

Ad \eqref{prop1}. Due to \eqref{piiswidehatp},  $ 1+\pi^m/m = \widehat{p}(\rho^m)$ and \eqref{gabiskrank3} implies that $\widehat{p}_0(1/b) \leq  1 +\theta \, \pi^m/m \leq \widehat{p}_1(1/a)$ in $\Omega_{a,b}(t)$ for all $\theta \in [0,1]$. By the definition of $\widehat{p}$ we then verify over $\Omega_{a,b}(t)$ that
\begin{align*}
 \Phi^m = & - \bar{\rho}^m \cdot \int_0^1 \bar{g}^{\prime\prime}\Big(1+\theta \, (\widehat{p}(\rho^m)-1)\Big) \, d\theta 
\geq   \varrho^m \, \min_{i=1,\ldots,N, \, \theta \in [0,1]} \Big|\bar{g}_i^{\prime\prime}\Big(1+\theta \, (\widehat{p}(\rho^m)-1)\Big)\Big| \, ,
\end{align*}
and \eqref{prop1} follows.

Ad \eqref{prop2}. 
Due Lemma \ref{Meslemma}, we can choose $m_1 \in \mathbb{N}$ such that $\inf_{0<t<\bar{\tau}} |\Omega_{\bar{\varrho}_{\min}/2,2\bar{\varrho}_{\max}}(t)| > |\Omega|/2$, for all $m \geq m_1$, and it follows from \eqref{prop1} that
\begin{align*}
 \int_{\Omega} \Phi^m(x,t) \, dx 
 \geq \frac{\bar{\varrho}_{\min}}{4} \, |\Omega| \, \min_{\widehat{p}_0(\max\bar{\calv}/2) \leq s \leq \widehat{p}_1(2\max\bar{\calv})} \min_{i=1,\ldots,N} |\bar{g}_i^{\prime\prime}(s)| \, .
\end{align*}
This proves \eqref{prop2}. It remains to show the $L^{\infty}$-bound. Employing the growth conditions (A6), we get
\begin{align*}
-\rho^m \cdot \bar{g}^{\prime\prime}\Big(1+\theta \, (\widehat{p}(\rho^m)-1)\Big) \leq \bar{c}_3 \, \rho^m \cdot \frac{\bar{g}^{\prime}\Big(1+\theta \, (\widehat{p}(\rho^m)-1)\Big)}{1+\theta \, (\widehat{p}(\rho^m)-1)} \, .
\end{align*}
Let $\bar{s}$ be the constant occurring in (A5). If $\widehat{p}(\rho^m) > \bar{s}$ we estimate $\bar{g}^{\prime}(1+\theta \, (\widehat{p}(\rho^m)-1)) \leq \bar{g}^{\prime}(1) = \bar{\calv}$, and we obtain that
\begin{align*}
 \Phi^m \leq \bar{c}_3\, \rho^m \cdot \bar{\calv} \, \int_0^1 \frac{1}{1+\theta \, (\widehat{p}(\rho^m)-1)} \, d\theta =  \bar{c}_3\, \rho^m \cdot \bar{\calv} \, \frac{\ln \widehat{p}(\rho^m)}{\widehat{p}(\rho^m)-1} \, .
\end{align*}
Due to \eqref{plarge12} we have $\varrho^m \leq c \, \widehat{p}^{1/\gamma}(\rho^m)$, thus $ |\Phi^m| \leq c \, \sup_{s > \bar{s}} s^{\frac{1}{\gamma}} \, \frac{\ln s}{s-1}$. If $\widehat{p}(\rho^m) \leq \bar{s}$, then (A4) allows to estimate $\bar{g}^{\prime}(\widehat{p}) \leq\bar{c}_2/\widehat{p}$ and thus
\begin{align*}
 \Phi^m \leq & \bar{c}_3\, \rho^m \cdot \int_{0}^1  \frac{\bar{g}^{\prime}\Big(1+\theta \, (\widehat{p}(\rho^m)-1)\Big)}{1+\theta \, (\widehat{p}(\rho^m)-1)} d\theta \\
 \leq&  \bar{c}_3 \, \bar{c}_{2} \, \varrho^m \, \int_{0}^1  \frac{1}{(1+\theta \, (\widehat{p}(\rho^m)-1))^2} d\theta \leq c \, \frac{\varrho^m}{ \widehat{p}(\rho^m)} \, .
\end{align*}
Since $\varrho^m \leq \bar{c}_1^{-1} \, \widehat{p}(\rho^m)$ due to \eqref{la-bas}, the claim follows.
\end{proof}
\begin{coro}\label{l1boundandtozero}.
Assumptions of Lemma \ref{pressandg}, \ref{Philemma}. If $\limsup_{m\rightarrow \infty} \sqrt{m} \,| \int_{\Omega} \rho^{0,m} \cdot\bar{\calv} - 1 \, dx| = 0$, then $\lim_{m\rightarrow \infty} \frac{1}{\sqrt{m}} \, \|q^m\|_{L^2(Q)} = 0$ and  for all $0 < a_0 < b_0 < +\infty$
 \begin{align*}
  \int_{0}^{\bar{\tau}}\int_{\Omega_{a_0,b_0}(\tau)} |\bar{g}^m(\pi^m) - \pi^m \, \bar{\calv})|\, dxd\tau \rightarrow 0  \quad \text{ for } \quad m \rightarrow +\infty\, .
 \end{align*}
\end{coro}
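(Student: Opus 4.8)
The plan is to split the statement into the two assertions and treat them consecutively, in each case leaning on the structural identity $\bar g^m(\pi) - \bar{\calv}\,\pi = \int_0^1\int_0^\theta \bar g^{\prime\prime}(1+\lambda\pi/m)\,d\lambda\,d\theta\;\pi^2/m$ from \eqref{gabiskrank2}, the definition of $\Phi^m$ from Lemma \ref{Philemma}, and the uniform bounds \eqref{lesbounds}, \eqref{ilstendent1}. First I would address the claim that $\tfrac{1}{\sqrt m}\|q^m\|_{L^2(Q)}\to 0$. Here the idea is to revisit the entropy/energy estimate from which the sixth bound in \eqref{lesbounds} was derived: the relative energy identity controls, uniformly in $m$, the term $\int_{\Omega}\big(\bar f^m(\rho^m)\big)(t)\,dx$ minus its value at time zero, and the $\bar g^m$-part of $\bar f^m$ contributes (via convexity and \eqref{gabiskrank2}) a term of the form $\tfrac{1}{m}\int_\Omega \Phi^m(\pi^m)^2\,dx$ up to lower-order pieces. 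The initial value of this quantity is, up to controlled terms, comparable to $\sqrt m$ times $|\int_\Omega \rho^{0,m}\cdot\bar{\calv}-1\,dx|$ plus $m\|\widehat p(\bar\rho^{0,m})-1\|_{L^2}^2$ (cf.\ the hypothesis of Theorem \ref{rigolo}), which the corollary's hypothesis $\limsup_m \sqrt m\,|\int_\Omega \rho^{0,m}\cdot\bar{\calv}-1\,dx|=0$ forces to zero. Combining with Lemma \ref{Philemma}(ii), which gives $\int_\Omega \Phi^m\,dx \geq c>0$ uniformly for $m\geq m_1$, and with the fact that $q^m_{N-1}$ and $\pi^m$ differ only by $\tilde\pi^m = \tilde P_m$, whose $L^1$-bound is Lemma \ref{L1bound}, one upgrades the already-known $O(\sqrt m)$ bound on $\|q^m\|_{L^2}$ to $o(\sqrt m)$.

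Second, for the vanishing of $\int_0^{\bar\tau}\int_{\Omega_{a_0,b_0}(\tau)}|\bar g^m(\pi^m)-\pi^m\bar{\calv}|\,dx d\tau$, I would use \eqref{gabiskrank2} directly: on $\Omega_{a_0,b_0}(\tau)$ the argument $1+\theta\pi^m/m$ stays in the compact interval $[\widehat p_0(1/b_0),\widehat p_1(1/a_0)]$ by \eqref{gabiskrank3}, so $|\bar g^{\prime\prime}|$ is bounded there and hence $|\bar g^m(\pi^m)-\pi^m\bar{\calv}| \leq C_{a_0,b_0}\,(\pi^m)^2/m$ on that set. Moreover, on $\Omega_{a_0,b_0}(\tau)$ one has $\Phi^m \geq c_{a_0,b_0}>0$ by Lemma \ref{Philemma}(i). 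Then
\begin{align*}
\int_0^{\bar\tau}\!\!\int_{\Omega_{a_0,b_0}(\tau)}\!\!|\bar g^m(\pi^m)-\pi^m\bar{\calv}|\,dx d\tau \;\leq\; \frac{C_{a_0,b_0}}{m}\int_0^{\bar\tau}\!\!\int_{\Omega_{a_0,b_0}(\tau)}\!\!(\pi^m)^2\,dx d\tau,
\end{align*}
and it remains to show the right-hand side is $o(1)$. Writing $\pi^m = \tilde\pi^m + q^m_{N-1}$, the $q^m_{N-1}$-contribution gives $\tfrac{1}{m}\|q^m\|_{L^2(Q)}^2 = \big(\tfrac{1}{\sqrt m}\|q^m\|_{L^2}\big)^2\to 0$ by the first part. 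For the $\tilde\pi^m$-contribution I would use Lemma \ref{pressandg}, which on $\Omega_{a_0,b_0}(\tau)$ bounds $|\tilde P_m(\varrho^m,q^m)|$ by $C(\ln\tfrac{1}{\min\{\bar\varrho_{\max}-\varrho^m,\varrho^m-\bar\varrho_{\min}\}}+|q^m|)$ when $\varrho^m$ lies strictly between $\bar\varrho_{\min}$ and $\bar\varrho_{\max}$ — and otherwise one exploits that the complement has the crude $L^1$-bound on $\tilde\pi^m$ from Lemma \ref{L1bound} together with the $L^\infty$-bound on $\Phi^m$; either way the resulting $L^2$-in-space norm of $\tilde\pi^m$, divided by $m$, is dominated by $\tfrac{1}{m}(\text{const} + \|q^m\|_{L^2}^2)\to 0$.

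The main obstacle I anticipate is the first part: extracting genuine smallness, not just boundedness, of $\tfrac{1}{m}\int_\Omega\Phi^m(\pi^m)^2\,dx$ from the relative energy estimate. One must track carefully how the initial data enter — specifically that the $\bar f^m$-discrepancy at time zero is controlled by $m\|\widehat p(\bar\rho^{0,m})-1\|_{L^2}^2$, which the hypothesis kills — and one has to separate the troublesome boundary layer where $\varrho^m$ approaches $\bar\varrho_{\min}$ or $\bar\varrho_{\max}$ (where the logarithm in Lemma \ref{pressandg} blows up) from the bulk; the set $\Omega_{a_0,b_0}$ is precisely chosen so that $a_0,b_0$ can be taken just outside $[\bar\varrho_{\min},\bar\varrho_{\max}]$ but the interior of that interval, where $\tilde P_m$ is only logarithmically singular, is still integrable. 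Handling that dichotomy cleanly, using Lemma \ref{Meslemma} to control the measure of the bad set uniformly in $m$, is the delicate point; once it is in place the rest is Hölder's inequality and the uniform bounds \eqref{lesbounds}.
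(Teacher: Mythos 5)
Your second claim is handled essentially as in the paper (Taylor expansion \eqref{gabiskrank2} plus \eqref{gabiskrank3} giving $|\bar{g}^m(\pi^m)-\bar{\calv}\,\pi^m|\leq C_{a_0,b_0}(\pi^m)^2/m$ on $\Omega_{a_0,b_0}(t)$, then reduction to the first claim via Lemma \ref{pressandg}), but your treatment of the first claim contains a genuine gap, and the whole corollary hinges on it. You propose to extract \emph{smallness} of $\tfrac{1}{m}\int_\Omega\Phi^m(\pi^m)^2\,dx$ from the energy inequality together with smallness of the initial data. The energy balance, however, only yields $m\int_\Omega G(\rho^m(t))\,dx\leq m\int_\Omega G(\rho^{0,m})\,dx+O(1)$, where $G(\rho):=\bar{f}_1(\rho)-\bar{g}(1)\cdot\rho+1\geq 0$ is the $m$-weighted part of $\bar{f}^m$: the right-hand side of \eqref{DISSIPm} carries the initial kinetic energy, the mixing-entropy part $\bar{k}$, and the gravitational work, all of which are $O(1)$ uniformly in $m$ but do \emph{not} vanish as $m\to\infty$. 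So even if the initial $m$-part tends to zero, this route only reproduces the bound $\sup_m m^{-1/2}\|q^m\|_{L^2(Q)}<+\infty$ already recorded in \eqref{lesbounds}; it cannot be upgraded to $o(1)$. In addition, the corollary's hypothesis $\sqrt{m}\,|\int_\Omega\rho^{0,m}\cdot\bar{\calv}-1\,dx|\to 0$ is a signed, averaged condition and does not control $m\|\widehat{p}(\bar{\rho}^{0,m})-1\|_{L^2}^2$; by invoking that quantity you are importing the separate hypothesis of Theorem \ref{rigolo}, which the corollary does not assume.

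The missing idea is to use the hypothesis \emph{linearly} through a conservation law rather than quadratically through the energy. Testing the mass equations with $\bar{\calv}$ gives $\partial_t(\rho^m\cdot\bar{\calv})_M=0$, hence $m\int_\Omega(\rho^m(t)\cdot\bar{\calv}-1)\,dx=m\int_\Omega(\rho^{0,m}\cdot\bar{\calv}-1)\,dx$ for all $t$ (identity \eqref{tillilee}); combined with the pointwise identity $\rho^m\cdot\bar{\calv}-1=\Phi^m\pi^m/m$ and the decomposition $\pi^m=\tilde{\pi}^m+(q^m_{N-1})_M+\big(q^m_{N-1}-(q^m_{N-1})_M\big)$, the lower bound $\int_\Omega\Phi^m\,dx\geq k_0>0$ from Lemma \ref{Philemma}, the $L^\infty$-bound on $\Phi^m$, the $L^1$-bound on $\tilde{\pi}^m$ from Lemma \ref{L1bound} and the Poincar\'e inequality yield $m^{-1/2}\|(q^m_{N-1})_M\|_{L^1(0,\bar{\tau})}\to 0$ directly from the hypothesis, with no need for any quadratic control of $\pi^m$. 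Even at that point the conclusion is only $L^1$ in time; passing to the claimed $L^2(Q)$-convergence requires the equi-integrability estimate of Lemma \ref{uamamoto}\,(i) (bounding $m^{-1}|(q^m_k)_M(t)|^2$ by the free energy and $m^{-1}|\nabla\bar{q}^m|_{L^1}^2$), a step your sketch omits entirely. Without these two ingredients the first claim, and with it the second, does not follow.
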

\begin{proof}
Due to \eqref{lesbounds}, $\{q^{m}_i\}$ is bounded in $L^2(Q)$ for $i = 1,\ldots,N-2$. Hence $\{q^{m}_i/\sqrt{m}\}$ obviously tends to zero in $L^2(Q)$. In order to show that also $\{q^m_{N-1}/\sqrt{m}\}$ converges to zero in $L^2$, we at first exploit the equations \eqref{massrescfin}. Multiplying with the vector $\bar{\calv}$ and integrating over $\Omega$, we get $\partial_t (\rho^m \cdot \bar{\calv})_M = 0$. Hence, for all $0 \leq t \leq \bar{\tau}$, 
\begin{align}\label{tillilee}
 m \, \int_{\Omega} \rho^m(x,t) \cdot \bar{\calv} - 1 \, dx = m \,  \int_{\Omega^{\rm R}} \rho^{0,m}(x) \cdot \bar{\calv} - 1 \, dx \, .
\end{align}
With the function $\Phi^m$ of Lemma \ref{Philemma}, we further note that
\begin{align}\label{Phiidentity}
\rho^m \cdot \bar{\calv} - 1 = \rho^m \cdot \Big(\bar{g}^{\prime}(1) - \bar{g}^{\prime}\Big(1+\frac{\pi^m}{m}\Big)\Big) 
= \Phi^m \, \frac{\pi^m}{m} \, .
\end{align}
Now, the identity \eqref{tillilee} implies that $\int_{\Omega} \Phi^m \, \pi^m \, dx = m \, \int_{\Omega} \rho^{0,m}(x) \cdot \bar{\calv} - 1 \, dx$, which we use to prove that
\begin{align*}
 (q_{N-1}^m)_M(t) \, \int_{\Omega} \Phi^m \, dx = m \, \int_{\Omega} \rho^{0,m} \cdot \bar{\calv} - 1 \, dx - \int_{\Omega} \Phi^m \, \Big(\tilde{\pi}^m + q_{N-1}^m - (q_{N-1}^m)_M\Big)  \, dx\, .
\end{align*}
With $k_0 := \inf_{m\geq m_1, 0<t<\bar{\tau}} \int_{\Omega}\Phi^m(x,t) \,dx>0$, this allows to bound
\begin{align*}
k_0 \,  |(q_{N-1}^m)_M(t) | \leq   m \, \left|\int_{\Omega} \bar{\rho}^{0,m} \cdot \bar{\calv} - 1 \, dx\right| + \sup_m \|\Phi^m\|_{L^{\infty}} \, \int_{\Omega} |\tilde{\pi}^m| + | q_{N-1}^m - (q_{N-1}^m)_M| \, dx
\end{align*}
Invoking Lemma \ref{L1bound}, $\{\tilde{\pi}^m\}$ is bounded in $L^1(Q)$ while, owing to the Poincar\'e inequality and \eqref{lesbounds}, $\{q^m - (q^m)_M\}$ is bounded in $L^2(Q)$. Hence
\begin{align}\label{john}
 \limsup_{m \rightarrow \infty} \frac{1}{\sqrt{m}} \, \|(q_{N-1}^m)_M\|_{L^1(0,\bar{\tau})} = 0 \quad \text{ and } \quad  \limsup_{m \rightarrow \infty} \frac{1}{\sqrt{m}} \, \|q_{N-1}^m\|_{L^1(Q)} = 0 \, .
\end{align}
To show the strong convergence to zero in $L^2$, we use the inequality (cf.\ Appendix, Lemma \ref{uamamoto}, \eqref{claim1})
\begin{align*}
\frac{1}{m} \, \max_{k=1,\ldots,N-1} |(q^m_k)_M(t)|^2 \leq C_1\, \left(\int_{\Omega} (\bar{f}^m(\bar{\rho}) + 1)^{\frac{1}{2}} \, dx\right)^2 + \frac{C_2}{m} \, |\nabla \bar{q}^m|_{L^1(\Omega)}^2 \, ,
\end{align*}
valid with $C_1,C_2$ independent on $m$. Up to constants, the right-hand is estimated by the sum of $\int_{\Omega} |\bar{f}^m| + 1\, dx$ which is uniformly bounded in $L^{\infty}(0,\bar{\tau})$ and of $m^{-1} \, |\nabla \bar{q}^m|_{L^1(\Omega)}^2$, which converges to zero in $L^1(0,\bar{\tau})$. Thus, $\{m^{-1} \, |(q^m_{N-1})_M(t)|^2\}$ is equi-integrable over $(0,\bar{\tau})$. Since \eqref{john} implies that it converges pointwise to zero, we then easily prove the convergence to zero in $L^2(0,\bar{\tau})$, for instance using Egoroff's theorem.

Next we want to prove the second convergence result. By means of \eqref{gabiskrank}, \eqref{gabiskrank2}, there is $\lambda \in [0,1]$ such that $\bar{g}_i^m(\pi^m) - \bar{\calv}_i \, \pi^m = \frac{1}{2} \,  \bar{g}^{\prime\prime}(1+\lambda \, \pi^m/m) \, (\pi^m)^2/m$.
Using the property \eqref{gabiskrank3}, we find that $\widehat{p}_0(1/b_0) \leq  1 + \pi^m/m \leq \widehat{p}_1(1/a_0)$ in $\Omega_{a_0,b_0}(t)$, implying that 
\begin{align*}
|\bar{g}_i^m(\pi^m) - \bar{\calv}_i \, \pi^m| \leq \frac{1}{2} \, \sup_{\widehat{p}_0 < s < \widehat{p}_1} |\bar{g}^{\prime\prime}(s)| \, \frac{(\pi^m)^2}{m} 
\end{align*}
over $\Omega_{a_0,b_0}(t)$. Due to Lemma \ref{pressandg}, we have $|\pi^m| \leq c \, (1+|q^m|)$ on $\Omega_{a_0,b_0}(t)$. Thus, $m \, |\bar{g}_i^m(\pi^m) - \bar{\calv}_i \, \pi^m| \, \chi_{\Omega_{a_0,b_0}(t)} \leq c\, (1+|q^m|^2)$ and the convergence to zero in $L^1$ follows from the first claim.
\end{proof}
The discussion of the energy inequality and of the stability estimate is more technical in the case of weak solutions We next set up some further preliminaries.

For $0\leq t \leq \bar{\tau}$, and $0< a_0 < b_0 < +\infty$, we let
\begin{align} \begin{split}\label{Omegazwish}\Omega_{b_0,+}(t) = \{x \in \Omega \, : \, \varrho(x,t) \geq b_0\}\quad & \text{ and } \quad  \Omega_{a_0,-}(t) = \{x \in \Omega \, : \, \varrho(x,t) \leq a_0\} \, ,\\
\Omega_{a_0,b_0}(t) = \{x \in \Omega \, & : \, a_0 \leq \varrho(x,t) \leq b_0\}\, .
\end{split}
\end{align}
The occurrence of these sets is typical for the discussion of singular limits in fluid dynamics, see \cite{feinovbook}. 
For the multicomponent case, we define another essential set in which all densities are strictly positive 
\begin{align}\label{Omegainf}
B_{s_0,+}(t) = B_{s_0,+}(t; \, a_0,b_0) := \{x \in \Omega_{a_0,b_0}(t) \, : \, \min_{i=1,\ldots,N} \rho_i(x,t) \geq s_0\} \quad \text{ for } s_ 0 > 0 \, .
\end{align}
\begin{lemma}\label{erhobelowlem}
We let $(\rho^{\infty},p^{\infty},v^{\infty})$ be subject to (C) and $(\rho, \, v)$ satisfy \eqref{NC}.
With $r_{\min} := \inf_{Q} \varrho^{\infty}$ and $r_{\max} := \sup_Q \varrho^{\infty}$ (cf.\ \eqref{rminmax}) and $0 < \delta_0$ arbitrary, we define $a_0 := r_{\min}-\delta_0$ and $b_0 := r_{\max}+\delta_0$. Moreover we let $r_0 := \inf_{(x,t) \in Q_{\bar{\tau}}, \, i = 1,\ldots,N} \rho^{\infty}_i(x,t)$ and $0 < s_0 < r_0$. With $\mathcal{E}^m(t) = \mathcal{E}^m(\rho,v \, |\, \rho^{\infty},v^{\infty})(t)$ there are $C, \, C^{\prime} > 0$ such that, for all $m \in \mathbb{N}$ and all $0 \leq t \leq \bar{\tau}$,
 \begin{align*}
  \mathcal{E}^m(t) \geq C \, \Big(|\Omega_{a_0,-}(t)| + \int_{\Omega_{b_0,+}(t)} |\rho|^{\gamma} \, dx \Big) \quad \text{ and } \quad  \mathcal{E}^m(t) \geq C^{\prime} \, |\Omega_{a_0,b_0}(\tau) \setminus B_{s_0,+}(\tau)| \, .
\end{align*} 
\end{lemma}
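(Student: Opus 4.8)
The plan is to work pointwise with the integrand of the relative energy, $E^m(\rho\,|\,r):=\bar f^m(\rho)-\bar f^m(r)-D\bar f^m(r)\cdot(\rho-r)\ge 0$ (nonnegativity by convexity of $\bar f^m$), so that $\mathcal E^m(t)\ge\int_A E^m(\rho\,|\,\rho^\infty)\,dx$ for every measurable $A\subset\Omega$, and it suffices to bound $E^m(\rho\,|\,r)$ from below, uniformly in $m\ge1$ and in $r=\rho^\infty(x,t)$. First I would exploit that $\rho^\infty$ obeys the incompressibility constraint $\sum_i\rho_i^\infty\,\bar{\calv}_i=1$: since $\bar{\calv}_i=\bar g_i'(1)$, the normalised equation of state \eqref{EOSnormalised2} gives $\widehat p(\rho^\infty)\equiv1$, hence $\hat\pi^m(\rho^\infty)=m(\widehat p(\rho^\infty)-1)\equiv0$ by \eqref{piiswidehatp}; because $\bar g_i^m(0)=0$, this yields $\bar f^m(\rho^\infty)=\bar k(\rho^\infty)$ and $D\bar f^m(\rho^\infty)=D\bar k(\rho^\infty)$, which by $\rm(C)$ are bounded uniformly in $m$ and over $\overline{Q_{\bar\tau}}$ (the state $\rho^\infty$ ranging in a fixed compact $K\subset\mathbb R^N_+$). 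Combining \eqref{FEm} with the equation of state $\sum_i\rho_i(\bar g_i^m)'(\hat\pi^m(\rho))=1$ one rewrites $\bar f^m(\rho)=\bar k(\rho)+\Theta^m(\rho)$ with $\Theta^m(\rho):=\sum_i\rho_i\bigl[\bar g_i^m(\hat\pi^m(\rho))-(\bar g_i^m)'(\hat\pi^m(\rho))\,\hat\pi^m(\rho)\bigr]\ge0$ (tangent inequality for the concave $\bar g_i^m$, using $\bar g_i^m(0)=0$), so that
\[
E^m(\rho\,|\,\rho^\infty)=\Theta^m(\rho)+B(\rho\,|\,\rho^\infty),\qquad B(\rho\,|\,r):=\bar k(\rho)-\bar k(r)-D\bar k(r)\cdot(\rho-r)\ge0,
\]
where $B$ is \emph{independent of $m$}. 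Writing $\hat\pi^m(\rho)=m(\widehat p(\rho)-1)$ one finds moreover $\Theta^m(\rho)=m\sum_i\rho_i F_i(\widehat p(\rho)-1)$ with $F_i(u)=\int_0^u(-t)\bar g_i''(1+t)\,dt\ge0$, so $\Theta^m(\rho)\ge m\,\varrho\,G(\widehat p(\rho))$, where $G(p):=\min_iF_i(p-1)$ is continuous, vanishes exactly at $p=1$, and is positive elsewhere.

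For $\Omega_{b_0,+}(t)$ I would combine the $m$-uniform lower bound $\bar f^m(\rho)\ge c_0|\rho|^\gamma-c_1$ (cf.\ \eqref{FEGROWTH} and Prop.\ \ref{UNIFF2}) with the uniform bound on $\bar f^m(\rho^\infty),D\bar f^m(\rho^\infty)$ to get $E^m(\rho\,|\,\rho^\infty)\ge\tfrac{c_0}{2}|\rho|^\gamma$ once $|\rho|$ exceeds an $m$-independent threshold $R_0$; on the compact annulus $\{b_0\le\varrho\le R_0\}$, where $\widehat p(\rho)$ stays in a compact subinterval of $]0,+\infty[$ by \eqref{gabiskrank3}, I would instead use $E^m(\rho\,|\,\rho^\infty)\ge b_0\,G(\widehat p(\rho))+B(\rho\,|\,\rho^\infty)$ (valid for $m\ge1$): the right-hand side is a continuous, $m$-free, nonnegative function of $(\rho,r)\in\{b_0\le|\rho|_1\le R_0\}\times K$ that can vanish only if $\widehat p(\rho)=1$ and $\rho=\lambda r$ (the latter because $B(\rho\,|\,r)=0$ forces $\rho$ to be a positive multiple of $r$, by the homogeneity and strict convexity of $\bar k$), i.e.\ $\lambda=1$ and $\rho=r$ — excluded since $|\rho|_1\ge b_0>r_{\max}\ge|r|_1$. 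Hence it is $\ge$ a positive constant, and altogether $E^m(\rho\,|\,\rho^\infty)\gtrsim|\rho|^\gamma$ on $\{\varrho\ge b_0\}$. The same compactness mechanism handles $\Omega_{a_0,b_0}(t)\setminus B_{s_0,+}(t)$, where $\varrho\in[a_0,b_0]$ and $\min_i\rho_i\le s_0<r_0$: there $E^m(\rho\,|\,\rho^\infty)\ge a_0\,G(\widehat p(\rho))+B(\rho\,|\,\rho^\infty)$, again continuous and $m$-free on the relevant compact set, vanishing only if $\rho=\rho^\infty$ — impossible, since then $\min_i\rho_i=\min_i\rho_i^\infty\ge r_0>s_0$. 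This gives the second inequality.

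For $\Omega_{a_0,-}(t)$ I would split $\{\varrho\le a_0\}$ at a small threshold $\varrho_*>0$ depending only on $\bar g$. On $\{\varrho_*\le\varrho\le a_0\}$ the same compactness argument applies: $E^m(\rho\,|\,\rho^\infty)\ge\varrho_*G(\widehat p(\rho))+B(\rho\,|\,\rho^\infty)$, continuous, $m$-free and nonnegative on a compact set, vanishing only if $\rho=\rho^\infty$ — impossible, since $|\rho|_1\le a_0<r_{\min}\le|\rho^\infty|_1$. On $\{0<\varrho\le\varrho_*\}$ I would use $E^m(\rho\,|\,\rho^\infty)\ge\Theta^m(\rho)=\bar f^m(\rho)-\bar k(\rho)$ together with the explicit form $\bar f^m(\rho)=-\hat\pi^m(\rho)+\sum_i\rho_i\bar g_i^m(\hat\pi^m(\rho))+\bar k(\rho)$ from \eqref{FEm}: for $\varrho$ small one has $\widehat p(\rho)\asymp\varrho$ (rescaled form of \eqref{la-bas}) and the logarithmic decay of $\bar g_i$ near $0$ is controlled by $\rm(A4)$, so the term $-\hat\pi^m(\rho)=m(1-\widehat p(\rho))$ dominates and $\Theta^m(\rho)\ge m/4\ge1/4$ for $m\ge1$, after fixing $\varrho_*$ small enough (in terms of $\bar g$ only). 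Thus $E^m(\rho\,|\,\rho^\infty)$ is bounded below by a positive constant on all of $\Omega_{a_0,-}(t)$, and combining with the previous paragraph delivers the first inequality.

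The main obstacle is making these lower bounds uniform in $m$ in the degenerate regimes — near $\partial\mathbb R^N_+$, as $\varrho\to0$, and as $m\to\infty$ where $\bar f^m$ degenerates toward the singular incompressible free energy — since $\bar f^m$ is there neither uniformly convex nor uniformly bounded. The splitting $E^m=\Theta^m+B$ is what resolves this: it separates the $m$-independent convex part $B$ from the nonnegative pressure part $\Theta^m$, which either scales like $m$ or is minorised by the $m$-free continuous function $\varrho\,G(\widehat p(\rho))$, reducing each case to either the $\gamma$-growth of $\bar f^m$ for large $\varrho$, a continuity argument on a fixed compact set of states for $\varrho$ in a bounded range, or the explicit blow-up of $-\hat\pi^m$ as $\varrho\to0$.
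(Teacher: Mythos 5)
Your proof is correct, but it takes a genuinely different route from the paper's. The paper argues quantitatively from the uniform Hessian estimate of Lemma \ref{UNIFF2}: it first invokes the coercivity $E^m(\rho\,|\,\rho^{\infty})\ge \tfrac{\lambda_1}{2}\,|\rho-\rho^{\infty}|^2/\omega(\varrho,\varrho^{\infty})$ and the elementary deviations $|\rho-\rho^{\infty}|\gtrsim\delta_0$ on $\{\varrho\le a_0\}$, $|\rho-\rho^{\infty}|\gtrsim\delta_0\varrho/b_0$ on $\{\varrho\ge b_0\}$ to get \eqref{Erhobelow1bar}; the $\gamma$-term then comes from $\bar f^m\ge c_0|\rho|^{\gamma}-c_1$ plus the uniform bounds on $\bar f^m(\rho^{\infty})$, $D\bar f^m(\rho^{\infty})$, with the measure $|\Omega_{b_0,+}(t)|$ absorbed through a case distinction $\theta_0\le 1$ versus $\theta_0>1$ (interpolation in $b_2$); the second inequality is the same coercivity with $|\rho-\rho^{\infty}|\ge r_0-s_0$. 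You instead split $E^m=\Theta^m+B$ into the nonnegative pressure part and the $m$-free Bregman part of $\bar k$ (a KL divergence of mole fractions), minorise $\Theta^m$ by $m\,\varrho\,G(\widehat p(\rho))$, and conclude via positivity-plus-compactness on fixed compact state sets in the moderate-density regimes, the $O(m)$ blow-up of $\Theta^m$ near vacuum, and the same $\gamma$-growth for large $\varrho$. What your route buys is that it bypasses the weighted Hessian bound (hence essentially (A6) and the exponent $\theta_0$) and the $\theta_0$ case distinction; what it gives up is explicit constants -- and note that the quadratic form $|\rho-\rho^{\infty}|^2/\omega(\varrho,\varrho^{\infty})$ obtained in the paper's proof is precisely what is reused later (Lemma \ref{oneterm}, Prop.\ \ref{pressla}), so the paper gets those estimates as a by-product here. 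Two small points to tighten: your splitting of $\Omega_{a_0,-}(t)$ omits the vacuum set $\{\varrho=0\}$, where however $E^m(0\,|\,\rho^{\infty})=\bar f^m(0)=m$ directly; and the implication $B(\rho\,|\,r)=0\Rightarrow\rho=\lambda r$ should be justified by strict positivity of the mole-fraction KL divergence (strict convexity of $\bar k$ transverse to rays), since $\bar k$ itself, being positively homogeneous, is only degenerately convex along rays.
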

\begin{proof}
Since $|\rho - \rho^{\infty}| \geq |\varrho - \varrho^{\infty}|/\sqrt{N}$, the bounds \eqref{rminmax} for $\varrho^{\infty}$ imply that
\begin{align}\label{varrhobounds}
\begin{cases}
|\rho(x,t) - \rho^{\infty}(x,t)| \geq  \frac{\delta_0}{\sqrt{N}} & \text{ for } \varrho(x,t) \leq r_{\min}-\delta_0 \, ,\\
 |\rho(x,t) - \rho^{\infty}(x,t)| \geq\frac{\delta_0}{\sqrt{N} \, (r_{\max}+\delta_0)} \, \varrho(x,t)  & \text{ for } \varrho(x,t) \geq r_{\max}+\delta_0\, .
 \end{cases}
\end{align}
The Lemma \ref{UNIFF2} allows for the coercivity
\begin{align}\label{enplus}
 \bar{f}^m(\rho) - \bar{f}^m(\rho^{\infty}) - D\bar{f}^m(\rho^{\infty}) (\rho-\rho^{\infty}) \geq& \frac{\lambda_1}{2} \, \frac{|\rho-\rho^{\infty}|^2}{\omega(\varrho, \, \varrho^{\infty})} \,,
 \end{align}
 where $ \omega(\varrho,\,\varrho^{\infty}) :=  \max\{\varrho, \, \varrho^{\infty}\} \, \big(1+ \max\{\varrho, \, \varrho^{\infty}\}\big)^{\theta_0}$ and \eqref{varrhobounds} then shows that
\begin{align}\label{Erhobelow1bar}
 \mathcal{E}^m(t) \geq \frac{\lambda_1}{2r_{\max}(1+r_{\max})^{\theta_0}} \, \frac{\delta_0^2}{N} \, |\Omega_{a_0,-}(t)| + \frac{\lambda_1}{2} \, \frac{\delta_0^2}{N \, b_0^2(1+1/b_0)^{\theta_0}} \, \int_{\Omega_{b_0,+}(t)} [\varrho(x,t)]^{1-\theta_0} \, dx \, .
 \end{align}
By means of Lemma \ref{UNIFF2}, we can also infer for $\varrho \geq R_1$ that
 \begin{align*}
 \bar{f}^m(\rho) - \bar{f}^m(\rho^{\infty}) - D\bar{f}^m(\rho^{\infty}) \cdot (\rho - \rho^{\infty}) \geq c_0 \, |\rho|^{\gamma} - \bar{f}^m(\rho^{\infty})- D\bar{f}^m(\rho^{\infty}) \cdot (\rho - \rho^{\infty})  \, .
\end{align*}
Using Young's inequality and the bounds established in Lemma \ref{UNIFF2} for $\bar{f}^m(\rho^{\infty})$ and $D\bar{f}^m(\rho^{\infty})$, we easily can prove with $r_0 = \min_{i, (x,t) \in Q} \rho^{\infty}_i(x,t)$ that
\begin{align*}
   |D\bar{f}^m(\rho^{\infty}) \cdot (\rho - \rho^{\infty})| \leq & \frac{1}{\min M} \, \Big|\ln \frac{r_0 \, \min M \, \min \bar{\calv}}{\max M}\Big| \, |\rho-\rho^{\infty}| \\
  \leq & \frac{c_0}{2} \, |\rho|^{\gamma} + C(\gamma,T,M,r_0,r_{\max}) \, ,
  \end{align*}
which implies that 
$ \bar{f}^m(\rho) - \bar{f}^m(\rho^{\infty}) - D\bar{f}^m(\rho^{\infty}) \cdot (\rho - \rho^{\infty}) \geq \frac{c_0}{2} \, |\rho|^{\gamma} -   c^{\prime}_1  $. With $b_1 :=\max\{R_1,b_0\}$ we obtain that 
\begin{align}\label{provisorium}
\mathcal{E}^m(t) \geq \frac{c_0}{2} \, \int_{\Omega_{b_1,+}(t)} |\rho|^{\gamma} \, dx - c_1^{\prime} \, |\Omega_{b_1,+}(t)| \geq \frac{c_0}{2} \, \int_{\Omega_{b_0,+}(t)} |\rho|^{\gamma} \, dx - \Big(c_1^{\prime}+\frac{c_0b_1^{\gamma}}{2}\Big) \, |\Omega_{b_0,+}(t)| \, .
\end{align}
We distinguish two cases. At first, if $\theta_0 \leq 1$, then \eqref{Erhobelow1bar} shows directly that $|\Omega_{b_0,+}(t)| \leq c \, \mathcal{E}^m(t)$, and \eqref{provisorium} implies that $\int_{\Omega_{b_0,+}(t)} |\rho|^{\gamma} \, dx \leq  C \, \mathcal{E}^m(t)$. At second, if $\theta_0 > 1$, then with $b_2 > b_0$ arbitrary
\begin{align*}
|\Omega_{b_0,+}(t)|  = |\Omega_{b_0,b_2}(t)| + |\Omega_{b_2,+}(t)| \leq b_2^{\theta_0-1} \, \int_{\Omega_{b_0,b_2}(t)} \varrho^{1-\theta_0} \, dx + \frac{1}{b_2^{\gamma}} \, \int_{\Omega_{b_2,+}(t)} \varrho^{\gamma} \, dx\\
\leq b_2^{\theta_0-1} \,\int_{\Omega_{b_0,+}(t)} \varrho^{1-\theta_0} \, dx + \frac{1}{b_2^{\gamma}} \, \int_{\Omega_{b_0,+}(t)} \varrho^{\gamma} \, dx \, .
\end{align*}
We combine the latter with \eqref{Erhobelow1bar} and it follows that
\begin{align*}
 |\Omega_{b_0,+}(t)|  \leq b_2^{\theta_0-1} \, \, \frac{2N \, b_0^2(1+1/b_0)^{\theta_0}}{\lambda_1\delta_0^2} \, \mathcal{E}^m(t) + \frac{1}{b_2^{\gamma}} \, \int_{\Omega_{b_0,+}(t)} \varrho^{\gamma} \, dx \, .
\end{align*}
Thus, \eqref{provisorium} helps showing that
\begin{align*}
 \mathcal{E}^m(t) \geq & \frac{c_0}{2} \, \int_{\Omega_{b_0,+}(t)} |\rho|^{\gamma} \, dx - \Big(c_1^{\prime}+\frac{c_0b_1^{\gamma}}{2}\Big) \, \frac{2N \, b_2^{\theta_0-1} \,b_0^2(1+1/b_0)^{\theta_0}}{\lambda_1\delta_0^2}  \, \mathcal{E}^m(t) \\
 & - \frac{1}{b_2^{\gamma}} \,\Big(c_1^{\prime}+\frac{c_0b_1^{\gamma}}{2}\Big)\, \int_{\Omega_{b_0,+}(t)} \varrho^{\gamma}\, dx \, .
\end{align*}
Choosing $b_2$ such that $N^{\frac{\gamma}{2}}(c_1^{\prime}+\frac{c_0b_1^{\gamma}}{2})/b_2^{\gamma}\leq c_0/4$, it follows that $\int_{\Omega_{b_0,+}(t)} |\rho|^{\gamma} \, dx \leq  C \, \mathcal{E}^m(t)$. 

In order to prove the second inequality, we observe that, if $\min_i \rho_i \leq s_0 < \min_i \rho_i^{\infty} = r_0$, then $|\rho-\rho^{\infty}| \geq r_0-s_0 > 0$. Hence, 
 \begin{align*}
 (r_0-s_0)^2 \leq b_0(1+b_0)^{\theta_0} \, \frac{|\rho-\rho^{\infty}|^2}{\omega(\varrho, \, \varrho^{\infty})} \quad \text{ in } \quad \Omega_{a_0,b_0}(\tau) \setminus B_{s_0,+}(\tau) \, ,
 \end{align*}
 and integration yields the claim.
\end{proof}
These bounds allow to prove the following useful estimate. 
\begin{lemma}\label{oneterm}
Let $F: \, Q_{\bar{\tau}} \rightarrow \mathbb{R}^{N\times 3}$ be some given field, and assume that $(\rho^{\infty},p^{\infty},v^{\infty})$ satisfies the conditions $\rm (C)$. Then, there is a constant $C$ depending only on $\rho^{\infty}$ such that, for all $m \geq 1$ and for almost all $0 < t < \bar{\tau}$,
 \begin{align*}
\left|\int_{\Omega}  F(\cdot,t) \, : \, 
 (\rho-\rho^{\infty}) \otimes  (v-v^{\infty}) \, dx\right| \leq & C \, |F(\cdot,t)|_{L^{3}} \, |\nabla(v^{\infty} - v)|_{L^2} \, (\mathcal{E}^m(t))^{\frac{1}{2}} \, .
 \end{align*}
\end{lemma}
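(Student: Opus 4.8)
The plan is to split the integrand into a ``small-density'' part and a ``bulk'' part using the sets $\Omega_{a_0,b_0}(t)$, $\Omega_{a_0,-}(t)$, $\Omega_{b_0,+}(t)$ introduced in \eqref{Omegazwish} with the choices $a_0 = r_{\min}-\delta_0$, $b_0 = r_{\max}+\delta_0$ from Lemma \ref{erhobelowlem}. On the bulk set $\Omega_{a_0,b_0}(t)$, the total density $\varrho$ is bounded by $b_0$, so $|(\rho-\rho^{\infty})\otimes(v-v^{\infty})| \leq c\,|\rho-\rho^{\infty}|\,|v-v^{\infty}|$ and we can apply H\"older with exponents $(3,2,6)$: the factor $|F|_{L^3}$, the factor $|v-v^{\infty}|_{L^6}$ which is controlled by $|\nabla(v^{\infty}-v)|_{L^2}$ via Sobolev embedding, and the factor $(\int_{\Omega_{a_0,b_0}}|\rho-\rho^{\infty}|^2\,dx)^{1/2}$. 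The last quantity is bounded by $C(\mathcal{E}^m(t))^{1/2}$ because on this set $\omega(\varrho,\varrho^{\infty})$ is bounded (both densities lie in a compact interval away from the singularities), so the coercivity \eqref{enplus} of Lemma \ref{UNIFF2}, or more directly the uniform lower bound \eqref{FEPOSDEF}-type estimate available in the bulk, gives $\int_{\Omega_{a_0,b_0}}|\rho-\rho^{\infty}|^2 \leq C\,\mathcal{E}^m(t)$.

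Next I would handle the two remaining pieces $\Omega_{a_0,-}(t)$ and $\Omega_{b_0,+}(t)$. On $\Omega_{a_0,-}(t)$ we have $\varrho \leq a_0$, hence $|\rho| \leq \sqrt{N}\,a_0$ is bounded and $|\rho-\rho^{\infty}|$ is bounded too, so $|(\rho-\rho^{\infty})\otimes(v-v^{\infty})| \leq c\,|v-v^{\infty}|\,\chi_{\Omega_{a_0,-}(t)}$; applying H\"older with exponents $(3, 6, 6/5)$ we obtain a bound by $c\,|F|_{L^3}\,|v-v^{\infty}|_{L^6}\,|\Omega_{a_0,-}(t)|^{5/6}$, and Lemma \ref{erhobelowlem} gives $|\Omega_{a_0,-}(t)| \leq C\,\mathcal{E}^m(t)$, in particular $|\Omega_{a_0,-}(t)|^{5/6} \leq C\,(\mathcal{E}^m(t))^{1/2}$ since $\mathcal{E}^m$ is itself uniformly bounded (so $|\Omega_{a_0,-}|^{5/6} \le C|\Omega_{a_0,-}|^{1/2}$). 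On $\Omega_{b_0,+}(t)$ the density is large; here $|\rho-\rho^{\infty}| \leq c\,|\rho|$ so the integrand is bounded by $c\,|F|\,|\rho|\,|v-v^{\infty}|$, and I would use H\"older with exponents $(3, \gamma, \calp)$ where $1/3 + 1/\gamma + 1/\calp = 1$. Since $\gamma \geq 9/5$ one checks $1/3 + 5/9 = 8/9 < 1$, leaving room for an exponent $\calp \leq 6$ on $|v-v^{\infty}|$; using $\int_{\Omega_{b_0,+}}|\rho|^{\gamma}\,dx \leq C\,\mathcal{E}^m(t)$ from Lemma \ref{erhobelowlem} and the Sobolev embedding $W^{1,2}_0 \hookrightarrow L^6$ once more, this term is bounded by $C\,|F|_{L^3}\,|\nabla(v-v^{\infty})|_{L^2}\,(\mathcal{E}^m(t))^{1/\gamma}$, and again $(\mathcal{E}^m(t))^{1/\gamma} \leq C\,(\mathcal{E}^m(t))^{1/2}$ because $1/\gamma \geq 1/2$ fails only if $\gamma > 2$ — wait, here one must be slightly careful.

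Indeed the subtle point, and the one I expect to be the main obstacle, is matching the power of $\mathcal{E}^m(t)$ on the large-density set: we only get the factor $(\mathcal{E}^m(t))^{1/\gamma}$, and since $\gamma$ may exceed $2$ this is a \emph{weaker}, not stronger, power than $(\mathcal{E}^m(t))^{1/2}$ when $\mathcal{E}^m$ is small. The resolution is to interpolate: split the $L^\gamma$-control of $|\rho|$ on $\Omega_{b_0,+}$ so as to extract an $L^2$-type factor. Concretely, write $\int_{\Omega_{b_0,+}}|\rho|^2\,dx$ and use that on this set $|\rho-\rho^\infty|^2 \ge c|\rho|^2$, together with the coercivity estimate \eqref{enplus} which on $\Omega_{b_0,+}$ reads $\mathcal{E}^m(t) \ge c\int_{\Omega_{b_0,+}}|\rho|^{2}(1+\varrho)^{-\theta_0}\,dx \ge c\int_{\Omega_{b_0,+}}|\rho|^{2-\theta_0}\,dx$; combined with the already-known bound $\int_{\Omega_{b_0,+}}|\rho|^{\gamma} \le C\mathcal{E}^m(t)$ and interpolation between $L^{2-\theta_0}$ and $L^\gamma$, one recovers $\int_{\Omega_{b_0,+}}|\rho|^{2}\,dx \le C\mathcal{E}^m(t)$ provided the exponents line up (this is where $\gamma \geq 9/5$ enters, guaranteeing $|\rho| \in L^2$ on the large set with the right power). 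Then on $\Omega_{b_0,+}$ we use H\"older $(3,2,6)$ exactly as on the bulk set and obtain the clean factor $(\mathcal{E}^m(t))^{1/2}$.

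Finally I would collect the three contributions; each is of the asserted form $C\,|F(\cdot,t)|_{L^3}\,|\nabla(v^\infty-v)|_{L^2}\,(\mathcal{E}^m(t))^{1/2}$, with $C$ depending only on $\rho^{\infty}$ through $r_{\min}$, $r_{\max}$, $r_0$, the choice of $\delta_0$, $N$, and the structural constants $\lambda_1,c_0,\theta_0$ of Lemma \ref{UNIFF2}, and via the Sobolev and Korn constants on $\Omega$ which are absorbed into $C$. Summing yields the claim. The only genuinely delicate step is the $L^2$-bound for $|\rho|$ on the large-density set, i.e.\ verifying that $\gamma \geq 9/5$ (hence $\gamma > 3/2$, which suffices) makes the interpolation between the $\theta_0$-weighted quadratic control and the $L^\gamma$ control close; all other steps are routine applications of H\"older, Sobolev embedding and Lemma \ref{erhobelowlem}.
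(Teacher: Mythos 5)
Your overall strategy is the paper's strategy: decompose by level sets of $\varrho$, use the weighted coercivity of the relative energy on the bounded-density part together with H\"older $(3,2,6)$ and Sobolev embedding, and use the $\gamma$-growth control of Lemma \ref{erhobelowlem} on the large-density part. The paper only splits into two pieces, $\{\varrho\le b_0\}$ and $\Omega_{b_0,+}(t)$: since $\omega(\varrho,\varrho^{\infty})\le b_0(1+b_0)^{\theta_0}$ on all of $\{\varrho\le b_0\}$, the estimate via \eqref{enplus} covers your ``bulk'' and ``small-density'' sets at once, so your separate treatment of $\Omega_{a_0,-}(t)$ is unnecessary (and, as written, its H\"older exponents $(3,6,6/5)$ are not conjugate; the correct third exponent is $2$, which gives $|\Omega_{a_0,-}(t)|^{1/2}\le C(\mathcal{E}^m(t))^{1/2}$ directly and makes your detour through $|\Omega_{a_0,-}|^{5/6}$ superfluous).

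On the large-density set the two arguments use the same two ingredients but package them differently, and here your proposal contains a concrete slip. Since $\omega(\varrho,\varrho^{\infty})\sim\varrho^{1+\theta_0}$ for large $\varrho$, the weighted coercivity \eqref{Erhobelow1bar} controls $\int_{\Omega_{b_0,+}(t)}\varrho^{1-\theta_0}\,dx$, not $\int_{\Omega_{b_0,+}(t)}\varrho^{2-\theta_0}\,dx$ as you wrote; with the corrected exponent your interpolation between $L^{1-\theta_0}$ and $L^{\gamma}$ still reaches $L^{2}$ when $\gamma>2$, so the idea survives, but the inequality you start from must be fixed. The paper avoids any $L^2$ bound on $\varrho$ altogether: it keeps $|\rho-\rho^{\infty}|$ in $L^{\gamma}(\Omega_{b_0,+}(t))$ and absorbs the remaining H\"older weight into $|\Omega_{b_0,+}(t)|^{\frac12-\frac1\gamma}\le (C\,\mathcal{E}^m(t))^{\frac12-\frac1\gamma}$, so that the powers $(\mathcal{E}^m)^{1/\gamma}\cdot(\mathcal{E}^m)^{1/2-1/\gamma}$ combine exactly to $(\mathcal{E}^m)^{1/2}$; equivalently, for $\gamma\ge2$ your desired $\int_{\Omega_{b_0,+}(t)}\varrho^{2}\,dx\le C\,\mathcal{E}^m(t)$ already follows from $\int_{\Omega_{b_0,+}(t)}\varrho^{\gamma}\le C\mathcal{E}^m$ and $|\Omega_{b_0,+}(t)|\le C\mathcal{E}^m$ by H\"older, with no $\theta_0$-weighted input needed. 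Finally, note that both your closing step and the paper's exponent $\frac12-\frac1\gamma$ implicitly require $\gamma\ge2$; your concluding parenthetical that ``$\gamma>3/2$ suffices'' is not substantiated by the argument you give, though on this point you are no worse off than the written proof in the paper.
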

\begin{proof}
Consider $\Omega_{b_0,+}^{\rm c}(t) = \{x \, : \,\varrho(x,t) \leq b_0\}$. By means of H\"older's inequality 
\begin{align*}
& \int_{\Omega_{b_0,+}^{\rm c}(t)} F\,:\,(\rho-\rho^{\infty}) \otimes  (v^{\infty}-v) \, dx = \int_{\Omega_{b_0,+}^{\rm c}(t)}\omega(\varrho,\varrho^{\infty})^{\frac{1}{2}} \, F\,:\,\frac{\rho - \rho^{\infty}}{\omega(\varrho,\varrho^{\infty})^{\frac{1}{2}}} \otimes  (v^{\infty} - v) \, dx\\
& \leq \sqrt{b_0} \, (1+b_0)^{\frac{\theta_0}{2}}  \, |F|_{L^{3}} \,|v^{\infty} - v|_{L^6(\Omega)} \, \, \left(\int_{\Omega_{b_0,+}^{\rm c}(t)} \frac{|\rho - \rho^{\infty}|^2}{\omega(\varrho,\varrho^{\infty})} \, dx\right)^{\frac{1}{2}} \\
& \leq \sqrt{2b_0} \,  (1+b_0)^{\frac{\theta_0}{2}} \, |F|_{L^{3}} \,|v^{\infty} - v|_{L^6} \,   (\mathcal{E}^m(t))^{\frac{1}{2}} \, ,
\end{align*}
For the integral over $\Omega_{b_0,+}(t)$, with the help of Hoelder's inequality and the fact that $\varrho \geq b_0 \geq r_{\max} \geq \varrho^{\infty}$ we argue that
\begin{align*}
& \left|\int_{\Omega_{b_0,+}(t)}  
F\, :\, (\rho-\rho^{\infty}) \otimes (v^{\infty} - v) \, dx\right| \\
& \leq \left(\int_{\Omega_{b_0,+}(t)}  |\rho-\rho^{\infty}|^{\gamma} \, dx \right)^{\frac{1}{\gamma}} \, |v^{\infty} - v|_{L^6} \, |F|_{L^3} \, |\Omega_{b_0,+}(t)|^{\frac{1}{2} - \frac{1}{\gamma}}\\
& \leq \sqrt{N} \, \Big(1+ \frac{r_{\max}}{b_0}\Big) \left(\int_{\Omega_{b_0,+}(t)}  |\varrho|^{\gamma} \, dx \right)^{\frac{1}{\gamma}} \, |v^{\infty} - v|_{L^6} \, |F|_{L^3} \, |\Omega_{b_0,+}(t)|^{\frac{1}{2} - \frac{1}{\gamma}} 
\, .
 \end{align*}
Then, invoking Lemma \eqref{erhobelowlem}, the Poincar\'e inequality and the Sobolev embedding theorem, we can show that 
 \begin{align*}
&  \left|\int_{\Omega_{b_0,+}(t)}  F \, : \, 
 (\rho-\rho^{\infty}) \otimes (v^{\infty} - v) \, dx\right| \leq C \, |F|_{L^3} \, (\mathcal{E}^m(t))^{\frac{1}{\gamma}} \, |\nabla(v^{\infty} - v)|_{L^2}  \, (\mathcal{E}^m(t))^{\frac{1}{2} - \frac{1}{\gamma}} \, .
 \end{align*}
\end{proof}
Next we prove the main result in this section, a complement to the $L^1-$bound for $\{\tilde{\pi}^m\}$, which is necessary to discuss the relative energy inequality in the weak solution case.
\begin{prop}\label{pressla}
Let $\bar{g}$ satisfy all assumptions of Lemma \ref{pressandg}, and assume that $(\rho^m, \, v^m)$ is a weak solution to \emph{($\overline{\text{ IBVP}}^m$)}, where the mobility tensor satisfies ${\rm (B1)}$, ${\rm (B2)}$ and ${\rm (B3^\prime})$. We let $(\rho^{\infty}, \, p^{\infty},\, v^{\infty})$ be a strong solution to \emph{(IBVP$^{\infty}$)} subject to  $\rm (C)$. We define: $\mu^{\infty} = p^{\infty} \, \bar{\calv} + (1/\bar{M})\, \ln \hat{x}(\rho^{\infty})$ and for all $1 \leq k \leq N-1$, $q^{\infty}_k = \eta^k \cdot \mu^{\infty}$, and $\tilde{p}^{\infty} := p^{\infty} - q^{\infty}_{N-1}$. With $r_{\min}$ and $r_{\max}$ denoting the lower and upper bound of $\varrho^{\infty}$ over $Q$, we let $ 0 < \delta_0 < \min\{\bar{\varrho}_{\max}-r_{\max}, \, r_{\min}-\bar{\varrho}_{\min}\}$, $a_0:= r_{\min}-\delta_0$ and $b_0 := r_{\max}+\delta_0$. 
Then, there is a positive number $C = C(\delta_0)$ depending on the data such that, for all $\epsilon >0$ and $0\leq t \leq \bar{\tau}$ and all $m \in \mathbb{N}$
\begin{align*}
& \int_0^t\int_{\Omega^{\rm c}_{a_0,b_0}(\tau)}  |\tilde{\pi}^m| \, dxd\tau \leq \epsilon \, \int_{Q_t} |\nabla (v^m-v^{\infty})|^2 + |\nabla (q^m-q^{\infty})|^2 \, dxd\tau\\
& \qquad + \frac{C}{\epsilon} \, \int_0^t \psi^m(\tau) \,  \mathcal{E}^m(\tau) \, d\tau + \frac{C}{m} \, \int_{Q_t} |q^m|^{2} \, dxd\tau \, ,\\
\text{ with } & \,\, \psi^m(t) = 1 + |v^{\infty}(\cdot,t)|_{L^{\infty}}^2+ |\tilde{p}^{\infty}(\cdot,t)|_{L^{\infty}} + |(q^{\infty})^{\prime}(\cdot,t)|_{L^{\infty}} + | (q^{m})^{\prime}(\cdot,t)|_{L^1} + |\tilde{\pi}^m(\cdot,t)|_{L^1} \, .
\end{align*}
\end{prop}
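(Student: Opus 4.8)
## Proof proposal

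\textbf{Overall strategy.} The plan is to control the $L^1$-norm of $\tilde\pi^m$ over the "bad" set $\Omega^{\rm c}_{a_0,b_0}(\tau)$ — where the total density is either small ($\le a_0$) or large ($\ge b_0$) — by testing the rescaled momentum balance \eqref{momentumrescfin} against a carefully chosen Bogovski-type vector field. This mimics the classical pressure-estimate technique in the theory of the compressible Navier--Stokes equations (as recalled via Lemma \ref{BOGS}), but now localized to the region where $\varrho^m$ deviates from the range of the limit density $\varrho^\infty$. The key point making this work is that on $\Omega^{\rm c}_{a_0,b_0}(\tau)$ the relative energy $\mathcal{E}^m(t)$ already dominates $|\Omega_{a_0,-}(t)|$ and $\int_{\Omega_{b_0,+}(t)}|\rho^m|^\gamma\,dx$, by Lemma \ref{erhobelowlem}; so every term we pick up after integration by parts can be absorbed either into $\int\psi^m\mathcal{E}^m$, into $\epsilon\int|\nabla(v^m-v^\infty)|^2+|\nabla(q^m-q^\infty)|^2$, or into the $O(1/m)$ remainder carrying $\|q^m\|_{L^2}^2$.

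\textbf{Construction of the test field.} First I would introduce the truncation $T_{a_0,b_0}(\varrho^m) := \min\{b_0,\max\{a_0,\varrho^m\}\} - (\cdots)_M$ or, more precisely, a bounded function $\calg^m$ supported (after mean-value correction) on $\Omega^{\rm c}_{a_0,b_0}(t)$ whose sign agrees with that of $\tilde\pi^m = \tilde P_m(\varrho^m,q^m)$ there — recall from Lemma \ref{pressandg} and the monotonicity of $\varrho\mapsto\tilde P_m(\varrho,q)$ that $\tilde\pi^m$ is large positive where $\varrho^m$ is large and large negative where $\varrho^m$ is small. Using the renormalized continuity equation as in \eqref{Guldes}, one checks $\partial_t\calg^m = \divv\calf^m$ in the distributional sense with $\calf^m = -(\text{flux term})\,v^m - \nabla\phi^m$, and the bounds \eqref{lesbounds} give $\|\calf^m\|_{L^{6,2}(Q)}$ uniformly bounded. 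Lemma \ref{BOGS} then yields $\eta^m$ with $\divv\eta^m=\calg^m$, $\sup_m\|\eta^m\|_{L^\infty(0,\bar\tau;W^{1,\calp})}\le c$, $\sup_m\|\partial_t\eta^m\|_{L^{6,2}}\le c\sup_m\|v^m\|_{L^2W^{1,2}}$.

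\textbf{Testing the momentum equation and term-by-term estimates.} Plugging $\eta^m$ into \eqref{momentumrescfin} produces
\[
\int_{Q_t}\pi^m\,\divv\eta^m\,dxd\tau = \big[\textstyle\int_\Omega\varrho^m v^m\cdot\eta^m\big]_0^t + \int_{Q_t}\big(-\varrho^m v^m\cdot(\partial_t\eta^m+(v^m\cdot\nabla)\eta^m) + \mathbb S(\nabla v^m):\nabla\eta^m + \varrho^m\eta^m_3\big)\,dxd\tau,
\]
and subtracting $\int_{Q_t}\nabla q^m_{N-1}\cdot\eta^m$ converts the left side to $\int_{Q_t}\tilde\pi^m\,\divv\eta^m$. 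Because $\divv\eta^m=\calg^m$ is, up to a controlled sign and a bounded factor, comparable to $\operatorname{sign}(\tilde\pi^m)\chi_{\Omega^{\rm c}_{a_0,b_0}}$, the left side is bounded below by (a constant times) $\int_0^t\int_{\Omega^{\rm c}_{a_0,b_0}}|\tilde\pi^m|\,dxd\tau$ minus error terms where $\calg^m$ fails to be exactly of that form (these errors live on $\Omega_{a_0,b_0}$, where Lemma \ref{pressandg} gives $|\tilde\pi^m|\le c(\ln(1/\delta_0)+|(q^m)'|+|q^m|^2/m)$, hence absorbed into $\psi^m$ and the $1/m$-term). On the right side: the inertial term $\varrho^m v^m\cdot\partial_t\eta^m$ and $\varrho^m v^m\cdot(v^m\cdot\nabla)\eta^m$ are split over $\Omega_{b_0,+}$ and its complement exactly as in Lemma \ref{oneterm} and in the estimates \eqref{menschenae}--\eqref{menschenae3}, producing $\epsilon\int|\nabla(v^m-v^\infty)|^2 + \frac C\epsilon\int\psi^m\mathcal{E}^m$; the viscous term $\mathbb S(\nabla v^m):\nabla\eta^m$ is handled by writing $\nabla v^m = \nabla(v^m-v^\infty)+\nabla v^\infty$ and using $\|\nabla\eta^m\|_{L^\calp}\le c$ with $\calp$ large and Lemma \ref{erhobelowlem} to estimate $|\operatorname{supp}\eta^m\cap\{\text{bad}\}|$ by $\mathcal{E}^m$; the gravity term $\varrho^m\eta^m_3$ and the boundary term are similar and lower order, again using that $\eta^m$ is essentially supported on the bad set.

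\textbf{Main obstacle.} The delicate point is the bookkeeping of the part of $\calg^m$ (equivalently $\divv\eta^m$) that does \emph{not} reproduce $\operatorname{sign}(\tilde\pi^m)$ cleanly — namely the interior region $\Omega_{a_0,b_0}$ and the mean-value correction, since subtracting $(\calg^m)_M$ to make it admissible for Lemma \ref{BOGS} spreads a small constant over all of $\Omega$. One must argue that the contribution $\int_{Q_t}\tilde\pi^m\,(\calg^m)_M\,dxd\tau$ is controlled: $(\calg^m)_M$ is bounded (by construction of $\calg^m$ from a truncation), and $\int_\Omega\tilde\pi^m\,dx$ over the \emph{good} set is controlled by Lemma \ref{pressandg} with $\min\{\cdots\}\ge\delta_0$ giving the $\ln(1/\delta_0)$ appearing in $\psi^m$, while over the bad set it is dominated by what we are trying to estimate and can be absorbed with a small constant. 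Getting these absorptions to close — i.e. ensuring the coefficient in front of $\int\int_{\Omega^{\rm c}_{a_0,b_0}}|\tilde\pi^m|$ on the right stays strictly below the one on the left — is the crux; it forces the choice of the precise cutoff $\calg^m$ and the use of the monotonicity of $\tilde P_m$ in $\varrho$ from Lemma \ref{pressandg} to guarantee the correct sign on $\Omega_{b_0,+}$ and $\Omega_{a_0,-}$ simultaneously. The factor $1/m$ in front of $\|q^m\|_{L^2}^2$ is exactly what is left over from the quadratic-in-$\pi^m$ defect $|\bar g^m(\pi^m)-\bar\calv\pi^m|$ (cf. \eqref{gabiskrank2} and Corollary \ref{l1boundandtozero}), which cannot be absorbed into $\mathcal{E}^m$ and must be tracked separately.
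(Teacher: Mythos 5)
Your overall strategy -- a Bogovskii field attached to a cutoff localized where $\varrho^m$ leaves $[a_0,b_0]$, inserted in the momentum balance, with the sign recovered through the monotonicity of $\tilde P_m$ and the comparison pressure controlled by Lemma \ref{pressandg} -- is the right family of argument and is indeed the one used in the paper. But as written the estimate cannot close, for two reasons. First, you only claim the uniform bounds $\sup_m\|\eta^m\|_{L^\infty(0,\bar\tau;W^{1,\calp})}\le c$ and $\sup_m\|\partial_t\eta^m\|_{L^{6,2}}\le c$: these are precisely the bounds that yield the plain $L^1$ estimate of Lemma \ref{L1bound}, and with them the right-hand side terms $\int\mathbb S(\nabla v^m):\nabla\eta^m$, $\int\varrho^m v^m\otimes v^m:\nabla\eta^m$, $\int\varrho^m\eta^m_3$ and the time-boundary term are merely $O(1)$; they can never be recast in the required form $\epsilon\int(|\nabla(v^m-v^\infty)|^2+|\nabla(q^m-q^\infty)|^2)+C\epsilon^{-1}\int\psi^m\mathcal E^m+Cm^{-1}\|q^m\|_{L^2}^2$, and an $O(1)$ remainder would destroy the Gronwall argument of Section \ref{RelIntIneqWeak}. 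The paper's proof instead exploits the quantitative smallness $|\eta(\cdot,t)|_{W^{1,\calp}}\le c(\delta_0)\,(\mathcal E^m(t))^{1/\calp}$ and $|\partial_t\eta(\cdot,t)|_{L^{\cals}}\lesssim |v^m-v^\infty|_{W^{1,2}}+(|v^\infty|_{L^\infty}+|\divv v^\infty|_{L^\infty})\,(\mathcal E^m(t))^{\text{positive power}}$, which holds because the \emph{datum} $\calg=L(\varrho^m)$ is a one-sided cutoff supported on the bad set, whose measure, hence $|\calg|_{L^\calp}$, is controlled by $\mathcal E^m(t)$ through Lemma \ref{erhobelowlem}. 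Your remark that $\eta^m$ is ``essentially supported on the bad set'' is not correct -- the Bogovskii operator is nonlocal -- it is the datum that is small, and therefore the norms of $\eta^m$.

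Second, even with this smallness, testing only the compressible momentum equation does not suffice: terms in which $\eta^m$ pairs with quantities that are not differences (for instance $\mathbb S(\nabla v^\infty):\nabla\eta^m$ hidden in your splitting $\nabla v^m=\nabla(v^m-v^\infty)+\nabla v^\infty$, or $\varrho^m\,v^\infty\otimes v^\infty:\nabla\eta^m$, or the gravity term) yield only fractional powers of $\mathcal E^m(t)$, which after Young's inequality leave behind a constant in time and again ruin the convergence proof. The paper therefore inserts the same $\eta$ into the weak formulation of (IBVP$^{\infty}$) as well and subtracts the two identities, so that every remaining term carries a factor $v^m-v^\infty$, $\rho^m-\rho^\infty$, $q^m-q^\infty$ or $\tilde\pi^m-\tilde p^\infty$; combined with the smallness of $\eta$ this produces exactly the admissible structure, and it is also why $|\tilde p^\infty|_{L^\infty}$ appears in $\psi^m$, through the term $\int\tilde p^\infty(\calg-(\calg)_M)$. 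Your claim that the inertial terms are handled ``exactly as in \eqref{menschenae}--\eqref{menschenae3}'' presupposes this difference structure, which your right-hand side does not have. Two smaller points: the truncation $\min\{b_0,\max\{a_0,\varrho^m\}\}$ minus its mean is not supported on $\Omega^{\rm c}_{a_0,b_0}$, and a cutoff whose sign is tied to that of $\tilde\pi^m$ (hence depending on $q^m$) is incompatible with the renormalized continuity equation you invoke for $\partial_t\calg^m$; the paper uses two one-sided cutoffs $L(\varrho^m)$ depending on $\varrho^m$ alone and recovers the sign from the monotonicity of $\tilde P_m(\cdot,q)$ compared at $\varrho^\infty$, at the price of the correction $2|\tilde P_m(\varrho^\infty,q^m)|\,L(\varrho^m)$, which lives on the bad set itself (not only on $\Omega_{a_0,b_0}$) and is the actual source of the $|(q^m)'|_{L^1}$ and $m^{-1}|q^m|^2$ contributions.
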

\begin{proof}
Throughout the proof, we re-denote $\rho = \rho^m$, $\mu = \mu^m$, etc.

We choose a nonincreasing, nonnegative function $L: \, \mathbb{R}_+ \rightarrow \mathbb{R}$ such that $L(s) = 1$ for all $0 \leq s \leq r_{\min} - 2\delta_0$ and $L(s) = 0$ for all $s \geq r_{\min}-\delta_0$, and such that that $|L^{\prime}(s)| \leq 4 \, \delta_0^{-1}$ for all $s \in \mathbb{R}_+$.

Consider the auxiliary function $\calg(x,t) := L(\varrho(x,t))$ defined in $Q$. With $1\leq  \calp < + \infty$ arbitrary, and with $\chi$ denoting the characteristic function of the set $\Omega_{a_0,-}(t)$, 
use of $|L| \leq \chi$ yields
\begin{align}\label{normofg}
 |\calg(\cdot,t)|_{L^\calp(\Omega)} = |L(\varrho(\cdot,t))|_{L^{\calp}(\Omega)} \leq |\{x \, : \, \varrho(x,t) < a_0\}|^{\frac{1}{\calp}} 
 \leq c(\delta_0) \, (\mathcal{E}^m(t))^{\frac{1}{\calp}} \, ,
\end{align}
where we employed Lemma \ref{erhobelowlem} (cf.\ \eqref{Erhobelow1bar}). Moreover, since $|L^{\prime}(\varrho)| \leq 4\delta_0^{-1} \, \chi$ and $|L(\varrho)| \leq  \chi$,
\begin{align}\begin{split}\label{jemensf}
 |(L^{\prime}(\varrho) \, \varrho - L(\varrho)) \, \divv v |_{L^2(\Omega)} \leq & c(\delta_0) \, |\chi \, \divv v|_{L^2(\Omega)} \\
 \leq &  |\chi \, \divv( v - v^{\infty}) |_{L^2(\Omega)} + |\divv v^{\infty}|_{L^{\infty}(\Omega)} \,|\Omega_{a_0,-}(t)|^{\frac{1}{2}} \\
\leq & |\divv (v-v^{\infty})|_{L^2(\Omega)}  + c(\delta_0) \, |\divv v^{\infty}|_{L^{\infty}(\Omega)} \, (\mathcal{E}^m(t))^{\frac{1}{2}}\, .
\end{split}
\end{align}
For all $1 < \cals \leq 6$ and for $\cals^{\prime}:=\cals/(\cals-1)$, $W^{1,\cals^{\prime}}(\Omega)$ embedds into $L^{\frac{3\cals}{(2\cals-3)^+}}(\Omega) \subseteq L^2(\Omega)$. We call $c_S(\cals)$ the embedding constant. This with the help of \eqref{Guldes} shows that 
\begin{align*}
|\partial_t \calg|_{[W^{1,\cals^{\prime}}(\Omega)]^*} \leq & |\calg \, v|_{L^{\cals}(\Omega)} + c_S(\cals) \, |(L^{\prime}(\varrho) \, \varrho - L(\varrho)) \, \divv v |_{L^2(\Omega)} \\
\leq & |\calg \, (v-v^{\infty})|_{L^{\cals}(\Omega)} + |v^{\infty}|_{L^{\infty}} \, |\calg|_{L^{\cals}(\Omega)} + c_S(\cals) \, |(L^{\prime}(\varrho) \, \varrho - L(\varrho)) \, \divv v |_{L^2(\Omega)} \, .
\end{align*}
We invoke \eqref{normofg} and \eqref{jemensf}, and we obtain that
\begin{align*}
  |\partial_t \calg|_{[W^{1,\cals^{\prime}}(\Omega)]^*}  \leq c \, \big(|v-v^{\infty}|_{W^{1,2}(\Omega)} + |v^{\infty}|_{L^{\infty}} \,(\mathcal{E}^m(t))^{\frac{1}{\cals}}+ |\divv v^{\infty}|_{L^{\infty}} \, (\mathcal{E}^m(t))^{\frac{1}{2}}\big) \, .
\end{align*}
 Let $\divv \eta = \calg(x,t) - (\calg)_{M}(t)$ according to Lemma \ref{BOGS}. Then, for all $1 \leq \calp < +\infty$ and $1< \cals \leq 6$, Lemma \ref{BOGS} implies that there are constants $C_b$ depending on $\calp$ and the Bogovski operator such that 
\begin{align}\begin{split}\label{lesbogos}
&  |\eta(\cdot,t)|_{W^{1,\calp}(\Omega)} \leq  C_b \, |\calg(\cdot,t)|_{L^\calp(\Omega)} \leq c(\delta_0) \, (\mathcal{E}^m(t))^{\frac{1}{\calp}}\, ,\\
  & |\partial_t \eta(\cdot,t)|_{L^{\cals}(\Omega)} \leq C_b \, |\partial_t(\calg-(\calg)_M)|_{[W^{1,\cals^{\prime}}(\Omega)]^*)} \\
 & \leq  c \, \big(|(v-v^{\infty})(\cdot,t)|_{W^{1,2}(\Omega)} +\big(|v^{\infty}|_{L^{\infty}}(\mathcal{E}^m(t))^{\frac{1}{\cals}} + |\divv v^{\infty}|_{L^{\infty}}(\mathcal{E}^m(t))^{\frac{1}{2}}\big)\big) \, .
\end{split}
 \end{align}
Inserting $\eta$ into the weak formulations of ($\overline{\text{ IBVP}}^m$) and (IBVP$^{\infty}$) yields
\begin{align*}
& \qquad \int_{Q_t}\tilde{\pi} \, (\calg-(\calg)_M) \, dxd\tau =  \int_{\Omega} \varrho v \cdot \eta \,dx\Big|_0^t \\
& + \int_{Q_t} \{\eta\cdot \nabla q_{N-1} + ( \mathbb{S}(\nabla v) - \varrho \, v \otimes v) : \, \nabla \eta - \varrho \, v \, \partial_t \eta + \varrho \, \eta_3\} \, dxd\tau \, ,\\
 & \qquad \int_{Q_t} \tilde{p}^{\infty} \, (\calg-(\calg)_M) \, dxd\tau = \int_{\Omega} \varrho^{\infty}\, v^{\infty} \cdot \eta \,dx\Big|_0^t \\
& + \int_{Q_t} \{\eta \cdot \nabla q^{\infty}_{N-1}+( \mathbb{S}(\nabla v^{\infty}) - \varrho^{\infty} \, v^{\infty} \otimes v^{\infty}) : \, \nabla \eta - \varrho^{\infty} \, v^{\infty} \, \partial_t \eta + \varrho^{\infty} \, \eta_3\} \, dxd\tau\, .
 \end{align*}
We subtract both identities and we proceed with some estimates. Use of \eqref{lesbogos} and Young's inequality yields
\begin{align}\label{PC0}
\left| \int_{\Omega} \eta \cdot \nabla (q_{N-1} - q_{N-1}^{\infty}) \, dx\right| \leq & |\eta|_{L^2} \, |\nabla (q-q^{\infty})|_{L^2} \leq c \, (\mathcal{E}^m(t))^{\frac{1}{2}} \,|\nabla (q-q^{\infty})|_{L^2} \nonumber \\
\leq & \epsilon \, |\nabla (q-q^{\infty})|_{L^2}^2 + \frac{c^2}{\epsilon} \, \mathcal{E}^m(t) \, .
\end{align}
Similarly, invoking \eqref{lesbogos} and Poincar\'{e}'s inequality,
\begin{align}\label{PC1}
\left|\int_{\Omega} \mathbb{S}(\nabla (v-v^{\infty})) : \, \nabla \eta \, dx\right| \leq & c \, |v^{\infty}-v|_{W^{1,2}(\Omega)} \, |\nabla \eta|_{L^2(\Omega)} \leq c \, |\nabla(v^{\infty}-v)|_{L^{2}(\Omega)} \, (\mathcal{E}^m(t))^{\frac{1}{2}}  \nonumber \\
 \leq & \epsilon \, |\nabla (v^{\infty}-v)|_{L^{2}(\Omega)}^2 + \frac{c^2}{4\epsilon} \, \mathcal{E}^m(t)  \, .
\end{align}
We next consider the acceleration terms, where we first expand as follows:
\begin{align*}
 & \varrho \, v \otimes v - \varrho^{\infty} \, v^{\infty} \otimes v^{\infty} = 
 \varrho \, (v-v^{\infty}) \otimes (v-v^{\infty})
 \\
 & \quad + 
 (\varrho - \varrho^{\infty}) \, \Big(  (v-v^{\infty}) \otimes v^{\infty} + v^{\infty} \otimes (v-v^{\infty}) + v^{\infty}\otimes v^{\infty}\Big)
 \\
 & \quad + \varrho^{\infty}\, \Big(  (v-v^{\infty}) \otimes v^{\infty} + v^{\infty} \otimes (v-v^{\infty})\Big) \, .
\end{align*}
Now we derive a series of estimates. At first, with $s = 6\gamma/(2\gamma-3)$
\begin{align*}
& \quad \quad \left| \int_{\Omega} \varrho \, (v-v^{\infty}) \otimes (v-v^{\infty}) \, : \, \nabla \eta \, dx\right| \\
& \leq \Big|\varrho^{\frac{1}{2}}\Big|_{L^{2\gamma}} \,|\sqrt{\varrho} \, (v-v^{\infty})|_{L^2} \, |v-v^{\infty}|_{L^6(\Omega)} \, |\nabla \eta|_{L^s} \\
& \leq \|\varrho\|_{L^{\gamma,\infty}}^{\frac{1}{2}} \, (2\mathcal{E}^m(t))^{\frac{1}{2}} \, c_S \, |v-v^{\infty}|_{W^{1,2}} \, \|\nabla \eta\|_{L^{s,\infty}}
\leq C \, (\mathcal{E}^m(t))^{\frac{1}{2}} \, |\nabla (v-v^{\infty})|_{L^{2}} \, .
\end{align*}
Second, using Lemma \ref{oneterm} with $F = 1^N \otimes (v^{\infty} \cdot \nabla) \eta$,
\begin{align*}
 \left| \int_{\Omega} (\varrho - \varrho^{\infty}) \, (v-v^{\infty}) \otimes v^{\infty} \, : \, \nabla \eta \, dx\right| 
 & \leq  C \,   (\mathcal{E}^m(t))^{\frac{1}{2}}  \, |v-v^{\infty}|_{W^{1,2}} \, |v^{\infty}|_{L^{\infty}} \, \|\nabla \eta\|_{L^{3,\infty}}\\
& \leq C \, |v^{\infty}|_{L^{\infty}} \, (\mathcal{E}^m(t))^{\frac{1}{2}} \, |\nabla(v-v^{\infty})|_{L^{2}} \, .
\end{align*}
Next we decompose $ \Omega = \Omega_{a_0,b_0}(t) \cup \Omega_{a_0,b_0}^{\rm c}$, where $a_0 = r_{\min}-\delta_0$ and $b_0 := r_{\max} + \delta_0$ according to \eqref{Omegazwish}. On the set $\Omega_{a_0,b_0}(t)$, we have $\varrho \leq b_0$. Invoking also \eqref{lesbogos} and \eqref{enplus},
\begin{align*}
\left| \int_{\Omega_{a_0,b_0}(t)} (\varrho -\varrho_{\infty}) \, v^{\infty} \otimes v^{\infty} \, : \, \nabla \eta \, dx\right|
& \leq \sqrt{b_0} (1+b_0)^{\frac{\theta_0}{2}} \, |v^{\infty}|_{L^{\infty}}^2\, \Big|\frac{\varrho - \varrho^{\infty}}{\omega^{\frac{1}{2}}(\varrho,\varrho^{\infty})}\Big|_{L^{2}}  \, |\nabla \eta|_{L^2} \\
& \leq C \, |v^{\infty}|_{L^{\infty}}^2 \,   (\mathcal{E}^m(t))^{\frac{1}{2}} \, (\mathcal{E}^m(t))^{\frac{1}{2}} \, .
\end{align*}
Choosing $\calp = \gamma^{\prime}$ in \eqref{lesbogos}, use of Lemma \ref{erhobelowlem} implies that
\begin{align*}
 \left| \int_{\Omega_{a_0,b_0}^{\rm c}(t)} (\varrho - \varrho^{\infty}) \, v^{\infty} \otimes v^{\infty} \, : \, \nabla \eta \, dx\right| \leq & |v^{\infty}|_{L^{\infty}}^2 \, |(\varrho -\varrho^{\infty})\, \chi_{\Omega_{a_0,b_0}^{\rm c}(t)}|_{L^{\gamma}} \, |\nabla \eta|_{L^{\gamma^{\prime}}} \\
 \leq & c \,|v^{\infty}|_{L^{\infty}}^2 \, (\mathcal{E}^m(t))^{\frac{1}{\gamma}+\frac{1}{\gamma^{\prime}}} \, ,
\end{align*}
and by similar means $|\int_{\Omega} \varrho^{\infty} \, (v-v^{\infty}) \otimes v^{\infty}\, : \, \nabla \eta \, dx| \leq r_{\max} \, |v^{\infty}|_{L^3} \, |v-v^{\infty}|_{L^6} \, |\nabla \eta|_{L^2}$.
Overall the acceleration term is estimated as
\begin{align}\label{PC2}
\left|\int_{\Omega} (\varrho \, v\otimes v - \varrho^{\infty} \, v^{\infty}\otimes v^{\infty}) \, : \, \nabla \eta \, dx\right| \leq  \epsilon \, |\nabla (v^{\infty}-v)|_{L^{2}(\Omega)}^2 + \frac{C^2}{\epsilon} \, |v^{\infty}|_{L^{\infty}}^2 \, \mathcal{E}^m(t)\,.
\end{align}
Next we consider the terms with time derivatives, and we decompose
\begin{align*}
 \varrho \, v - \varrho^{\infty} \, v^{\infty} = \varrho  \, (v-v^{\infty}) + (\varrho - \varrho^{\infty}) \, v^{\infty}  \, .
\end{align*}
With $|v^{\infty}|_{L^{\infty}_{\divv}} := |v^{\infty}|_{L^{\infty}} + |\divv v^{\infty}|_{L^{\infty}}$, use of \eqref{lesbogos} implies that
\begin{align*}
& \quad \left| \int_{\Omega_{b_0,+}^{\rm c}(t)} \varrho \, (v-v^{\infty}) \cdot \partial_t \eta \, dx\right| \leq \sqrt{b_0} \,  \, |\sqrt{\varrho} \, (v-v^{\infty})|_{L^2} \, |\partial_t \eta|_{L^2}\\
& \leq \sqrt{2b_0} \, (\mathcal{E}^m(t))^{\frac{1}{2}}(t) \, \, |\partial_t\eta|_{L^2} \leq c \, \sqrt{b_0} \, (\mathcal{E}^m(t))^{\frac{1}{2}} \,  \, (|\nabla (v-v^{\infty})|_{L^2} + |v^{\infty}|_{L^{\infty}_{\divv}} \, (\mathcal{E}^m(t))^{\frac{1}{2}})\\
& \leq \epsilon \, |\nabla (v-v^{\infty})|_{L^2}^2 +c\, \sqrt{b_0} \, \Big(\frac{c \, \sqrt{b_0}}{4\epsilon}+| v^{\infty}|_{L_{\divv}^{\infty}}\Big)  \, \mathcal{E}^m(t) \, ,\\
& \quad \left| \int_{\Omega_{b_0,+}^{\rm c}(t)} (\varrho - \varrho^{\infty}) \, v^{\infty} \cdot \partial_t \eta \, dx\right| \leq \sqrt{b_0}(1+b_0)^{\frac{\theta_0}{2}} \, \Big|\frac{\varrho - \varrho^{\infty}}{\omega^{\frac{1}{2}}(\varrho,\varrho^{\infty})}\Big|_{L^{2}} \, |v^{\infty}|_{L^{\infty}} \,  |\partial_t \eta|_{L^{2}}\\
& \leq c \, |v^{\infty}|_{L^{\infty}} \, (\mathcal{E}^m(t))^{\frac{1}{2}}  \, (|\nabla(v^{\infty}-v)|_{L^2} + | v^{\infty}|_{L_{\divv}^{\infty}} \,  (\mathcal{E}^m(t))^{\frac{1}{2}}) \, .
\end{align*}
Moreover, invoking \eqref{lesbogos} and Lemma \ref{erhobelowlem} again, with $\bar{\mathcal{E}} := \sup_{m} \sup_{0<t<\bar{\tau}} \mathcal{E}^m(t) < +\infty$,
\begin{align*}
& \quad \left| \int_{\Omega_{b_0,+}(t)} \varrho \, (v-v^{\infty}) \cdot \partial_t \eta \, dx\right| \leq \Big|\sqrt{\varrho}\, \chi_{\Omega_{b_0,+}(t)}\Big|_{L^{2\gamma}} \, |\sqrt{\varrho} \, (v^{\infty}-v)|_{L^2} \, |\partial_t \eta|_{L^{\frac{2\gamma}{\gamma-1}}}\\
& \leq c \, (\mathcal{E}^m(t))^{\frac{1}{2\gamma}} \, (\mathcal{E}^m(t))^{\frac{1}{2}} \, \big(|\nabla(v^{\infty}-v)|_{L^2} +(|v^{\infty}|_{L^{\infty}}\, (\mathcal{E}^m(t))^{\frac{\gamma-1}{2\gamma}} + |\divv v^{\infty}|_{L^{\infty}} \, (\mathcal{E}^m(t))^{\frac{1}{2}})\big) \\
& \leq \epsilon\, |\nabla (v^{\infty}-v)|_{L^2}^2 \, +c\, \Big(\frac{c\bar{\mathcal{E}}^{\frac{1}{\gamma}}}{4\epsilon} + |v^{\infty}|_{L^{\infty}}+\bar{\mathcal{E}}^{\frac{1}{2\gamma}} \, |\divv v^{\infty}|_{L^{\infty}} \Big) \, \mathcal{E}^m(t) \, .
\end{align*}
Here we used $2\gamma/(\gamma-1) \leq 6$ to employ \eqref{lesbogos} with $\cals = 2\gamma/(\gamma-1)$. In order to estimate the next contribution, we define: If $\gamma \leq 2$, then $\cals_1 := \gamma/(\gamma-1)$ and $\cals_2 = +\infty$; If $\gamma > 2$, then $\cals_1 = 2$ and $\cals_2 := 2\gamma/(\gamma-2)$. We have $1/\gamma+1/s_1+1/s_2 = 1$ and $1/\gamma+1/s_2 \geq 1/2$. By means of H\"older's inequality and \eqref{lesbogos} it follows that
\begin{align*}
& \quad \left| \int_{\Omega_{b_0,+}(t)} (\varrho - \varrho^{\infty}) \, v^{\infty} \cdot \partial_t \eta \, dx\right| \leq |(\varrho - \varrho^{\infty}) \, \chi_{\Omega_{b_0,+}(t)}|_{L^{\gamma}} \, |\Omega_{b_0,+}(t)|^{\frac{1}{s_2}} \, |v^{\infty}|_{L^{\infty}}\, |\partial_t \eta|_{L^{s_1}}\\
& \leq c \, (\mathcal{E}^m(t))^{\frac{1}{\gamma}+\frac{1}{s_2}} \, |v^{\infty}|_{L^{\infty}} \, \big(|\nabla(v^{\infty}-v)|_{L^2} +  (  |\divv v^{\infty}|_{L^{\infty}} \,  (\mathcal{E}^m(t))^{\frac{1}{2}} +|v^{\infty}|_{L^{\infty}} \,(\mathcal{E}^m(t))^{\frac{1}{s_1}})\big) \,\\
& \leq c \, |v^{\infty}|_{L^{\infty}} \, \big[\bar{\mathcal{E}}\Big]^{\frac{1}{\gamma}+\frac{1}{s_2}-\frac{1}{2}} \,  (\mathcal{E}^m(t))^{\frac{1}{2}} \, \Big(|\nabla(v^{\infty}-v)|_{L^2} + |\divv v^{\infty}|_{L^{\infty}} \, (\mathcal{E}^m(t))^{\frac{1}{2}}\Big) \\
& \quad + c \, |v^{\infty}|_{L^{\infty}}^2 \, \mathcal{E}^m(t) \, .
\end{align*}
Hence, the integral with time derivatives obeys
\begin{align}\label{PC3}
\left|\int_{\Omega} (\varrho \, v - \varrho^{\infty}\, v^{\infty}) \, : \, \partial_t \eta \, dx\right| \leq  \epsilon \, |\nabla (v^{\infty}-v)|_{L^{2}(\Omega)}^2 + \frac{C^2}{\epsilon} \, |v^{\infty}|_{L^{\infty}_{\divv}}^2  \, \mathcal{E}^m(t) \, .
\end{align}
The term $|\int_{\Omega} (\varrho - \varrho^{\infty}) \, \eta_3 \, dx|$ is easily treated by similar means.
At last we consider the intergals involving the pressure. At first
\begin{align}\label{PC4}
\left| \int_{\Omega} \tilde{p}^{\infty} \, (\calg - (\calg)_M) \, dx\right| \leq 2\, |\tilde{p}^{\infty}|_{L^{\infty}} \, |\calg|_{L^1} \leq 2\, |\tilde{p}^{\infty}|_{L^{\infty}} \, |\Omega_{a_0,-}(t)| \leq c \, \mathcal{E}^m(t) \, .
\end{align}
Moreover, using that $(\calg)_M$ depends only on time, the bound \eqref{normofg} implies that
\begin{align}\label{PC5}
 \left| \int_{\Omega} \tilde{\pi}^{m} \,  (\calg)_M \, dx\right| \leq |\calg|_{L^1} \, |\tilde{\pi}^m|_{L^1} \leq c \,  |\tilde{\pi}^m|_{L^1} \, \mathcal{E}^m(t) \, ,
\end{align}
Hence, with the help of \eqref{PC0}, \eqref{PC1}, \eqref{PC2}, \eqref{PC3}, \eqref{PC4}, \eqref{PC5},
\begin{align}\begin{split}\label{ttshow}
\left| \int_0^t\int_{\Omega} \tilde{\pi} \, L(\varrho) \, dxd\tau\right| \leq & \epsilon \, (\|\nabla (v^{\infty}-v)\|_{L^2(Q_t)}^2 + (\|\nabla (q^{\infty}-q)\|_{L^2(Q_t)}^2)\\
& + \frac{C}{\epsilon} \,  \int_0^{t} (|v^{\infty}|_{L^{\infty}}^2 +|\nabla v^{\infty}|_{L^{\infty}} + |\tilde{p}^{\infty}|_{L^{\infty}}|+ |\tilde{\pi}^m|_{L^1}) \, \mathcal{E}^m(\tau) \, d\tau \, .
\end{split}
\end{align}
On the support of $L(\varrho)$, we know that $\varrho(x,t) \leq r_{\min} - \delta_0$. Hence $\varrho(x,t) - \varrho^{\infty}(x,t) \leq 0$. Since $\tilde{P}_m$ is increasing in the first argument (cf.\ Lemma \ref{pressandg}), we get $( \tilde{\pi} - \tilde{P}_m(\varrho^{\infty}, \, q)) \, L(\varrho) \leq 0$. We thus can write
\begin{align}\label{pp11}
  \tilde{\pi} \, L(\varrho) = & ( \tilde{\pi} - \tilde{P}_m(\varrho^{\infty}, \, q)) \, L(\varrho) + \tilde{P}_m(\varrho^{\infty}, \, q) \, L(\varrho) \nonumber\\
 =&  - |\tilde{\pi} - \tilde{P}_m(\varrho^{\infty}, \, q)| \, L(\varrho) + \tilde{P}_m(\varrho^{\infty}, \, q) \, L(\varrho) \, , \nonumber\\[0.1cm]
 \text{ implying that} \qquad
 - \tilde{\pi} \, L(\varrho) \geq & | \tilde{\pi} | \, L(\varrho) - 2 \, |\tilde{P}_m(\varrho^{\infty}, \, q)| \, L(\varrho) \, .
\end{align}
Since $\min\{\bar{\varrho}_{\max}-\varrho^{\infty}, \, \varrho^{\infty} - \bar{\varrho}_{\min}\} \geq \delta_0$, the properties of $\tilde{P}_m$ proved in Lemma \ref{pressandg} show that
$ |\tilde{P}_m(\varrho^{\infty},\, q)| \leq c \, (\ln (1/\delta_0)+ |q^{\prime}|  + |q|^2/m)$.
With the help of \eqref{pp11} we see that
\begin{align*}
  - \tilde{\pi} \, L(\varrho) \geq & |\tilde{ \pi}| \, L(\varrho) - C(\delta_0) \, (1 + |(q-q^{\infty})^{\prime}| + |(q^{\infty})^{\prime}|) \, L(\varrho) \\
  & -C(\delta_0) \, \frac{1}{m} \,  (1 + |q|)^{2} \, L(\varrho) \, .
\end{align*}
Then, invoking Young's inequality again, we conclude $0 < \epsilon < 1$ arbitrary that
\begin{align*}
- \tilde{\pi} \, L(\varrho) \geq & | \tilde{\pi}| \, L(\varrho) - \epsilon \, |q^{\prime}-(q^{\infty})^{\prime} - (q-q^{\infty})^{\prime}_M)|^2 \, L(\varrho) \\
  & - C(\delta_0) \, \Big(\frac{1}{\epsilon}  +  |(q-q^{\infty})^{\prime}_M| +|(q^{\infty})^{\prime}|+ \frac{|q|^2}{m}\Big) \, L(\varrho) \, .
\end{align*}
Since $|\Omega_{a_0,-}(t)| \leq c \mathcal{E}^m(t)$, we bound
\begin{align*}
& \int_{\Omega}  (\epsilon^{-1} +  |(q-q^{\infty})^{\prime}_M| + |( q^{\infty})^{\prime}|) \, L(\varrho) \, dx \leq c \, \Big(\epsilon^{-1} +  |(q-q^{\infty})^{\prime}|_{L^1} + |(q^{\infty})^{\prime}|_{L^{\infty}}\Big) \, \mathcal{E}^m(t) \, .
\end{align*}
We use also the Poincar\'{e} inequality for functions with mean value zero and, with $0 < \epsilon$ arbitrary, we conclude that
\begin{align*}
& \left| \int_{\Omega} \tilde{\pi} \, L(\varrho) \, dx\right| \geq \int_{\Omega} |\tilde{\pi}| \, L(\varrho) \, dx  - c_{\Omega} \, \epsilon \, \int_{\Omega} |\nabla (q-q^{\infty})|^2 \, dx \\
&- c \, ((4\epsilon)^{-1} +  |(q-q^{\infty})^{\prime}|_{L^1} + |(q^{\infty})^{\prime}|_{L^{\infty}}) \, \mathcal{E}^m(t)- \frac{C}{m}  \, \int_{\Omega} |q|^{2} \, dx\, ,
\end{align*}
which we integrate in time and combine with \eqref{ttshow}. Without essential modifications, we can obtain the same result with a function $L: \, \mathbb{R}_+ \rightarrow \mathbb{R}$ such that $L(s) = 1$ for all $ s \geq r_{\max} +2\delta_0 $ and $L(s) = 0$ for all $s \leq r_{\max}+\delta_0$. The support of $L(\varrho)$ is then contained in $\Omega_{b_0,+}(t)$.
This proves the claim.
\end{proof}
Finally, we state two auxiliary estimates that shall also be of use in the proof of the convergence theorem.
\begin{lemma}\label{better}
We adopt the same assumptions as in Lemma \ref{pressandg} and Coro.\ \ref{l1boundandtozero}.
Let further $a_0$ and $b_0$ be defined as in \eqref{Omegazwish} and $0 < s_0 < r_0= \inf_{(x,t) \in Q_{\bar{\tau}}, \, i = 1,\ldots,N} \rho^{\infty}_i(x,t) $.
 Suppose that $\eta \in L^{\infty}(Q_{\bar{\tau}}; \, \{1^N\}^{\perp})$. Then, there are functions $\{o_m\}$ such that $o_m \rightarrow 0$ in $L^1(Q_{\bar{\tau}})$ and such that, for all $0< t < \bar{\tau}$ and $0<\epsilon<1$, 
 \begin{align*}
&  \int_0^t\int_{\Omega_{a_0,b_0}(\tau) \setminus B_{s_0,+}(\tau)} \Big| \sum_{i-1}^N \eta_i \, \frac{1}{\bar{M}_i} \, \ln \hat{x}_i(\rho^m) \Big| \, dxd\tau \leq \epsilon \, \int_{Q_t} |\nabla (q^m-q^{\infty})|^2 \, dxd\tau\\
  & \qquad + C \, \int_0^t \Big(\epsilon^{-1} \, \|\eta\|_{L^{\infty}}^2 + (|(q^{\infty})^{\prime}|_{L^{\infty}} + |(q^{m})^{\prime}|_{L^1}) \, \|\eta\|_{L^{\infty}} \Big) \, \mathcal{E}^m(\tau) \, d\tau + \|o_m\|_{L^1(Q_t)} \, ,
 \end{align*}
 where $B_{s_0,+}(t)$ is defined in \eqref{Omegainf} and $C$ is a constant depending on $a_0$, $b_0$ and $s_0-r_0$.
\end{lemma}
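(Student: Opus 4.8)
\emph{Proof plan.} The plan is to reduce the logarithmic quantity to the entropic variables $q^m$ and to the pressure $\tilde{\pi}^m$, and then to exploit that the set $G_m(\tau):=\Omega_{a_0,b_0}(\tau)\setminus B_{s_0,+}(\tau)$ is small in measure. Starting from the representation \eqref{rescaledpot} of the chemical potentials, $\tfrac{1}{\bar{M}_i}\ln\hat{x}_i(\rho^m)=\mu^m_i-\bar{g}^m_i(\pi^m)$ (in the a.e.\ sense available for the weak solutions constructed in \cite{dredrugagu20}), I would use $\mu^m=\Pi q^m+\bar{\mathscr{M}}^m(\varrho^m,q^m)\,1^N$ together with $\eta\perp 1^N$ and the special choice $\xi^{N-1}=\bar{\calv}$ to obtain
\[
\sum_{i=1}^N\eta_i\,\tfrac{1}{\bar{M}_i}\ln\hat{x}_i(\rho^m)=\sum_{k=1}^{N-2}(\eta\cdot\xi^k)\,q^m_k-(\eta\cdot\bar{\calv})\,\tilde{\pi}^m-\sum_{i=1}^N\eta_i\,\big(\bar{g}^m_i(\pi^m)-\bar{\calv}_i\pi^m\big).
\]
Thus, up to the fixed bounded vector $\eta$, the integrand is controlled on $G_m(\tau)$ by $|(q^m)'|$, by $|\tilde{\pi}^m|$ and by $\sum_i|\bar{g}^m_i(\pi^m)-\bar{\calv}_i\pi^m|$.

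On $\Omega_{a_0,b_0}(\tau)\supseteq G_m(\tau)$ the hypothesis $\delta_0<\min\{\bar{\varrho}_{\max}-r_{\max},\,r_{\min}-\bar{\varrho}_{\min}\}$ keeps $\varrho^m$ bounded away from $\bar{\varrho}_{\min}$ and $\bar{\varrho}_{\max}$, so Lemma \ref{pressandg} yields $|\pi^m|\le c(1+|q^m|)$ and $|\tilde{\pi}^m|\le C(\delta_0)(1+|(q^m)'|+m^{-1}|q^m|^2)$ there, while \eqref{gabiskrank2}--\eqref{gabiskrank3} give $|\bar{g}^m_i(\pi^m)-\bar{\calv}_i\pi^m|\le c(\delta_0)\,m^{-1}(1+|q^m|^2)$ on $\Omega_{a_0,b_0}(\tau)$. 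Consequently the third sum above, and the $m^{-1}|q^m|^2$ summand inside the bound for $\tilde{\pi}^m$, both feed into a sequence $o_m:=C\|\eta\|_{L^\infty}\,m^{-1}(1+|q^m|^2)$, which tends to $0$ in $L^1(Q)$ since $m^{-1}\|q^m\|^2_{L^2(Q)}\to 0$ by Corollary \ref{l1boundandtozero}.

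For the remaining terms one uses that $|G_m(\tau)|\le C^{-1}\mathcal{E}^m(\tau)$ by Lemma \ref{erhobelowlem}. I would split $q^m_k=(q^m_k-q^\infty_k)+q^\infty_k$, with $(q^\infty)'(\cdot,\tau)\in L^\infty(\Omega)$ from $\rm (C)$, and decompose $q^m-q^\infty$ into its spatial mean and its oscillation over $\Omega$. The $q^\infty$-part and the constant "$1$" integrate to $\le C(1+|(q^\infty)'|_{L^\infty})\,\|\eta\|_{L^\infty}\,|G_m(\tau)|\le C(1+|(q^\infty)'|_{L^\infty})\,\|\eta\|_{L^\infty}\,\mathcal{E}^m(\tau)$; the mean of $q^m-q^\infty$ integrates to $\le C\|\eta\|_{L^\infty}(|(q^m)'|_{L^1}+|(q^\infty)'|_{L^\infty})\,\mathcal{E}^m(\tau)$; and the oscillation is handled by Hölder and $W^{1,2}(\Omega)\hookrightarrow L^6(\Omega)$,
\[
\int_{G_m(\tau)}\big|(q^m-q^\infty)'-((q^m-q^\infty)')_M\big|\,dx\le c\,|G_m(\tau)|^{5/6}\,|\nabla(q^m-q^\infty)|_{L^2(\Omega)}\le c\,(\mathcal{E}^m(\tau))^{5/6}\,|\nabla(q^m-q^\infty)|_{L^2(\Omega)},
\]
so that Young's inequality, together with $\bar{\mathcal{E}}:=\sup_{m,\tau}\mathcal{E}^m(\tau)<\infty$ and $(\mathcal{E}^m)^{5/3}\le\bar{\mathcal{E}}^{2/3}\mathcal{E}^m$, converts this into $\epsilon\,|\nabla(q^m-q^\infty)|^2_{L^2(\Omega)}+C\epsilon^{-1}\|\eta\|^2_{L^\infty}\,\mathcal{E}^m(\tau)$. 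Summing over $k$, multiplying by the bounded coefficients $\eta\cdot\xi^k$ and $\eta\cdot\bar{\calv}$, absorbing the residual bare $\mathcal{E}^m$-contributions into $\epsilon^{-1}\|\eta\|^2_{L^\infty}\mathcal{E}^m$ by one further Young step (using $\epsilon<1$), adding $o_m$ and integrating over $\tau\in(0,t)$ gives the claim; exactly as in Proposition \ref{pressla} one runs the argument once with a cut-off localising to $\{\varrho^m\le r_{\min}-\delta_0\}$ and once to $\{\varrho^m\ge r_{\max}+\delta_0\}$.

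The main obstacle is not any single estimate but the fact that $\ln\hat{x}_i(\rho^m)$ equals $-\infty$ precisely on the degenerate part of $G_m(\tau)$: its integrability there is recovered only through the identity with $\mu^m_i-\bar{g}^m_i(\pi^m)$ and the $L^2$-bounds on the entropic variables, so the reduction in the first step must be carried out in the weak-solution sense of \cite{dredrugagu20} rather than pointwise. One must also track carefully that \emph{every} occurrence of $q^m$ — the explicit $|(q^m)'|$ and the one hidden in the bound for $\tilde{\pi}^m$ — is split consistently into a $\nabla(q^m-q^\infty)$-part, absorbed by $\epsilon$, and an $|(q^m)'|_{L^1}$-part multiplying $\mathcal{E}^m$, so that the bookkeeping matches the displayed right-hand side.
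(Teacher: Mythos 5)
Your argument is correct and follows essentially the same route as the paper's proof: the same identity $\sum_i \eta_i \bar{M}_i^{-1}\ln\hat{x}_i(\rho^m)=\eta\cdot\big(\Pi' q^m-(\bar{g}^m(\pi^m)-\bar{\calv}\,\pi^m)-\tilde{\pi}^m\,\bar{\calv}\big)$ exploiting $\eta\perp 1^N$, the bounds of Lemma \ref{pressandg} on $\Omega_{a_0,b_0}$, Corollary \ref{l1boundandtozero} to build $o_m$, Lemma \ref{erhobelowlem} for $|\Omega_{a_0,b_0}\setminus B_{s_0,+}|\le c\,\mathcal{E}^m$, and Young plus Poincar\'e to absorb the oscillation of $q^m-q^\infty$ into $\epsilon\int|\nabla(q^m-q^\infty)|^2$. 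Your only deviations are cosmetic — handling the oscillation via H\"older/Poincar\'e--Sobolev in $L^6$ with the uniform bound $\bar{\mathcal{E}}$ instead of a pointwise Young step followed by the $L^2$ Poincar\'e inequality, and the closing remark about cut-off localisation, which belongs to Proposition \ref{pressla} and is not needed here.
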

\begin{proof}
In the sets $\Omega_{a_0,b_0}(t)$ the density $\varrho$ is strictly positive. Hence, all densities $\rho_1, \ldots,\rho_N$ are positive almost everywhere for weak solutions of type-I (cf.\ Th.\ \ref{EXIWEAK}). Since $\pi^m$ is finite, the chemical potentials $\mu_i^m = \bar{g}_i^m(\pi^m) + (1/\bar{M}_i)\, \ln \hat{x}_i(\rho^m)$ are finite almost everywhere. If $\eta$ attains values in the orthogonal complement of $1^N$, we get
\begin{align*}
\sum_{i=1}^N \eta_i \, \frac{1}{\bar{M}_i} \, \ln \hat{x}_i(\rho^m) = & \eta \cdot (\mu^m - \bar{g}^m(\pi^m)) = \eta \cdot (\Pi q^m -   \bar{g}^m(\pi^m))\\
= & \eta \cdot \Big(\Pi^{\prime} q^m -   \big(\bar{g}^m(\pi^m) - \pi^m \, \bar{\calv}\big) - \tilde{\pi}^m \, \bar{\calv}\Big) \, ,
\end{align*}
where we used the identities $\tilde{\pi}^m = \pi^m - q_{N-1}^m$ and $\Pi q^m - q^m_{N-1} \, \bar{\calv} = \Pi^{\prime} q^m$.
Hence, over a set $\Omega_{a_0,b_0}(t)$ we obtain that
\begin{align*}
\Big|\sum_{i=1}^N \eta_i \, \frac{1}{\bar{M}_i} \, \ln \hat{x}_i(\rho^m) \Big|& \leq C \, |\eta| \, ( |(q^{m})^{\prime}| +  |\tilde{\pi}^m|
+ |\bar{g}^m(\pi^m) - \pi^m \, \bar{\calv}|) 
\, ,
\end{align*}
with a constant $C$ that depends only on $|\Pi|$ and $|\bar{\calv}|$. In order to estimate $|\tilde{\pi}^m|$, we rely on Lemma \ref{pressandg}, yielding
\begin{align*}
\Big|\sum_{i=1}^N \eta_i \, \frac{1}{\bar{M}_i} \, \ln \hat{x}_i(\rho^m) \Big| & \leq C \,  |\eta| \, \Big(1+ |(q^{ m})^{\prime}| + m^{-1} \, |q^m|^{2} + |\bar{g}^m(\pi^m) - \pi^m \, \bar{\calv}|\Big) \\
& \leq C \,  |\eta| \, (  1 + |(q^{ m}-q^{\infty})^{\prime}| + |(q^{\infty})^{\prime}| + o_m) \, ,\\
\text{ with }  o_m(x,t) :=& m^{-1} \, |q^{m}(x,t)|^{2}
 + \chi_{\Omega_{a_0,b_0}(t)} \, |\bar{g}^m(\pi^m(x,t)) - \pi^m(x,t) \, \bar{\calv}| \, .
 \end{align*}
 To see that $o_m \rightarrow 0$ in $L^1(Q_{\bar{\tau}})$, we invoke \eqref{lesbounds} and the Corollary \ref{l1boundandtozero}.
With the help of Young's inequality, we obtain that
\begin{align*}
 \Big|\sum_{i=1}^N \eta_i \, \frac{1}{\bar{M}_i} \, \ln \hat{x}_i(\rho^m) \Big| \leq & \epsilon \, |q^m-q^{\infty} - (q^m-q^{\infty})_M|^2 + \frac{C^2}{\epsilon} \, |\eta|^2\\
 & 
 + C \, |\eta| \, (1+|(q^{\infty})^{\prime}| + |((q^{m})^{\prime}-(q^{\infty})^{\prime})_M| + o_m) \, .
\end{align*}
We integrate over $\Omega_{a_0,b_0}(\tau) \setminus B_{s_0,+}(\tau)$ for $\tau \in ]0,t[$. Observe that
\begin{align*}
 & \int_{\Omega_{a_0,b_0}(\tau) \setminus B_{s_0,+}(\tau)} |\eta| \, (1+|(q^{\infty})^{\prime}|+ |((q^{m})^{\prime}-(q^{\infty})^{\prime})_M|) \, dx \\
 \leq & |\eta(\cdot,\tau)|_{L^{\infty}} \, (1+|(q^{\infty})^{\prime}(\cdot,\tau)|_{L^{\infty}} + |(q^{m})^{\prime}(\cdot,\tau)-(q^{\infty})^{\prime}(\cdot,\tau)|_{L^1}) \, |\Omega_{a_0,b_0}(\tau) \setminus B_{s_0,+}(\tau)| \, .
 \end{align*}
 Invoking Lemma \ref{erhobelowlem} it follows that
 \begin{align*}
& \int_{\Omega_{a_0,b_0}(\tau) \setminus B_{s_0,+}(\tau)} |\eta| \, (1+|(q^{\infty})^{\prime}|+ |((q^{m})^{\prime}-(q^{\infty})^{\prime})_M|) \, dx
 \\
 \leq &  c \,  |\eta(\cdot,\tau)|_{L^{\infty}} \, (1+|(q^{\infty})^{\prime}(\cdot,\tau)|_{L^{\infty}}+ |(q^{m}(\cdot,\tau)-q^{\infty}(\cdot,\tau))^{\prime}|_{L^1}) \, \mathcal{E}^m(\tau)  \, .
\end{align*}
Finally, we invoke the Poincar\'e inequality to control $\int_{Q_t} |q-q^{\infty}-(q-q^{\infty})_M|^2$ using the gradient. Integrating in time, we are done.
\end{proof}

\subsection{The convergence proof for weak solutions}\label{RelIntIneqWeak}

\addtocontents{toc}{\protect\setcounter{tocdepth}{3}}

As the mass densities do not need being bounded from below as assumed in Prop.\ \ref{calculheuri}, the two last integrals in the expression of the remainder $\mathcal{R}^m(t)$ are not finite.
In order to obtain an identity similar to Prop.\ \ref{calculheuri} in the weak solution context, we must provide a substitute for the vector of chemical potentials. The techniques to employ would be different for the two types of weak solutions distinguished in Section \ref{ShortSurvey}.
Here we however discuss only the case of a uniformly positive Onsager operator (cf.\ (B3$^{\prime}$), weak solutions of type-I). Under the assumption (B3$^\prime$), the variable $q$ belongs to $L^2(0,\bar{\tau}; \, W^{1,2}(\Omega;\mathbb{R}^{N-1}))$ and the identity $J = - M(\rho) \, \Pi \, \nabla q$.
holds over the domain. Let $0<\delta_0 < 1$, $a_0 := r_{\min} - \delta_0$ and $b_0 := r_{\max} + \delta_0$. The sets $\Omega_{a_0,b_0}(t) $ and $ B_{s_0,+}(t) $ are
 as in \eqref{Omegazwish}, \eqref{Omegainf}. Following the appendix, Section \ref{SStorm}, we can derive the following variant of Prop.\ \ref{calculheuri}.
\begin{prop}\label{calculFO}
Let $(\rho, \, v)$ satisfy \emph{($\overline{\text{ IBVP}}^m$)} and $(\rho^{\infty}, \, p^{\infty}, \, v^{\infty})$ satisfy \emph{(IBVP$^{\infty}$)}. Then
\begin{align*} 
& \mathcal{E}^m(t) + \int_0^t \mathcal{D}^{\rm Visc}(v^m \, |\, v^{\infty})(\tau) \, d\tau +  \int_{0}^t\int_{\Omega}  \Pi^{\sf T} M(\rho) \Pi \nabla (q-q^{\infty}) \, : \, \nabla (q-q^{\infty}) \, dxd\tau \\
& \leq  \int_0^t\int_{\Omega_{a_0,b_0}(\tau)}  (\partial_t v^{\infty} + \divv (\rho^{\infty}\, v^{\infty})) \cdot (\bar{g}^m(\pi)- \pi \, \bar{\calv}) - (\rho \cdot \bar{\calv} - 1) \, (\partial_t p^{\infty} + v^{\infty} \cdot \nabla p^{\infty})dxd\tau \\
& \quad  
+  \int_{\Omega} p^{\infty}(x,\cdot) \, (\rho^m(x,\cdot) \cdot \bar{\calv} - 1)\, dx\Big|^t_0 + \mathcal{E}^m(0)  +  \int_{0}^t (\mathcal{R}^m(\tau)+\mathdutchcal{E}^m(\tau)) \, d\tau \, ,
\end{align*}
with the remainder $\mathcal{R}^m$ and the additional error $\mathdutchcal{E}^m$. We have $\mathcal{R}^m = \sum_{i=1}^4 \mathcal{R}^{m,i}$, where $\mathcal{R}^{m,i}$ is as in Proposition \ref{calculheuri} for $i=1,2$ and
\begin{align*}
& \mathcal{R}^{m,3}(t) :=  - \int_{\Omega} (M(\rho) - M(\rho^{\infty})) \nabla \mu^{\infty} \cdot (\Pi\nabla q-\mu^{\infty}) \, dx\\
& \mathcal{R}^{m,4}(t) :=   \int_{B_{s_0,+}(t)} (\partial_t \rho^{\infty}+\divv( \rho^{\infty} \, v^{\infty})) \cdot \big(D\bar{k}(\rho) - D\bar{k}(\rho^{\infty}) - D^2\bar{k}(\rho^{\infty}) \, (\rho - \rho^{\infty}) \big)\, dx \, , 
\end{align*}
and, with $\eta^{\infty} := \partial_t \rho^{\infty}+\divv( \rho^{\infty} \, v^{\infty})$,
\begin{align*}
& 
\mathdutchcal{E}^m(t) :=  \int_{\Omega_{a_0,b_0}(t) \setminus B_{s_0,+}(t)} \eta^{\infty} \cdot \big(D\bar{k}(\rho) - D\bar{k}(\rho^{\infty}) - D^2\bar{k}(\rho^{\infty}) \, (\rho - \rho^{\infty}) \big) \, dx  \\
& + \int_{\Omega_{a_0,b_0}^{\rm c}(t)}\eta^{\infty} \cdot  \big(\Pi^{\prime} (q - q^{\infty}) - D^2\bar{k}(\rho^{\infty})\, (\rho-\rho^{\infty})\big) + (\tilde{p}^{\infty}-\tilde{\pi}) \, \divv v^{\infty} \, dx \, 
 \, .
\end{align*}
\end{prop}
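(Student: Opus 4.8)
The plan is to establish the inequality by the relative--energy method, adapting the derivation of Proposition~\ref{calculheuri} carried out in the appendix, Section~\ref{SStorm}, to the weak--solution setting, where the densities $\rho^m_1,\ldots,\rho^m_N$ are only positive almost everywhere on $\{\varrho^m>0\}$ and not bounded below, so that neither $\ln\hat{x}(\rho^m)$ nor $D\bar{k}(\rho^m)$ nor the full chemical potential is globally integrable. The starting point is the energy inequality~\eqref{DISSIP} for $(\rho^m,v^m)$, written with $\zeta^{\rm Diff}=\Pi^{\sf T}M(\rho^m)\Pi\,\nabla q^m:\nabla q^m$; this is legitimate because under (B3$^{\prime}$) one has $q^m\in L^2(0,\bar{\tau};W^{1,2})$ and the identity $J^m=-M(\rho^m)\Pi\nabla q^m$ holds on all of $Q$. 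Testing the weak momentum balance with $v^{\infty}$ and the weak species balances with suitable functions of $\rho^{\infty}$, one expands $\frac{d}{dt}\mathcal{E}^m(\rho^m,v^m\,|\,\rho^{\infty},v^{\infty})$ and substitutes the equations satisfied by the strong solution: the incompressible momentum equation, the species equations with the incompressible fluxes, and the differentiated constraint $\rho^{\infty}\cdot\bar{\calv}=1$, which gives $\eta^{\infty}\cdot\bar{\calv}=\divv v^{\infty}$, together with $\eta^{\infty}\cdot1^N=\partial_t\varrho^{\infty}+\divv(\varrho^{\infty}v^{\infty})=0$ (summing the species equations and using $\sum_iJ^i=0$). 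To make every manipulation licit, one performs the computation first on the regularised solutions of Prop.~\ref{EXIRESC} (or uses the renormalised--continuity machinery already invoked for Lemma~\ref{L1bound}) and then passes to the limit.

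The crux is the ``diffusion--source'' contribution, i.e.\ the terms pairing $\eta^{\infty}$ with the difference of chemical potentials and with the Hessian correction $D^2f^m(\rho^{\infty})(\rho^m-\rho^{\infty})$; these must be split over the partition $\Omega=B_{s_0,+}(t)\cup\big(\Omega_{a_0,b_0}(t)\setminus B_{s_0,+}(t)\big)\cup\Omega_{a_0,b_0}^{\rm c}(t)$. On the good set $B_{s_0,+}(t)$ all densities exceed $s_0$, so $\mu^m=\bar{g}^m(\pi^m)+D\bar{k}(\rho^m)$ and $\mu^{\infty}=p^{\infty}\bar{\calv}+D\bar{k}(\rho^{\infty})$ are classical; decomposing $\bar{g}^m(\pi^m)=\bar{\calv}\pi^m+(\bar{g}^m(\pi^m)-\pi^m\bar{\calv})$, writing the Taylor remainder $D\bar{k}(\rho^m)-D\bar{k}(\rho^{\infty})-D^2\bar{k}(\rho^{\infty})(\rho^m-\rho^{\infty})$, and then using $\eta^{\infty}\cdot1^N=0$, $\eta^{\infty}\cdot\bar{\calv}=\divv v^{\infty}$ together with an integration by parts in time and the pressure equation for $p^{\infty}$, one recovers $\mathcal{R}^{m,4}$, the boundary term $\int_{\Omega}p^{\infty}(\rho^m\cdot\bar{\calv}-1)\,dx\big|_0^t$, and the localised integral $\int_{\Omega_{a_0,b_0}}\eta^{\infty}\cdot(\bar{g}^m(\pi^m)-\pi^m\bar{\calv})-(\rho^m\cdot\bar{\calv}-1)(\partial_tp^{\infty}+v^{\infty}\cdot\nabla p^{\infty})$. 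On the two remaining sets one avoids $D\bar{k}$ and $\mu^m$ and uses instead the globally available entropic variables: with the basis normalisation $\xi^{N-1}=\bar{\calv}$, $\xi^N=1^N$ one has $\mu^m=\Pi^{\prime}q^m+q^m_{N-1}\bar{\calv}+\mathscr{M}^m1^N$ a.e.\ and likewise for $\mu^{\infty}$, so that the $1^N$--components drop out (pairing with $\eta^{\infty}\cdot1^N=0$), the $\bar{\calv}$--components produce $(\tilde{p}^{\infty}-\tilde{\pi}^m)\divv v^{\infty}$ through $\pi^m-q^m_{N-1}=\tilde{\pi}^m$, $p^{\infty}-q^{\infty}_{N-1}=\tilde{p}^{\infty}$, and the $\Pi^{\prime}$--components yield $\eta^{\infty}\cdot\Pi^{\prime}(q^m-q^{\infty})$, while $-\eta^{\infty}\cdot D^2\bar{k}(\rho^{\infty})(\rho^m-\rho^{\infty})$ survives unchanged; on $\Omega_{a_0,b_0}^{\rm c}(t)$ this is the second line of $\mathdutchcal{E}^m$, and on $\Omega_{a_0,b_0}(t)\setminus B_{s_0,+}(t)$ one keeps instead the $D\bar{k}$--Taylor remainder (finite there, but no longer dominated by $\mathcal{E}^m$), which is the first line of $\mathdutchcal{E}^m$.

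The gradient contributions are handled by completing the square: writing $-\int Df^m(\rho^{\infty})\cdot\partial_t(\rho^m-\rho^{\infty})=\int\nabla\big(Df^m(\rho^{\infty})\big):(\rho^m\otimes v^m+J^m)$ (legitimate since $\rho^{\infty}$ is uniformly positive by (C)), the $J^m$--part and the analogous term coming from the incompressible flux combine with $\int\Pi^{\sf T}M(\rho^m)\Pi\nabla q^m:\nabla q^m$ and the corresponding term for $\rho^{\infty}$ into the dissipation $\int\Pi^{\sf T}M(\rho^m)\Pi\nabla(q^m-q^{\infty}):\nabla(q^m-q^{\infty})$, the mismatch being $\mathcal{R}^{m,3}=-\int(M(\rho^m)-M(\rho^{\infty}))\nabla\mu^{\infty}\cdot(\Pi\nabla q^m-\nabla\mu^{\infty})$, which is finite because $\nabla\mu^{\infty}\in L^{\infty}$ by (C); the acceleration remainders $\mathcal{R}^{m,1},\mathcal{R}^{m,2}$ and the initial term $\mathcal{E}^m(0)$ come out exactly as in Proposition~\ref{calculheuri}.

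The main obstacle I expect is the bookkeeping forced by the three--fold partition: one must verify that every integral in $\mathcal{R}^m$ and $\mathdutchcal{E}^m$ is at least finite in the class~\eqref{NC}, \eqref{NC+} --- in particular that $D\bar{k}(\rho^m)$, which a priori blows up where a mole fraction vanishes, is integrable against $\eta^{\infty}$ on $\Omega_{a_0,b_0}(t)\setminus B_{s_0,+}(t)$ (here one leans on $J^m=-M(\rho^m)\Pi\nabla q^m\in L^1$, on (B3$^{\prime}$), and on the a.e.\ positivity of the densities for type--I solutions), and that the substitution of $\mu^m$ by $\Pi^{\prime}q^m+q^m_{N-1}\bar{\calv}+\mathscr{M}^m1^N$ is a genuine identity a.e.\ and not merely formal. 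A secondary subtlety is that $Df^m(\rho^{\infty})\neq\mu^{\infty}$ in general, since the EOS pressure $\hat{\pi}^m(\rho^{\infty})$ need not coincide with the independent pressure $p^{\infty}$; tracking this discrepancy is precisely what produces the terms $\bar{g}^m(\pi^m)-\pi^m\bar{\calv}$ and $\tilde{p}^{\infty}-\tilde{\pi}^m$, and one has to check their signs so that they land on the correct side of the inequality.
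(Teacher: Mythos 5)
Your proposal is correct and follows essentially the same route as the paper's own derivation in the appendix, Section \ref{SStorm}: the same relative-energy expansion from the energy inequality, the completion of the square in the entropic variables producing $\mathcal{R}^{m,3}$ and the term $-J^{\infty}:\nabla(\Pi q-\mu^{\infty})$, the identities $\eta^{\infty}\cdot 1^N=0$ and $\eta^{\infty}\cdot\bar{\calv}=\divv v^{\infty}$ to convert the pressure contributions, and the same three-fold partition $B_{s_0,+}$, $\Omega_{a_0,b_0}\setminus B_{s_0,+}$, $\Omega_{a_0,b_0}^{\rm c}$ yielding $\mathcal{R}^{m,4}$ and the two parts of $\mathdutchcal{E}^m$. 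No essential step is missing.
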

Next we want to prove convergence. The arguments are however more technical in this case than in Section \ref{heuri}, since the pressure is not bounded.
We at first consider the remainder $\mathcal{R}^m$, and we notice that it essentially possesses the same expression as in Prop.\ \ref{calculheuri}. For the estimate of $\mathcal{R}^{m,1}$, employing the Lemma \ref{oneterm} with $F = 1^N \otimes \partial_tv^{\infty}$, we obtain that
\begin{align*}
& \Big|\int_{\Omega} (\varrho-\varrho^{\infty}) \, \partial_t v^{\infty} \cdot (v^{\infty}-v) \, dx\Big| \leq C \, |\partial_t v^{\infty}|_{L^{3}} \,|\nabla (v^{\infty} - v)|_{L^2} \,   (\mathcal{E}^m(t))^{\frac{1}{2}} \, .
\end{align*}
Invoking \eqref{KKorn} and the Young inequality, we obtain the estimate \eqref{menschenae}, and \eqref{menschenae2} is also proved as above.
The integral $| \int_{\Omega_{a_0,-}^{\rm c}(t)} \varrho^{\infty} \, \big((v^{\infty}-v) \cdot \nabla\big) v^{\infty} \cdot(v^{\infty} - v) \, dx|$ can be controlled exactly as in \eqref{menschenae3}. Moreover, use of \eqref{Erhobelow1bar} shows that
\begin{align*}
& \left| \int_{\Omega_{a_0,-}(t)} \varrho^{\infty} \, \big((v^{\infty}-v) \cdot \nabla\big) v^{\infty} \cdot(v^{\infty} - v) \, dx \right| \leq  r_{\max} \, |\nabla v^{\infty}|_{L^{6}} \,|v^{\infty} - v|_{L^6}^2 \, |\Omega_{a_0,-}(t)|^{\frac{1}{2}} \\
& \leq r_{\max} \, |\nabla v^{\infty}|_{L^{6}} \, (|v^{\infty}|_{W^{1,2}}+|v|_{W^{1,2}}) \,|v^{\infty} - v|_{W^{1,2}} \, |\Omega_{a_0,-}(t)|^{\frac{1}{2}}\\
& \leq \epsilon \, \int_{\Omega} \mathbb{S}(\nabla(v^{\infty} - v)) \, : \, \nabla(v^{\infty} - v) \, dx + \frac{C}{\epsilon} \, |\nabla v^{\infty}|_{L^{6}}^2 \, (|v^{\infty}|_{W^{1,2}}+|v|_{W^{1,2}})^2 \, \mathcal{E}^m(t) \, .
\end{align*}
Overall, the first remainder $\mathcal{R}^{m,1}$ still obeys \eqref{gouldenae1}, with $\epsilon > 0$ arbitrary, the same function $\psi(t)$, but a possibly different constant $C$. As to $\mathcal{R}^{m,2}$, since \eqref{gouldenae1} holds true, the contributions  
$ \int_{\Omega}(v^{\infty} - v) \, (\rho -\rho^{\infty}) \, : \, \nabla \mu^{\infty} \, dx$ and $ \int_{\Omega}(v^{\infty} - v) \, b \, (\varrho -\varrho^{\infty})  \, dx$ are estimated as above, yielding \eqref{gouldenae2}.

For the estimate of $\mathcal{R}^{m,3}$, the argument must be somewhat updated.
Consider the symmetric matrix $\widetilde{M}(\rho) = \Pi^{\sf T} M(\rho) \, \Pi$ with $\Pi$ from \eqref{Pi}.
For $w \in \mathbb{R}^{N-1}$ and $k = 1,\ldots,N-1$, $\eta^k \cdot \mathcal{P} \Pi w = \eta^k \cdot \Pi w = w_k$ due to the choice of $\Pi$ and of $\{\eta^1,\ldots,\eta^{N-1}\}$ being the dual basis. Hence, with $c_{\eta} := \sup_{k=1,\ldots,N-1} |\eta^k|_{\infty}$ we see that $|w| \leq  c_{\eta} \, |\mathcal{P} \Pi w|$. We employ (B3$^{\prime}$) to see that 
\begin{align}\label{tildempos}
 \Pi^{\sf T} M(\rho) \, \Pi \, w \cdot w \geq \frac{\lambda_0}{c_{\eta}} \, |w|^2 \quad \text{ for all } \quad w \in \mathbb{R}^{N-1} 
\end{align}
and $\widetilde{M}(\rho)$ is uniformly positive definite and invertible.
We have $$(M(\rho) - M(\rho^{\infty})) \, \nabla \mu^{\infty} \, : \, (\Pi \nabla q - \nabla \mu^{\infty}) = (\widetilde{M}(\rho) - \widetilde{M}(\rho^{\infty})) \, \nabla q^{\infty} \, : \, \nabla (q - q^{\infty}) \, .$$
Arguing first as in Section \ref{calculheuri}, we rely on the smallest positive eigenvalue of $M(\rho)$ and show that (cp.\ \eqref{menschenae4})
\begin{align*}
 & | (\widetilde{M}(\rho) - \widetilde{M}(\rho^{\infty})) \, \nabla q^{\infty} \cdot \nabla (q-q^{\infty})|\nonumber\\
&  \leq  \frac{c \, \|\partial M\|^2_{\infty}}{4\,\epsilon \, \lambda_0}  \, |\rho - \rho^{\infty}|^2_{\infty} \, |\nabla q^{\infty}|^2 + \epsilon \, \widetilde{M}(\rho) \, \nabla (q-q^{\infty}) \cdot \nabla (q-q^{\infty}) \, \\
& \leq\frac{\omega_0 \, \|\partial M\|^2_{\infty}}{4 \, \epsilon \, \lambda_{0}} \, |\nabla q^{\infty}|^2 \, \frac{|\rho-\rho^{\infty}|^2}{\omega(\varrho, \, \varrho^{\infty})} 
 + \epsilon \, \widetilde{M}(\rho) \, \nabla (q-q^{\infty}) \cdot \nabla (q-q^{\infty}) \, , \nonumber
\end{align*}
where we used the fact that $\max\{\varrho, \, \varrho^{\infty}\} \leq b_0$, hence $\omega(\varrho,\varrho^{\infty}) \leq b_0(1+b_0)^{\theta_0} =: \omega_0 $ in $\Omega_{b_0,-}(t)$. In order to also treat the integral over $\Omega_{b_0,+}(t)$ we now observe that $\|\widetilde{M}^{-1}(\rho)\|_2 \leq c_{\eta}/\lambda_0$. Hence, with $\bar{\lambda}$ from \eqref{b1prime} we can also bound
\begin{align*}
& |(\widetilde{M}(\rho) - \widetilde{M}(\rho^{\infty})) \, \nabla q^{\infty} \cdot \nabla (q - q^{\infty})| = | ({\rm I} - \widetilde{M}^{-1}(\rho) \widetilde{M}(\rho^{\infty})) \, \nabla q^{\infty} \cdot \widetilde{M}(\rho) \nabla (q - q^{\infty})|\\
& \leq \Big(\widetilde{M}(\rho) \,({\rm I} - \widetilde{M}^{-1}(\rho) \widetilde{M}(\rho^{\infty})) \, \nabla q^{\infty} \cdot({\rm I} - \widetilde{M}^{-1}(\rho) \widetilde{M}(\rho^{\infty})) \nabla q^{\infty}\Big)^{\frac{1}{2}} \\
& \qquad \times  \, \Big(\widetilde{M}(\rho) \, \nabla (q - q^{\infty}) \cdot \nabla (q - q^{\infty})\Big)^{\frac{1}{2}}\\
& \leq  \|\widetilde{M}(\rho)\|^{\frac{1}{2}} \, \Big(1+c_{\eta}\frac{\|\widetilde{M}(\rho^{\infty})\|}{\lambda_0}\Big) \, |\nabla q^{\infty}| \,  (\widetilde{M}(\rho) \, \nabla (q - q^{\infty}) \cdot \nabla (q - q^{\infty}))^{\frac{1}{2}}\\
& \leq |\Pi|\, \sqrt{\bar{\lambda}} \, \Big(1+c_{\eta} \, |\Pi|^2 \, (1+r_{\max})\frac{\bar{\lambda}}{\lambda_0}\Big)\,  \sqrt{1+|\rho|} \, |\nabla q^{\infty}| \,  (\widetilde{M}(\rho) \, \nabla (q - q^{\infty}) \cdot \nabla (q - q^{\infty}))^{\frac{1}{2}} \, .
\end{align*}
Hence, invoking also \eqref{Erhobelow1bar} 
\begin{align*}
& \int_{ \Omega_{b_0,+}(t)} |(\widetilde{M}(\rho) - \widetilde{M}(\rho^{\infty})) \, \nabla q^{\infty} \cdot \nabla (q - q^{\infty})| \, dx \\ & \leq C\, \int_{\Omega_{b_0,+}(t)} \sqrt{1+|\rho|}   \, |\nabla q^{\infty}| \,  (\widetilde{M}(\rho) \, \nabla (q - q^{\infty}) \cdot \nabla (q - q^{\infty}))^{\frac{1}{2}} \, dx\\
& \leq C \, |\nabla q^{\infty}|_{L^{\infty}} \, \Big(\int_{\Omega} \widetilde{M}(\rho) \, \nabla (q- q^{\infty}) \cdot \nabla (q - q^{\infty}) \, dx\Big)^{\frac{1}{2}} \, \Big(\int_{\Omega_{b_0,+}(t)} 1+|\rho| \, dx\Big)^{\frac{1}{2}}\\
& \leq \epsilon \, \int_{\Omega} \widetilde{M}(\rho) \, \nabla (q- q^{\infty}) \cdot \nabla (q - q^{\infty}) \, dx + \frac{C^{\prime}}{4\epsilon} \, |\nabla q^{\infty}|_{L^{\infty}}^2 \, \mathcal{E}^m(t) \, ,
\end{align*}
Overall, the estimation of $\mathcal{R}^{m,3}$ yields \eqref{gouldenae3} again, possibly with a different constant.

For estimating $\mathcal{R}^{m,4}$, we exploit that  $D\bar{k}(\rho) - D\bar{k}(\rho^{\infty}) - D^2\bar{k}(\rho^{\infty}) \cdot (\rho-\rho^{\infty})$ is integrated on $B_{s_0,+}(t)$ in which $\inf_{(x,t), i=1,\ldots,N} \rho_i(x,t) \geq s_0 > 0$ and $\varrho(x,t) \leq b_0$. Thus, the same procedure as in Section \ref{calculheuri} can be applied for estimating this integral, to the result \eqref{gouldenae4}.

Overall, the estimate \eqref{jelacite} is also valid and together with the inequality of Prop.\ \ref{calculFO}, we get
\begin{align*}
& \mathcal{E}^m(t) + (1-2\epsilon)\,\int_0^t \mathcal{D}^{\rm Visc}(v^m\,|\, v^{\infty})(\tau) + \int_{\Omega}  \widetilde{M}(\rho^m) \nabla (q^m-q^{\infty}) : \, \nabla (q^m-q^{\infty}) \, dx\, d\tau \\
& \leq \int_{0}^t \int_{\Omega_{a_0,b_0}(\tau)}\partial_t\rho^{\infty} + \divv (\rho^{\infty} \, v^{\infty}) \cdot (\bar{g}^m(\pi) - \bar{\calv} \, \pi^m) - (\rho^m \cdot \bar{\calv}-1) \, (\partial_t p^{\infty} + v^{\infty} \cdot\nabla p^{\infty})dxd\tau\\
& \quad + \mathcal{E}^m(0) +  \int_{\Omega} p^{\infty}(x,\cdot) \, (\rho^m(x,\cdot) \cdot \bar{\calv} - 1)\, dx\Big|^t_0  
+ \frac{C}{\epsilon} \, \int_{0}^t \psi^m(\tau) \, \mathcal{E}^m(\tau) \, d\tau + \int_0^t \mathdutchcal{E}^m(\tau) \, d\tau \,.
\end{align*}
Here $\widetilde{M}(\cdot) = \Pi^{\sf T} M(\cdot)\Pi$, and the function $\psi^m$ obeys \eqref{jelaciteII}, so that $\{\psi^m\}$ is uniformly bounded in $L^1(0,\bar{\tau})$.

It remains to show how to control the error term $\mathdutchcal{E}^m$. In the estimates, we abbreviate $\eta^{\infty} := \partial_t \rho^{\infty} + \divv (\rho^{\infty} \, v^{\infty})$, which is a $L^{\infty}-$field with values in the orthogonal complement of $1^N$. We next consider successively every contribution to $\mathdutchcal{E}^m$. At first, invoking Young's inequality, and noting that Lemma \ref{erhobelowlem} implies that $|\Omega_{a_0,b_0}^{\rm c}(t)| \leq c \, \mathcal{E}^m(t)$, we see that
\begin{align*}
& \left| \int_0^t\int_{\Omega_{a_0,b_0}^{\rm c}(\tau)} \eta^{\infty} \cdot  \Pi^{\prime} (q - q^{\infty}) \, dxd\tau\right| 
\leq  \epsilon \, \int_0^t\int_{\Omega} |q-q^{\infty} - (q-q^{\infty})_M|^2 dxd\tau\\
 + & \int_0^t (\frac{1}{4\epsilon} \, |\eta^{\infty}|_{L^{\infty}}^2 + |q^{\prime}-(q^{\infty})^{\prime}|_{L^1} \, |\eta^{\infty}|_{L^{\infty}}) \, |\Omega_{a_0,b_0}^{\rm c}(\tau)| \, d\tau  \\
\leq & \epsilon^{\prime}  \, \int_{Q_t} |\nabla (q-q^{\infty})|^2 dx +c\,  \int_0^t (\frac{1}{4\epsilon^\prime} \, |\eta^{\infty}|_{L^{\infty}}^2 + |q^{\prime}-(q^{\infty})^{\prime}|_{L^1} \, |\eta^{\infty}|_{L^{\infty}}) \, \mathcal{E}^m(\tau)  \, d\tau \, .
\end{align*}
Second, we can use Lemma \ref{erhobelowlem} again to show that
\begin{align*}
\left|\int_{0}^t \int_{\Omega_{a_0,b_0}^{\rm c}(t)} \eta^{\infty} \cdot D^2\bar{k}(\rho^{\infty})\, (\rho-\rho^{\infty})  \, dxd\tau\right| \leq & c(r_0) \, \int_{0}^t |\eta^{\infty}|_{L^{\infty}} \, \int_{\Omega_{a_0,b_0}^{\rm c}(\tau)} |\rho-\rho^{\infty}| \, dxd\tau \\
\leq & c \, \int_{0}^t |\eta^{\infty}|_{L^{\infty}} \, \mathcal{E}^m(\tau) \, dxd\tau  \, .
\end{align*}
For all $0 < \epsilon < 1$, the Proposition \ref{pressla} implies that
\begin{align*}
& \left|  \int_{0}^t \int_{\Omega_{a_0,b_0}^{\rm c}(\tau)}  \tilde{\pi} \, \divv v^{\infty} \, dxd\tau \right| \leq \|\divv v^{\infty}\|_{L^{\infty}} \, \int_0^t \int_{\Omega_{a_0,b_0}^{\rm c}(\tau)}  |\tilde{\pi}|\, dxd\tau\\
&  \leq \|\divv v^{\infty}\|_{L^{\infty}} \, \epsilon \, \int_{Q_t} |\nabla (q^m-q^{\infty})|^2 + |\nabla v^{m}-v^{\infty}|^2 \, dxd\tau \\
& \quad +   \|\divv v^{\infty}\|_{L^{\infty}} \, C \, \epsilon^{-1} \, \int_0^t \psi^m(\tau) \, \mathcal{E}^m(\tau) \, d\tau  + C \, \|\divv v^{\infty}\|_{L^{\infty}} \, \frac{1}{m} \, \|q^m\|_{L^2(Q_T)}^2\, .
\end{align*}
With \eqref{Erhobelow1bar}, we next see that
\begin{align*}
 \left|  \int_{0}^t \int_{\Omega_{a_0,b_0}^{\rm c}(t)}  \tilde{p}^{\infty} \, \divv v^{\infty} \, dxd\tau \right|
 \leq c \, \int_0^t |\tilde{p}^{\infty}|_{L^{\infty}} \, |\divv v^{\infty}|_{L^{\infty}} \, \mathcal{E}^m(\tau) \, d\tau \, .
\end{align*}
It was shown in Lemma \ref{better} that
\begin{align*}
& \int_0^t \int_{\Omega_{a_0,b_0}(\tau) \setminus B_{s_0,+}(\tau)} \eta^{\infty}\cdot D\bar{k}(\rho) \, dxd\tau \leq \epsilon \, \int_{Q_t} |\nabla (q^m-q^{\infty})|^2 \, dxd\tau\\
  & \qquad +  C \, \int_0^t \Big(\epsilon^{-1} \, \|\eta^{\infty}\|_{L^{\infty}}^2 + (|(q^{\infty})^{\prime}|_{L^{\infty}} + |(q^{m})^{\prime}|_{L^1}) \, \|\eta^{\infty}\|_{L^{\infty}} \Big) \, \mathcal{E}^m(\tau) \, d\tau + \|o_m\|_{L^1(Q_t)} \, .
\end{align*}
Finally, invoking Lemma \ref{erhobelowlem}, we bound
\begin{align*}
& \int_{\Omega_{a_0,b_0}(t) \setminus B_{s_0,+}(t)} \eta^{\infty} \cdot \big(- D\bar{k}(\rho^{\infty}) - D^2\bar{k}(\rho^{\infty}) \, (\rho - \rho^{\infty}) \big) \, dx\\
 & \quad \leq c(r_{0},r_{\max},b_0) \, |\eta^{\infty}|_{L^{\infty}} \, |\Omega_{a_0,b_0}(t) \setminus B_{s_0,+}(t)| \leq c \, |\eta^{\infty}|_{L^{\infty}} \, \mathcal{E}^m(t) \, .
\end{align*}
Thus the error satisfies the bound 
\begin{align*}
& \int_{0}^t \mathdutchcal{E}^m(\tau) \, d\tau \leq  \epsilon \, \int_{Q_t} \widetilde{M}(\rho^m)\nabla (q^m-q^{\infty}) : \, \nabla (q^m-q^{\infty}) \, dx + \frac{C}{\epsilon}\,  \int_{0}^t \tilde{\psi}^m(\tau) \, \mathcal{E}^m(\tau) \, d\tau \, ,\\
& \tilde{\psi}^m(\tau) :=  |\eta^{\infty}|_{L^{\infty}}^2 + (1+|(q^{m})^\prime|_{L^1} +|(q^{\infty})^\prime|_{L^1}) \, |\eta|_{L^{\infty}}) \\
& \phantom{\tilde{\psi}^m(\tau) :=} + \|\divv v^{\infty}\|_{L^{\infty}} \,\psi^m(\tau)  + |\tilde{p}^{\infty}|_{L^{\infty}} \, |\divv v^{\infty}|_{L^{\infty}} \, .
\end{align*}
Hence, with $\tilde{o}_m = C \, (\|o_m\|_{L^1(Q)} + \|\divv v^{\infty}\|_{L^{\infty}} \, m^{-1} \, \|q^m\|^2_{L^2(Q)})$, we see that
\begin{align*}
& \mathcal{E}^m(t) + (1-3\epsilon)\,\int_0^t \Big(\mathcal{D}^{\rm Visc}(v^m\,|\, v^{\infty})(\tau) + \int_{\Omega}  \widetilde{M}(\rho^m) \nabla (q^m-q^{\infty}) : \, \nabla (q^m-q^{\infty}) \, dx\Big)\, d\tau \\
& \leq \frac{C}{\epsilon} \, \int_{0}^t \tilde{\psi}^m(\tau) \, \mathcal{E}^m(\tau) \, d\tau + \tilde{o}_m+\int_{0}^t \int_{\Omega_{a_0,b_0}(t)} (\partial_t\rho^{\infty} + \divv (\rho^{\infty} \, v^{\infty})) \cdot (\bar{g}^m(\pi) - \bar{\calv} \, \pi^m) dxd\tau\\
&  - \int_0^t \int_{\Omega} (\rho^m \cdot \bar{\calv}-1) \, (\partial_t p^{\infty} + v^{\infty} \cdot\nabla p^{\infty}) \,  dxd\tau+ \mathcal{E}^m(0) +  \int_{\Omega} p^{\infty}(x,\cdot) \, (\rho^m(x,\cdot) \cdot \bar{\calv} - 1)\, dx\Big|^t_0 \, .
\end{align*}
Next we want to establish the convergence of the right-hand side. We first consider the initial relative energy $\mathcal{E}^m(0)$. Since $v^{m}(0) = v^0 = v^{\infty}(0)$, 
\begin{align*}
  \mathcal{E}^m(0) 
 = & \int_{\Omega} \bar{g}^m( \hat{\pi}^m(\rho^{0,m})) \cdot \rho^{0,m} -   \hat{\pi}^m(\rho^{0,m}) \, dx \\
 & + \int_{\Omega} \bar{k}(\rho^{0,m}) - \bar{k}(\rho^{0,\infty}) - D\bar{k}(\rho^{0,\infty}) \, (\rho^{0,m}-\rho^{0,\infty}) \, dx \, .
\end{align*}
Since $\rho^{0,\infty}$ and $\rho^{0,m} \rightarrow \rho^{0,\infty}$ in $L^1(\Omega)$, the second term tends to zero.
Recalling that $1+\pi^{0,m}/m = \widehat{p}(\rho^{0,m})$ we can show that
\begin{align*}
& \bar{g}^m( \hat{\pi}^m(\rho^{0,m})) \cdot \rho^{0,m} -   \hat{\pi}^m(\rho^{0,m}) \\
& \quad  = \int_0^1 \bar{g}^{\prime}(1 + \lambda \,[ \widehat{p}(\rho^{0,m})-1]) - \bar{g}^{\prime}(\widehat{p}(\rho^{0,m})) \,d\lambda \cdot\rho^{0,m} \, (\widehat{p}(\rho^{0,m})-1) \, .
\end{align*}
Since in Theorem \ref{rigolo}, we assume that $\rho^{0,m}$ is uniformly positive and bounded, and that $\widehat{p}(\rho^{0,m})-1$ converges to zero in $L^1(\Omega)$, we easily see that $\mathcal{E}^m(0) \rightarrow 0$.

Invoking \eqref{ilstendent1}, we can further show that
\begin{align*}
 \sup_{0 < t < \bar{\tau}} \left|\int_{\Omega} p^{\infty}(x,t) \, (\rho^m(x,t) \cdot \bar{\calv} - 1) \, dx\right| +
\int_{Q_{\bar{\tau}}} |\rho^m \cdot \bar{\calv} -1| \, |\partial_t p^{\infty} + v^{\infty} \cdot \nabla p^{\infty}| \, dxd\tau \rightarrow 0 & \, .
\end{align*}
Due to Cor.\ \ref{l1boundandtozero}, $\int_{0}^{\bar{\tau}} \int_{|\Omega{a_0,b_0}(\tau)} |\partial_t\rho^{\infty} + \divv (\rho^{\infty} \, v^{\infty})| \cdot |\bar{g}^m(\pi) - \bar{\calv} \, \pi^m|dxd\tau$ tends to zero. Hence, using the Gronwall--Lemma as in Section \ref{convarg}, we conclude that
\begin{align}\label{ilstendentverszero}
& \limsup_{m\rightarrow \infty} \sup_{0< t<\bar{\tau}} \mathcal{E}^m(t) = 0 \, \quad \text{ and } \\
\limsup_{m\rightarrow \infty} & \int_{Q_{\bar{\tau}}} \mathbb{S}(\nabla (v^m-v^{\infty})) \, : \, \nabla (v^m-v^{\infty}) + \widetilde{M}(\rho^m) \nabla (q^m-q^{\infty}) \, : \, \nabla (q^m-q^{\infty}) \, dxd\tau = 0 \, .\nonumber
\end{align}
Finally let us verify the convergence claims of Theorem \ref{rigolo}. The convergence $\rho^m \rightarrow \rho^{\infty}$ in $L^1(\Omega)$ for all $t$, and $v^m \rightarrow v^{\infty}$ in $L^2(0,\bar{\tau}; \, W^{1,2}(\Omega;\mathbb{R}^3))$, are direct consequences of \eqref{ilstendentverszero}.

We next turn proving a convergence result for $\{\pi^m\}$.
On the sets $\Omega_{a_0,b_0}(t)$ recall that
\begin{align}\begin{split}\label{istesrighti}
 \sum_{k=1}^{N-1} q^m_k \,  \xi^k +\bar{\mathscr{M}}^m(\varrho^m,q^m) = & \frac{1}{\bar{M}} \, \ln \hat{x}(\rho^m) + \bar{g}^m(\hat{\pi}^m(\rho^m))\\
 = & \frac{1}{\bar{M}} \, \ln \hat{x}(\rho^m) + h^m + \bar{\calv} \,  \hat{\pi}^m(\rho^m) \, ,
\end{split}
 \end{align}
with $h^m := \chi_{\Omega_{a_0,b_0}(t)} \, (\bar{g}^m(\hat{\pi}^m(\rho^m)) - \bar{\calv} \, \hat{\pi}^m(\rho^m))$, which we have shown to converge to zero in $L^1(Q_{\bar{\tau}})$ (cp.\ Cor.\ \ref{l1boundandtozero}).

Let $\eta \in \mathbb{R}^N$ according to \eqref{CONVEN} satisfy $\eta \cdot \bar{\calv} = 1$ and $\eta \cdot 1^N = 0$. Multiplication in \eqref{istesrighti} with $\eta$ yields $  q^m_{N-1} = \eta \cdot \Big(\frac{1}{\bar{M}} \, \ln \hat{x}(\rho^m) + h^m\Big) + \hat{\pi}^m(\rho^m)$.
Subtracting the mean-value $(q^{m}_{N-1})_M$ on both sides of this identity yields, for $\pi^m_* :=  \hat{\pi}^m(\rho^m) - (q^{m}_{N-1})_M$, and $\zeta^m := q^m_{N-1} -  (q^{m}_{N-1})_M$,
\begin{align}\label{groulde}
\pi^m_* = \zeta^m - \eta \cdot \Big(\frac{1}{\bar{M}} \, \ln \hat{x}(\rho^m) + h^m\Big) \, .
\end{align}
Since $\{\zeta^m\}$ possesses zero mean-value over $\Omega$, it is bounded in $L^2(0,\bar{\tau}; \, W^{1,2}(\Omega))$. Employing \eqref{ilstendentverszero}, $\zeta^m \rightarrow q^{\infty}_{N-1} - (q^{\infty}_{N-1})_M$ in $L^2(0,\bar{\tau}; \, W^{1,2}(\Omega))$.
On $B_{s_0,+}(t)$, the functions $\eta \cdot \frac{1}{\bar{M}} \, \ln \hat{x}(\rho^m)$ are uniformly bounded. Using Lemma \ref{better}, we also see that $\int_{0}^T \int_{\Omega_{a_0,b_0} \setminus B_{s_0,+}} |\eta \cdot (\frac{1}{\bar{M}} \, \ln \hat{x}(\rho^m))| \, dxd\tau \rightarrow 0$. Then, \eqref{groulde} shows that $\{\chi_{\Omega_{a_0,b_0}(t)} \, \pi^m_* \}$ is dominated by a strongly convergent sequence in $L^1(Q)$ and converges pointwise, which yields
\begin{align}\label{tildepiconvpart}
\chi_{\Omega_{a_0,b_0}(t)} \, \pi^m_* \longrightarrow q^{\infty}_{N-1} - (q^{\infty}_{N-1})_M - \eta \cdot \frac{1}{\bar{M}} \, \ln \hat{x}(\rho^{\infty}) \quad \text{ strongly in } L^1(Q) \, .
\end{align}
Note that $\pi^m_* = \tilde{\pi}^m + q^m_{N-1} - (q^m_{N-1})_M$, and $ \int_0^{\bar{\tau}} \int_{\Omega_{b_0,+}(\tau) \cup \Omega_{a_0,-}(\tau)} |\tilde{\pi}^m| \, dxd\tau \rightarrow 0$ is a consequence of Prop.\ \ref{pressla}. Since also
\begin{align*}
 \int_{\Omega_{b_0,+}(\tau) \cup \Omega_{a_0,-}(\tau)} |q^m_{N-1} - (q^m_{N-1})_M| \, dx \leq & |q^m_{N-1} - (q^m_{N-1})_M|_{L^2}  \, |\Omega_{b_0,+}(\tau) \cup \Omega_{a_0,-}(\tau)|^{\frac{1}{2}}\\
 \leq & c \, |\nabla q^m|_{L^2} \,  (\mathcal{E}^m(\tau))^{\frac{1}{2}} \, ,
 \end{align*}
we see that $\chi_{\Omega_{a_0,b_0}^{\rm c}(t)} \, \pi^m_* \rightarrow 0$ in $L^1(Q)$, and \eqref{tildepiconvpart} implies that
\begin{align*}
\pi^m_* \longrightarrow q^{\infty}_{N-1} - (q^{\infty}_{N-1})_M - \eta \cdot \frac{1}{\bar{M}} \, \ln \hat{x}(\rho^{\infty}) = p^{\infty} \quad \text{ strongly in } L^1(Q) \, .
\end{align*}

We shall motivate only very shortly the proof of Theorem \ref{weaktoweak} for the convergence of weak solutions to weak solutions to (IBVP$^{\infty}$). It relies on the \emph{a priori} bounds \eqref{lesbounds}, Lemma \ref{L1bound} and the convergence property \eqref{ilstendent1} valid for the sequence $\{\varrho^m, \, q^m, \, v^m\}$. With these bounds at hand, we can pass to the limit as in the sections 8--11 of \cite{druetmixtureincompweak} and obtain the convergence up to subsequences to a weak solution with defect measure for (IBVP$^{\infty}$).


\appendix

\addtocontents{toc}{\protect\setcounter{tocdepth}{1}}

\section{Rescaling of the PDEs}\label{RESCALE}

We first introduce in well-known manner average quantities in order to rescale the equations \eqref{momentum}. These are: $p^{\text{av}}$ (pressure), $v^{\text{av}}$ (modulus of velocity), $\varrho^{\rm av}$ (density). We further denote by $\bar{g}$ the constant of gravitational acceleration.

Next we introduce reference quantities: $t^{\rm R} =v^{\rm av}/\bar{g}$ (time), $L^{\rm R} = t^{\rm R} \, v^{\text{av}} = (v^{\text{av}})^2/\bar{g}$ (position), $(c^{\rm R})^2:= p^{\rm av}/\varrho^{\rm av}$ (speed of compression waves), $\eta^{\rm R} := \varrho^{\rm av} \, v^{\rm av} \, L^{\rm R}$ (viscosity), $M^{\rm R} := \varrho^{\rm av} \, T \, t^{\rm R}$ (mobility), $g^{\rm R} := \varrho^{\rm av}/p^{\rm av}$ (free energy).
These choices of the reference quantities are dictated by the wish that, after rescaling the PDEs, the Mach- number is the only one small parameter. 

From now, all these constants will be denoted with the sup-script ${\rm R}$ independently of whether it is an average or an arbitrary reference-value.
We renormalise time and space via $t := \bar{t} \, t^{\text{R}}$ and $\calx = \bar{\calx} \, L^{\text{R}}$. For a function $f = f(\calx, \, t)$, its renormalisation is defined via $\bar{f}(\bar{\calx}, \, \bar{t}) := f(L^{\text{R}}\, \calx, \, t^{\text{R}}\, \bar{t})/f^{\text{R}}$, where $f^{\text{R}}$ is a reference quantity with the same physical dimension as $f$.
For $b = -\bar{g} \, e^3$, the rescaled equations \eqref{momentum} assume the form
\begin{align}\label{momentumresc}
\bar{\varrho} \, ( \partial_{\bar{t}} \bar{v}  + \bar{v} \cdot \bar{\nabla} \bar{v}) + \overline{\divv} \, \overline{\mathbb{S}}(\bar{\nabla} \bar{v}) + \frac{1}{\text{Ma}^2} \, \bar{\nabla} \bar{p} = - \bar{\varrho} \, e^3 \, , \quad \text{ with } \quad \text{Ma} = \frac{v^{\text{R}}}{c^{\text{R}}} = \frac{\sqrt{\varrho^{\rm R}} \, v^{\rm R}}{\sqrt{p^{\rm R}}} \, ,
\end{align}
where $\text{Ma}$ is called the Mach-number.
We also introduced
 \begin{align*}
  \overline{\mathbb{S}}(\bar{\nabla} \bar{v}(\bar{\calx},\bar{t})) = 2 \, \bar{\eta} \, (\bar{\nabla} \bar{v}(\bar{\calx},\bar{t}))_{\text{sym}} + \bar{\lambda} \, \overline{\divv} \bar{v}(\bar{\calx},\bar{t}) \, \mathbb{I}, \quad \text{ with } \quad \bar{\eta} = \eta/\eta^{\rm R}, \, \bar{\lambda} = \lambda /\eta^{\rm R} \, .
 \end{align*}
 In this place there is an important observation concerning the comparison of the multicomponent case with the single-component case. 
 \begin{rem}
The Froude number $\text{Fr}$ is the ratio $\sqrt{\frac{g \, t^{\text{R}}}{v^{\text{R}}}}$. Our choice of the scaling guarantees that $\text{Fr} = 1$. 
In the single-component case, a typical scaling uses a bounded ratio ${\rm Fr}^2/{\rm Ma}$ in \eqref{momentumresc}. This can not be expected to yield a limit in the general multicomponent case. Let formally expand $\bar{p} = p_0 + p_1 \, {\rm Ma} + p_2 \, {\rm Ma}^2  + \ldots$, and choose ${\rm Fr}^2 = {\rm Ma}$. Then, matched asymptotics in \eqref{momentumresc} requires that $\nabla p_1 = -\bar{\varrho} \, e^3$. In general, this equation has no solution $p_1$ for a non constant $\bar{\varrho}$. Now, the mass density of a multicomponent liquid is not approximately constant at low Mach-number. For instance, consider a mixture of incompressibile liquids subject to \eqref{EOS}. Then $\sum_{i=1}^N \rho_i/\rho_i^{\rm R} = 1$, where $\rho_i^{\rm R}$ are the densities of the fluids at reference conditions (cf.\ \eqref{I1}). Hence, the total mass $\varrho$ is subject to the constraint $1/\varrho = 1/\sum_{i=1}^N \rho_i^{\rm R} \, y_i$ with the mass fractions $y_i$. Except for the very singular case of all the $\rho_i^{\rm R}$s being identical, a constant mass density will occur only in the case of mixtures where a single dominant widely dominate, which are called dilute. 
\end{rem}
Next we want to normalise of the mass continuity equations. We define
\begin{align}\label{MUMU}
 \bar{\mu}_j(\bar{\calx}, \, \bar{t}) := &\frac{1}{{\rm Ma}^2} \, \frac{1}{g^{\text{R}}} \,\Big( g_j\Big( p^{\text{R}} \,\bar{p}(\bar{\calx},\bar{t})\Big) -g_j(p^{\rm R})\Big) + \frac{R \, T}{(v^{\text{R}})^2 \, M_j} \, \ln \bar{x}_j(\bar{\calx}, \, \bar{t}) \, .
\end{align}
Note that the fractions are normalised using the refrence-value $x_j^{\rm R} = 1$. Since $p^{\rm R}/g^{\rm R} = \varrho^{\rm R}$,
\begin{align*}
 \bar{\nabla} \frac{\bar{\mu}_j(\bar{\calx},\bar{t})}{T} = & \frac{\varrho^{\text{R}} \, \partial_pg_j\Big(p^{\text{R}}\, \bar{p}(\bar{\calx},\bar{t})\Big)}{{\rm Ma}^2 \, T} \, \bar{\nabla} \bar{p}(\bar{\calx},\bar{t}) +  \frac{R}{(v^{\text{R}})^2 \, M_j} \, \bar{\nabla} \ln \bar{x}_j(\bar{\calx},\bar{t})\\
 =  & \frac{L^{\text{R}}}{(v^{\text{R}})^2 \, T}\, \Big(\partial_pg_j(p(\calx,t) ) \, \nabla p +  \frac{R \, T}{M_j}\,  \nabla \ln x_j(\calx,t)\Big)= \frac{L^{\text{R}}}{(v^{\text{R}})^2} \, \nabla \frac{\mu_j}{T} \, .
 \end{align*}
We infer that
\begin{align*} 
 J^i(\calx,t) = -\sum_{j=1}^N M_{ij} \, \nabla \frac{\mu_j}{T} = -\sum_{j=1}^N \frac{(v^{\text{R}})^2\, M_{ij}}{L^{\text{R}}} \,   \bar{\nabla} \frac{\bar{\mu}_j(\bar{\calx},\bar{t})}{T}  = - \frac{(v^{\text{R}})^2 \, M^{\text{R}}}{L^{\text{R}} \, T} \, \sum_{j=1}^N \bar{M}_{ij} \, \bar{\nabla} \bar{\mu}_j(\bar{\calx},\bar{t}) \, .
\end{align*}
Using the special choice of $M^{\text{R}} = \varrho^{\rm R} \, T \, t^{\rm R}$, we then infer for the normalised partial mass densities $\bar{\rho}_i(\bar{x},\bar{t}) := \rho_i(\bar{x} \, L^{\text{R}}, \bar{t} \, t^{\text{R}})/\varrho^{\text{R}}$ that 
\begin{align}\label{massresc}
 \partial_{\bar{t}} \bar{\rho}_i + \overline{\divv} \Big(\bar{\rho}_i \, \bar{v} - \sum_{j=1}^N \bar{M}_{ij} \, \bar{\nabla} \bar{\mu}_j\Big) = 0 \, ,
\end{align}
in which $\bar{\mu}_j$ obeys \eqref{MUMU}. We let $\bar{p}_{\Delta}(\bar{x}, \, \bar{t}) := (\bar{p}(\bar{x},\bar{t})-1)/{\rm Ma}^2$. For $s \in ]{\rm Ma}^{-2}, \, +\infty[$ -this is the maximal possible range of $\bar{p}_{\Delta}$ for $\bar{p}$ being positive- we define a new function 
\begin{align*}
g^{\rm Ma}_j(s) := \frac{1}{{\rm Ma}^2}  \, \frac{1}{g^{\rm R}} \, \Big( g_j\big(p^{\text{R}}  \, [1+ {\rm Ma}^2 \, \, s]\big) -g_j(p^{\rm R})\Big) \, . 
\end{align*}
Then, we can rewrite \eqref{MUMU} as $\bar{\mu}_j = g^{\rm Ma}_j(\bar{p}_{\Delta}) + (1/\bar{M}_j) \, \ln \bar{x}_{j}$, in which $\bar{M}_j$ is a normalised molar mass defined via $\bar{M}_j := M_j\, (v^{\text{R}})^2/(RT)$.
We note that $(g^{\rm Ma}_i)^{\prime}(s) =\varrho^{\rm R} \, \partial_pg_i(p^{\text{R}}  \, [1+ {\rm Ma}^2 \, \, s])$, which implies that
\begin{align*}
 & (g^{\rm Ma}_i)^{\prime}(\bar{p}_{\Delta}) = \varrho^{\rm R} \, \partial_pg_i\big(p\big) \quad
 \text{and} \quad  \sum_{i=1}^N  (g^{\rm Ma}_i)^{\prime}(\bar{p}_{\Delta}) \, \bar{\rho}_i = \sum_{i=1}^N  \partial_pg_i(p) \, \rho_i\, .
\end{align*}
Since $p(\calx,t) = \hat{p}(T,\, \rho(\calx,t))$ with the implicit function $\hat{p}(T,\cdot)$ defined by \eqref{EOS}, the latter shows that $\bar{p}_{\Delta}(\bar{\calx},\bar{t}) = \hat{\pi}(\bar{\rho}(\bar{\calx},\bar{t}))$, where now $\hat{\pi}$ is the implicit function defined by the normalised equation of state $\sum_{i=1}^N (g^{\rm Ma}_i)^{\prime}(\pi) \, \bar{\rho}_i = 1$ with solution $\pi = \hat{\pi}(\bar{\rho}_1,\ldots,\bar{\rho}_N)$. Overall, we can write the system \eqref{momentumresc}, \eqref{massresc} as 
\begin{align*}
 \partial_{\bar{t}} \bar{\rho}_i + \overline{\divv} \Big(\bar{\rho}_i \, \bar{v} - \sum_{j=1}^N \bar{M}_{ij} \, \bar{\nabla} \bar{\mu}_j\Big) =&  0 \, ,\\ 
\bar{\varrho} \, ( \partial_{\bar{t}} \bar{v}  + \bar{v} \cdot \bar{\nabla} \bar{v}) + \overline{\divv} \, \overline{\mathbb{S}}(\bar{\nabla} \bar{v}) + \bar{\nabla} \bar{p}_{\Delta} = & -  \bar{\varrho} \, e^3 \, ,
\end{align*}
where the Mach number does not occur explicitly. Indeed, it occurs only at the level of the constitutive equations. For ${\rm Ma}^{-2} =:m$ and $p_{\Delta} =: \pi$, the constitutive model is given by 
\begin{align*}
\bar{\mu}_i = \hat{\mu}^{m}_i(\pi,\bar{x}_i) := & \bar{g}_i^m(\pi) + \frac{1}{\bar{M}_i} \, \ln \bar{x}_i \quad
\text{ with } \quad \bar{g}_i^m(
\pi) =  m \, \Big(\bar{g}_i\big(1+\frac{\pi}{m}\big) - \bar{g}_i(1)\Big) \, ,
\end{align*}
with the normalised functions $\bar{g}_i(s) = g_i(p^{\rm R} \, s)/g^{\rm R}$.
Note also that these definitions allow to show that the chemical potentials are rescaled via
$ \bar{\mu}_i(\bar{\calx}, \, \bar{t}) = (\mu_i(L^{\rm R} \, \bar{\calx}, \, t^{\rm R} \, \bar{t}) - \mu_i^{\rm R})/(v^{\rm R})^2$ with $\mu_i^{\rm R} := g_i(p^{\rm R})$.

\section{Uniform properties of the free energy function}

We let $m \in \mathbb{N}$ and $g_1^m, \ldots, g^m_N$ satisfy all assumptions (A1)--(A3). For $\rho \in \mathbb{R}^N_+$ and $p > 0$, we let $V^m(p, \, \rho) := \sum_{i=1}^N \partial_pg_i^m(p) \, \rho_i$. Exploiting (A3), we verify that $V^m(p,\rho) \rightarrow 0$ for $p \rightarrow +\infty$, while $V^m(p,\rho) \rightarrow +\infty$ for $p \rightarrow 0$. Since (A2) implies that $p \mapsto V^m(p,\cdot)$ is decreasing, the equation $V^m(p,\rho) = 1$ possesses a unique solution $p = \hat{p}^m(\rho)$ for all $\rho \in \mathbb{R}^N_+$. We easily check that
\begin{align}\label{pderivative}
 \partial_{\rho_i} \hat{p}^m(\rho) = - \frac{\partial_pg^m_i(\hat{p}^m(\rho))}{ \partial^2_pg^{m}(\hat{p}^m(\rho)) \cdot \rho} \, . 
\end{align}
For $\rho \in \mathbb{R}^N_+$, we recall that the free energy function is given by (cp.\ \eqref{FE})
\begin{align*}
 f^m(\rho) = \sum_{i=1}^N \rho_i \, g_i^m(\hat{p}^m(\rho)) - \hat{p}^m(\rho) + RT\, \sum_{i=1}^N \frac{\rho_i}{M_i} \, \ln \hat{x}_i(\rho) \, .
\end{align*}
\begin{lemma}\label{UNIFF1}
Assume that $g^m$ is subject to $\rm (A1)-(A3)$. Then, $f^m$ belongs to $C^2(\mathbb{R}^N_+)$ and is a function of Legendre--type on $\mathbb{R}^N_+$. Moreover, the following properties are valid:
\begin{enumerate}[(i)]
 \item If $\rm (A4)$ is valid, then $f^m \in C(\overline{\mathbb{R}^N_+})$;
 \item If both $\rm (A4)$ and $\rm (A5)$ hold, then $f^m$ is co-finite ($ Df^m: \, \mathbb{R}^N_+ \rightarrow \mathbb{R}^N$ is surjective);
 \item\label{casegrowthplow} If $\rm (A4)$ is valid with constants $\bar{c}_1, \, \bar{c}_2$ independent on $m$ then
 \begin{align*}
\bar{c}_1 \, \varrho &\leq \hat{p}^m(\rho) \leq \bar{c}_2 \,\varrho \, &  \text{ for } \quad 0 < \hat{p}^m(\rho)  < \bar{s} \, .
\end{align*}
\item\label{casegrowthp} If $(A5)$ is valid with constants independent on $m$ and $\calv_0 := \liminf_{m \rightarrow \infty} \min \partial_pg^m(\bar{s}) >0$ then 
 \begin{align*}
\left(\frac{\beta\, \calv_0}{\max \alpha} \right)^{\frac{\max\alpha}{\max\alpha-1}} \, \bar{s} \, \varrho^{\frac{\max\alpha}{\max\alpha-1}} \leq&  \hat{p}^m(\rho) \leq \left(\frac{\max\alpha\, \max \calv}{\beta} \right)^{\frac{\beta}{\beta-1}} \, \bar{s} \, \varrho^{\frac{\beta}{\beta-1}}&  \text{ for }\quad  \hat{p}^m(\rho) \geq  \bar{s} \, 
\end{align*}
\item\label{grothoff} Under the assumptions of \eqref{casegrowthplow}, \eqref{casegrowthp}, there are  $c_0, \, c_1 > 0$ such that $ f^m(\rho) \geq c_0 \, |\rho|^{\gamma} - c_1$ for all $\rho \in \mathbb{R}^N_+$ and all $m \in \mathbb{N}$ with $\gamma = \max\alpha/\max\alpha-1$; 
\item \label{caseunif} If $\{g^m\}$ is bounded in $C^2(K)$ for every compact subset $K \in \mathbb{R}_+$, then, there is a positive function $\lambda_1 \in C(]0,+\infty[)$ such that
\begin{align*} 
\lambda_{\min}(D^2_{\rho,\rho}f^m(\rho)) \geq\lambda_1(\hat{p}^m(\rho)) \, ,\quad \text{ for all } \rho \in \mathbb{R}^N_+, \, m =1,2,\ldots
 \end{align*}
\item \label{casea6} If $\rm (A4)-(A6)$ are valid with constants independent on $m$, and if the assumptions \eqref{casegrowthplow}, \eqref{casegrowthp} hold, then there is $\lambda_1 > 0$ such that for all $\rho \in \mathbb{R}^N_+$, $m \geq 1$,
\begin{align*} 
\lambda_{\min}(D^2_{\rho,\rho}f^m(\rho)) \geq \frac{\lambda_1}{\varrho \, (1+\varrho)^{\theta_0}} \, \quad \text{ with } \quad \theta_0 := 2\frac{\beta}{\beta-1} \, \Big(\frac{1}{\beta}-\frac{1}{\max \alpha}\Big) \, .
\end{align*}
\end{enumerate}
\end{lemma}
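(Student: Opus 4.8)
\textbf{Proof plan for Lemma \ref{UNIFF1}.}

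The plan is to treat the eight items essentially in the order listed, since the harder ones build on the easier ones. The regularity $f^m \in C^2(\mathbb{R}^N_+)$ and the Legendre-type property follow from the implicit function theorem applied to the equation $V^m(p,\rho)=1$: by (A2), $\partial_p V^m = \sum_i \partial_p^2 g_i^m \, \rho_i < 0$ on $\mathbb{R}^N_+$, so $\hat{p}^m$ is $C^1$ with the derivative formula \eqref{pderivative}, whence $f^m$ is $C^2$. Strict convexity will come out of the Hessian computation \eqref{Hessian}: the first term is $-\partial_p g^m \otimes \partial_p g^m/(\sum_k \rho_k \partial_p^2 g_k^m)$, which is positive semidefinite since $\partial_p^2 g_k^m < 0$, and the second term $\frac{RT}{M_iM_j\hat n}(\delta^i_j/\hat x_i - 1)$ is positive definite on $\mathbb{R}^N_+$ (it is the Hessian of the entropic part $k$, which is strictly convex as a sum of $r\mapsto r\ln r$-type terms along the relevant directions); together with co-finiteness in (ii) this gives "of Legendre type". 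For (i), continuity up to $\partial\mathbb{R}^N_+$, I would use (A4): it forces $\hat p^m(\rho) \le \bar c_2 \varrho$ for small pressure (from $g_i^{m\prime}(p)p \ge \bar c_1$ and $\sum \rho_i g_i^{m\prime} = 1$), so as some $\rho_i \to 0$ the pressure stays bounded and the only possibly singular term $\rho_i\ln\hat x_i(\rho) \to 0$. For (ii), co-finiteness, one shows $Df^m = \hat\mu^m$ is surjective onto $\mathbb{R}^N$; using \eqref{gradient}, $\hat\mu_i^m = g_i^m(\hat p^m(\rho)) + \frac{RT}{M_i}\ln\hat x_i(\rho)$, and the growth (A5) guarantees $g_i^m$ is unbounded above as $p\to\infty$ while $\ln \hat x_i$ ranges over all of $(-\infty,0]$, so one can realize any target by sending $\varrho\to\infty$ along an appropriate ray — this is a standard degree/continuity argument.

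For items (iii) and (iv) I would argue directly from the defining relation $\sum_i \rho_i g_i^{m\prime}(\hat p^m(\rho)) = 1$. In the regime $\hat p^m(\rho) < \bar s$, (A4) gives $\bar c_1/\hat p^m \le \sum_i \rho_i g_i^{m\prime}(\hat p^m) \cdot \frac{\hat p^m}{\hat p^m}$... more precisely $\bar c_1 \varrho \le \hat p^m \sum_i \rho_i g_i^{m\prime}(\hat p^m) = \hat p^m \le \bar c_2 \varrho$ after using $\bar c_1 \le g_i^{m\prime}(p)p \le \bar c_2$; this is (iii), and the constants are those of (A4), hence $m$-independent. In the regime $\hat p^m(\rho)\ge \bar s$, I use (A5): $\alpha_i p g_i^{m\prime}(p) \ge g_i^m(p) \ge \beta p g_i^{m\prime}(p)$ controls $g_i^{m\prime}$ from above and below in terms of the value $g_i^m(\bar s)$ by integrating the differential inequality $\frac{d}{dp}\ln g_i^m \in [1/(\alpha_i p), 1/(\beta p)]$ on $[\bar s, p]$, yielding two-sided power bounds $g_i^m(\bar s)(p/\bar s)^{1/\alpha_i} \lesssim g_i^m(p) \lesssim g_i^m(\bar s)(p/\bar s)^{1/\beta}$ and correspondingly for $g_i^{m\prime}$; combining with $\sum \rho_i g_i^{m\prime}(\hat p^m) = 1$ and $g_i^m(\bar s) \sim \bar s\, g_i^{m\prime}(\bar s)$ gives the displayed power bounds in (iv), with $\calv_0 = \liminf_m \min\partial_p g^m(\bar s)$ entering the lower constant. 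Item (v) then follows by inserting these pressure bounds into $f^m(\rho) = \sum_i \rho_i g_i^m(\hat p^m) - \hat p^m + k(\rho)$: for large $\varrho$, $\hat p^m(\rho) \gtrsim \varrho^{\gamma}$ with $\gamma = \max\alpha/(\max\alpha-1)$ and $\sum_i \rho_i g_i^m(\hat p^m) \ge \beta\, \hat p^m \sum_i \rho_i g_i^{m\prime}(\hat p^m) = \beta\, \hat p^m$, so $f^m \ge (\beta-1)\hat p^m + k(\rho) \ge c_0\varrho^{\gamma} - c_1$ since $k$ is bounded below by a multiple of $-\varrho$ (which is absorbed).

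Items (vi), (vii) and especially (viii) are the analytic heart. For (vi), under a local $C^2$-bound on $\{g^m\}$ I would estimate the two pieces of \eqref{Hessian} separately on the compact pressure range $[\,\inf,\sup\,]$ corresponding to a fixed $\hat p^m(\rho) = p$: the entropic piece $D^2 k$ has smallest eigenvalue bounded below by $c/\varrho$ (explicit, using $\sum_i x_i = 1$), and since $p$ controls $\varrho$ two-sidedly on compacta (by (iii)–(iv) or just by the local bounds), one gets a bound depending continuously on $p$; the rank-one piece only helps. For (viii) one must make this $m$-uniform and track the decay as $\varrho\to 0$ and $\varrho\to\infty$: this is where (A6), $p|g_i^{m\prime\prime}(p)| \le \bar c_3 g_i^{m\prime}(p)$, is essential, since it bounds the denominator $|\sum_k \rho_k \partial_p^2 g_k^m|$ of the bad rank-one term in terms of $\frac{1}{p}\sum_k \rho_k \partial_p g_k^m = 1/p$, keeping that term from blowing up, while the entropic Hessian contributes $\gtrsim \lambda_1/\varrho$; combining with the power relation $p \sim \varrho^{\gamma}$ for large $\varrho$ and $p\sim\varrho$ for small $\varrho$ one reads off the exponent $\theta_0 = \frac{2\beta}{\beta-1}(\frac1\beta - \frac1{\max\alpha})$ from matching the exponents of $\varrho$ coming from $1/\varrho$ versus the $p$-dependent terms. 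The main obstacle I anticipate is precisely this bookkeeping in (viii): one has to lower-bound $\lambda_{\min}(D^2 f^m)$ by testing against an arbitrary $\xi \in \mathbb{R}^N$, splitting $\xi$ into its component along $\partial_p g^m(\hat p^m)$-direction (annihilated up to sign structure by... actually controlled by the rank-one term sign) and the orthogonal part, and carefully showing the entropic term dominates uniformly in $m$ after normalizing by the right power of $\varrho$; getting the constants genuinely independent of $m$ requires that all of (A4)–(A6) hold with $m$-independent constants and that $\calv_0 > 0$, and assembling these into a single clean estimate is the delicate step. I would structure it as: (a) reduce to the two regimes $p \lessgtr \bar s$; (b) in each regime write $D^2 f^m = \text{(PSD rank-one)} + D^2 k$ and bound $\lambda_{\min}(D^2 k) \ge c/\varrho \cdot (\text{correction})$; (c) use (A6) to show the rank-one term does not spoil positivity and the $p$-to-$\varrho$ conversion to express everything as a power of $\varrho$; (d) collect to get the stated $\lambda_1/(\varrho(1+\varrho)^{\theta_0})$.
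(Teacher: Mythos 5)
Your handling of items (iii)--(v) (and the growth/co-finiteness parts) follows the same lines as the paper and is fine: $\bar c_1\le g_i^{m\prime}(p)p\le\bar c_2$ combined with $\partial_pg^m(\hat p^m(\rho))\cdot\rho=1$ gives (iii), integrating the logarithmic differential inequality from (A5) gives (iv), and $f^m-k \ge (\beta-1)\hat p^m$ in the high-pressure regime gives (v). The genuine gap is in your treatment of the Hessian, and it propagates from the strict-convexity claim through items (vi)--(viii). You assert that the entropic part $D^2k(\rho)$, with $k$ as in \eqref{KFUKK}, ``is positive definite on $\mathbb{R}^N_+$'' with smallest eigenvalue $\gtrsim 1/\varrho$. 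This is false: $k$ is positively homogeneous of degree one, so $D^2k(\rho)\,\rho=0$ and $\lambda_{\min}(D^2k(\rho))=0$. After conjugation by $M^{1/2}Y^{1/2}$ one gets exactly $\tfrac{RT}{\varrho}\,(\mathbb I-\sqrt{x}\otimes\sqrt{x})$, a (scaled) projection, so the bound $c/\varrho$ holds only on the complement of one direction. Consequently, ``PSD rank-one term $+$ positive definite entropic term'' does not prove strict convexity, and your proposed route to (vi)--(viii) collapses: if your premise were true you would obtain the stronger bound $\lambda_1/\varrho$ without ever invoking (A6) and without the exponent $\theta_0$, which is a clear signal that the premise cannot be right.

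The missing mechanism, which is the actual content of the paper's proof, is that the degenerate direction of $D^2k$ (essentially $\rho$) must be controlled by the pressure rank-one term $K^m(\rho)\,\upsilon^m(\rho)\otimes\upsilon^m(\rho)$, using $\upsilon^m(\rho)\cdot\rho=1$: an arbitrary $\eta$ is split into its projection orthogonal to $\sqrt{x}$ (controlled by $D^2k$) and a multiple of $\rho$ (controlled by the rank-one term), leading to the quantitative lower bound \eqref{EVD2fm} involving both $K^m(\rho)/\varrho^2$ and the ratio $\max\partial_pg^m/\min\partial_pg^m$. Only then does (A6) enter, and in the opposite role to the one you assign it: one needs $K^m(\rho)=1/|\partial_p^2g^m(\hat p^m(\rho))\cdot\rho|$ bounded \emph{below} (namely $K^m\ge\hat p^m(\rho)/\bar c_3$), i.e.\ the rank-one coefficient must be large enough, not ``kept from blowing up''. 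The exponent $\theta_0$ then comes from the (A4)/(A5) bounds on $\max\partial_pg^m/\min\partial_pg^m$ and the $p\leftrightarrow\varrho$ conversion of (iii)--(iv), not from any property of $D^2k$ alone. Your item (viii) gestures at a directional splitting but does not identify the kernel of $D^2k$ nor the role of $\upsilon^m\cdot\rho=1$, so as written the eigenvalue estimates -- and the strict convexity needed for the Legendre-type claim -- are not established.
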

\begin{proof}
Owing to \eqref{pderivative} and (A1), $\hat{p}^m \in C^1(\mathbb{R}^N_+)$. The expressions for $Df^m(\rho)$ was already obtained in \eqref{gradient}. It shows that $f^m \in C^2(\mathbb{R}^N_+)$, and the Hessian $D^2f^m(\rho)$ is given in \eqref{Hessian}.
%
Let $\upsilon^m(\rho) := \partial_pg^m(\hat{p}^m(\rho))$ and $K^m(\rho) := -1/ \sum_{k=1}^N \partial^2_pg^{m}_k(\hat{p}^m(\rho)) \, \rho_k$. Using the function $k$ of \eqref{KFUKK}, $D^2f^m$ is equivalently expressed
\begin{align*}
 D^2f^m(\rho) = K^m(\rho) \, \upsilon^m(\rho) \otimes \upsilon^m(\rho) + D^2k(\rho)  \, .
\end{align*}
In order to show that $f^m$ is strictly convex, we first note that
\begin{align*} 
 \frac{\varrho}{RT} \, M^{\frac{1}{2}} \, Y^{\frac{1}{2}} \, D^2k(\rho) \, Y^{\frac{1}{2}} \, M^{\frac{1}{2}} = \mathbb{I} - \sqrt{x} \otimes \sqrt{x} \, ,
\end{align*}
where $M = \text{diag}(M_1,\ldots,M_N)$, $Y = \text{diag}(y_1,\ldots,y_N)$ and $\sqrt{x} = (\sqrt{x_1},\ldots, \sqrt{x_N})$ with $y = \hat{y}(\rho)$ and $x = \hat{x}(\rho)$. The matrix on the right-hand side is the projection onto the orthogonal complement of $\text{span} \{\sqrt{x}\}$. Hence, for $\eta \in \mathbb{R}^N$ arbitrary
\begin{align*}
 \frac{\varrho}{RT} \,  D^2k(\rho) \eta \cdot \eta \geq |Y^{-\frac{1}{2}} \, M^{-\frac{1}{2}} \, \eta|^2 -  (Y^{-\frac{1}{2}} \, M^{-\frac{1}{2}} \, \eta \cdot \sqrt{x})^2 \, .
\end{align*}
We let $\eta^1 := Y^{-\frac{1}{2}} \, M^{-\frac{1}{2}} \, \eta -  (Y^{-\frac{1}{2}} \, M^{-\frac{1}{2}} \, \eta \cdot \sqrt{x}) \, \sqrt{x}$ be the projection. Then
\begin{align*} 
 \eta - Y^{\frac{1}{2}} \, M^{\frac{1}{2}} \, \eta^1 = (Y^{-\frac{1}{2}} \, M^{-\frac{1}{2}} \, \eta \cdot \sqrt{x}) \, Y^{\frac{1}{2}} \, M^{\frac{1}{2}} \sqrt{x} \, .
 \end{align*}
We multiply the latter with $\upsilon^m(\rho)$. Since $Y^{\frac{1}{2}} \, M^{\frac{1}{2}} \sqrt{x} = \rho/\sqrt{\varrho \, \hat{n}(\rho)}$, and since $\upsilon^m(\rho) \cdot \rho = 1$, it follows that $
 \eta = Y^{\frac{1}{2}} \, M^{\frac{1}{2}} \, \eta^1 + \rho \, (\upsilon^m(\rho) \cdot \eta - \upsilon^m(\rho) \cdot Y^{\frac{1}{2}} \, M^{\frac{1}{2}} \, \eta^1)$. The latter clearly allows to bound
 \begin{align*}
 |\eta| \leq & \sqrt{\max M} \, (1+|\rho| \, |\upsilon^m(\rho)|) \, |\eta^1| + |\rho| \, |\upsilon^m(\rho) \cdot \eta|\\
 \leq & \sqrt{RT \, \max M} \, (1+|\rho| \, |\upsilon^m(\rho)|) \, \sqrt{\frac{\varrho}{RT} \,  D^2k(\rho) \eta \cdot \eta} + \frac{|\rho|}{\sqrt{K^m}}  \, \sqrt{K^m} \, | \upsilon^m(\rho) \cdot \eta|\\
\leq & \max \Big\{\sqrt{\varrho \, RT\, \max M} \, (1+|\rho| \, |\upsilon^m(\rho)|) , \,  \frac{|\rho|}{\sqrt{K^m}}\Big\} \, \sqrt{D^2f^m(\rho) \eta \cdot \eta} \, .
\end{align*}
Recall that $|\upsilon^m(\rho)| \leq N \, \max \partial_pg^m(\hat{p}^m(\rho))$ and that $|\rho| \leq N^{1/2} \, \varrho \leq  N^{1/2} /\min \partial_pg^m(\hat{p}^m(\rho))$. Hence we see that 
\begin{align}\label{EVD2fm}
 D^2f^m(\rho) \eta \cdot \eta \geq&  \min \Big\{\frac{RT}{\varrho \, \max M \, (1+N^{\frac{3}{2}} \, \frac{\max \partial_pg^m(\hat{p}^m(\rho))}{\min\partial_pg^m(\hat{p}^m(\rho))})^2} , \,  \frac{K^m(\rho)}{N \, \varrho^2}\Big\} \, |\eta|^2 \, .
\end{align}
This shows that $f^m$ is strictly convex.\\

In order to prove that $f^m$ is a function of Legendre-type, it remains to prove that $|\nabla f^m(\rho)| \rightarrow +\infty$ as $\rho \rightarrow \rho^0$ with $\rho^0_i = 0$ for some $i$ (essential smoothness). If $\rho \rightarrow \rho^0 \neq 0$, then it is readily seen that $\hat{p}^m(\rho) \rightarrow \hat{p}^m(\rho^0)$ finite due to the definition of $\hat{p}^m$. Hence, recalling \eqref{gradient}, 
\begin{align*}
 |\nabla f^m(\rho)| \geq -\frac{1}{m_i} \, \ln \hat{x}_i(\rho) - \sup_m |g^m(\hat{p}^m(\rho))| \rightarrow + \infty \, \quad \text{ for } \rho \rightarrow \rho^0 \text{ s.t.\ } \rho_i^0 = 0, |\rho^0| > 0\, .
\end{align*}

If $\rho \rightarrow 0$, then $\hat{p}^m(\rho) \rightarrow 0$ and $g_i^m(\hat{p}^m(\rho)) \rightarrow -\infty$ both result of (A3). Then \eqref{gradient} shows that $\partial_{\rho_i} f^m(\rho) \rightarrow -\infty$ for $i = 1,\ldots,N$.\\

We next discuss the growth conditions. Using (A4) and $\partial_pg^m(\hat{p}^m(\rho)) \cdot \rho = 1$ directly yields \eqref{casegrowthplow}.
For $\hat{p}^m(\rho) \geq \bar{s}$, we can integrate (A5) to show that
\begin{align}\label{plarge12proof1}
\left(\frac{\min g^m(\bar{s})}{\bar{s}^{\frac{1}{\max \alpha}} \, \max\alpha} \, \varrho\right)^{\gamma} \leq \hat{p}^m(\rho) \leq \left(\frac{\max g^m(\bar{s})}{\bar{s}^{\frac{1}{\beta}} \, \beta} \, \varrho\right)^{\frac{\beta}{\beta-1}}  \, 
\end{align}
and since (A5) implies that $\max\alpha \, \bar{s} \, \partial_pg^m(\bar{s}) \geq  g^m(\bar{s}) \geq \beta \, \bar{s} \, \partial_pg^m(\bar{s}) $, we get \eqref{casegrowthp}.
In order to prove the lower bound for $f^m$, we reason as in the Lemma 6.1 of \cite{druetmaxstef}. First, we use the fact that $g^m$ is concave and increasing to show with the help of (A5) that
\begin{align*}
 0 \leq g^m(\hat{p}^m(\rho)) \cdot \rho - \hat{p}^m(\rho) & \leq  g^m(\bar{s}) \cdot \rho \leq \max\alpha \, \partial_pg^m(\bar{s}) \cdot \rho  \,\\
 & \leq \max \alpha \, \partial_pg^m(\hat{p}^m(\rho)) \cdot \rho = \max \alpha \quad & \text{ if }0 < \hat{p}^m(\rho) \leq \bar{s} \, .
\end{align*}
If $\hat{p}^m(\rho) \geq \bar{s}$, then (A5) implies that $g^m(\hat{p}^m(\rho)) \cdot \rho - \hat{p}^m(\rho) > (\beta-1) \, \hat{p}^m(\rho)$, and \eqref{casegrowthp} can be used to show that
\begin{align*}
 g^m(\hat{p}^m(\rho)) \cdot \rho - \hat{p}^m(\rho) \geq (\beta-1) \,  \left(\frac{\beta\, \calv_0}{\sqrt{N}\max \alpha} \right)^{\gamma} \, \bar{s} \, |\rho|^{\gamma} \, \quad \text{ if } \quad \hat{p}^m(\rho) \geq \bar{s} \, .
\end{align*}
Hence, combining the two latter results,
\begin{align*}
 (\beta-1) \,  \left(\frac{\beta\, \min \partial_pg^{m}(\bar{s})}{\sqrt{N}\max \alpha} \right)^{\gamma} \, \bar{s} \, |\rho|^{\gamma} \leq & g^m(\hat{p}^m(\rho)) \cdot \rho - \hat{p}^m(\rho) + (\beta-1) \, \bar{s} \\ 
 = & f^m(\rho) - k(\rho) +  (\beta-1) \, \bar{s} \, .
\end{align*}
It remains to observe that $|k(\rho)| \leq c \, |\rho|$ with $c = N^{3/2}/(e\min M)$. Thus, Young's inequality allows to prove \eqref{grothoff} with 
\begin{align*}
 c_0 = \frac{\beta-1}{2} \,  \left(\frac{\beta\, \calv_0}{\sqrt{N}\max \alpha} \right)^{\gamma} \, \bar{s} \, , \quad c_1 = (\beta-1) \, \bar{s} + c(\gamma)  \, .
\end{align*}

In order to prove that $f^m$ is co-finite, we must show that the equations $\nabla f^m(\rho) = \mu$ are solvable for all $\mu$ in $\mathbb{R}^N$. As shown in \cite{dredrugagu20}, a sufficient condition is that the equation
\begin{align*}
 \sum_{i=1}^N \exp(M_i \, (\mu_i - g_i^m(s))) = 1 \, ,
\end{align*}
possesses a unique solution $s = s(\mu)$ for all $\mu$ in $\mathbb{R}^N$. This is the case if $g_i^m(s)$ converges to $-\infty$ for $s \rightarrow 0$ for at least one $i$, and if $g_i^m(s)$ tend to $+\infty$ for $s \rightarrow \infty$ for every $i$. These two conditions clearly result of (A4), (A5).\\

We next prove the bounds for the smallest eigenvalue of $D^2f^m$. Due to \eqref{EVD2fm}, it is sufficient to bound the quotients $\min \partial_pg^m(\hat{p}^m(\rho))/\max \partial_pg^m(\hat{p}^m(\rho))$ and $K^m(\rho)/\varrho$ from below in order to obtain a bound.

We note that $K^m(\rho)/\varrho \geq 1/(\varrho^2 \, |\partial^2_pg^m(\hat{p}^m(\rho))|_1)$. Thus, for this term, it is sufficient to bound $\varrho^2 \, |\partial^2_pg^m(\hat{p}^m(\rho))|$ from above.

For the bound in \eqref{caseunif}, recall that 
$1/\max \partial_pg^{m}(\hat{p}^m(\rho)) \leq \varrho \leq  1/\min \partial_pg^{m}(\hat{p}^m(\rho))$ is a consequence of \eqref{EOS}.
Hence 
$\varrho^2 \, |\partial^2_pg^m(\hat{p}^m(\rho))| \leq |\partial^2_pg^m(s)|/\min^2 \partial_pg^{m}(s)$. The function $\lambda_1(s)$ of \eqref{caseunif} is thus obtained as
\begin{align*}
 \lambda_1(s) := \inf_{m} \min \Big\{\frac{RT\, \min \partial_pg^m(s)}{\max M \, (1+N^{\frac{3}{2}} \,\frac{\max \partial_pg^m(s)}{\min \partial_pg^m(s)})^2} , \,  \frac{\min^2 \partial_pg^{m}(s)}{N \, |\partial^2_pg^m(s)|}\Big\} \, .
\end{align*}
For the case \eqref{casea6}, $\max \partial_pg^m(s)/\min \partial_p g^m(s) \leq \bar{c}_2/\bar{c}_1$ for $0 < s \leq \bar{s}$ due to (A4).
For $s \geq \bar{s}$, we use $\partial_gg^m(\bar{s}) \leq \calv$, $\min\partial_gg^m(\bar{s}) \geq  \calv_0$ and (A5) implies that
\begin{align*}
 \frac{\max \partial_pg^m(s)}{\min \partial_p g^m(s)} \leq \frac{\max \alpha}{\beta} \, \frac{\max g^m(\bar{s})}{\min g^m(\bar{s})} \, \left(\frac{s}{\bar{s}}\right)^{\frac{1}{\beta}-\frac{1}{\max\alpha}}\leq \left(\frac{\max \alpha}{\beta}\right)^2 \, \frac{\max \calv}{\calv_0} \, \left(\frac{s}{\bar{s}}\right)^{\frac{1}{\beta}-\frac{1}{\max\alpha}}  \, .
\end{align*}
In connection with \eqref{casegrowthp}, this implies that $ \frac{\max \partial_pg^m(\hat{p}^m(\rho))}{\min \partial_p g^m(\hat{p}^m(\rho))} \leq c^{\prime} \, \Big(1+\varrho^{ \frac{\beta}{\beta-1} \, \left(\frac{1}{\beta}-\frac{1}{\max \alpha}\right)}\Big)$.
If (A6) holds then 
\begin{align*}
\frac{1}{K^m(\rho)} & = \sum_{k=1}^N |\partial^2_p g^m(\hat{p}^m(\rho))| \, \rho_k = \frac{1}{\hat{p}^m(\rho)} \, \sum_{k=1}^N \hat{p}^m(\rho) \, |\partial^2_p g^m(\hat{p}^m(\rho))| \, \rho_k\\
& \leq \frac{\bar{c}_3}{\hat{p}^m(\rho)} \,\sum_{k=1}^N \partial_p g^m(\hat{p}^m(\rho)) \, \rho_k = \frac{\bar{c}_3}{\hat{p}^m(\rho)} \, .
\end{align*}
Hence, $K^m(\rho) \geq \bar{c}_3^{-1} \, \hat{p}^m(\rho)$.
The inequalities $\hat{p}^m(\rho) \geq \bar{c}_1 \, \varrho$ or $\hat{p}^m(\rho) \geq c \, \varrho^{\gamma}$ for $p$ small or large (cf.\ \eqref{casegrowthplow}, \eqref{casegrowthp}) can be exploited to obtain that $K^m(\rho)/\varrho$ is bounded below as desired.
%
\end{proof}

To obtain uniform properties for the free energy in the case of the rescaling, we at first recall the definition \eqref{FEm} of $\bar{f}^m$. Let $\bar{g}_1, \ldots, \bar{g}_N$ be the normalised functions of \eqref{bargipi}. If we assume that $\bar{g}$ is subject to the assumptions (A), then we can construct a strictly convex free energy function $\bar{f}$, satisfying all claims of Lemma \ref{UNIFF1}, via
\begin{align*}
& \bar{f}(\bar{\rho}) = \bar{g}(\widehat{p}(\bar{\rho})) \cdot \bar{\rho} - \widehat{p}(\bar{\rho}) + \bar{k}(\bar{\rho}) =: \bar{f}_1(\bar{\rho}) + \bar{k}(\bar{\rho}) \quad
\text{ where } \quad  \sum_{i=1}^N \bar{g}_i^{\prime}(\widehat{p}(\bar{\rho})) \, \bar{\rho}_i = 1 \, .
\end{align*}
Due to the latter definition of $\widehat{p}$, note that $\widehat{p}(\bar{\rho}) = 1$ iff $\bar{\calv} \cdot \bar{\rho} = 1$ with $\bar{\calv} = \bar{g}^{\prime}(1)$. 
For $m \in \mathbb{N}$, we next define $\hat{\pi}^m(\bar{\rho}) := m \, (\widehat{p}(\rho) - 1)$. Then, using that
\begin{align*}
 \bar{f}_1(\bar{\rho}) = \bar{g}(\widehat{p}(\bar{\rho})) \cdot \bar{\rho} - \widehat{p}(\bar{\rho}) = \bar{g}\Big(1+\frac{\hat{\pi}^m(\bar{\rho})}{m}\Big) \cdot \bar{\rho} -1 -\frac{ \hat{\pi}^m(\bar{\rho})}{m} \, ,
\end{align*}
it is readily seen that
\begin{align*}
m \, (  \bar{f}_1(\bar{\rho}) - \bar{g}(1) \cdot \bar{\rho}+1) = m \, \Big(\bar{g}\Big(1+\frac{\hat{\pi}^m(\bar{\rho})}{m}\Big) -\bar{g}(1) \Big) \cdot \bar{\rho} -  \hat{\pi}^m(\bar{\rho}) \, .
\end{align*}
Thus, we also obtain the representation
\begin{align*}
 \bar{f}^m(\bar{\rho}) = m \, (  \bar{f}_1(\bar{\rho}) - \bar{g}(1) \cdot \bar{\rho}+1) + \sum_{i=1}^N \frac{\bar{\rho}_i}{\bar{M}_i} \, \ln \hat{x}_i(\bar{\rho}) \, .
\end{align*}
This allows to verify the growth properties. Since $\bar{f}_1(\bar{\rho}) \geq c_0 \, |\bar{\rho}|^{\gamma} - c_1$ with constants $c_0,c_1$ independent of $m$, we obtain that
\begin{align*}
 \bar{f}^m(\bar{\rho}) \geq & m \, (c_0 \, |\bar{\rho}|^{\gamma} - (c_1-1) - |\bar{g}(1)| \, |\bar{\rho}|) - \frac{N^{3/2} \, \max \bar{M}}{e \, (\min \bar{M})^{3/2}} \, |\bar{\rho}|)
 \geq  m \, \frac{c_0}{2} \, |\bar{\rho}|^{\gamma}\\
 & \quad \text{ for all } \quad |\bar{\rho}| \geq \max\Big\{\Big( \frac{6(c_1-1)}{c_0}\Big)^{\frac{1}{\gamma}}, \, \Big(\frac{6|\bar{g}(1)|}{c_0} \Big)^{\frac{1}{\gamma-1}}, \, \Big(\frac{6N^{3/2} \, \max \bar{M}}{c_0\, e \, (\min \bar{M})^{3/2}} \Big)^{\frac{1}{\gamma-1}})     \Big\}  \, .
\end{align*}
Moreover the Hessian satisfies
\begin{align*}
 D^2_{\bar{\rho},\bar{\rho}}\bar{f}^m(\bar{\rho}) = m \, D^2_{\bar{\rho},\bar{\rho}}\bar{f}_1(\bar{\rho}) + D^2_{\bar{\rho},\bar{\rho}} \bar{k}(\bar{\rho}) \geq D^2_{\bar{\rho},\bar{\rho}}\bar{f}(\bar{\rho}) \, , 
\end{align*}
proving by means of Lemma \ref{UNIFF1} that the smallest eigenvalue $\lambda_1$ can be chosen independently on $m$.\\

Next we prove an additional growth property.
We consider $0 < \bar{\theta} < 1$ and $\bar{\rho}$ subject to $-m < \hat{\pi}^m(\bar{\rho}) \leq -m \, \bar{\theta}$. Since $\hat{\pi}^m > -m$, the identity $\bar{g}^{\prime}(1+\hat{\pi}^m(\bar{\rho})/m) \cdot \bar{\rho} = 1$ is valid and, together with (A4) for $\bar{g}$, it allows to show that $\bar{\varrho} \leq \bar{c}_1^{-1} \, (1+ \hat{\pi}^m(\bar{\rho})/m) \leq \bar{c}_1^{-1} \, (1-\bar{\theta})$, 
and that
\begin{align*}
 \bar{g}^m(\hat{\pi}^m(\bar{\rho})) = & \int_{0}^1 \bar{g}^{\prime}\Big(1+ \theta \, \frac{\hat{\pi}^m(\bar{\rho})}{m}\Big) \, d\theta \, \hat{\pi}^m(\bar{\rho}) \geq \bar{c}_2 \, \int_{0}^1 \frac{1}{1+\theta \, \frac{\hat{\pi}^m(\bar{\rho})}{m}} \, d\theta \, \hat{\pi}^m(\bar{\rho}) \\
 = & \bar{c}_2 \, \frac{m}{\hat{\pi}^m(\bar{\rho})} \, \ln \Big(1+ \theta \, \frac{\hat{\pi}^m(\bar{\rho})}{m}\Big)  \,  \hat{\pi}^m(\bar{\rho}) 
 \geq  \frac{\bar{c}_2}{\bar{\theta}} \, |\ln (1-\bar{\theta})| \, \hat{\pi}^m(\bar{\rho})\, .
\end{align*}
Hence, for all $\bar{\rho}$ subject to $0<\hat{\pi}^m(\bar{\rho}) \leq -m \, \bar{\theta}$, we have 
\begin{align*}
& \bar{g}^m(\hat{\pi}^m(\bar{\rho})) \cdot \bar{\rho} \geq - |\bar{g}^m(\hat{\pi}^m(\bar{\rho}))|_{\infty} \, \bar{\varrho} \geq - \frac{\bar{c}_2}{\bar{c}_1} \, \frac{1-\bar{\theta}}{\bar{\theta}} \, |\ln (1-\bar{\theta})|\\
\text{ implying that } &\\
&  \bar{g}^m(\hat{\pi}^m(\bar{\rho})) \cdot \bar{\rho} - \hat{\pi}^m(\bar{\rho}) \geq -\hat{\pi}^m(\bar{\rho}) \, \Big(1 - \frac{\bar{c}_2}{\bar{c}_1} \, \frac{1-\bar{\theta}}{\bar{\theta}} \, |\ln (1-\bar{\theta})|\Big) \, ,
\end{align*}
We can choose $\bar{\theta}= \bar{\theta}(\bar{c}_1,\bar{c}_2)$ near to one, and we get
\begin{align*}
\bar{f}^m(\bar{\rho}) - \bar{k}(\bar{\rho}) \geq \frac{1}{2} \,  | \hat{\pi}^m(\bar{\rho})| \quad \text{ whenever } \quad - m < \hat{\pi}^m(\bar{\rho}) \leq - m \, \bar{\theta} \, .
\end{align*}
Note that, if $\bar{\rho} \rightarrow 0$ and $\hat{\pi}^m(\bar{\rho}) \rightarrow - m$, and $\bar{f}^m(\bar{\rho}) \rightarrow m$. Hence the latter inequality extends to $\bar{\rho} = 0$. Using these and similar arguments, we can prove the following statement.
\begin{lemma}\label{UNIFF2}
Assume that $\bar{g}$ is subject to the assumptions $\rm (A1)-(A6)$. Then, $\bar{f}^m$ belongs to $C^2(\mathbb{R}^N_+) \cap C(\overline{\mathbb{R}^N_+})$ and is a co-finite function of Legendre--type on $\mathbb{R}^N_+$ such that $\bar{f}^m(0) = m$. There are constant $\bar{c}_0, R_1 > 0$ and $\lambda_1 > 0$ such that, for all $m\in\mathbb{N}$,
\begin{align*}
 \bar{f}^m(\bar{\rho}) \geq \bar{c}_0 \, |\rho|^{\gamma} \text{ for all } |\bar{\rho}| \geq R_1 \, ,
\qquad \lambda_{\min}(D^2_{\bar{\rho},\bar{\rho}}\bar{f}^m(\bar{\rho})) \geq \frac{\lambda_1}{\bar{\varrho} \, (1+\bar{\varrho})^{\theta_0}} \, \text{ for all } \bar{\rho} \in \mathbb{R}^N_+ \, ,
 \end{align*}
with $\gamma = \max \alpha/\max \alpha-1$ and $\theta_0 = 2\beta \, (1/\beta-1/\max\alpha)/(\beta-1)$ ($\beta,\alpha_1,\ldots,\alpha_N$ from $\rm (A5)$). There is $ 0 < \bar{\theta} < 1$ independent on $m$ such that
\begin{align*}
\bar{f}^m(\bar{\rho}) \geq \frac{1}{2} \,  | \hat{\pi}^m(\bar{\rho})| \quad \text{ whenever } \quad \hat{\pi}^m(\bar{\rho}) \leq - m \, \bar{\theta} \, .
\end{align*}
For all $r \in \mathbb{R}^N_+$ subject to $  \sum_{i=1}^N r_i \, \bar{\calv}_i = 1$ with $\bar{\calv} = \bar{g}^{\prime}(1)$, we can bound
\begin{align*}
|\bar{f}^m(r)| \leq &\frac{1}{ \min \bar{\calv}} \, \Big(\frac{N}{e} \, \frac{\max M}{\min M}\Big)\, , \quad
|D \bar{f}^m(r)| \leq  - \frac{1}{\min M} \, \ln \Big(\frac{\min M \, \min \bar{\calv}}{\max M} \, \min r\Big) \, .
\end{align*}
\end{lemma}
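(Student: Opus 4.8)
The plan is to obtain every claim from Lemma~\ref{UNIFF1} applied to the \emph{fixed} function $\bar{f}$ built from $\bar{g}$, combined with the two structural relations recorded just before the statement,
\[
\bar{f}^m(\bar{\rho}) = m\,\bigl(\bar{f}_1(\bar{\rho}) - \bar{g}(1)\cdot\bar{\rho} + 1\bigr) + \bar{k}(\bar{\rho}), \qquad D^2_{\bar{\rho},\bar{\rho}}\bar{f}^m(\bar{\rho}) = D^2_{\bar{\rho},\bar{\rho}}\bar{f}(\bar{\rho}) + (m-1)\,D^2_{\bar{\rho},\bar{\rho}}\bar{f}_1(\bar{\rho}),
\]
where $\bar{f}_1(\bar{\rho}) = \bar{g}(\widehat{p}(\bar{\rho}))\cdot\bar{\rho} - \widehat{p}(\bar{\rho})$ is convex (its Hessian is the rank-one positive semidefinite matrix $K^m\,\upsilon^m\otimes\upsilon^m$, exactly as in the proof of Lemma~\ref{UNIFF1}), and with the gradient identity $\partial_{\bar{\rho}_i}\bar{f}^m(\bar{\rho}) = \bar{g}_i^m(\hat{\pi}^m(\bar{\rho})) + \frac{1}{\bar{M}_i}\ln\hat{x}_i(\bar{\rho})$ recorded after \eqref{FEm}. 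Since $\bar{g}$ satisfies $\rm (A1)$--$\rm (A6)$ with constants trivially independent of $m$, Lemma~\ref{UNIFF1} already gives that $\bar{f}\in C^2(\mathbb{R}^N_+)\cap C(\overline{\mathbb{R}^N_+})$ is a co-finite function of Legendre type, that $\bar{f}(\bar{\rho})\ge c_0|\bar{\rho}|^\gamma - c_1$, and that $\lambda_{\min}(D^2\bar{f}(\bar{\rho}))\ge \lambda_1/(\bar{\varrho}\,(1+\bar{\varrho})^{\theta_0})$ with $\gamma,\theta_0$ as stated. Because $m\ge1$ and $D^2\bar{f}_1\succeq0$, the second relation transfers the eigenvalue bound and the strict convexity of $\bar{f}^m$ at once. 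Essential smoothness is read off from the gradient identity as in Lemma~\ref{UNIFF1}: when a component of $\bar{\rho}$ tends to zero with $|\bar{\rho}|>0$ the logarithmic term blows up while $\bar{g}_i^m(\hat{\pi}^m(\bar{\rho}))$ stays bounded; and when $\bar{\rho}\to0$ then $\widehat{p}(\bar{\rho})\to0$ (by $\rm (A4)$, which yields $\bar{c}_1\bar{\varrho}\le\widehat{p}(\bar{\rho})\le\bar{c}_2\bar{\varrho}$ for $\widehat{p}$ small), so $\hat{\pi}^m(\bar{\rho})\to-m$ and $\bar{g}_i^m(\hat{\pi}^m(\bar{\rho}))\to-\infty$. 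Co-finiteness follows exactly as in Lemma~\ref{UNIFF1}, the scalar equation $\sum_{i=1}^N\exp\bigl(\bar{M}_i(\mu_i-\bar{g}_i^m(s))\bigr)=1$ having a unique root $s\in\,]-m,+\infty[$ since $\bar{g}_i^m(s)\to-\infty$ as $s\to-m$ (by $\rm (A4)$) and $\bar{g}_i^m(s)\to+\infty$ as $s\to+\infty$ (by $\rm (A5)$). The value $\bar{f}^m(0)=m$ is then the limit $\bar{\rho}\to0$ in the first relation: $\widehat{p}(\bar{\rho})\to0$ forces $\bar{f}_1(\bar{\rho})\to0$, $\bar{g}(1)\cdot\bar{\rho}\to0$ and $\bar{k}(\bar{\rho})\to0$, so $\bar{f}^m(\bar{\rho})\to m$.

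For the global growth bound I would reproduce the estimate carried out just before the statement: $\bar{f}_1(\bar{\rho})\ge c_0|\bar{\rho}|^\gamma - c_1$ with $m$-independent $c_0,c_1$, while the affine term $-\bar{g}(1)\cdot\bar{\rho}$ and the linearly bounded $\bar{k}(\bar{\rho})\ge -c|\bar{\rho}|$ are absorbed into $\tfrac{c_0}{2}|\bar{\rho}|^\gamma$ as soon as $|\bar{\rho}|\ge R_1$ with $R_1$ depending only on $c_0,c_1,|\bar{g}(1)|,\bar{M}$; hence $\bar{f}^m(\bar{\rho})\ge m\tfrac{c_0}{2}|\bar{\rho}|^\gamma\ge\bar{c}_0|\bar{\rho}|^\gamma$ for such $\bar{\rho}$, with $\bar{c}_0=c_0/2$ independent of $m$.

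For the lower bound on the region $\hat{\pi}^m(\bar{\rho})\le-m\bar{\theta}$ I would start from the inequality already established above, $\bar{f}^m(\bar{\rho})-\bar{k}(\bar{\rho})\ge|\hat{\pi}^m(\bar{\rho})|\bigl(1-\tfrac{\bar{c}_2}{\bar{c}_1}\tfrac{1-\bar{\theta}}{\bar{\theta}}\,|\ln(1-\bar{\theta})|\bigr)$, valid for $-m<\hat{\pi}^m(\bar{\rho})\le-m\bar{\theta}$, where moreover $\rm (A4)$ forces $\bar{\varrho}\le\bar{c}_1^{-1}(1-\bar{\theta})$ and hence $|\bar{k}(\bar{\rho})|\le c|\bar{\rho}|\le c'(1-\bar{\theta})$. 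Using $|\hat{\pi}^m(\bar{\rho})|\ge m\bar{\theta}\ge\bar{\theta}$, I would then fix $\bar{\theta}=\bar{\theta}(\bar{c}_1,\bar{c}_2,N)<1$ close enough to $1$ that simultaneously $1-\tfrac{\bar{c}_2}{\bar{c}_1}\tfrac{1-\bar{\theta}}{\bar{\theta}}|\ln(1-\bar{\theta})|\ge\tfrac34$ and $c'(1-\bar{\theta})\le\tfrac14\bar{\theta}\le\tfrac14|\hat{\pi}^m(\bar{\rho})|$, which gives $\bar{f}^m(\bar{\rho})\ge\tfrac12|\hat{\pi}^m(\bar{\rho})|$; the case $\bar{\rho}=0$ (where $\hat{\pi}^m(0)=-m$ and $\bar{f}^m(0)=m$) is included. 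This uniform-in-$m$ calibration of $\bar{\theta}$ is the one point that needs a little care, since the crude bound $\bar{f}^m-\bar{k}\ge\tfrac12|\hat{\pi}^m|$ of the preliminary computation must be upgraded to a bound on $\bar{f}^m$ itself without letting the threshold depend on $m$.

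Finally, for $r\in\mathbb{R}^N_+$ with $\bar{\calv}\cdot r=1$: since $\bar{\calv}=\bar{g}'(1)$, uniqueness of the root of $\bar{g}'(\widehat{p}(r))\cdot r=1$ gives $\widehat{p}(r)=1$, hence $\hat{\pi}^m(r)=m(\widehat{p}(r)-1)=0$ and $\bar{g}_i^m(\hat{\pi}^m(r))=\bar{g}_i^m(0)=0$, so that $\bar{f}^m(r)=\bar{k}(r)$ and $D\bar{f}^m(r)=D\bar{k}(r)=(1/\bar{M})\ln\hat{x}(r)$, with no $m$-dependence left. From $(\min\bar{\calv})\,\varrho\le\bar{\calv}\cdot r=1$ one gets $\varrho=\sum_i r_i\le1/\min\bar{\calv}$, and from the definition of $\hat{x}_i$ one has $\tfrac{\min\bar{M}}{\max\bar{M}}\tfrac{r_i}{\varrho}\le\hat{x}_i(r)\le1$. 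Plugging these into $\bar{k}(r)=\sum_i\tfrac{r_i}{\bar{M}_i}\ln\hat{x}_i(r)$ and $|D\bar{k}(r)|_\infty=\max_i\tfrac{1}{\bar{M}_i}|\ln\hat{x}_i(r)|$, using the elementary bound $t\ln(1/t)\le1/e$ on $(0,1]$ for the first and $r_i\ge\min r$ in the logarithm for the second, yields the two asserted estimates. The only genuine work throughout is the bookkeeping ensuring that $\bar{c}_0$, $R_1$, $\lambda_1$ and $\bar{\theta}$ depend only on the data in the assumptions on $\bar{g}$ and on the masses, hence not on $m$; everything else is either an invocation of Lemma~\ref{UNIFF1} or a rearrangement of the computations preceding the statement.
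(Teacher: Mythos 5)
Your proposal is correct and follows essentially the same route as the paper, whose proof of Lemma \ref{UNIFF2} consists precisely of the computations preceding the statement (the representation $\bar{f}^m = m(\bar{f}_1-\bar{g}(1)\cdot\bar{\rho}+1)+\bar{k}$, the uniform $\gamma$-growth of $\bar{f}_1$, the Hessian comparison $D^2\bar{f}^m\geq D^2\bar{f}$ via Lemma \ref{UNIFF1}, and the $\bar{\theta}$-estimate) together with ``similar arguments'' for the remaining claims. Your calibration of $\bar{\theta}$ so that $|\bar{k}|\leq c'(1-\bar{\theta})\leq\frac{1}{4}|\hat{\pi}^m|$ correctly supplies the one step the paper glosses over when passing from $\bar{f}^m-\bar{k}\geq\frac{1}{2}|\hat{\pi}^m|$ to the stated bound on $\bar{f}^m$ itself, and the evaluation $\widehat{p}(r)=1$, hence $\bar{f}^m(r)=\bar{k}(r)$ and $D\bar{f}^m(r)=D\bar{k}(r)$, on the incompressible manifold is exactly the intended argument for the last two estimates (your elementary bounds produce constants of the stated form, up to harmless bookkeeping between $\bar{M}$ and $M$).
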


\section{Survey of existence results for multicomponent fluids}\label{ShortSurvey}

Both for the compressible problem (IBVP$^m$) (or its rescaled variant ($\overline{\text{ IBVP}}^m$) and for the incompressible model (IBVP$^{\infty}$), the local-in-time existence of strong solutions starting from sufficiently regular initial data is known from the papers \cite{bothedruet}, \cite{bothedruetincompress}. About well-posedness results for the compressible case, let us also mention \cite{giovan}, Ch.\ 9 and the papers \cite{giovanmatuII}, \cite{piashiba19}, \cite{piashiba19pr}, \cite{drumaxmix}. 
Moreover the existence of certain global-in-time weak solutions has been proved in \cite{dredrugagu20} and \cite{druetmaxstef}. 

In the most references, the existence results for compressible models (this means (IBVP$^m$) with $m < + \infty$) rely on a change of variables allowing to exhibit the parabolic-hyperbolic normal form of the PDE system \eqref{mass}, \eqref{momentum}. Indeed, the $N$ partial mass densities $\rho_1, \ldots,\rho_N$, cannot enjoy full parabolic regularity, because, summing up over $i = 1,\ldots,N$ in \eqref{mass} and using the property \eqref{MCONSTRAINT} of $\{M_{ij}\}$, the total mass density $\varrho$ is seen to be subject to the continuity equation $\partial_t\varrho+\divv(\varrho\,v) = 0$.
For this reason, beside the 'hyperbolic variable' $\varrho$, further $N-1$ 'diffusive variables' are needed to allow exhibiting the full parabolic regularity inherent to diffusion.

Canonically, the differences $\mu_1 - \mu_N, \ldots, \mu_{N-1} - \mu_N$ occurring in \eqref{DIFFUSPRIME} are considered. However, in order to study the incompressible limit, we shall introduce a slightly more general way which, still, exploits linear combinations of the chemical potentials. 
Beside this possibility, another more complicated approach of choosing diffusive variables was introduced in \cite{druetmaxstef} in order to treat the weak solution theory of multicomponent systems with Maxwell--Stefan diffusion (cf.\ \eqref{maxstefbase}, \eqref{maxstefreg}). In this case, the linear proportionality between the mobility coefficients and the densities allows for not more than weighted for the gradients of $\mu_i-\mu_N$, and the control degenerates for vanishing densities.


\subsection{Strong solutions}

The change of variables \eqref{MV1} of the section \ref{ENTROPICVAR} allows to introduce strong and weak solutions with mixed parabolic-hyperbolic regularity. For an overview of the original ideas, we refer to the Chapter 8 of \cite{giovan} for an overview.
We let $\xi^{1}, \ldots, \xi^{N}$ with $\xi^N = 1^N$ be the basis used to perform the transformation, and $\eta^{1}, \ldots, \eta^N$ be the dual basis.
Next, we can multiply the mass continuity equations \eqref{mass} with the axes $\xi^1, \ldots,\xi^N$. With $R_k(\varrho, \, q) := \xi^k \cdot \mathscr{R}(\varrho,q)$ for $k = 1,\ldots,N-1$, where $\mathscr{R}$ is the map of \eqref{newpartdens}, we obtain the equivalent system of equations
\begin{align}\begin{split}\label{massnew}
\partial_t R_k(\varrho, \, q) +\divv\Big(R_{k}(\varrho,q) \, v- \sum_{\ell = 1}^{N-1} \widetilde{M}_{k\ell}(\varrho,q)\, \nabla q_{\ell}\Big) =&  0 \quad \text{ for } k = 1,\ldots,N-1 \, ,\\
\partial_t \varrho + \divv (\varrho \, v) =&  0 \, .
\end{split}
\end{align}
Here we have introduced the $N-1 \times N-1$ Onsager matrix
\begin{align*}\begin{split}
& \widetilde{M}_{k\ell}(\varrho, \, q) := \xi^k \, M(\mathscr{R}(\varrho, \, q)) \, \xi^{\ell} \quad \text{ for } \quad k,\ell = 1,\ldots,N-1 \, ,
\end{split}
\end{align*}
or, equivalently $\widetilde{M}(\varrho, \, q) = \Pi^{\sf T} \, M(\mathscr{R}(\varrho, \, q)) \, \Pi$, where $\Pi$ is from \eqref{Pi}.
Under the condition (B3), the matrix $\widetilde{M}(\cdot)$ can be shown to be symmetric and strictly positive definite on regular states, because we factorised the kernel ${\rm span}(\xi^N)$ of $M(\rho)$ by the projection (cf.\ \eqref{tildempos}).

For the new variables $(\varrho, \, q_1,\ldots,q_{N-1})$ and $v$, the PDE system \eqref{momentum} reads
\begin{align}\label{momentumnew}
 \partial_t(\varrho \, v) + \divv(\varrho \, v\otimes v -\mathbb{S}(\nabla v)) + \nabla P(\varrho, q) = \varrho \, b \, .
\end{align}
The system \eqref{massnew}, \eqref{momentumnew} is a parabolic-hyperbolic system in normal form. Another viewpoint is to see it as a compressible Navier--Stokes system coupled, via the pressure, to a parabolic system of the general form for the variables $q_1,\ldots,q_{N-1}$. In \cite{bothedruet}, adopting this second viewpoint, we performed some well-posedness analysis for this PDE system in the strong solution class
\begin{align*}
\varrho \in W^{1,1}_{\calp,\infty}(Q_{t^*}), \quad  q \in W^{2,1}_\calp(Q_{t^*}; \, \mathbb{R}^{N-1}), \quad v \in W^{2,1}_\calp(Q_{t^*}; \, \mathbb{R}^3) \, ,
\end{align*}
with an index $\calp > 3$ and where $t^*$ is some (small) constant determined by the data. Strictly positive mass densities of class $W^1_\calp(Q_{t^*}; \, \mathbb{R}^N)$ are recovered with the help of the map $\mathscr{R}$.
For the interested readers, the full assumptions necessary for the short-time existence result can be read in \cite{bothedruet}: They amount to requiring (A1)-(A3) for $g_1, \ldots,g_N$ and (B1)-(B3) for $\{M_{ij}\}$, where the smoothness in (A1) has to be increased to $C^3$ and the one in (B1) to $C^2$. The domain $\Omega$ should be assumed of class $\mathscr{C}^2$.

\subsubsection{Incompressible model}

For the incompressible case (IBVP$^{\infty}$), the mathematical structure of the PDE system is exhibited with a similar change of variables. However, we are faced with a singular potential $f^{\infty}$ (cf.\ \eqref{limitFE}) where  $f^{\infty}(\rho) = + \infty$ everywhere outside of the surface $S^0$ of states satisfying \eqref{CONSTRAINT}:
$ S^0 := \{ \rho \in \mathbb{R}^N_+ \, : \, \sum_{i=1}^N \calv_i \, \rho_i = 1\}$, with the vector $\calv$ of \eqref{defbarupsiloni}. Interestingly, the convex conjugate 
$$ (f^{\infty})^*(\mu) := \sup_{\rho \in S^0} \{\mu \cdot \rho - f^{\infty}(\rho)\} \quad \text{ for }\quad \mu \in \mathbb{R}^N \, ,$$
is a smooth convex function, though it is not strictly convex. In fact,  $(f^{\infty})^*$ is affine in the direction of $\calv$ and we have
\begin{align}\label{affine}(f^{\infty})^* (w + s \, \calv) = (f^{\infty})^*(w) + s \quad \text{ for all } w \in \mathbb{R}^N, \, s \in \mathbb{R} \, .
 \end{align}
This property can be exploited by further specialising the choice of the basis $\xi^1, \ldots,\xi^N$ used for the projection. In addition to $\xi^N = 1^N$, we choose $\xi^{N-1} := \calv$. This is possible if, among the species, at least two have a different specific volume. In this way, the vectors $1^N$ and $\calv$ are not parallel.

We again define $q_i = \eta^i \cdot \mu$ for $i = 1,\ldots,N-2$ and, since the combination $\zeta := \mu \cdot \eta^{N-1}$ plays a very special role, we single it out with an independent name. Next we can, as in the compressible case, eliminate $\mu \cdot \eta^N = \mathscr{M}(\varrho, \, q_1,\ldots,q_{N-2})$ using the equation
\begin{align*}
 \varrho = 1^N \cdot \nabla (f^{\infty})^*\Big(\Pi \, q + \mathscr{M} \, 1^N\Big) \, ,
\end{align*}
in which the component $\zeta$ does not occur owing to the property \eqref{affine}.

As shown in \cite{bothedruetincompress}, the change of variables $(\rho_1, \ldots,\rho_N) \leftrightarrow (\varrho, \, q_1, \ldots,q_{N-2}, \, \zeta)$ yields for the pressure and the densities the following expressions
\begin{align}\begin{split}\label{newpartdensincomp}
 p = (f^{\infty})^*\Big(\sum_{k=1}^{N-2} q_{k} \, \xi^k + \mathscr{M}(\varrho,q) \, 1^N\Big) + \zeta =: P^{\infty}(\varrho, \, q) + \zeta \quad \text{ pressure,}\\
 \rho = \nabla_{\mu} (f^{\infty})^*\Big(\sum_{k=1}^{N-2} q_{k} \, \xi^k + \mathscr{M}(\varrho,q) \, 1^N\Big) =: \mathscr{R}^{\infty}(\varrho, \, q) \quad \text{ densities} \, .
 \end{split}
\end{align}
The equivalent expression of the PDEs \eqref{mass} for the new variables $\varrho$, $q_1, \ldots,q_{N-2}$ and $\zeta$ is an elliptic--parabolic--hyperbolic system, which reads
\begin{align}\begin{split}\label{massnewincomp}
 & \partial_t R^{\infty}_k(\varrho, q) +\divv\Big( R^{\infty}_k(\varrho, q) \, v - \sum_{\ell = 1}^{N-2}\widetilde{M}_{k\ell}(\varrho,q,\zeta) \, \nabla q_{\ell} - A_k(\varrho,q,\zeta) \,\nabla \zeta\Big) = 0 \\
 & \qquad \text{ for } k = 1,\ldots,N-2 \, , \\
& - \divv\Big(d(\varrho,q,\zeta) \, \nabla \zeta + \sum_{\ell = 1}^{N-2} A_{\ell}(\varrho,q,\zeta) \, \nabla q_{\ell}\Big) = 0 \,, \\
& \partial_t \varrho + \divv (\varrho \, v) = 0 \, .
\end{split}
\end{align}
Here $R_k^{\infty}(\varrho,q) = \mathscr{R}^{\infty}(\varrho,q) \cdot \xi^k$ for $k = 1,\ldots,N-2$, while the diffusion coefficients are given by
\begin{align*}
\widetilde{M}_{k\ell}(\varrho,q,\zeta) := & \xi^k \, M(P^{\infty}(\varrho,q)+\zeta, \, \mathscr{R}(\varrho,q)) \, \xi^{\ell} \, , \quad \text{ for } k,\ell = 1,\ldots,N-2 \, ,\\
A_{\ell}(\varrho,q,\zeta) := & \xi^{\ell} \, M(P^{\infty}(\varrho,q)+\zeta, \, \mathscr{R}(\varrho,q)) \, \xi^{N-1}\, , \quad \text{ for } \quad \ell = 1,\ldots,N-2 \, ,\\
d(\varrho,q,\zeta) := & \xi^{N-1} \, M(P^{\infty}(\varrho,q)+\zeta, \, \mathscr{R}(\varrho,q)) \, \xi^{N-1} \, .
\end{align*}
In the new variables, the equation \eqref{momentum} reads 
\begin{align}\label{momentumnewicomp}
 \partial_t (\varrho v) + \divv(\varrho \, v \otimes v -\mathbb{S}(\nabla v)) + \nabla P^{\infty}(\varrho,q) + \nabla \zeta = \varrho \, b \, .
\end{align}
The system \eqref{massnewincomp}, \eqref{momentumnewicomp} has been analysed in the paper \cite{bothedruetincompress} for the case that the thermodynamic diffusivities $M(p,\rho) = M(\rho)$ satisfy (B1)--(B3) and do not depend on pressure. Then we find a unique strong solution in the class
\begin{align}\label{PHE}
\varrho \in W^{1,1}_{\calp,\infty}(Q_{t^*}), \quad  q \in W^{2,1}_\calp(Q_{t^*}; \, \mathbb{R}^{N-2}), \quad \zeta \in W^{2,0}_\calp(Q_{t^*})\, , \quad v \in W^{2,1}_\calp(Q_{t^*}; \, \mathbb{R}^3) \, ,
\end{align}
where $\calp>3$ and $t^*$ is some (small) constant determined by the data.

We recover strictly positive partial mass densities subject to \eqref{CONSTRAINT} by defining $\rho:= \mathscr{R}^{\infty}(\varrho, \, q)$ with $\mathscr{R}$ from \eqref{newpartdensincomp}. This implies that $\rho \in W^1_\calp(Q_{t^*}; \, \mathbb{R}^N)$.

We note that the regularity required in the assumption (C) for (IBVP$^{\infty}$) is stronger than \eqref{PHE}. Next we formulate an existence theorem for the incompressible model, where the regularity is improved from 'strong' to 'classical'. This shall be useful to prove that (C) is valid and the weak solutions to (IBVP$^m$) converge to the unique regular solution to (IBVP$^{\infty}$) at least for a small time-interval.
\begin{theo}\label{EXISSTRONGINFTY}
Let $\Omega \subset \mathbb{R}^3$ be a domain of class $\mathscr{C}^{2,\alpha}$ with a $0 < \alpha \leq 1$.
 We assume that $g_1,\ldots,g_N$ belong to $C^3(]0,+\infty[)$ and are strictly concave functions satisfying $\rm (A2), (A3)$, and that the entries of $\{M_{ij}\}$ are functions of class $C^{2}(\mathbb{R}^N_+)$ satisfying $\rm (B2)$ and $\rm (B3)$. If the initial data satisfy $v^0 \in C^{2,\alpha}(\overline{\Omega}; \, \mathbb{R}^3)$, $v^0 = 0$ on $\partial \Omega$ and $\rho^0 \in C^{2,\alpha}(\overline{\Omega}; \, \mathbb{R}^N)$, $\inf_{x \in \Omega} \min_{i} \rho^0_i(x) \geq s_0 > 0$, and $\rho^0(x) \cdot \calv \equiv 1$, then the boundary-value-problem \emph{(IBVP$^{\infty}$)} possesses a local solution $(\rho^{\infty}, \, p^{\infty}, \, v^{\infty})$ such that $\rho^{\infty}_1, \ldots,\rho^{\infty}_N \in W^1_{\infty}(Q_{\bar{\tau}})$ and $v^{\infty} \in C^{2+\alpha,1+\alpha/2}(Q_{\bar{\tau}}; \, \mathbb{R}^3)$.
\end{theo}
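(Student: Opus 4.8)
\textbf{Plan of proof for Theorem \ref{EXISSTRONGINFTY}.}
The plan is to obtain the classical solution by a bootstrap argument starting from the strong solution already constructed in \cite{bothedruetincompress} in the parabolic--hyperbolic--elliptic class \eqref{PHE}. First I would invoke that existence result: under the stated hypotheses (which are at least as strong as those required in \cite{bothedruetincompress}, since $g_i \in C^3$, $M_{ij} \in C^2$, $\Omega$ of class $\mathscr{C}^2$, and the initial data lie in the $W^{2-2/\calp}_\calp$-trace space with $\rho^0$ on the constraint surface $S^0$), there is a $t^* > 0$ and a unique solution $(\varrho, q_1,\ldots,q_{N-2},\zeta,v)$ to \eqref{massnewincomp}, \eqref{momentumnewicomp} with $\varrho \in W^{1,1}_{\calp,\infty}$, $q \in W^{2,1}_\calp$, $\zeta \in W^{2,0}_\calp$, $v \in W^{2,1}_\calp$ for any $\calp > 3$; in particular, by Sobolev embedding, $\varrho$, $q$, $\zeta$, $v$ and $\nabla v$ are already Hölder continuous in space-time, and $\rho = \mathscr{R}^\infty(\varrho,q) \in W^1_\calp(Q_{t^*};\mathbb{R}^N)$ is uniformly positive on the constraint surface. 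This gives a fixed reference solution on which to improve regularity; note $p^{\infty} = P^{\infty}(\varrho,q) + \zeta$ and I can adjust the arbitrary additive function of time so that \eqref{CONVEN} holds.

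The core of the argument is then a three-tiered Schauder bootstrap on the coupled system, treating the hyperbolic transport for $\varrho$, the parabolic system for $(q,v)$, and the elliptic equation for $\zeta$ separately but iteratively. I would start from the transport equation $\partial_t\varrho + v\cdot\nabla\varrho = -\varrho\,\divv v$: with $v$ known to be Lipschitz in $x$ and Hölder in $t$ from the $W^{2,1}_\calp$-regularity, the method of characteristics (or a standard $L^\infty$ a priori estimate for the linear transport equation) together with the $C^{2,\alpha}$-regularity of $\varrho^0$ shows that $\varrho$ and $\nabla\varrho$ inherit spatial Hölder continuity, i.e.\ $\varrho \in C^{\alpha',\alpha'/2}$ with $\nabla\varrho$ Hölder, on the reference interval. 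Feeding this back, the coefficients $\widetilde M_{k\ell}(\varrho,q,\zeta)$, $A_\ell(\varrho,q,\zeta)$, $d(\varrho,q,\zeta)$, together with $P^{\infty}(\varrho,q)$ and $\mathscr{R}^\infty(\varrho,q)$, become Hölder-continuous in $Q_{t^*}$ (here the $C^2$-regularity of $M_{ij}$ and the $C^3$-regularity of $g_i$, hence the $C^2$-smoothness of $(f^\infty)^*$ away from the kernel direction, are used). I would then apply linear Schauder theory: elliptic $C^{2+\alpha}$-estimates in space for $\zeta$ (with its right-hand side controlled by $\nabla q$), and the parabolic Schauder estimates of Solonnikov/Ladyzhenskaya type for the Navier--Stokes-like system \eqref{momentumnewicomp} for $v$ and for the parabolic subsystem for $q$, yielding $v \in C^{2+\alpha,1+\alpha/2}$ and $q \in C^{2+\alpha,1+\alpha/2}$. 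One more pass through the transport equation with this improved $v$ then gives $\varrho \in W^1_\infty(Q_{\bar\tau})$ and, via $\rho = \mathscr{R}^\infty(\varrho,q)$, the claimed $\rho^{\infty}_i \in W^1_\infty(Q_{\bar\tau})$; the pressure $p^{\infty} = P^\infty(\varrho,q)+\zeta$ inherits $W^1_\infty$-regularity as well.

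The main obstacle will be the simultaneous coupling: the parabolic system for $(q,v)$, the elliptic equation for $\zeta$, and the hyperbolic equation for $\varrho$ all feed into each other's coefficients and right-hand sides, so the bootstrap must be organized as a contraction/fixed-point on a short time interval rather than a single linear estimate. Concretely, I would freeze $(\varrho,q,\zeta)$ in the coefficients, solve the decoupled linear problems in Hölder spaces, and show the resulting solution map is a contraction on a small ball in $C^{2+\alpha,1+\alpha/2}\times C^{2+\alpha,1+\alpha/2}\times C^{2+\alpha}$ (spatial) around the known strong solution --- the short time $t^*$ from \cite{bothedruetincompress} and the fact that the strong solution is already continuous make the smallness available. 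A secondary technical point is the compatibility condition at $t=0$ for the parabolic Schauder estimate of $v$: one needs the initial data $v^0 = 0$ on $\partial\Omega$ together with the constraint $\rho^0\cdot\calv \equiv 1$ to be consistent with the divergence structure of the limiting system at the boundary; this is exactly why the hypotheses $v^0 \in C^{2,\alpha}(\overline\Omega)$ with $v^0 = 0$ on $\partial\Omega$ and $\rho^0(x)\cdot\calv \equiv 1$ are imposed, and verifying the zeroth-order (and, if $\alpha$ is taken large enough, first-order) compatibility is the one place where a genuine computation is unavoidable. Once the contraction is established, uniqueness and the asserted regularity follow, and since the solution thus produced has the regularity demanded in (C), the remark following (C) --- that (C) holds locally in time for smooth data --- is justified.
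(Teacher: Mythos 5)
The paper does not actually prove Theorem \ref{EXISSTRONGINFTY}: it states the result and defers the full proof to the forthcoming survey \cite{drusurv}, so there is no internal argument to compare your proposal against line by line. That said, your route --- take the strong solution of the parabolic--elliptic--hyperbolic system \eqref{massnewincomp}, \eqref{momentumnewicomp} from \cite{bothedruetincompress} in the class \eqref{PHE}, and upgrade it to classical regularity by a Schauder bootstrap organised as a fixed point, with the transport equation for $\varrho$ propagating spatial H\"older regularity of $\nabla\varrho$ into the coefficients --- is exactly the natural strategy consistent with the framework the paper sets up in the appendix, and nothing in it is structurally wrong.

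Two technical points in your sketch deserve correction or more care. First, $W^{2,1}_{\calp}(Q)$ in three space dimensions embeds into a space with H\"older-continuous spatial gradient only for $\calp>5$ (this is why assumption (C) insists on $\calp\geq 6$); your phrase ``for any $\calp>3$ \dots $\nabla v$ [is] already H\"older continuous'' is false as stated, although harmless since \cite{bothedruetincompress} allows you to take $\calp$ as large as you wish. Second, the compatibility issue is not a matter of ``$\alpha$ taken large enough'': to obtain $v^{\infty}\in C^{2+\alpha,1+\alpha/2}$ up to $t=0$ with homogeneous Dirichlet data, the first-order compatibility condition (the momentum equation evaluated at $t=0$ must produce $\partial_t v(\cdot,0)=0$ on $\partial\Omega$) is required for every $\alpha>0$; since the theorem's hypotheses do not impose it, you must either verify it from the structure of the incompressible system (using $\rho^0\cdot\calv\equiv 1$ and the elliptic equation for $\zeta$ at $t=0$) or interpret the H\"older regularity away from the initial time --- this is a genuine step, not a footnote. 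Finally, if you also want $p^{\infty}\in W^1_{\infty}(Q_{\bar\tau})$ as demanded by (C), the elliptic equation for $\zeta$ only gives spatial regularity, and bounding $\partial_t\zeta$ requires differentiating that equation in time and controlling $\partial_t$ of its coefficients and of $\nabla q$, which your plan asserts but does not argue.
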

For a full proof we refer to the upcoming survey \cite{drusurv}.

\subsection{Weak solutions}

While for the theory of strong solutions, the initial data provide, at least for a short time, the strict positivity of the densities, the existence theory for global weak solutions has to face the additional difficulty how to make sense of the model for vanishing species. 

The main mathematical problem concerns the way how to appropriately express diffusion. Indeed, recall that the chemical potentials are defined via $\mu_i = \partial_{\rho_i}f(\rho)$. If $f$ is a function of Legendre type, then it is by definition \emph{essentially smooth}, and this implies the blow-up of the gradient on the boundary of $\mathbb{R}^N_+$:
$$ |\mu| = |\nabla_{\rho}f(\rho)| \rightarrow + \infty \quad \text{ for } \min_{i=1,\ldots,N} \rho_i \rightarrow 0+ \, .$$ 
Hence, in the case that $\rho_i = 0$ on some set of positive measure, several expressions in the PDEs that involve the chemical potentials must be transformed in order to remain meaningful under even extreme behaviour of the system.
In this point, the appropriate weak solution concept is affected by the properties of the matrix $M(\rho)$. In particular, it depends on the behaviour of the positive eigenvalues of $M(\rho)$ for $\min_i \rho_i \rightarrow 0+$. Recall that the crucial property for the definition of weak solutions is the inequality \eqref{DISSIP}. Now, let us represent $\mu = \Pi q + \mathscr{M} \, 1^N$. 
Using \eqref{DIFFUSFLUX} and the constraint \eqref{MCONSTRAINT}, we obtain that
\begin{align}\label{dissnew}
 - \int_{Q_t} J \, : \, \nabla \mu \, dxd\tau = & \sum_{i,j=1}^N \int_{Q_t} M_{ij}(\rho) \, \nabla \mu_i \cdot \nabla \mu_j = \sum_{k,\ell = 1}^{N-1} \int_{Q_t} \xi^k \,  M(\rho) \, \xi^{\ell} \, \nabla q_{k} \cdot \nabla q_{\ell} \, dxd\tau \nonumber\\
 = & \int_{Q_t} \widetilde{M}(\varrho, \, q) \, \nabla q \cdot \nabla q \, dxd\tau\, .
\end{align}
The smallest eigenvalue of $M(\rho)$ is always zero due to (B2). If we assume that the second smallest eigenvalue is uniformly positive, that is, (B3$^{\prime}$) is valid, then the Onsager operator $\widetilde{M}(\varrho, \, q) = \Pi^{\sf T}\, M(\rho) \Pi$ generates a uniformly positive quadratic form, and, with $\lambda_0$ as in (B3$^{\prime}$) and $c_{\eta}$ from \eqref{tildempos}, we shall have
$ - \int_{Q_t} J \, : \, \nabla \mu \, dxd\tau \geq (\lambda_0/c_{\eta}) \, \int_{Q_t} |\nabla q|^2 \, dxd\tau$. Thus it is possible and natural to perform the weak existence theory using the variables $q_1,\ldots,q_{N-1}$, and we obtain $q \in L^2(0,\bar{\tau}; \, W^{1,2}(\Omega; \, \mathbb{R}^{N-1}))$ as in the paper \cite{dredrugagu20}.
The second case occurs if the positive eigenvalues of $M(\rho)$ degenerate for $\min_i \rho_i \rightarrow 0+$. This occurs for instance if the matrix $M(\cdot)$ is constructed from inversion of the Maxwell--Stefan equations (cf.\ \eqref{maxstefbase}, \eqref{maxstefreg}). In this case we cannot expect to control the gradients of $q$ if the densities do not remain strictly positive. In the papers \cite{druetmaxstef}, we tried to find a suitable substitute for the diffusive variables. Here too, reinforcing of the condition (B3) is needed, but it is more complex than (B3$^\prime$).

Next we sketch in more details these two possible scenarios.

\subsubsection{Compressible model: Weak solutions of type-I}

In the paper \cite{dredrugagu20} devoted to electrolytes, we imposed the conditions (B1), (B3$^\prime$). Then there are two constants $0< \lambda_0 \leq \bar{\lambda} < + \infty$ such that
\begin{align}\label{ONSA}
 \lambda_0 \, |\mathcal{P}\eta|^2 \leq M(\rho) \eta \cdot \eta \leq \bar{\lambda} \, (1+|\rho|) \, |\mathcal{P}\eta|^2 \quad \text{ for  all } \eta\in \mathbb{R}^N, \, \rho \in \mathbb{R}^N_+ \, .
\end{align}
This corresponds to a uniformly positive Onsager tensor.
 
Moreover, the free energy function must provide a coercivity estimate $f(\rho) \geq c_0 \, |\rho|^{\gamma} - c_1$ with $\gamma \geq \frac{9}{5}$\footnote{We expect that $\gamma >\frac{3}{2}$ is sufficient as for compressible Navier-Stokes, but this was not shown in \cite{dredrugagu20}}. Then, we obtain the existence of a weak solution vector $(\varrho, \, q, \, v)$ in the class
\begin{align}\begin{split}\label{coilemoi}
 \varrho \in L^{\infty}(0,\bar{\tau}, \, L^{\gamma}(\Omega)),& \quad q \in L^2(0,\bar{\tau}; \, W^{1,2}(\Omega;\,\mathbb{R}^{N-1})),\\
v \in L^2(0,\bar{\tau}; \, W^{1,2}(\Omega; \, \mathbb{R}^3)),& \quad \sqrt{\varrho} \, |v| \in L^{\infty}(0,\bar{\tau}; \, L^2(\Omega; \, \mathbb{R}^3)) \, .
\end{split}
\end{align}
Technically, this result exploits the compactness theory for compressible Navier--Stokes equations in connection with the control on $\nabla q$ in $L^2(Q_{\bar{\tau}})$.

Interestingly, the partial mass densities of the species, which are recovered via the map $\rho_i = \mathscr{R}_i(\varrho, \, q)$ of \eqref{newpartdens}, are almost everywhere strictly positive unless the vacuum $\varrho = 0$ occurs of some set of positive measure. To the present state of knowledge, a vacuum cannot be prevented in the weak theory of compressible Navier--Stokes equations.

For $i = 1,\ldots,N$, the diffusion fluxes are expressed via $J^i = - \sum_{k=1}^{N-1} e^i \, M(\rho) \,  \xi^k \, \nabla q_k$.
This expression makes everywhere sense on the domain provided that $\rho \mapsto M(\rho)$ is continuous on $\overline{\mathbb{R}^N_+}$, in particular up to $\varrho = 0$. The pressure $p = P(\varrho, \, q)$ (cf.\ \eqref{newpress}) associated with the weak solution and the diffusion flux matrix satisfy
\begin{align}\label{coilemoi2}
p \in L^r(Q_{\bar{\tau}}) \text{ with } r = \min\{1+1/\gamma, \, 5/3-1/\gamma\} \, , \quad  J \in L^2(0,\bar{\tau}; \, L^{\frac{2\gamma}{1+\gamma}}(\Omega; \, \mathbb{R}^{N\times 3})) \, .
\end{align}
In the dissipation inequality \eqref{DISSIP}, the diffusion $\zeta^{\rm Diff}$ is re-interpreted via \eqref{dissnew}. In particular, weak solutions satisfy
\begin{align}\label{WEAKDISS}
 \int_{Q_t} \Pi^{\sf T} \, M(\rho) \, \Pi \, \nabla q \cdot \nabla q \, dxd\tau < + \infty \, .
\end{align}
The weak solutions $(\varrho, \, q, \, v)$ satisfy the weak form of the equations \eqref{massnew}, \eqref{momentumnew}, while $\rho = \mathscr{R}(\varrho, \, q)$ satisfies with $v$ and $p = P(\varrho, \, q)$ the weak form of \eqref{mass}, \eqref{momentum}. We resume these claims in the following theorem.
\begin{theo}\label{EXIWEAK}
Assume that $g_1, \ldots,g_N$ are subject to the assumptions ${\rm (A1)-(A5)}$, and that $\{M_{ij}\}$ satisfies ${\rm(B1), \, (B2) \text{ and } (B3^\prime)}$. Moreover,  $v^0 \in L^{2}(\Omega; \, \mathbb{R}^3)$ and $\rho^{0} \in L^{\infty}(\Omega;\mathbb{R}^N)$ satisfies $\inf_{i=1,\ldots,N, \, x \in \Omega} \rho^{0,i}(x) > 0$. Then, the problem \emph{(IBVP$^m$)} possesses at least one weak solution such that \eqref{coilemoi}, \eqref{coilemoi2} and \eqref{WEAKDISS} are valid. Introducing $Q^+(\varrho) := \{(x,t) \, : \, \varrho(x,t) > 0\}$, we can show that $\min_i \rho_i(x,t) > 0$ almost everywhere in $Q^+(\varrho)$, and that  $\Pi q + \mathscr{M}(\varrho, \, q) = RT/M \, \ln \hat{x}(\rho) + g(\hat{p}(\rho)) $ for almost all $(x,t) \in Q^+(\varrho)$.
\end{theo}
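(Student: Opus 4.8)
The theorem is essentially the content of \cite{dredrugagu20}; I outline how its proof goes, stressing the points that are specific to the present formulation. The plan is to reformulate (IBVP$^m$) in the entropic variables, construct approximate solutions, derive uniform estimates from the energy inequality together with a Bogovskii-type pressure bound, pass to the limit using the Lions--Feireisl compactness machinery for the density, and finally read off the positivity of the partial densities from the Legendre structure.

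\textbf{Reformulation.} First I would invoke Lemma~\ref{UNIFF1}: under (A1)--(A5) the free energy $f^m$ is a co-finite function of Legendre type on $\mathbb{R}^N_+$ with $f^m(\rho)\ge c_0|\rho|^\gamma-c_1$ and $\gamma\ge 9/5$. Hence the change of variables \eqref{MV1} of Section~\ref{ENTROPICVAR} is available, and (IBVP$^m$) is equivalent to the parabolic--hyperbolic system \eqref{massnew}, \eqref{momentumnew} for $(\varrho,q,v)$, with $\rho=\mathscr{R}(\varrho,q)$, $p=P(\varrho,q)$ and $J=-M(\rho)\,\Pi\,\nabla q$. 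Under (B3$'$) the Onsager matrix $\widetilde{M}(\varrho,q)=\Pi^{\sf T}M(\mathscr{R}(\varrho,q))\Pi$ is symmetric and uniformly positive definite (cf.\ \eqref{tildempos}), and $\varrho\mapsto P(\varrho,q)$ is strictly increasing for each fixed $q$; both facts are used decisively later.

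\textbf{Approximation and uniform bounds.} Next I would build approximate solutions $(\varrho_\delta,q_\delta,v_\delta)$ by a multilevel regularisation: a smooth strictly convex truncation of $f^m$, an artificial viscosity $\epsilon\Delta\varrho$ in the continuity equation, and an artificial pressure $\delta\varrho^\beta$ (with $\beta$ large) in \eqref{momentumnew}, solved by a Faedo--Galerkin scheme for $v$ coupled to a fixed point for $(\varrho,q)$, followed by the successive limits $\epsilon\to0$ then $\delta\to0$. The energy inequality \eqref{DISSIP}, in which the diffusion dissipation equals $\int\widetilde{M}(\varrho,q)\nabla q:\nabla q$ by \eqref{dissnew} and is bounded below by $(\lambda_0/c_\eta)|\nabla q|^2$, delivers uniform bounds for $\sqrt{\varrho_\delta}v_\delta$ in $L^{2,\infty}$, for $v_\delta$ in $L^2(0,\bar\tau;W^{1,2})$, for $\varrho_\delta$ in $L^{\gamma,\infty}$ (via the coercivity of $f^m$), and for $\nabla q_\delta$ in $L^2$; together with a Poincar\'e-type control of the mean $(q_\delta)_M$ this gives $q_\delta$ bounded in $L^2(Q)$. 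Testing \eqref{momentumnew} with a Bogovskii field $\mathcal{B}[\varrho_\delta^\theta-(\varrho_\delta^\theta)_M]$ then upgrades the pressure to $P(\varrho_\delta,q_\delta)\in L^r$ with $r=\min\{1+1/\gamma,\,5/3-1/\gamma\}$, and the Cauchy--Schwarz estimate $|J^i_\delta|\le M_{ii}^{1/2}(\widetilde{M}\nabla q_\delta:\nabla q_\delta)^{1/2}$ combined with \eqref{b1prime} yields $J_\delta\in L^2(0,\bar\tau;L^{2\gamma/(1+\gamma)})$.

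\textbf{Compactness, limit passage and the main obstacle.} From \eqref{massnew} I would bound $\partial_t R_k(\varrho_\delta,q_\delta)$ in a negative Sobolev norm; with $\nabla q_\delta\in L^2$ and Aubin--Lions this forces $q_\delta\to q$ strongly in $L^2(Q)$ and a.e., and the usual arguments give weak-$*$ convergence of $\varrho_\delta$ in $L^{\gamma,\infty}$ and strong convergence of $\varrho_\delta v_\delta$ in the relevant spaces. The hard part is the strong convergence of $\{\varrho_\delta\}$, needed to identify the weak limit of the nonlinear pressure $P(\varrho_\delta,q_\delta)$; I would treat it exactly as in the barotropic compressible Navier--Stokes theory of Lions and Feireisl. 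Since $q$ converges strongly and $\varrho\mapsto P(\varrho,q)$ is monotone, one proves the effective viscous flux identity
\begin{align*}
\overline{P(\varrho,q)\,\varrho}-(2\eta+\lambda)\,\overline{\varrho\,\divv v}=\overline{P(\varrho,q)}\;\overline{\varrho}-(2\eta+\lambda)\,\overline{\varrho}\;\divv v,
\end{align*}
then, using that $\varrho_\delta$ is a renormalised solution of the continuity equation together with an oscillation-defect-measure bound --- this is where the assumption $\gamma\ge 9/5$ is consumed --- deduces $\overline{\varrho\ln\varrho}=\overline{\varrho}\,\ln\overline{\varrho}$ and hence $\varrho_\delta\to\varrho$ a.e. With $\varrho_\delta\to\varrho$ a.e.\ (and in $L^p$ for $p<\gamma$), $q_\delta\to q$ a.e., and $v_\delta\rightharpoonup v$ in $L^2(0,\bar\tau;W^{1,2})$, the artificial terms disappear and one passes to the limit in the weak forms of \eqref{massnew}, \eqref{momentumnew}; weak lower semicontinuity yields the dissipation inequality, i.e.\ \eqref{WEAKDISS}, while \eqref{coilemoi}, \eqref{coilemoi2} are read off the bounds. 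Setting $\rho:=\mathscr{R}(\varrho,q)$, $p:=P(\varrho,q)$ gives a weak solution of \eqref{mass}, \eqref{momentum}. Finally, since $f^m$ is essentially smooth, $\nabla(f^m)^*$ maps $\mathbb{R}^N$ into $\mathbb{R}^N_+$; hence $\rho=\mathscr{R}(\varrho,q)=\nabla(f^m)^*\bigl(\Pi q+\mathscr{M}(\varrho,q)\,1^N\bigr)$ has all components strictly positive wherever $\mathscr{M}(\varrho,q)$ is finite, which holds a.e.\ on $Q^+(\varrho)=\{\varrho>0\}$ by construction of the change of variables, and on $Q^+(\varrho)$ the identity $\mu=\Pi q+\mathscr{M}(\varrho,q)\,1^N=\nabla f^m(\rho)=g^m(\hat{p}^m(\rho))+(RT/M)\ln\hat{x}(\rho)$ follows from \eqref{gradient}, \eqref{lesidentes}, which is the last assertion. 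For the details of the approximation and of the density compactness I would refer to \cite{dredrugagu20}.
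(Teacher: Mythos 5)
Your proposal is correct and follows essentially the same route as the paper, which itself does not give a self-contained proof but defers to \cite{dredrugagu20} and the forthcoming survey \cite{drusurv} after remarking that ``all approximation arguments carry over to (IBVP$^m$).'' You correctly identify the main ingredients — the entropic change of variables into the parabolic--hyperbolic normal form, the multilevel approximation with artificial viscosity and pressure, the Lions--Feireisl density compactness adapted to a pressure $P(\varrho,q)$ in which $q$ converges strongly before the monotonicity-in-$\varrho$ argument is applied, and the recovery of positivity and of the identity $\Pi q+\mathscr{M}(\varrho,q)1^N=Df^m(\rho)$ on $Q^+(\varrho)$ from the Legendre structure of $f^m$.
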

Remark that in the paper \cite{dredrugagu20}, the existence has been proved in the more general context of electro-chemistry, where there is an additional equation for the electric field. As a counterpart, only a special choice of $g_1, \ldots, g_N$ was considered: $g_i(p) = g_0(p) \, \calv_i$.
Nevertheless, all approximation arguments carry over to (IBVP$^m$). We refer to the forthcoming survey paper \cite{drusurv} for full proofs.

\subsubsection{Compressible model: Weak solutions of type-II}

The assumption (B3$^{\prime}$) of a uniformly positive Onsager tensor is in particular problematic in the study mixtures with dilute constituents. The second approach on diffusion is a positive Onsager--tensor with singular diffusivities, as it for instance arises from inversion of the Maxwell--Stefan equations (see the example \eqref{maxstefbase}). In this case we do not obtain a control on $\nabla q$ in the fashion of \eqref{dissnew}. 
Presently, it does not seem possible to treat in the framework of weak solutions the Maxwell--Stefan--Navier--Stokes system as it is naturally formulated in the literature. However, in the paper \cite{druetmaxstef}, we developed a theory of weak solutions on the idea that the Maxwell--Stefan diffusion coefficients might exhibit certain singularities at very low (or very large) pressure.


In this paper, instead of the change of variables \eqref{newpartdens}, we represented the vector of mass densities via 
\begin{align}\label{TrafotypeII}
\rho = \mathcal{X}(p, \, w_1, \ldots, w_N) \, ,
\end{align}
where $p$ is the pressure and $w_1, \ldots, w_N$ a \emph{normalised state} subject to the constraint
\begin{align*}
 \hat{p}(T,\, w_1, \ldots, w_N) = p^0  \, ,
\end{align*}
where $p^0$ is a reference pressure, and $\hat{p}(T, \cdot)$ is the representation resulting of the equation of state \eqref{EOS}.
This means that the mass densities are represented by a point $w$ on the isobar $p = p^0$, and departure from this isobar is ''measured'' by the pressure variable. It can next be shown that the quotients $\rho_i/w_i$ are bounded away from zero and from above by certain functions of pressure only. This indicates that, in the range of finite pressures, the variable $w_i$ tends to zero if and only if $\rho_i$ tends to zero.
Hence, for a mobility matrix of Maxwell-Stefan type (cf.\ \eqref{maxstefreg}), we have
\begin{align*}
 d_0(p) \, \mathdutchcal{P}^{\sf T} (W^{-1}R) \, W \, \mathdutchcal{P} \leq M(\rho) \, , 
\end{align*}
with $R = \text{diag}(\rho)$, $W = \text{diag}(w)$ and $\mathdutchcal{P} = \mathbb{I} - 1^N \otimes \hat{y}(\rho)$. Since the entries of the diagonal matrix $W^{-1}R$ are below by  functions on pressure, we get
 \begin{align*}
 \tilde{d}_0(p) \, \mathdutchcal{P}^{\sf T} \, W \, \mathdutchcal{P} \leq M(\rho) \, .
\end{align*}
Under certain conditions formulated in the paper \cite{druetmaxstef} for the thermodynamic diffusivity $M_{ij}$ for extreme pressures, we can \emph{a priori} prove that $\inf_p \tilde{d}_0(p) > 0$, 
and the diffusion in the energy inequality \eqref{DISSIP} yields $\zeta^{\rm Diff} \geq c \, |\nabla \sqrt{w}|^2$. Since $\hat{p}(w) = p^0$, we have $w \cdot \calv = 1$ and it follows that $w$ is bounded. In this way, the bound on the gradient of the square-root yields also a control on $|\nabla w|^2 \leq c \, \zeta^{\rm Diff}$.
The control on the total mass density $\varrho$ in $L^{\infty}(0,\bar{\tau}; \, L^{\gamma}(\Omega)) $ subject to the continuity equation, combined with the control on $\nabla\sqrt{w}$ and $\nabla w$ in $L^2(Q; \, \mathbb{R}^N)$ allow to perform the technical part of the analysis.

In this approach, the diffusion flux matrix is integrated to the solutions vector. A weak solution vector consists of $(\rho, \, v, \, J)$ with
\begin{gather*}
 \rho \in L^{\infty}(0,\bar{\tau}; \, L^{\gamma}(\Omega; \, \mathbb{R}^N)), \quad v \in L^2(0,\bar{\tau}; \, W^{1,2}(\Omega;\mathbb{R}^3)), \quad \sqrt{\varrho} \, |v| \in L^{\infty}(0,\bar{\tau}; \, L^2(\Omega;\mathbb{R}^3))\, ,\\
 J \in L^2(0,\bar{\tau}; \, L^{\frac{2\gamma}{1+\gamma}}(\Omega; \, \mathbb{R}^{N\times 3})) \, .
\end{gather*}
The weak solution satisfies the distributional formulation of \eqref{mass}, \eqref{momentum} and,
moreover, use of the associated normalised mass densities 
$ w \in L^{\infty}(Q_{\bar{\tau}}; \, \mathbb{R}^N) \cap L^2(0,\bar{\tau}; \, W^{1,2}(\Omega; \, \mathbb{R}^N))$,
which are introduced via \eqref{TrafotypeII}, yields for the diffusion fluxes the following identities:
\begin{align*}
 J^i =  - \sum_{j=1}^N M_{ij}(\rho) \, \nabla \mu_j  \quad & \text{ a.e.\ in } Q^+ := \{(x,t) \in Q \, : \,  \inf_i \rho_i(x,t) > 0\}\, ,\\
 J^i =  0 & \quad \text{ in } \{(x,t) \in Q\, : \, \rho_i(x,t) = 0, \, \varrho(x,t) > 0\}\, ,\\
 J^i = - \sum_{j=1}^N (\underbrace{M(\rho) \, D^2f(w)}_{=: \Gamma})_{ij} \, \nabla w_j & \quad  \text{ in } \{(x,t) \in Q \, : \, \varrho(x,t) > 0\} \, . 
\end{align*}
Hence, the relationship between gradients of the state variables and diffusion fluxes is specified everywhere but for the vacuum set. For this type of solutions, we also obtain an energy inequality. Unfortunately, this point was not completely enlighted in the paper \cite{druetmaxstef} and it shall have to be discussed in further publications. Let us note that, in \eqref{DISSIP}, we can identify
\begin{align*}
 -\int_{Q_t} \zeta^{\rm Diff} \, dxd\tau = - \int_{Q_t} K(\rho)\, \frac{J}{\sqrt{w}} \cdot \frac{J}{\sqrt{w}} \, dxd\tau \, ,
\end{align*}
where, for $\rho = \mathcal{X}(p, \, w)$, the matrix $K$ is symmetric and positive definite on the orthogonal complement of ${\rm span}(\rho/\sqrt{w})$. Hence, for weak solutions, we also obtain a control on $J/\sqrt{w}$ in $L^2(Q; \, \mathbb{R}^{N\times 3})$.


\subsubsection{ Weak solutions to the incompressible model}

The existence analysis for weak solutions to the incompressible model was tackled in \cite{druetmixtureincompweak}, but only the approach using the assumption (B3$^\prime$) or \eqref{ONSA} of a uniformly positive Onsager operator, which simplifies the handling of diffusion, was implemented up to now.
Hence the solution vector consists of $(\varrho, \, q_1, \ldots, q_{N-2}, \, \zeta, \, v)$. Due to the volume constraint $\sum_{i=1}^N \rho_i \, \calv_i = 1$, the total mass density is interval $[\varrho_{\min}, \, \varrho_{\max}]$ with the constants of \eqref{rhominmax}. Hence $\varrho \in L^{\infty}(Q_{\bar{\tau}})$ is also strictly positive, and the solution class is
\begin{gather*}
 q_1, \ldots, q_{N-2}, \, \zeta \in L^2(0,\bar{\tau}; \, W^{1,2}(\Omega)), \,\,\, 
 v \in L^2(0,\bar{\tau}; \, W^{1,2}(\Omega; \, \mathbb{R}^3)) \cap L^{\infty}(0,\bar{\tau}; \, L^2(\Omega;\mathbb{R}^3)) \, .
\end{gather*}
The main problem of the analysis concerns the pressure, which satisfies a bound only in $L^1(Q_{\bar{\tau}})$. In the paper \cite{druetmixtureincompweak}, the pressure is affected by a singular measure: $ p = (P^{\infty}(\varrho, \, q) +\zeta) \, d\lambda_4 + d\kappa$, where $\lambda_4$ denotes the four-dimensional Lebesgue--measure, and the regular measure $\kappa$ is concentrated in the a set of Lebesgue--measure zero. It can be shown that at every point $(x,t)$ of the singular set where the Lebesgue value of $\varrho$ exists, one of the threshold $\varrho_{\min}$ or $\varrho_{\max}$ of density is attained.

The partial mass densities are recovered as $\rho_i = \mathscr{R}^{\infty}_i(\varrho, \, q)$ with the map of \eqref{newpartdensincomp}. These are nonnegative $L^{\infty}$ functions. Moreover, $\rho_i$ is even strictly positive almost everywhere. The weak solutions satisfy the weak form of \eqref{mass} and \eqref{momentum}, where the pressure $ p = P(\varrho, \, q) + \zeta$ is supplemented by a singular measure $\kappa$.

\section{{\it A priori} bounds for the rescaled system}

\begin{prop}\label{EXIRESC}
Consider thermodynamic functions $\bar{g}_1, \ldots, \bar{g}_N$ satisfying ${\rm (A)}$ with $\gamma \geq 9/5$.
Assume moreover that $\{\bar{M}_{ij}\}$ is subject to ${\rm (B1)}$, ${\rm (B2)}$ and ${\rm (B3^{\prime})}$. 
Then, there exists a weak solution $(\bar{\rho}^m, \, \bar{v}^m)$ to the problem \emph{($\overline{\text{ IBVP}}^m$)} with initial data $(\bar{\rho}^{0,m}, \, \bar{v}^{0})$. The corresponding variables $(\bar{\varrho}^m, \, \bar{q}^m, \, \bar{v}^m)$ satisfy
\begin{align}\begin{split}\label{DISSIPm}
 & \int_{\Omega^{\rm R}} \Big( \frac{\bar{\varrho}_m(x,t)}{2} \, |\bar{v}^m(x,t)|^2 + \bar{f}^m(\bar{\rho}^m(x,t))\Big) \, dx \\
 & \qquad + \int_0^t \int_{\Omega^{\rm R}} \overline{\mathbb{S}}(\nabla \bar{v}^m) \, : \, \nabla \bar{v}^m + \Pi^{\sf T} \, \bar{M}(\bar{\rho}^m)\Pi \nabla \bar{q}^m\, : \, \nabla \bar{q}^m \, dxd\tau \\
 & \qquad \leq  \int_{\Omega^{\rm R}} \Big( \frac{\varrho_0^m(x)}{2} \, |v^0(x)|^2 + \bar{f}^m(\bar{\rho}^{0,m}(x))\Big) \, dx - \int_0^t\int_{\Omega^{\rm R}} \bar{\varrho}^m \,  \bar{v}^m_3 \, dxd\tau
 \end{split}
\end{align}
and a uniform bound for the norms defined in \eqref{coilemoi} is valid.
\end{prop}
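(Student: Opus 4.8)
\textbf{Plan of proof for Proposition \ref{EXIRESC}.} The statement asserts the existence of a weak solution to the rescaled compressible problem ($\overline{\text{ IBVP}}^m$) together with the rescaled energy inequality \eqref{DISSIPm} and the uniform bounds of \eqref{coilemoi}. The key point is that the rescaling is just a change of the reference quantities; the structural properties of the constitutive model are preserved. Concretely, by \eqref{ginormalised} and the discussion following it, the rescaled free enthalpies $\bar{g}^m_1,\ldots,\bar{g}^m_N$ satisfy ($\bar{\rm A}$1)--($\bar{\rm A}$3) on the interval $]-m,+\infty[$, and by Lemma \ref{UNIFF2} the rescaled free energy function $\bar{f}^m$ built via \eqref{FEm} is a co-finite function of Legendre type on $\mathbb{R}^N_+$ with a $\gamma$-coercivity $\bar{f}^m(\bar\rho)\geq \bar c_0\,|\bar\rho|^\gamma$ for $|\bar\rho|\geq R_1$, with constants independent of $m$. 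Hence, \emph{for each fixed $m$}, the rescaled problem ($\overline{\text{ IBVP}}^m$) is exactly an instance of the compressible problem (IBVP) treated in Theorem \ref{EXIWEAK}: the assumptions ${\rm (A1)-(A5)}$ on the (rescaled) free enthalpies and ${\rm (B1)}$, ${\rm (B2)}$, ${\rm (B3^\prime)}$ on the (rescaled) mobility tensor hold, and $\gamma\geq 9/5$ is the coercivity exponent required there. The first step, therefore, is simply to invoke Theorem \ref{EXIWEAK} with the data $(\bar\rho^{0,m},\bar v^0)$ --- which belong to the right classes since $\bar\rho^{0,m}\in L^\infty(\Omega^{\rm R};\mathbb{R}^N)$ with $\inf_{i,x}\bar\rho^{0,m}_i(x)>0$ and $\bar v^0\in L^2(\Omega^{\rm R};\mathbb{R}^3)$ --- to obtain a weak solution $(\bar\varrho^m,\bar q^m,\bar v^m)$ in the class \eqref{coilemoi}, \eqref{coilemoi2}, \eqref{WEAKDISS}, and to recover $\bar\rho^m=\mathscr{R}^m(\bar\varrho^m,\bar q^m)$ with the corresponding weak form of \eqref{massrescfin}, \eqref{momentumrescfin}.

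The second step is to record the energy inequality in the form \eqref{DISSIPm}. This is the dissipation inequality \eqref{DISSIP}/\eqref{WEAKDISS} of Theorem \ref{EXIWEAK}, written for the rescaled problem: the body force term $\int\varrho b\cdot v$ becomes $-\int_{\Omega^{\rm R}}\bar\varrho^m\,\bar v^m_3\,dx$ in view of the rescaled right-hand side $-\bar\varrho\,e^3$ in \eqref{momentumrescfin}, and the diffusion dissipation $\zeta^{\rm Diff}$ is represented via \eqref{dissnew} as $\Pi^{\sf T}\bar M(\bar\rho^m)\Pi\nabla\bar q^m:\nabla\bar q^m$; the viscous term is $\overline{\mathbb{S}}(\nabla\bar v^m):\nabla\bar v^m$ with the rescaled (still constant) viscosities. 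Nothing beyond Theorem \ref{EXIWEAK} and the bookkeeping of the rescaling is needed here.

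The third and genuinely substantive step is the \emph{uniform} (in $m$) character of the bounds claimed in \eqref{lesbounds}, i.e. that the norms in \eqref{coilemoi} are controlled independently of $m$. This uses two $m$-independent ingredients established earlier in the paper: first, by Lemma \ref{UNIFF2} the initial energy $\int_{\Omega^{\rm R}}(\tfrac{\bar\varrho_0^m}{2}|v^0|^2+\bar f^m(\bar\rho^{0,m}))\,dx$ is uniformly bounded by assumption \eqref{E0bounded}; second, the $\gamma$-coercivity constant $\bar c_0$, the lowest-eigenvalue function $\lambda_1$ of $D^2\bar f^m$ (via $\theta_0$), and the Onsager ellipticity constant $\lambda_0$ in ${\rm (B3^\prime)}$ are all independent of $m$. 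Plugging these into \eqref{DISSIPm} and absorbing the body-force term $\int_0^t\int\bar\varrho^m\bar v^m_3$ by Young's inequality together with the Gronwall lemma (exactly as in the proof of the a priori estimates for compressible Navier--Stokes) yields the uniform bounds $\sup_m\|\bar\varrho^m\|_{L^{\gamma,\infty}}<\infty$, $\sup_m\|\nabla\bar v^m\|_{L^2}<\infty$, $\sup_m\|\sqrt{\bar\varrho^m}\,\bar v^m\|_{L^{2,\infty}}<\infty$, and, via the uniform lower bound on $\Pi^{\sf T}\bar M\Pi$, $\sup_m\|\nabla\bar q^m\|_{L^2}<\infty$. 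The remaining two bounds in \eqref{lesbounds} --- $\sup_m\sum_{i=1}^{N-2}\|\bar q^m_i\|_{L^2}<\infty$ and $\sup_m m^{-1/2}\|\bar q^m\|_{L^2}<\infty$ --- follow from the Poincar\'e-type inequality of Lemma \ref{uamamoto} controlling the spatial means $|(\bar q^m_k)_M|$ by $\int_{\Omega^{\rm R}}(\bar f^m(\bar\rho^m)+1)^{1/2}dx$ plus $m^{-1/2}|\nabla\bar q^m|_{L^1}$, combined again with the uniform energy bound.

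\textbf{Expected main obstacle.} The existence itself is essentially a citation of Theorem \ref{EXIWEAK}, so the only real work lies in making the constants $m$-uniform. The delicate point is that the lowest eigenvalue of $D^2\bar f^m$ degenerates like $\lambda_1/(\bar\varrho(1+\bar\varrho)^{\theta_0})$ rather than being bounded below, so the coercivity estimate \eqref{enplus} one gets from it is weighted by $\omega(\bar\varrho,\bar\varrho^\infty)$; one must therefore be careful to use the \emph{separate} $\gamma$-coercivity $\bar f^m(\bar\rho)\geq \bar c_0|\bar\rho|^\gamma$ of Lemma \ref{UNIFF2} (not the Hessian lower bound) when extracting the uniform $L^{\gamma,\infty}$ control of $\bar\varrho^m$, and to verify that $\bar c_0,R_1$ genuinely do not depend on $m$ --- which is exactly the content of the computation following \eqref{FEm} in the appendix. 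The $m$-uniformity of the bound $m^{-1/2}\|\bar q^m\|_{L^2}$, which later feeds the pressure estimates, is the one place where the large parameter $m$ must be tracked explicitly through Lemma \ref{uamamoto}; everything else is the standard compressible Navier--Stokes a priori machinery.
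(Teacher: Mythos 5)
Your proposal follows essentially the same route as the paper: Lemma \ref{UNIFF2} guarantees that the rescaled free energies $\bar f^m$ meet the hypotheses of Theorem \ref{EXIWEAK} with $m$-independent constants, existence and the rescaled energy inequality \eqref{DISSIPm} follow by citation, and the uniform bounds of \eqref{coilemoi}/\eqref{lesbounds} are read off from \eqref{DISSIPm} using the uniform initial energy and the uniform coercivity, with the $L^2$-control of $\bar q^m$ itself supplied by Lemma \ref{uamamoto} exactly as the paper does (the paper merely defers that last step to just after the proposition). Your extra care with the body-force term and with distinguishing the $\gamma$-coercivity of $\bar f^m$ from the degenerate Hessian bound is consistent with, and slightly more explicit than, the paper's argument.
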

\begin{proof}
Owing to the Lemma \ref{UNIFF2} the rescaled free energies $\bar{f}^m$ satisfy all requirements of Theorem \ref{EXIWEAK}, so we obtain the global existence of a weak solution $(\bar{\varrho}^m, \, \bar{q}^m, \, \bar{v}^m)$ so that \eqref{DISSIPm} is valid. From the latter, we can directly read off the bound
\begin{align*}
\sup_{m \in \mathbb{N} } \|\bar{\rho}^m\|_{L^{\gamma,\infty}(Q^{\rm R})} + \|\sqrt{\bar{\varrho}_m} \, \bar{v}^m\|_{L^{2,\infty}(Q^{\rm R})} + \|\bar{\nabla} \bar{v}^m\|_{L^2(Q^{\rm R})} + \|\bar{\nabla} \bar{q}^m\|_{L^2(Q^{\rm R})} < +\infty \, .
\end{align*}
\end{proof}
It remains to show how to obtain a bound for $\bar{q}^m$, since the bounds derived in the paper \cite{dredrugagu20} for $\bar{q}^m$ in $L^{2,1}$ were not shown to be uniform in $m$. In a first step we prove convergence to an incompressible state and a weighted convergence property for the pressure.
\begin{lemma}\label{Bogovencorelui}
We adopt the assumptions of Prop.\ \ref{EXIRESC} and we let $(\bar{\varrho}^m, \, \bar{q}^m, \, \bar{v}^m)$ be a weak solution to \emph{($\overline{\text{ IBVP}}^m$)}. Then $\|\bar{\rho}^m \cdot \bar{\calv} - 1\|_{L^{1,\infty}(Q^{\rm R})} \rightarrow 0$ for $m \rightarrow \infty$.
\end{lemma}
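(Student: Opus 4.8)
The statement to prove is that $\|\bar{\rho}^m \cdot \bar{\calv} - 1\|_{L^{1,\infty}(Q^{\rm R})} \to 0$ as $m \to \infty$, where $(\bar{\varrho}^m, \bar{q}^m, \bar{v}^m)$ is a weak solution to $(\overline{\text{IBVP}}^m)$. The starting point is the energy inequality \eqref{DISSIPm}, which gives, via Lemma \ref{UNIFF2} (the coercivity $\bar{f}^m(\bar{\rho}) \geq \bar{c}_0 |\bar{\rho}|^\gamma$ for large $|\bar{\rho}|$, and the lower bounds $\bar{f}^m(\bar{\rho}) \geq \tfrac12 |\hat{\pi}^m(\bar{\rho})|$ when $\hat{\pi}^m(\bar{\rho}) \leq -m\bar{\theta}$, plus the bounds on $\bar{f}^m(r)$, $D\bar{f}^m(r)$ on the incompressible surface), the uniform bounds \eqref{lesbounds}. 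The key quantitative identity is the analogue of \eqref{Phiidentity}: writing $\bar{\rho}^m \cdot \bar{\calv} - 1 = \bar{\rho}^m \cdot (\bar{g}'(1) - \bar{g}'(1 + \pi^m/m)) = \Phi^m \, \pi^m/m$, where $\Phi^m = -\bar{\rho}^m \cdot \int_0^1 \bar{g}''(1 + \theta \pi^m/m)\,d\theta$ is the quantity studied in Lemma \ref{Philemma}, which is nonnegative and bounded in $L^\infty$ uniformly in $m$ (this uses the growth hypothesis (A6)).

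First I would establish that the initial data already satisfy $\|\bar{\rho}^{0,m} \cdot \bar{\calv} - 1\|_{L^1(\Omega)} \to 0$. This is where I use the hypothesis $\limsup_m m \|\widehat{p}(\bar{\rho}^{0,m}) - 1\|_{L^2(\Omega^{\rm R})}^2 = 0$ together with the uniform initial-energy bound \eqref{E0bounded}: since $1 + \pi^{0,m}/m = \widehat{p}(\bar{\rho}^{0,m})$ and $\Phi^m$ is uniformly bounded in $L^\infty$, we get $|\bar{\rho}^{0,m}\cdot\bar{\calv} - 1| = \Phi^m |\widehat{p}(\bar{\rho}^{0,m}) - 1| \leq C |\widehat{p}(\bar{\rho}^{0,m}) - 1|$, and integrating and using Cauchy--Schwarz gives $\|\bar{\rho}^{0,m}\cdot\bar{\calv}-1\|_{L^1(\Omega)} \leq C |\Omega|^{1/2} \|\widehat{p}(\bar{\rho}^{0,m})-1\|_{L^2} \to 0$. (In fact the hypothesis with $\sqrt m$ gives more, but $L^1\to 0$ is what is needed here.) Next, I propagate this in time: multiplying the rescaled continuity equations \eqref{massrescfin} by $\bar{\calv}$ and integrating over $\Omega^{\rm R}$, the diffusion and convection terms vanish (the former because $\sum_i \bar{M}_{ij} \bar{\calv}_i$ need not vanish — wait, here one uses instead that $\bar{\calv}\cdot\bar\rho$ is transported: $\partial_t (\bar\rho^m\cdot\bar\calv)_M = 0$, cf. \eqref{tillilee}), so that the \emph{spatial mean} $(\bar{\rho}^m\cdot\bar{\calv} - 1)_M(t)$ is constant in time and equals its initial value, which tends to zero. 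This controls the mean; it does not by itself control the full $L^1$ norm.

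The main work, and the main obstacle, is to control the oscillation of $\bar{\rho}^m\cdot\bar{\calv}-1$ around its mean, i.e. to show $\|\bar\rho^m\cdot\bar\calv - 1 - (\bar\rho^m\cdot\bar\calv-1)_M\|_{L^{1,\infty}(Q^{\rm R})} \to 0$. The strategy I would follow is a Bogovskii-type argument: set $\calg^m := \bar\rho^m\cdot\bar\calv - 1 - (\bar\rho^m\cdot\bar\calv - 1)_M$, which has zero spatial mean; from the continuity equations one reads off $\partial_t \calg^m = \divv \calf^m$ with $\calf^m$ built from $\bar\rho^m\bar v^m$ and the diffusion fluxes $\sum_j \bar M_{ij}\bar\nabla\bar q^m_j$ contracted against $\bar\calv$, and \eqref{lesbounds} gives $\calf^m$ uniformly bounded in a space like $L^{6/5,2}(Q^{\rm R})$ (via $\bar\varrho^m \in L^{\gamma,\infty}$, $\gamma\geq 9/5$, $\bar v^m\in L^2W^{1,2} \hookrightarrow L^{6,2}$, and $\bar\nabla\bar q^m\in L^2$, $\bar M_{ij}$ of linear growth in $\bar\rho$). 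Solve $\divv\eta^m = \calg^m$ by Lemma \ref{BOGS} with uniform control of $\eta^m$ in $L^\infty(0,\bar\tau;W^{1,\calp})$ for suitable $\calp$ and of $\partial_t\eta^m$. Test the rescaled momentum balance \eqref{momentumrescfin} with $\eta^m$: this produces $\int_{Q_t} \bar p_\Delta^m \, \divv\eta^m = \int_{Q_t}\bar p_\Delta^m \calg^m$ on the left (mean of $\eta^m$ handled as in Lemma \ref{L1bound}), while the right side — the inertial, viscous, gravity and boundary terms — is bounded uniformly in $m$ by \eqref{lesbounds} and the $\eta^m$-bounds, exactly as in the proof of Lemma \ref{L1bound}. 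Now invoke the crucial identity $\bar p_\Delta^m \calg^m = \hat\pi^m(\bar\rho^m)\,(\bar\rho^m\cdot\bar\calv-1) - \hat\pi^m(\bar\rho^m)\,(\bar\rho^m\cdot\bar\calv-1)_M = \tfrac{m}{\Phi^m}(\bar\rho^m\cdot\bar\calv - 1)^2 - (\text{lower order})$, using $(\bar\rho^m\cdot\bar\calv-1) = \Phi^m\pi^m/m$. Since $\Phi^m$ is uniformly bounded above in $L^\infty$, this gives $\int_{Q_t} (\bar\rho^m\cdot\bar\calv-1)^2\,dxd\tau \leq \frac{C}{m}\big(\text{uniformly bounded}\big) + \frac{C}{m}\|\hat\pi^m(\bar\rho^m)\|_{L^1}\,\|(\bar\rho^m\cdot\bar\calv-1)_M\|_{L^\infty}$; combined with the $L^1$-bound on $\tilde\pi^m$ from Lemma \ref{L1bound} (and $\pi^m = \tilde\pi^m + q^m_{N-1}$ with $\tfrac1{\sqrt m}\|q^m\|_{L^2}$ uniformly bounded) one concludes $\|\bar\rho^m\cdot\bar\calv - 1\|_{L^2(Q^{\rm R})}^2 = O(1/\sqrt m) \to 0$. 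Finally, upgrade from $L^2(Q^{\rm R})$ to $L^{1,\infty}(Q^{\rm R})$: since $t\mapsto \bar\rho^m(\cdot,t)\cdot\bar\calv$ is (after subtracting the conserved mean) a renormalized-continuity quantity, one can bound $\sup_t\|\bar\rho^m(\cdot,t)\cdot\bar\calv-1\|_{L^1(\Omega)}$ by the sum of the conserved mean (which $\to 0$) and an oscillation term estimated through the time-continuity of $\calg^m$ in a negative Sobolev norm together with the $L^2(Q^{\rm R})$ bound — e.g. by an interpolation/Lions--Aubin argument, or more simply by repeating the Bogovskii test with characteristic functions of time-slabs. The delicate point throughout is that everything — the $L^\infty$ bound on $\Phi^m$, the $L^1$ bound on $\tilde\pi^m$, the Bogovskii constants, the $\calf^m$-estimate — must be uniform in $m$, which is exactly why the hypotheses (A4)--(A6), (B3$'$) and $\gamma\geq 9/5$ are imposed; obtaining the quantitative rate $O(1/\sqrt m)$ rather than merely qualitative convergence is the heart of the matter and rests on the quadratic gain $(\bar\rho^m\cdot\bar\calv-1)^2 \sim \tfrac mC \,(\bar\rho^m\cdot\bar\calv-1)\cdot\bar p^m_\Delta/\Phi^m$ in the pressure--density pairing.
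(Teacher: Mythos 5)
Your route has a genuine circularity problem and also imports hypotheses that the lemma does not have. The lemma is stated under the assumptions of Prop.~\ref{EXIRESC} only, i.e.\ it must follow from the energy inequality \eqref{DISSIPm} and the bounds that can be read off from it ($\bar{\rho}^m$ in $L^{\gamma,\infty}$, $\bar{\nabla}\bar{q}^m$, $\bar{\nabla}\bar{v}^m$ in $L^2$, $\sqrt{\bar\varrho^m}\,\bar v^m$ in $L^{2,\infty}$). Your argument instead invokes (i) the smallness hypothesis $\limsup_m m\,\|\widehat p(\bar\rho^{0,m})-1\|_{L^2}^2=0$, which belongs to Theorem~\ref{rigolo} and is not assumed here (and is not needed: the initial time plays no special role); (ii) the $L^1$-bound on $\tilde\pi^m$ of Lemma~\ref{L1bound}; and (iii) the bounds $\sup_m\|(\bar q^m)'\|_{L^2}$, $\sup_m m^{-1/2}\|\bar q^m\|_{L^2}$ from the last line of \eqref{lesbounds}. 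But in the paper's architecture both (ii) and (iii) are proved \emph{downstream} of the present lemma: Lemma~\ref{L1bound} and Lemma~\ref{uamamoto} rest on Lemma~\ref{Meslemma} together with \eqref{ilstendent1}, which \emph{is} the conclusion you are trying to prove. Without an independent $L^1$-control of $\pi^m$ your Bogovskii pairing cannot absorb the mean term $\int|\pi^m|\,|(\bar\rho^m\cdot\bar\calv-1)_M|$, so the quadratic gain and the rate $O(1/\sqrt m)$ do not get off the ground. In addition, the final upgrade from an $L^2(Q^{\rm R})$ space-time estimate to the $L^{1,\infty}$ (sup-in-time) norm is only gestured at; a space-time bound alone does not give control at every time slice.

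The paper's proof is much more elementary and uses none of this machinery. The pointwise-in-time energy bound gives $\sup_{m,t}\int_\Omega\bar f^m(\bar\rho^m(t))\,dx<\infty$, and since
$\bar g^m(\pi^m)\cdot\bar\rho^m-\pi^m=m\,\bar\rho^m\cdot\bigl(\bar g(\widehat p(\bar\rho^m))-\bar g(1)-\bar g'(\widehat p(\bar\rho^m))(\widehat p(\bar\rho^m)-1)\bigr)\ge 0$
equals $\bar f^m(\bar\rho^m)-\bar k(\bar\rho^m)$, the factor $m$ in front of this nonnegative Bregman-type quantity forces, for every fixed $a>0$ and every $t$, $|\{x:\,|\widehat p(\bar\rho^m(x,t))-1|>a\}|=O(1/m)$; the deep-rarefaction region $\{\pi^m<-m\bar\theta\}$ is handled separately by the inequality $\bar f^m\ge\tfrac12|\hat\pi^m|$ of Lemma~\ref{UNIFF2}. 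Then $|1-\bar\rho^m\cdot\bar\calv|\le C|\widehat p(\bar\rho^m)-1|$ on the moderate-pressure set, and interpolation of the small-measure exceptional set against the uniform $L^{\gamma,\infty}$ bound yields $\sup_t\|\bar\rho^m\cdot\bar\calv-1\|_{L^1(\Omega)}\le b\,|\Omega|+C\,m^{-(1-1/\gamma)}$ for every $b>0$, hence the claim. If you want to salvage your scheme, you would have to first establish the present lemma by such an energy/measure argument and only afterwards run the Bogovskii/pressure estimates, which is exactly the order the paper follows.
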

\begin{proof}
We have $|\bar{g}^m(\pi^m) \cdot \rho^m - \pi^m| =  \bar{f}^m(\rho^m) + |\bar{k}(\rho^m)|$. Since $|\bar{k}(\rho^m)| \leq c \, |\rho^m|$ with $c = N^{3/2}/(e\min \bar{M})$, we can bound $\sup_m \|\bar{g}^m(\pi^m) \cdot \rho^m - \pi^m\|_{L^{1,\infty}}$ using the bounds for $\bar{f}^m$ and $\rho^m$ in $L^{1,\infty}$. Recalling that $1+\pi^m/m = \widehat{p}(\rho^m)$ we see that
\begin{align*}
\bar{g}^m(\pi^m) \cdot \rho^m - \pi^m = & m \, \rho^m  \cdot \Big(\bar{g}\Big(1+\frac{\pi^m}{m}\Big) - \bar{g}(1) - \bar{g}^{\prime}\Big(1+\frac{\pi^m}{m}\Big) \, \frac{\pi^m}{m}\Big)\\
= & m \, \rho \cdot \big(\bar{g}(\widehat{p}(\rho^m)) - \bar{g}(1) - \bar{g}^{\prime}(\widehat{p}(\rho^m)) \, (\widehat{p}(\rho^m)-1)\big)\\
\geq & \varrho^m \, m \, \min_i  \Big\{\bar{g}_i(\widehat{p}(\rho^m)) - \bar{g}_i(1) - \bar{g}^{\prime}_i(\widehat{p}(\rho^m)) \, (\widehat{p}(\rho^m)-1)\Big\} \, ,
\end{align*}
which shows that
\begin{align}\label{rbarvweakI}
\limsup_{m\rightarrow \infty} m \, \int_{\Omega} \varrho^m \, \min_i  \Big\{\bar{g}_i\big(\widehat{p}(\rho^m)\big) - \bar{g}_i(1) - \bar{g}^{\prime}_i\big(\widehat{p}(\rho^m)\big) \, (\widehat{p}(\rho^m)-1)\Big\} \, dx < + \infty \, .
\end{align}
Due to Lemma \ref{UNIFF2}, there is a fixed $0 < \bar{\theta} < 1$ such that $\pi^m < - m \, \bar{\theta}$ implies that $ |\bar{g}^m(\pi^m) \cdot \rho^m - \pi^m| \geq |\pi^m|/2 \geq \bar{\theta} \, m/2$. Hence, invoking $\sup_m \|\bar{g}^m(\pi^m) \cdot \rho^m - \pi^m\|_{L^{1,\infty}} < +\infty$ again,
\begin{align}\label{rbarvweakII}
\limsup_{m \rightarrow \infty} m \, |\{x \, : \, \pi^m(x,t) < - m \, \bar{\theta} \}| < + \infty \, .
\end{align}
For $a > 0$ arbitrary consider the set $\omega_a^m(t) := \{x \in \Omega \, : \, |\widehat{p}(\rho^m(x,t)) - 1| > a\}$. Then, due to \eqref{rbarvweakII},
\begin{align}\label{rbarweakIII}
 \limsup_{m\rightarrow \infty} m \, \Big| \omega^m_a(t) \cap  \{x \, : \, \pi^m(x,t) < - m \, \bar{\theta}\} \Big| < +\infty \, .
\end{align}
In the complement $\{x \, : \, \pi^m(x,t) < - m \, \bar{\theta}\}^{\rm c}$, we know that $1+\pi^m(x,t)/m \geq 1-\bar{\theta}$ which, due to the definition of $\pi^m$ and $\widehat{p}$, implies that $\varrho^m > 1/\max \bar{g}^{\prime}(1-\bar{\theta}) = a_0 > 0$. Hence, exploiting the strict concavity of $\bar{g}$, we find in $\omega_a^m(t) \cap \{x \, : \, \pi^m(x,t) < - m \, \bar{\theta}\}^{\rm c}$ that
\begin{align*}
 \varrho^m \, \min_i  \Big\{\bar{g}_i(\widehat{p}(\rho^m)) - \bar{g}_i(1) - \bar{g}^{\prime}_i(\widehat{p}(\rho^m)) \, (\widehat{p}(\rho^m)-1) \Big\} \geq a_0 \, c(a) >0\\
\text{ with } \quad  c(a) := \inf_{|s-1| > a} \min_i \{\bar{g}_i(s) - \bar{g}_i(1) - \bar{g}^{\prime}_i(s) \, (s-1)\} \, .
\end{align*}
Now, \eqref{rbarvweakI} yields $\limsup_{m \rightarrow \infty} m \, \Big|\omega_a^m(t) \cap \{x \, : \, \pi^m(x,t) < - m \, \bar{\theta}\}^{\rm c}\Big| < +\infty$, and with \eqref{rbarweakIII},
\begin{align}\begin{split}\label{forphattozero}
& \limsup_{m\rightarrow \infty}  \, m \, |\omega_a^m(t)| <+\infty \quad \text{ for all } \quad a >0, \, t > 0 \, .
\end{split}
\end{align}
Next, we recall \eqref{Phiidentity} and $\pi^m/m = \widehat{p}(\rho^m) - 1$ allow to bound
\begin{align*}
 |1 - \rho^m \cdot \bar{\calv}| \leq |\rho^m| \, \int_0^1 |\bar{g}^{\prime\prime}(1+ \lambda \, (\widehat{p}(\rho^m)-1))| \, d\lambda \, |\widehat{p}(\rho^m)-1| \, .
\end{align*}
If $\hat{\pi}^m(\rho^m) \geq - \bar{\theta} \, m$, we have $\widehat{p}(\rho^m) \geq 1-\bar{\theta}$. Hence, if $\hat{\pi}^m(\rho^m) \geq - \bar{\theta} \, m$ and $\widehat{p}(\rho^m) \leq 2$, we get $| 1 - \rho^m \cdot \bar{\calv} | \leq C_1 \, |\widehat{\bar{p}}(\rho^m)-1|$. For $b >0$, consider now sets $\tilde{\omega}^m_b(t) := \{x \, : \, 
| 1 - \rho^m(x,t) \cdot \bar{\calv} | \geq b\}$. We have just shown that
\begin{align*}
\tilde{\omega}^m_b(t) \cap \{x \, : \, \pi^m \geq -\bar{\theta} \, m\} \cap \{x \, : \, \widehat{p}(\rho^m)\leq 2 \} \subseteq \omega^m_{a_1}(t) \quad \text{ with } \quad a_1 = b/C_1  \, . 
\end{align*}
Then, we can easily observe that $\tilde{\omega}^m_b(t) \subseteq \{x \, : \, \pi^m < -m \, \bar{\theta}\} \cup \omega_{a_1}^m(t) \cup \omega_{1}^m(t)$, 
and \eqref{rbarvweakII} and \eqref{forphattozero} imply that $\limsup_{m \rightarrow \infty} m \, |\tilde{\omega}_b^m(t)| < +\infty$ for all $b > 0$ and $t > 0$. Thus
\begin{align*}
\|\rho^m\cdot\bar{\calv} - 1\|_{L^1(\Omega)} \leq & b \, |(\tilde{\omega}_b^m(t))^{\rm c}| + \sup_m \|1-\rho^m\cdot\bar{\calv}\|_{L^{\gamma,\infty}} \, |\tilde{\omega}^m_b(t)|^{1-\frac{1}{\gamma}} \,
\leq  b \, |\Omega| + C \, \Big(\frac{1}{m}\Big)^{ 1-\frac{1}{\gamma}} \, .
\end{align*}
Thus $\limsup_{m\rightarrow \infty} \sup_{0 < t < \bar{\tau}} \|\rho^m\cdot\bar{\calv} - 1\|_{L^1(\Omega)} \leq b \, |\Omega|$. Since $b > 0$ was arbitrary, we can not let $b \rightarrow 0$, proving the strong convergence in $L^{1,\infty}(Q)$.
\end{proof}
\begin{lemma}\label{uamamoto}
We adopt the assumptions of Prop.\ \ref{EXIRESC}, and of Lemma \ref{Bogovencorelui}. Moreover, assume that the initial states satisfy $m \, \int_{\Omega} \bar{\rho}^{0,m} \cdot\calv - 1 \, dx \rightarrow 0$ and that $\calw_i^{0,m} := \int_{\Omega} \rho^{0,m}_i(x) \, dx/|\Omega|$ satisfies $\inf \{\calw_i^{0,m} \, : \, 1 = 1,\ldots,N, \, m \geq 1\} > 0$.
Then 
\begin{enumerate}[(i)]
\item\label{claim1} There are $C_1, \, C_2$ independent on $m$ such that for almost all $0 < t < \bar{\tau}$,
\begin{align*}
\frac{1}{m} \, \max_{k=1,\ldots,N-1} |(\bar{q}^m_k)_M(t)|^2 \leq C_1\, \left(\int_{\Omega_{\bar{\varrho}_{\min}/2,2\bar{\varrho}_{\max}}(t)} (\bar{f}^m(\bar{\rho}) + 1)^{\frac{1}{2}} \, dx\right)^2 + \frac{C_2}{m} \, |\nabla \bar{q}^m|_{L^1(\Omega)}^2 \, .
\end{align*}
\item \label{claim2} $\sup_{m \in \mathbb{N}} \|(\bar{q}^m)^{\prime}\|_{L^2(Q^{\rm R})} < + \infty$ and $\sup_{m\in \mathbb{N}} \frac{1}{\sqrt{m}}  \, \|\bar{q}^m\|_{L^2(Q^{\rm R})} <+\infty$.
\end{enumerate}
\end{lemma}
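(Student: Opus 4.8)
\textbf{Proof plan for Lemma \ref{uamamoto}.}

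The plan is to exploit the change of variables and the free energy in order to relate the mean value of the entropic variables $\bar q^m$ to the free energy $\bar f^m$, and then to bootstrap the $L^1$-in-space bound on the gradient into an $L^2$-in-space-time bound on $\bar q^m$ itself. First I would prove \eqref{claim1}. Recall that on a set where the total mass density $\varrho$ is pinched between $\bar\varrho_{\min}/2$ and $2\bar\varrho_{\max}$, the partial mass densities are controlled from above and the mole fractions $\hat x_i(\bar\rho)$ cannot all be tiny; more precisely, on $\Omega_{\bar\varrho_{\min}/2,2\bar\varrho_{\max}}(t)$ at least one mole fraction is bounded below by a universal constant. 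Since $\bar\mu_i = \bar g_i^m(\pi^m) + (1/\bar M_i)\ln\hat x_i$ and $q^m = $ (dual-basis combinations of $\bar\mu$), one obtains a pointwise bound $|\bar q^m(x,t)| \le C(|\bar\mu^m(x,t)| + 1)$ on that set, while $|\bar\mu^m|$ is in turn estimated through $|\bar g^m(\pi^m)\cdot\bar\rho - \pi^m| = \bar f^m(\bar\rho) + |\bar k(\bar\rho)|$ together with the $L^\infty$-bound on $\bar\Phi^m$ and Lemma \ref{pressandg}, which after division by $m$ and use of the concavity identities \eqref{gabiskrank}, \eqref{gabiskrank2} yields $|\bar q^m|/\sqrt m \le C(\bar f^m(\bar\rho)+1)^{1/2}$ on the good set. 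Combining this with the Poincar\'e--Sobolev inequality for the mean value — $|(\bar q^m_k)_M(t)| \le \frac{1}{|E|}\int_E |\bar q^m_k|\,dx + |\nabla\bar q^m|_{L^1(\Omega)}$ with $E = \Omega_{\bar\varrho_{\min}/2,2\bar\varrho_{\max}}(t)$, whose measure is bounded below by $|\Omega|/2$ for $m\ge m_1$ by Lemma \ref{Meslemma} — gives exactly the stated inequality after squaring.

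Next I would prove \eqref{claim2}. For the components $q^m_1,\ldots,q^m_{N-2}$, the bound $\sup_m\|(\bar q^m)'\|_{L^2(Q^{\rm R})}<\infty$ follows from the Poincar\'e inequality: $\|(\bar q^m)' - ((\bar q^m)')_M\|_{L^2}$ is controlled by $\|\nabla\bar q^m\|_{L^2}$, which is uniformly bounded by \eqref{DISSIPm}/\eqref{lesbounds}, so it remains to bound the time-dependent means $((\bar q^m_k)')_M(t)$ in $L^2(0,\bar\tau)$ for $k\le N-2$; these are handled by the estimate of \eqref{claim1} since $\int_\Omega(\bar f^m(\bar\rho)+1)\,dx$ is uniformly bounded in $L^\infty(0,\bar\tau)$ by the energy inequality and $m^{-1}|\nabla\bar q^m|_{L^1(\Omega)}^2 \le m^{-1}|\Omega|\,|\nabla\bar q^m|_{L^2(\Omega)}^2 \to 0$ in $L^1(0,\bar\tau)$. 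For the full vector in the weighted norm $m^{-1/2}\|\bar q^m\|_{L^2(Q^{\rm R})}$, one splits $\bar q^m = (\bar q^m - (\bar q^m)_M) + (\bar q^m)_M$: the first part is $O(m^{-1/2})\cdot O(\|\nabla\bar q^m\|_{L^2})\to 0$, and for the second part \eqref{claim1} gives $m^{-1}|(\bar q^m_k)_M(t)|^2 \le C_1(\int_\Omega(\bar f^m(\bar\rho)+1)^{1/2}dx)^2 + C_2 m^{-1}|\nabla\bar q^m|_{L^1(\Omega)}^2$, and integrating in $t$ and using $\int_\Omega(\bar f^m+1)^{1/2}\le |\Omega|^{1/2}(\int_\Omega(\bar f^m+1))^{1/2}$ together with the uniform $L^\infty(0,\bar\tau)$ bound on $\int_\Omega \bar f^m$ (note $\bar f^m \ge -c$ and the dissipation inequality bounds $\int_\Omega(\bar f^m(\bar\rho)+ \tfrac{\bar\varrho}{2}|\bar v|^2)$) gives a uniform bound.

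The main obstacle I anticipate is the accurate control of $(\bar q^m_{N-1})_M$, i.e. the special direction $\xi^{N-1}=\bar\calv$: unlike the other components, $q^m_{N-1}$ appears in $\pi^m = \tilde\pi^m + q^m_{N-1}$ and is therefore entangled with the (only $L^1$-bounded) pressure. The estimate \eqref{claim1} is designed precisely so that the good-set contribution absorbs this, because on $\Omega_{\bar\varrho_{\min}/2,2\bar\varrho_{\max}}(t)$ Lemma \ref{pressandg} bounds $|\tilde\pi^m|$, hence $|\pi^m|$, hence $|q^m_{N-1}|$, by $c(1+|q^m|)$ rather than by $\tilde\pi^m$ alone; one must be careful that the resulting inequality for $(\bar q^m_{N-1})_M$ is not circular, which is why it is stated with the free-energy term on the right (coming from $\bar f^m \ge \tfrac12|\bar g^m\cdot\bar\rho - \pi^m|$ type bounds near the good set) and the gradient term, both of which are controlled independently. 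A secondary technical point is verifying that the hypothesis $m\int_\Omega(\bar\rho^{0,m}\cdot\calv-1)\,dx\to 0$ and the lower bound on $\calw_i^{0,m}$ are actually used only where needed (they enter, via \eqref{tillilee} and Lemma \ref{Philemma}\eqref{prop2}, when one wants the sharper conclusion $m^{-1/2}\|q^m\|\to 0$ of Corollary \ref{l1boundandtozero}, but for the uniform \emph{boundedness} claimed here the weaker input suffices); I would state the two claims so that \eqref{claim1} needs none of these and \eqref{claim2} needs only the energy bound plus \eqref{lesbounds} and Lemma \ref{Meslemma}.
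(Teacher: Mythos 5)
Your proposal has two genuine gaps. First, in (\ref{claim1}) the central pointwise estimate is false: on $\Omega_{\bar{\varrho}_{\min}/2,2\bar{\varrho}_{\max}}(t)$ the fact that one mole fraction is $\geq 1/N$ controls only the \emph{largest} chemical potential, not the vector $\bar{\mu}^m$; an individual $\ln \hat{x}_i(\bar{\rho}^m)$ can tend to $-\infty$ while the total density, the pressure and the free energy all stay bounded, because $\bar{f}^m$ only contains $\bar{\rho}_i \ln \hat{x}_i$, not $\ln \hat{x}_i$ itself. Hence your claimed bound $|\bar{q}^m|/\sqrt{m} \leq C\,(\bar{f}^m(\bar{\rho})+1)^{1/2}$ on the good set cannot hold, and integrating it is not a valid route to (\ref{claim1}). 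The paper avoids this by working with the shifted map $T_i(w)=w_i-\max w$ and $q^*=T(\Pi \bar{q}^m)$, establishing only the $\bar{\rho}_i$-weighted bound $\bar{\rho}_i\,|\bar{\mu}_{i_1}-\bar{\mu}_i| \leq C(|\pi|+1)$ on $\Omega_{a_1,b_1}(t)$, integrating, and then dividing by $\int_{\Omega_{a_1,b_1}(t)}\bar{\rho}_i\,dx$, which is bounded below by $|\Omega|\inf_{i,m}\calw_i^{0,m}/2$ via species-mass conservation, Lemma \ref{Meslemma} and the $L^{\gamma,\infty}$ bound. So your final remark that claim (\ref{claim1}) ``needs none of these'' hypotheses is wrong: the lower bound $\inf_{i,m}\calw_i^{0,m}>0$ is exactly what converts the density-weighted estimate into a bound on the mean values $(\bar{q}^m_k)_M(t)$; only the hypothesis $m\int_{\Omega}(\bar{\rho}^{0,m}\cdot\calv-1)\,dx\to 0$ is reserved for Corollary \ref{l1boundandtozero}.

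Second, the first half of (\ref{claim2}), $\sup_m\|(\bar{q}^m)'\|_{L^2(Q^{\rm R})}<\infty$, does not follow from (\ref{claim1}) as you assert: claim (\ref{claim1}) controls $|(\bar{q}^m_k)_M(t)|$ only after division by $\sqrt{m}$, i.e.\ it allows the means to grow like $\sqrt{m}$, so your reduction ``Poincar\'e plus the estimate of (i) for the means'' is a non sequitur for a uniform-in-$m$ bound. The paper needs a second, parallel argument: it introduces $\tilde{\mu}_i := \bar{\mu}_i - \bar{\calv}_i\,\pi$, for which the $m$-growth cancels because $|\bar{g}^m_i(\pi)-\bar{\calv}_i\,\pi| \leq C\,(\bar{f}^m(\bar{\rho})+1)$ on the good set (the concavity identity \eqref{gabiskrank2}), and exploits that the dual vectors $\eta^1,\ldots,\eta^{N-2}$ annihilate both $1^N$ and $\xi^{N-1}=\bar{\calv}$, so that the resulting mean-value estimate for $T(\Pi'\bar{q}^m)$ gives the uniform bound on the first $N-2$ components with no factor of $m$ (and this step again uses the $\calw_i^{0,m}$ lower bound). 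Your treatment of the weighted bound $m^{-1/2}\|\bar{q}^m\|_{L^2(Q^{\rm R})}$, by contrast, matches the paper and is fine once (\ref{claim1}) is available.
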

\begin{proof}
Let $\Omega^{\rm R}_+(t) := \{\calx \, : \, \bar{\varrho}(\calx,t) > 0\} = \{\calx \, : \, \pi(\calx,t) > - m\}$. For $\calx \in \Omega^{\rm R}_+(t)$, we let 
\begin{align}\label{mudef}
\bar{\mu}_i(\calx,t) := \bar{g}^m_i(\pi(\calx,t)) + \frac{1}{\bar{M}_i} \, \ln \bar{x}_i(\calx,t) \, .
\end{align}
Further we let $i_1 = i_1(\calx,t)$ be the largest index such that $\bar{\mu}_{i_1} = \max_{i=1,\ldots, N} \bar{\mu}_i$. For all $i \neq i_1$, we obtain in $\Omega^{\rm R}_+(t)$ that $ 0 \geq \bar{\mu}_i - \bar{\mu}_{i_1} \geq \bar{g}^m_i(\pi) - \bar{g}^m_{i_1}(\pi)  + \frac{1}{\bar{M}_i} \, \ln \bar{x}_i$, and this implies that 
\begin{align*}
\bar{\rho}_i\, |\bar{\mu}_{i_1} - \bar{\mu}_i| \leq \bar{\rho}_i \, |\bar{g}^m_i(\pi) - \bar{g}^m_{i_1}(\pi)| +  \frac{\bar{\varrho}}{e \, \min M} \, .
\end{align*}
Let $a_1 =\bar{\varrho}_{\min}/2$ and $ b_1 = 2\bar{\varrho}_{\max}$. With $\widehat{p}_0$ and $\widehat{p}_1$ from \eqref{gabiskrank3}, recall that $|\bar{g}^m(\pi)| \leq c\,|\pi|$ over $\Omega_{a_1,b_1}(t)$, with $c := \sup_{\widehat{p}_0(1/b_1) < s < \widehat{p}_1(1/a_1)} |\bar{g}^{\prime}(s)|$. After some elementary steps, we thus obtain that
\begin{align}\label{lachose}
 \bar{\rho}_i \, |\bar{\mu}_{i_1} - \bar{\mu}_i| \leq C(a_1,b_1) \, (|\pi| + 1) \, \quad \text{ over } \quad \Omega_{a_1, b_1}(t) \, .
\end{align}
For $i = 1,\ldots,N$, and $w \in \mathbb{R}^N$, we define $T_i(w) := w_i - \max w$. Then the map $T$ is Lipschitz continuous on $\mathbb{R}^N$, it assumes only negative values, and $T(w+ \lambda \, 1^N) = T(w)$ for all $(w, \, \lambda) \in \mathbb{R}^N \times \mathbb{R}$. We define $q_m^* = T(\Pi \bar{q}^m)$. Using the chain rule for Sobolev functions, we obtain that $\|\nabla q^*\|_{L^2(Q^R)} \leq \|\partial T\|_{L^{\infty}(\mathbb{R}^N)} \, \|\nabla \bar{q}\|_{L^2(Q^{\rm R})}$. In $\Omega_{a_1,b_1}(t)$, \eqref{lachose} implies that
\begin{align*}\begin{split}
 \bar{\rho}_i \, |q^*_i| \leq & C_1 \, (|\pi| + 1) \, ,\\
 \bar{\rho}_i \, |(q^*_i)_M(t)| \leq & C_1 \, (|\pi| + 1) + b_1 \, |q^*_i - (q^*_i)_M(t)| \, . 
\end{split}
 \end{align*}
We integrate this inequality and we obtain that
\begin{align}\label{amamoto}
 |(q^*_i)_M(t)| \, \int_{\Omega_{a_1,b_1}(t)} \bar{\rho}_i \, dx \leq C_1 \, \int_{\Omega_{a_1,b_1}(t)} (|\pi|+1) \, dx +b_1\, |q^*_i - (q^*_i)_M(t)|_{L^1(\Omega)} \, .
\end{align}
Since $a_1 < \bar{\varrho}_{\min}$ and $b_1 > \bar{\varrho}_{\max}$ the Lemma \ref{Meslemma} shows that
\begin{align*}
\sup_{0 < t < \bar{\tau}} |\Omega_{a_1,b_1}^{\rm c}(t)| \leq k_1^{-1} \, \|\bar{\rho}^m \cdot \bar{\calv} - 1\|_{L^{1,\infty}} \, .
\end{align*}
Hence, with $\calw_i^{0,m} := \int_{\Omega} \rho^{0,m}_i(x) \, dx/|\Omega|$, it also follows that
\begin{align*} 
\int_{\Omega_{a_1,b_1}(t)} \bar{\rho}_i \, dx \geq & \int_{\Omega} \bar{\rho}_i \, dx - \|\bar{\rho}_i\|_{L^{\gamma,\infty}} \, |\Omega_{a_1,b_1}^{\rm c}(t)|^{1-\frac{1}{\gamma}}\\
\geq & |\Omega| \, \calw_{i}^{0,m} - k_1^{\frac{1}{\gamma}-1} \, \sup_m\|\bar{\rho}^m\|_{L^{\gamma,\infty}} \, (\|\bar{\rho}^m \cdot \bar{\calv} - 1\|_{L^{1,\infty}})^{1-\frac{1}{\gamma}} \, . 
\end{align*}
For $m$ large enough, we achieve for $i = 1,\ldots,N$ that $\int_{\Omega_{a_1,b_1}(t)} \bar{\rho}_i \, dx \geq |\Omega| \, \inf_{i,m} \calw^{0,m}_{i}/2$. 
Therefore, invoking also the Poincar\'e inequality to estimate $q-(q)_M$ in \eqref{amamoto}, it follows that
\begin{align}\label{yamamoto0}
\frac{1}{2} \, \inf_{i,m} \calw_i^{0,m} \, \max_{i=1,\ldots,N} |(q^*_i)_M(t)| \leq C_1\, \int_{\Omega_{a_1,b_1}(t)} |\pi| \, dx + C_2 \, |\nabla \bar{q}|_{L^1(\Omega)} \, .
\end{align}
We further note that
\begin{align*}
\Big|\int_{0}^1 \bar{g}^{\prime}(1+\theta \, \frac{\pi^m}{m}) - \bar{g}^{\prime}(1+\frac{\pi^m}{m}) \, dt\theta \cdot \bar{\rho}\Big| \, |\pi^m| =   |\bar{g}^m(\pi^m) \cdot \bar{\rho} - \pi^m| = \bar{f}^m(\bar{\rho}) +|\bar{k}(\bar{\rho})| \, ,
\end{align*}
to show that, on $\Omega_{a_1,b_1}(t)$ we can bound 
\begin{align*}
\frac{\bar{\varrho}}{2} \,  \inf_{\widehat{p}_0(1/a_1) \leq s \leq \widehat{p}_1(1/b_1)}\min |\bar{g}^{\prime\prime}(s)|  \,  \frac{\pi^2}{m}  \leq \bar{f}^m(\bar{\rho}) +|\bar{k}(\bar{\rho})| \, .
\end{align*}
Thus, $\pi^2/m \leq c \, (\bar{f}^m(\bar{\rho}) + 1)$. Squaring \eqref{yamamoto0} and dividing by $m$ yields
\begin{align*}
\frac{1}{m} \, \max_{i=1,\ldots,N} |(q^*_i)_M(t)|^2 \leq \tilde{C}_1\, \left(\int_{\Omega_{a_1,b_1}(t)} (\bar{f}^m(\bar{\rho}) + 1)^{\frac{1}{2}} \, dx\right)^2 + \frac{\tilde{C}_2}{m} \, |\nabla \bar{q}|_{L^1(\Omega)}^2 \, .
\end{align*}
Now, the definition of $T$ implies that $\Pi \bar{q}^m = T(\Pi \bar{q}^m) + (\max \Pi \bar{q}^m) \, 1^N$. Since $\eta^k \cdot 1^N = 0$ for $k = 1,\ldots,N-1$, it follows that $\bar{q}_k^m = \eta^k \cdot T(\Pi \bar{q}^m)$ and \eqref{claim1} follows.

Ad \eqref{claim2}. It is readily seen by means of \eqref{claim1} that $ \sup_{m \in \mathbb{N}} \frac{1}{m} \, \|(\bar{q}^m)_M\|_{L^2(0,\bar{\tau})}^2 < +\infty$. Due to the Poincar\'e inequality, this also means that $\bar{q}^m/\sqrt{m}$ is bounded in $L^2(Q^{\rm R}) $.

To prove the second bound, we start from \eqref{mudef} again, and we introduce
\begin{align*}
\tilde{\mu}_i := \bar{\mu}_i - \bar{\calv}_i \, \pi =   \bar{g}^m_i(\pi) - \bar{\calv}_i \, \pi  + \frac{1}{\bar{M}_i} \, \ln \bar{x}_i \, . 
\end{align*}
We choose an index $i_2$ such that $\tilde{\mu}_{i_2} = \max \tilde{\mu}$. Applying the same steps as just seen, we can obtain over $\Omega_{a_1,b_1}(t)$ that
\begin{align*}
\bar{\rho}_i \, |T_i(\tilde{\mu})| \leq \bar{\rho}_i \, |\bar{g}^m_i(\pi) - \bar{\calv}_i \, \pi - (\bar{g}^m_{i_2}(\pi) - \bar{\calv}_{i_2} \, \pi )| + \frac{b_1}{e \, \min M} \, .
\end{align*}
On $\Omega_{a_1,b_1}(t)$, we can bound $|\bar{g}^m(\pi) - \bar{\calv} \, \pi| \leq C \, (\bar{f}^m(\bar{\rho}) +1)$. It follows that
\begin{align*}
\Big|\Big(T_i(\tilde{\mu})\Big)_M(t)\Big| \leq \tilde{C}_1 \, \int_{\Omega} \bar{f}^m(\bar{\rho}) + 1 \, dx + \tilde{C}_2 \, \Big|T_i(\tilde{\mu}) - \Big(T_i(\tilde{\mu})\Big)_M\Big|_{L^1(\Omega)} \, . 
\end{align*}
We let $q^{**} := T(\Pi^{\prime} \, \bar{q})$ of which we control the gradient. 
Recall that $\tilde{\mu} = \Pi^{\prime} \bar{q} + \mathscr{M}(\bar{\varrho}, \, \bar{q})$ on $\Omega^{\rm R}_+(t)$. Hence $T(\tilde{\mu}) = T(\Pi^{\prime} \, \bar{q}) = q^{**}$ on $\Omega_{a_1,b_1}(t)$. Thus
\begin{align*}
 \|(q^{**})_M\|_{L^2(0,\bar{\tau})} \leq C \, (\|\bar{f}^m(\bar{\rho})\|_{L^{1,2}} + \|\nabla \bar{q}\|_{L^{1,2}}) \, .
\end{align*}
Using that $\Pi^{\prime} \bar{q} = q^{**} + \max \Pi^{\prime} \, \bar{q} \, 1^N$, we finally obtain the bound in $L^2(Q^{\rm R})$ for $\{\Pi^{\prime} \bar{q}^m\}$.
\end{proof}

\section{Proof of the relative energy inequality}\label{SStorm}

In this section we provide the proof of Prop.\ \ref{calculheuri} and Prop.\ \ref{calculFO}, and another variant of the relative energy inequality. Even for weak solutions, the vector fields $\rho$ and $\varrho \, v$ possess certain distributional time derivatives that generate weak continuity in time. For $u = (u_1, \, u_2, \, u_3): \, Q_{\bar{\tau}} \rightarrow \mathbb{R}^3$ and $r = (r_1,\ldots,r_N): \, Q_{\bar{\tau}} \rightarrow \mathbb{R}^N$ sufficiently smooth, we can therefore rely for all $0 \leq t \leq \bar{\tau}$ on the identity
\begin{align}\label{timederiv}
& \int_{\Omega}\Big( \varrho \, \frac{|u|^2}{2} - \varrho \, u\cdot v - f(r) - \hat{\mu}(r) \cdot (\rho-r) \Big) \, dx \Big|_{0}^{t} \nonumber\\
& \qquad = \int_0^t  \frac{d}{dt}\int_{\Omega} \Big( \varrho \, \frac{|u|^2}{2} - \varrho \, u\cdot v - f(r) - \hat{\mu}(r) \cdot (\rho-r) \Big) \, dxd\tau \, .
\end{align}
Here $f = f^m$ or $f = \bar{f}^m$ according to which of the problems (IBVP) or ($\overline{\text{ IBVP}}^m$) is considered.

The time--derivative in the right-hand side is next computed. Using the continuity equation and the Navier--Stokes equations, we can replace the occurrences of $\partial_t \varrho$ with $-\divv (\varrho v)$, and those of $\partial_t (\varrho \, v)$ with $\divv(-\varrho \, v\otimes v + \mathbb{S} - p \, \mathbb{I}) + \varrho \, b$ in the sense of distributions. This procedure yields
\begin{align}\label{timederiv1}
\frac{d}{dt} \int_{\Omega}\Big( \varrho \, \frac{|u|^2}{2} - \varrho \, v\cdot u\Big) \, dx = 
 \int_{\Omega} & \varrho \, \partial_t u \cdot(u - v) + \varrho \, (v \cdot \nabla u) \cdot u \\
 & - \varrho \, (v \cdot  \, \nabla) u  \cdot v + \mathbb{S}(\nabla v) \, : \, \nabla u - p \, \divv u - \varrho \, b \cdot u \, dx\nonumber \, . 
\end{align}
We next want to compute the remaining terms in \eqref{timederiv}.
Note that $\partial_t r \cdot \hat{\mu}(r) = \partial_t f(r)$. The weak formulation of \eqref{mass} tested with $\psi = \hat{\mu}(r)$ yields
\begin{align}\label{timederiv2}
\frac{d}{dt} \int_{\Omega} \rho \cdot \hat{\mu}(r) \, dx - \int_{\Omega} \rho \cdot \partial_t \hat{\mu}(r) \, dx
 = \int_{\Omega} (\rho \, v + J) \, : \, \nabla \hat{\mu}(r) \, dx \, .
\end{align}
Collecting the identities \eqref{DISSIP}, \eqref{in1} and \eqref{timederiv}, \eqref{timederiv1} and \eqref{timederiv2}, we obtain that
\begin{align}\label{in2}
 \mathcal{E}(t) \leq & \mathcal{E}(0)  - \int_{0}^t\int_{\Omega} \mathbb{S}(\nabla v) \, : \,\nabla (v-u) + \zeta^{\rm Diff} + J \, : \, \nabla \hat{\mu}(r) \, dxd\tau\nonumber \\
  & + \int_0^t\int_{\Omega} \varrho \, (\partial_t u + (v\cdot \nabla )u) \cdot(u - v)  -  p \, \divv u + \varrho \, b\cdot (v-u) \\
 & \phantom{+ \int_0^t\int_{\Omega} } -\partial_t \hat{\mu}(r) \cdot (\rho - r) - \rho \, v \, : \, \nabla \hat{\mu}(r) \, dxd\tau \, .\nonumber
\end{align}
In order to use this identity in connection with solutions to the incompressible system, it is useful to re-express
\begin{align*}
& \int_{\Omega} \varrho \, (\partial_t u + (v\cdot \nabla )u) \cdot(u - v) \, dx = \int_{\Omega} |r|_1 \, (\partial_t u + (u \cdot \nabla)u) \cdot(u - v) \, dx + \mathcal{R}^1(t) \, ,\\
& \mathcal{R}^1(t) :=  \int_{\Omega} (\varrho - |r|_1) \, (\partial_t u + (v\cdot \nabla )u) \cdot(u - v) + |r|_1 \, [(v-u)\cdot \nabla ]u \cdot(u - v)  \, dx \, .
\end{align*}
Here we use the notation $|r|_1:= \sum_{i=1}^N r_i$.
Next we let  $(\rho^{\infty}, \, p^{\infty}, \, v^{\infty})$ be a solution to the incompressible model. With $r = \rho^{\infty}$, $|r|_1 =: \varrho^{\infty}$ and $u = v^{\infty}$, this implies that $\varrho^{\infty} \, (\partial_t v^{\infty} + (v^{\infty}\cdot \nabla )v^{\infty}) +\nabla p^{\infty} = \divv \mathbb{S}(\nabla v^{\infty}) + \varrho^{\infty} \, b$. From \eqref{in2}, we thus infer
\begin{align*}\begin{split}
 \mathcal{E}(t) \leq  & \mathcal{E}(0) - \int_{0}^t\int_{\Omega} \mathbb{S}(\nabla (v -v^{\infty})) \, : \,\nabla (v-v^{\infty}) +\zeta^{\rm Diff} + J \, : \, \nabla \hat{\mu}(\rho^{\infty}) \, dxd\tau \\
 + & \int_0^t\int_{\Omega}  - p \, \divv v^{\infty} - \nabla p^{\infty} \cdot(v^{\infty} - v) +(\varrho-\varrho^{\infty}) \, b \cdot (v-v^{\infty})  \, dxd\tau\\
-& \int_0^t\int_{\Omega} \partial_t \hat{\mu}(\rho^{\infty}) \cdot (\rho - \rho^{\infty}) + \rho \, v \, : \, \nabla \hat{\mu}(\rho^{\infty}) \, dxd\tau 
 +  \int_{0}^t \mathcal{R}^1(\tau) \, d\tau \, .
\end{split}
\end{align*}
In the incompressible case, the state is restricted by the constraint \eqref{CONSTRAINT} and, using the condition that $\partial_pg(p^0) = \calv$, we have the identities $1 = \sum_{i=1}^N \rho^{\infty}_i \, \calv_i = \sum_{i=1}^N \rho^{\infty}_i \, \partial_p g_i(p^0)$. It follows that $\hat{p}(\rho^{\infty}) = p^0$. We define the chemical potentials $\mu^{\infty}_i$ for the incompressible system via \eqref{CHEMPOTincom}. Then, under the condition that $g^m(p^0) = 0$ (or $\bar{g}^m(1) = 0$ for the rescaled problem), it follows that $ \mu^{\infty}_i = p^{\infty} \, \calv_i + RT/M_i \, \ln \hat{x}_i(\rho^{\infty}) = p^{\infty} \, \calv_i + \hat{\mu}_i(\rho^{\infty})$.
Hence $\nabla \hat{\mu}(\rho^{\infty}) = \nabla (\mu^{\infty} - \calv \, p^{\infty})$. Moreover, the mass continuity equations \eqref{mass} imply that
\begin{align*}
\int_{0}^t\int_{\Omega} (\rho \, v + J) \, : \, \calv \otimes \nabla p^{\infty} \, dxd\tau = - \int_{0}^t \int_{\Omega} (\rho \cdot \calv- 1) \, \partial_t p^{\infty} \, dxd\tau \\
+ \int_{\Omega} \{(\rho(x,t) \cdot \calv - 1) \, p^{\infty}(x, \, t) - (\rho^0(x) \cdot \calv- 1) \, p^{\infty}(x, \, 0)\} \, dx \, .
\end{align*}
With the abbreviation $\tilde{\mathcal{E}}(t) := \mathcal{E}(t) - \int_{\Omega} (\rho(x,t) \cdot \calv - 1) \, p^{\infty}(x, \, t) \, dx $, we obtain that
\begin{align*}
& \tilde{\mathcal{E}}(t) \leq  \tilde{\mathcal{E}}(0) - \int_{0}^t\int_{\Omega} \mathbb{S}(\nabla (v - v^{\infty})) \, : \,\nabla (v-v^{\infty}) + \zeta^{\rm Diff} +  J \, : \, \nabla \mu^{\infty} \, dxd\tau\nonumber \\
& + \int_0^t\int_{\Omega}   p \, \divv v^{\infty} + \nabla p^{\infty} \cdot(v^{\infty} - v) + (\varrho-\varrho^{\infty}) \, b \cdot( v-v^{\infty}) - (\rho \cdot \calv - 1) \, \partial_t p^{\infty} \nonumber \\
& \phantom{+ \int_0^t\int_{\Omega}} - \partial_t \hat{\mu}(\rho^{\infty}) \cdot (\rho - \rho^{\infty}) + \rho \, v \, : \, \nabla \mu^{\infty} \, dxd\tau  
+ \int_{0}^t \mathcal{R}^1(\tau) \, d\tau \, .
\end{align*}
The next steps consist of transforming the right-hand sides in order to obtain quadratic remainders. At first, we can expand $\rho \, v \,  = (\rho - \rho^{\infty}) \, (v-v^{\infty}) + (\rho - \rho^{\infty}) \, v^{\infty} + \rho^{\infty} \, v$. We verify that
\begin{align*}
 \rho^{\infty} \, v \, : \, \nabla \mu^{\infty} =& \rho^{\infty} \, v \, : \, \nabla \hat{\mu}(\rho^{\infty}) + \rho^{\infty}\cdot \calv \, \, v\cdot \nabla p^{\infty}\\
 =& \rho^{\infty} \, v \, : \, \nabla \Big(g(p^0) + \frac{RT}{M} \, \ln \hat{x}(\rho^{\infty})\Big) + \rho^{\infty}\cdot \calv \, v\cdot \nabla p^{\infty} =  v\cdot \nabla p^{\infty} \, .
\end{align*}
Here we used that $\sum_{i=1}^N\rho^{\infty}_i \, \nabla \hat{x}_i(\rho^{\infty}) = 0 $ and $\sum_{i=1}^{N} \calv_i \, \rho^{\infty}_i =1$. Hence we attain
\begin{align}\label{in6tris}
& \tilde{\mathcal{E}}(t) \leq  \tilde{\mathcal{E}}(0) - \int_{0}^t\int_{\Omega} \mathbb{S}(\nabla (v - v^{\infty})) \, : \,\nabla (v-v^{\infty}) + \zeta^{\rm Diff} + J \, : \, \nabla \mu^{\infty} \, dxd\tau \nonumber\\
& - \int_0^t\int_{\Omega}   p \, \divv v^{\infty} + \nabla p^{\infty} \cdot v^{\infty} +\partial_t \hat{\mu}(\rho^{\infty}) \cdot (\rho - \rho^{\infty}) +(\rho-\rho^{\infty}) \, v^{\infty} \, : \, \nabla \mu^{\infty} \, dxd\tau \nonumber \\
& - \int_{0}^t \int_{\Omega} (\rho \cdot \calv - 1) \, \partial_t p^{\infty} \, dxd\tau + \int_{0}^t (\mathcal{R}^1(\tau)+\mathcal{R}^2(\tau)) \, d\tau \, , \\
\mathcal{R}^2(\tau) & = \int_{\Omega}  (\rho - \rho^{\infty}) \, (v-v^{\infty}) \,:\, \nabla \mu^{\infty} + (\varrho-\varrho^{\infty}) \, b \cdot (v-v^{\infty})\, dx \, .
\nonumber
\end{align}
This is the first basic inequality valid for every type of weak solution. 
Next we want to use the stabilisation properties of diffusion, and there will be a branching in the discussion.

\subsection{Positive solutions and weak solutions of type-I}

In this case we can rely on $\zeta^{\rm Diff} = - J \, : \, \nabla \Pi q = M(\rho) \nabla \Pi q \, : \, \nabla \Pi q$, where the meaning of the variable $q$ is explained in \eqref{qplouc}.
With $ \mathscr{D} := \Pi^{\sf T} M(\rho)\, \Pi \nabla (q-q^{\infty}) \, : \, \nabla (q-q^{\infty}) \geq 0$, it follows that
\begin{align*}
& \zeta^{\rm Diff} + J \, : \, \nabla \mu^{\infty} = \Pi^{\sf T} \, M(\rho)  \Pi \, \nabla (q-q^{\infty}) \, : \, \nabla q \, \\
 =&  \mathscr{D} + \Pi^{\sf T} (M(\rho)-M(\rho^{\infty})) \, \Pi \nabla (q-q^{\infty}) \, : \, \nabla q^{\infty} + M(\rho^{\infty})  \Pi \, \nabla (q-q^{\infty}) \, : \, \nabla \mu^{\infty}\,\\
 =&  \mathscr{D} + \Pi^{\sf T} (M(\rho)-M(\rho^{\infty})) \, \Pi \nabla (q-q^{\infty}) \, : \, \nabla q^{\infty} - J^{\infty} \, : \,   \Pi \, \nabla (q-q^{\infty}) \, .
 \end{align*}
By means also of \eqref{in6tris}, we obtain that
\begin{align}\begin{split}\label{in8tris}
& \tilde{\mathcal{E}}^m(t) \leq  \tilde{\mathcal{E}}^m(0)- \int_{0}^t\int_{\Omega} \mathbb{S}(\nabla v - v^{\infty}) \, : \,\nabla (v-v^{\infty}) + \mathscr{D} -  J^{\infty} \, : \, \nabla(\Pi q- \mu^{\infty})  \, dxd\tau \\
& - \int_0^t\int_{\Omega}  p \, \divv v^{\infty} + \nabla p^{\infty} \cdot v^{\infty} +\partial_t \hat{\mu}(\rho^{\infty}) \cdot (\rho - \rho^{\infty}) + (\rho-\rho^{\infty}) \, v^{\infty} \, : \, \nabla \mu^{\infty} \, dxd\tau \\
& - \int_{0}^t \int_{\Omega} (\rho \cdot \bar{\calv} - 1) \, \partial_t p^{\infty} \, dxd\tau + \sum_{i=1}^3 \int_{0}^t  \mathcal{R}^{i}(\tau) \, d\tau \, ,
\end{split}
\end{align}
in which $ \mathcal{R}^{3}(t) :=- \int_{\Omega} (M(\rho) - M(\rho^{\infty})) \, (\Pi \nabla q- \nabla \mu^{\infty}) \, :\,\nabla \mu^{\infty} \, dx$.

We use the fact that $(\rho^{\infty},p^{\infty},v^{\infty})$ is a strong solution to the incompressible model (IBVP$^{\infty}$) to show that
\begin{align*}
 \int_{\Omega} J^{\infty} \, : \, \nabla (\Pi q - \mu^{\infty}) \, dx = & \int_{\Omega} (\partial_t \rho^{\infty} + \divv (\rho^{\infty} \,  v^{\infty})) \cdot (\Pi q - \mu^{\infty}) \, dx \, .
\end{align*}
For $0< a_0 < b_0 < + \infty$, $0 < t < \bar{\tau}$ we let $\Omega_{a_0,b_0}(t) = \{x \, : \, a_0 \leq \varrho(x,t) \leq b_0\}$. On $\Omega_{a_0,b_0}(t)$, the density is finite and strictly positive. Hence, the entire vector of chemical potentials is finite almost everywhere, and it obeys $\mu = \Pi q + \mathscr{M}(\varrho, \, q) \, 1^N$ (cf.\ \eqref{lesidentes}). All densities are strictly positive on this set, and we can split $\mu =g^m(p) + Dk(\rho)$ with $Dk(\rho) = (RT/M) \, \ln \hat{x}(\rho)$. Now,
due to the continuity equation for (IBVP$^{\infty}$) ($\partial_t\varrho^{\infty} + \divv(\varrho^{\infty} \, v^{\infty}) = 0$), we obtain the identity
\begin{align*}
 (\partial_t \rho^{\infty} + \divv (\rho^{\infty} \, v^{\infty})) \cdot (\Pi q - \mu^{\infty}) = (\partial_t \rho^{\infty} + \divv (\rho^{\infty} \, v^{\infty})) \cdot (\mu - \mu^{\infty}) \quad \text{ in } \quad \Omega_{a_0,b_0}(t) \, .
\end{align*}
We introduce the abbreviation $\eta^{\infty} := \partial_t \rho^{\infty} + \divv (\rho^{\infty} \, v^{\infty})$. It follows that
\begin{align}\label{splitmu}
 \eta^{\infty} \cdot (\mu - \mu^{\infty})
  = \eta^{\infty} \cdot (Dk(\rho) - Dk(\rho^{\infty})) + \eta^{\infty} \cdot g(p)  - \divv v^{\infty}\, p^{\infty} \, .
 \end{align}
Moreover, since $D^2k(\rho^{\infty}) \, \rho^{\infty} = 0$,
 \begin{align}\label{moreover}
  \partial_t \hat{\mu}(\rho^{\infty}) \cdot (\rho-\rho^{\infty}) =&  \partial_t \rho^{\infty} \cdot D^2k(\rho^{\infty}) \, (\rho - \rho^{\infty}) \nonumber \\
   = & \eta^{\infty}\cdot D^2k(\rho^{\infty}) \, (\rho - \rho^{\infty})- \divv (\rho^{\infty} \, v^{\infty}) \, D^2k(\rho^{\infty}) \, (\rho - \rho^{\infty}) \nonumber\\
  =&  \eta^{\infty}\cdot D^2k(\rho^{\infty}) \, (\rho - \rho^{\infty}) - v^{\infty} \cdot \nabla Dk(\rho^{\infty}) \, (\rho - \rho^{\infty}) \, .
 \end{align}
By means of \eqref{splitmu} and \eqref{moreover}, we see that
\begin{align*}
&  \int_{\Omega} J^{\infty} \, : \, \nabla(\Pi  q - \mu^{\infty}) - \partial_t\hat{\mu}^m(\rho^{\infty}) \cdot (\rho-\rho^{\infty}) \, dx = \int_{\Omega} v^{\infty} \cdot \nabla Dk(\rho^{\infty}) \cdot (\rho-\rho^{\infty}) \, dx\\
  & +\int_{\Omega_{a_0,b_0}(t)} \eta^{\infty} \cdot (Dk(\rho) - Dk(\rho^{\infty}) - D^2k(\rho^{\infty}) \, (\rho-\rho^{\infty})) \, dx\\
 & + \int_{\Omega_{a_0,b_0}(t)} \eta^{\infty} \cdot g^m(p) - \divv v^{\infty} \, p^{\infty}\, dx + \int_{\Omega_{a_0,b_0}^{\rm c}(t)} \eta^{\infty} \cdot  \big(\Pi q - \mu^{\infty}  - D^2k(\rho^{\infty}) (\rho-\rho^{\infty})\big) \, dx\, .
\end{align*}
Since $\Pi q = \Pi^{\prime}q + q_{N-1} \, \bar{\calv}$, we also note also that
\begin{align*}
& \int_{\Omega_{a_0,b_0}^{\rm c}(t)} \eta^{\infty} \cdot  \big(\Pi q - \mu^{\infty}  - D^2k(\rho^{\infty}) (\rho-\rho^{\infty})\big) \, dx \\
= & \int_{\Omega_{a_0,b_0}^{\rm c}(t)} \eta^{\infty} \cdot  \big(\Pi^{\prime}( q - q^{\infty})  - D^2k(\rho^{\infty}) (\rho-\rho^{\infty})\big) + \divv v^{\infty} \, (q_{N-1} - q^{\infty}_{N-1}) \, dx \, .
\end{align*}
Splitting $\Omega_{a_0,b_0}(t)$ into $B_{s_0,+}(t)$ and its complement, we introduce
\begin{align*}
& \mathcal{R}^{4}(t) := \int_{B_{s_0,+}(t)} (\partial_t \rho^{\infty} + \divv (\rho^{\infty} v^{\infty})) \cdot (Dk(\rho) - Dk(\rho^{\infty}) - D^2k(\rho^{\infty}) \, (\rho-\rho^{\infty})) \, dx \, ,\\
& \mathdutchcal{E}^{1}(t) :=  \int_{\Omega_{a_0,b_0}^{\rm c}(t)} (\partial_t \rho^{\infty} + \divv (\rho^{\infty} v^{\infty})) \cdot  \big(\Pi^{\prime} (q - q^{\infty}) - D^2k(\rho^{\infty}) (\rho-\rho^{\infty})\big) \, dx \\
& +  \int_{\Omega_{a_0,b_0}(t) \setminus B_{s_0,+}(t)} (\partial_t \rho^{\infty} + \divv (\rho^{\infty} v^{\infty})) \cdot (Dk(\rho) - Dk(\rho^{\infty}) - D^2k(\rho^{\infty}) \, (\rho-\rho^{\infty})) \, dx\, .
\end{align*}
We get
\begin{align*}
& \tilde{\mathcal{E}}^m(t) \leq  \tilde{\mathcal{E}}^m(0) - \int_{0}^t\int_{\Omega} \mathbb{S}(\nabla (v -  v^{\infty})) \, : \,\nabla (v-v^{\infty}) +  \mathscr{D} \, dxd\tau\nonumber\\
& + \int_0^t\int_{\Omega}\chi_{\Omega_{a_0,b_0}(t)} (\eta^{\infty} \cdot g(p) - \divv v^{\infty} \,p^{\infty})  + \chi_{\Omega_{a_0,b_0}^{\rm c}(t)} \, \divv v^{\infty} \, (q_{N-1} - q^{\infty}_{N-1}) \, dx d\tau \nonumber\\
& + \int_0^t\int_{\Omega} (p^{\infty}- p) \, \divv v^{\infty}  - (\rho-\rho^{\infty}) \, v^{\infty}\, : \, \nabla (\mu^{\infty} - Dk(\rho^{\infty}))- (\rho \cdot \bar{\calv} - 1) \, \partial_t p^{\infty} \, dxd\tau \nonumber \\
& + \int_0^t \mathdutchcal{E}^{1}(\tau) + \sum_{i=1}^4 \mathcal{R}^{i}(\tau) \, d\tau \, .
\end{align*}
We define $p = \tilde{p} + q_{N-1}$ and $p^{\infty} = \tilde{p}^{\infty} + q^{\infty}_{N-1}$ and get
\begin{align*}
& \chi_{\Omega_{a_0,b_0}(\tau)} (\eta^{\infty} \cdot g(p) - \divv v^{\infty} \,p^{\infty}) + \chi_{\Omega_{a_0,b_0}^{\rm c}(\tau)} \, \divv v^{\infty} \, (q_{N-1} - q^{\infty}_{N-1})+ (p^{\infty}- p) \, \divv v^{\infty} \\
& =  \chi_{\Omega_{a_0,b_0}(\tau)} \eta^{\infty} \cdot (g(p) - \calv \, p) \,+ \chi_{\Omega_{a_0,b_0}^{\rm c}(\tau)}\,  (\tilde{p}^{\infty} - \tilde{p}) \, \, \divv v^{\infty} \, .
\end{align*}
We define $\mathdutchcal{E}^{2}(t) := \int_{\Omega_{a_0,b_0}^{\rm c}(t)} (\tilde{p}^{\infty}-\tilde{p}) \, \divv v^{\infty} \, dx$. It remains to observe that 
\begin{align*}
 (\rho-\rho^{\infty}) \, v^{\infty}\, : \, \nabla (\mu^{\infty} - Dk(\rho^{\infty})) = (\rho\cdot \calv-1) \, v^{\infty} \cdot \nabla p^{\infty} \, ,
\end{align*}
and \eqref{in8tris} yields
\begin{align*}
\tilde{\mathcal{E}}^m(t) \leq & \tilde{\mathcal{E}}^m(0) - \int_{0}^t\int_{\Omega} \mathbb{S}(\nabla (v -  v^{\infty})) \, : \,\nabla (v-v^{\infty}) +  \mathscr{D} \, dxd\tau\nonumber\\
& + \int_0^t\int_{\Omega_{a_0,b_0}(\tau)} (\partial_t \rho^{\infty} + \divv (\rho^{\infty} v^{\infty})\cdot (g(p) - \calv \, p) \, dx d\tau\\
& - \int_{0}^t \int_{\Omega} (\rho \cdot \bar{\calv} - 1) \, (\partial_t p^{\infty} + v^{\infty} \cdot \nabla v^{\infty}) \, dxd\tau + \int_0^t \sum_{i=1,2} \mathdutchcal{E}^{i}(\tau) + \sum_{i=1}^4 \mathcal{R}^{i}(\tau) \, d\tau \, .
\nonumber
\end{align*}
This establishes the Proposition \ref{calculFO}. Moreover, suppose that $\inf_{i} \rho_i(x,t) \geq s_0$ for almost all $(x,t) \in Q$. Then $|\Omega \setminus B_{s_0,+}(t)| = 0$ for almost all $0 < t < \bar{\tau}$.  
Letting $a_0 \rightarrow 0+$ and $b_0 \rightarrow +\infty$, we see that $\mathdutchcal{E}^{i}(t) = 0$ for all $t \in [0, \, \bar{\tau}]$. This proves Prop.\ \ref{calculheuri}. 

\subsection{Another form of the relative energy inequality}\label{SStormII}

In order to prove the Theorem \ref{heuriheura} we adopted the simplifying positivity assumption \eqref{UNIFORMPOS}. Here we want to  motivate how this assumption can be removed. 

In the case that the strict positivity of the densities and even the uniform positivity (B3$^{\prime})$ of $\{M_{ij}(\rho)\}$ fail, the proof of a relative energy inequality is more delicate. Nevertheless, we can obtain a result by means of techniques developed for weak solutions of type-II, in particular the re-parametrisation $\rho = \mathcal{X}(p, \, w_1,\ldots,w_N)$ of \eqref{TrafotypeII}, where $w = (w_1,\ldots,w_N)$ is a diffusive variable subject to $\hat{p}^m(w) = p^0$. We have first to recall a few basic properties concerning this transformation.\\

{\bf Some preliminaries:} Let $\rho$ and $w$ be related via \eqref{TrafotypeII}. Then it was shown in \cite{druetmaxstef} that $\mathcal{P}(\hat{\mu}(\rho) - \hat{\mu}(w)) = 0$ is valid. Hence, for all $i \neq j$ we obtain that
\begin{align*}
 \frac{1}{M_i} \, \ln \hat{x}_i(\rho) - \frac{1}{M_j} \, \ln \hat{x}_j(\rho) + g^m_i(\hat{p}^m(\rho)) - g^m_j(\hat{p}^m(\rho)) =  \frac{1}{M_i} \, \ln \hat{x}_i(w) - \frac{1}{M_j} \, \ln \hat{x}_j(w)  \, ,
\end{align*}
where we also used that $g^m(p^0) = 0$. We choose $j = i_1$ with $i_1$ such that $\hat{x}_{i_1}(w) = \max \hat{x}(w) \geq 1/N$. With $p = \hat{p}^m(\rho)$, it follows that
\begin{align*}
 \frac{1}{M_i} \, \ln \hat{x}_i(w) \geq \frac{1}{\min M} \, \ln \frac{1}{N} + \frac{1}{M_i} \ln \hat{x}_i(\rho) + g_i^m(p) - g_{i_1}^m(p) \, ,
\end{align*}
and this implies that
\begin{align*}
 \hat{x}_i(w) \geq (N)^{-\frac{\max M}{\min M}} \, \hat{x}_i(\rho) \, \exp\Big(M_i \, (\min g(p) - \max g(p))\Big) \, .
\end{align*}
Similarly, with $j = i_2$ with $i_2$ such that $\hat{x}_{i_2}(\rho) = \max \hat{x}(\rho) \geq 1/N$, it follows that
\begin{align*}
 \hat{x}_i(\rho) \geq (N)^{-\frac{\max M}{\min M}} \, \hat{x}_i(w) \, \exp\Big(M_i \, (\min g(p) - \max g(p))\Big) \, .
\end{align*}
The quotients $\rho_i/w_i$ are nothing else but $\hat{x}_i(\rho)/ \hat{x}_i(w)$ times $\hat{n}(\rho)/\hat{n}(w)$. Since $\hat{p}^m(w) = p^0$ by definition, we have $\sum_{i} w_i \, \calv_i = 1$. The latter implies that $1/\max_i \{M_i\calv_i\} \leq \hat{n}(w) \leq 1/\min_i \{M_i\calv_i\}$, hence $\rho_i/w_i$ is seen to be bounded below and above via
\begin{align}\label{lesquotients}
 & \frac{\min\{M\calv\}}{\max M} \, \ \, \Big(\frac{1}{N}\Big)^{\frac{\max M}{\min M}}\varrho \, \exp\Big(M_i \, (\min g(p) - \max g(p))\Big)\leq \frac{\rho_i}{w_i} \\
 & \quad  \leq \frac{\max\{M\calv\}}{\min M} \, N^{\frac{\max M}{\min M}} \, \varrho \, \exp\Big(M_i \, (\max g(p) - \min g(p))\Big) \, .\nonumber
\end{align}
This can be used to show that, if $\{M_{ij}\}$ satisfies the typical conditions of a Maxwell-Stefan mobility matrix, the product $M(\rho) \, D^2f^m(w)$ is bounded by a function of $p$ and $\varrho$. To see this, we express $$M(\rho) \, D^2f^m(w) = (M(\rho) \, R^{-1}) \, (R \, W^{-1}) \, W \, D^2f^m(w) \, ,$$ with $R = \text{diag}(\rho)$ and $W = \text{diag}(w)$. If $M(\rho) \, R^{-1}$ is bounded, the claim is obvious. Now, a Maxwell-Stefan mobility tensor satisfies \eqref{maxstefreg}, thus $d_0 \, \mathdutchcal{P} \leq M(\rho) \, R^{-1} \leq d_1 \, \mathdutchcal{P}$.

Assume that $(\rho, \, v)$ is a weak solution of type-II. For $0 < p_1 < p_2 < +\infty$ we define
\begin{align*}
\Omega_{p_1,p_2}(t) := \{x \in \Omega \, : \, p_1 \leq \hat{p}^m(\rho(x,t)) \leq p_2\} \, .
\end{align*}
In this set the density $\varrho$ is bounded and strictly positive: We here can refer to the inequalities \eqref{la-bas} and \eqref{plarge12}. Since $\sqrt{w} \in L^2(0,\bar{\tau}; \, W^{1,2}(\Omega; \, \mathbb{R}^N))$ we obtain a representation
\begin{align}\label{JMS}
 J= -2 \, M(\rho) \, D^2f^m(w) \, W^{\frac{1}{2}} \, \nabla \sqrt{w} \, .
\end{align}
For $s_0 > 0$, define $B_{s_0,+}(t) := \{x \in \Omega_{p_1,p_2}(t) \, :\, \min \rho(x,t) \geq s_0\}$.
In the set $B_{s_0,+}(t)$ all $\rho_i$ are finite and strictly positive. Thus, after some straightforward manipulations using \eqref{lesquotients}, it is found that
\begin{align}\label{wposit}
 \min \hat{x}(w) \geq \epsilon_0 := c \, \inf_{s\in [p_1,p_2]} \, \exp(\max M \, (\min g^m(s) - \max g^m(s)) \quad \text{ on } \quad B_{s_0,+}(t) \, 
\end{align}
where $c = c(N,M,\calv)>0$ is some constant. 
This helps introducing another appropriate substitute for the chemical potentials. For $s \in \mathbb{R}$ and $\epsilon > 0$, we define $s_{(\epsilon)} := \min\{s, \, \epsilon\}$, and $\hat{x}_{(\epsilon)} := ((\hat{x}_1)_{(\epsilon)}, \ldots, (\hat{x}_N)_{(\epsilon)})$. Using the chain rule for Sobolev functions we can verify that the function
\begin{align}\label{ERSATZII}
\tilde{\mu}_i^{\epsilon} := \frac{RT}{M_i} \, \ln (\hat{x}_i(w))_{(\epsilon)}  \, ,
\end{align}
belongs to $L^2(0,\bar{\tau}; \, W^{1,2}(\Omega))$ and $L^{\infty}(Q)$. 
For $0 < \epsilon \leq \epsilon_0$, the bound \eqref{wposit} shows that $\tilde{\mu}_i^{\epsilon} = \hat{\mu}_i(w)$ on $B_{s_0,+}(t)$. From now we denote $\tilde{\mu}_i := \tilde{\mu}_i^{\epsilon_0}$, $\tilde{x}_i = (\hat{x}_{i}(w))_{(\epsilon_0)}$.
We then have
\begin{align}\label{PimuMS}
  \mathcal{P} \mu =  \mathcal{P} \hat{\mu}(w) = \mathcal{P} \tilde{\mu}  \quad \text{ in } \quad B_{s_0,+}(t)\, .
  \end{align}
As a substitute for the stabilising contribution of diffusion (that is $M(\rho) \nabla (\mu-\mu^{\infty}) \: : \: \nabla (\mu-\mu^{\infty})$) we define, in $\Omega_{p_1,p_2}(t)$, 
\begin{align*}
\mathscr{D} := M(\rho) \, (2D^2f^m(w) \, W^{\frac{1}{2}} \, \nabla \sqrt{w}-\nabla \mu^{\infty}) \, : \,   (2D^2f^m(w) \, W^{\frac{1}{2}} \, \nabla \sqrt{w}-\nabla \mu^{\infty}) \geq 0 \, ,
\end{align*}
In $B_{s_0,+}(t)$, we by construction have $\mathscr{D} = M(\rho) \nabla (\tilde{\mu}-\mu^{\infty}) \: : \: \nabla (\tilde{\mu}-\mu^{\infty})$.
With these preliminaries, we can prove a more general version of the relative energy inequality.
\begin{prop}\label{calculMS}
Let $(\rho, \, v)$ be a weak solution to \emph{(IBVP$^m$)} with associated diffusive variables $w$. Let $(\rho^{\infty}, \, p^{\infty}, \, v^{\infty})$ satisfy \emph{ (IBVP$^{\infty}$)}. Then, with $\tilde{\mu}$ according to \eqref{ERSATZII},
\begin{align*}
& \mathcal{E}^m(t)+\int_{Q_t} \mathbb{S}(\nabla (v-v^{\infty})) \, : \,\nabla (v-v^{\infty}) + \chi_{\Omega_{p_1,p_2}(\tau)} \, \mathscr{D} \, dxd\tau\\
& \leq \int_{Q_t} \chi_{B_{s_0,+}(\tau)} \, \big(\partial_t \rho^{\infty} + \divv (\rho^{\infty} \, v^{\infty})\big) \cdot ( g^m(p)- p \, \calv)-(\rho \cdot \calv - 1) \, (\partial_t p^{\infty} + v^{\infty} \cdot \nabla p^{\infty}) \, dxd\tau \\
& \quad +\mathcal{E}^m(0) - \int_{\Omega} p^{\infty}(x, \cdot ) \, (\rho^m(x,\cdot) \cdot \calv - 1) \, dx \Big|^t_0  
+  \int_{0}^t \mathcal{R}^m(\tau) + \mathscr{E}^m(\tau)  d\tau\, .
\end{align*}
in which $\mathcal{R}^m = \sum_{i=1}^4 \mathcal{R}^{m,i}$ where $\mathcal{R}^{m,i}$  possesses the same representation as in Prop.\ \ref{calculFO} for $i=1,2,4$ and $\mathcal{R}^{m,3}(t) := 
-\int_{B_{s_0,+}(t)} (M(\rho) - M(\rho^{\infty})) \nabla \mu^{\infty} \cdot \nabla (\tilde{\mu}-\mu^{\infty}) \, dx$.
Moreover, the functional $\mathdutchcal{E}^m(t)$ obeys
\begin{align*}
&  \mathscr{E}^m(t)
   :=  -\int_{\Omega} \chi_{\Omega_{p_1,p_2}^{\rm c}(t)} \,  J \, : \, \nabla \mu^{\infty} - \chi_{\Omega_{p_1,p_2}(t) \setminus B_{s_0,+}(t)} \, (J \, : \, \nabla \mu^{\infty} + M(\rho) \, \nabla \mu^{\infty} \cdot \nabla \mu^{\infty} ) \, dx \\
& + \int_{B_{s_0,+}^{\rm c}(t)}  \Big(J^{\infty} \, : \, \nabla( \tilde{\mu} - \mu^{\infty}) + (\partial_t \rho^{\infty} + \divv (\rho^{\infty} \, v^{\infty})) \cdot  (\tilde{\mu} - \mu^{\infty}  - D^2k(\rho^{\infty}) (\rho-\rho^{\infty})) \\
& \phantom{\int_{B_{s_0,+}^{\rm c}(t)}  \Big(\quad } + (p^{\infty}-p) \, \divv v^{\infty}\Big) \, dx \, .
\end{align*}
\end{prop}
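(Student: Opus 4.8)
The strategy is to repeat, essentially verbatim, the derivation of the basic inequality \eqref{in6tris} carried out in Section \ref{SStorm} up to the point where the stabilisation by diffusion is invoked, and then to branch into the type-II argument rather than the type-I argument. Thus I would begin from \eqref{in6tris}, which holds for \emph{every} weak solution (positive, type-I, or type-II), since its derivation used only the distributional time-derivatives of $\rho$ and $\varrho\,v$, the energy inequality \eqref{DISSIP}, and the constraint $\sum_i\rho^\infty_i\calv_i=1$ together with $g^m(p^0)=0$. The only place where type-II solutions differ is the treatment of the two terms $\zeta^{\rm Diff}$ and $J:\nabla\mu^\infty$, which for type-II solutions cannot be rewritten through $\nabla q$ but instead must be handled through the representation \eqref{JMS} of $J$ in terms of $\nabla\sqrt{w}$ and the substitute potential $\tilde\mu$ of \eqref{ERSATZII}.

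\textbf{Key steps.} First I would fix $0<p_1<p_2<+\infty$ and $s_0>0$, recall that on $\Omega_{p_1,p_2}(t)$ the density $\varrho$ is bounded and bounded away from zero (via \eqref{la-bas}, \eqref{plarge12}), and recall \eqref{wposit}, which guarantees that for $\epsilon_0$ chosen as there one has $\tilde\mu=\hat\mu(w)$ and $\mathcal P\mu=\mathcal P\tilde\mu$ on $B_{s_0,+}(t)$, cf.\ \eqref{PimuMS}. Second, I would split the integral $\int_\Omega(\zeta^{\rm Diff}+J:\nabla\mu^\infty)\,dx$ over $\Omega=\Omega^{\rm c}_{p_1,p_2}(t)\cup(\Omega_{p_1,p_2}(t)\setminus B_{s_0,+}(t))\cup B_{s_0,+}(t)$. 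On $B_{s_0,+}(t)$, using $\zeta^{\rm Diff}=-J:\nabla\mu$ and \eqref{PimuMS} one gets, exactly as in the type-I case, $\zeta^{\rm Diff}+J:\nabla\mu^\infty=\mathscr D+(M(\rho)-M(\rho^\infty))\nabla\mu^\infty:\nabla(\tilde\mu-\mu^\infty)-J^\infty:\nabla(\tilde\mu-\mu^\infty)$, which produces the remainder $\mathcal R^{m,3}$ and a term $-\int_{B_{s_0,+}}J^\infty:\nabla(\tilde\mu-\mu^\infty)$. On the two other sets one simply keeps the contributions $-\int J:\nabla\mu^\infty$ and, on $\Omega_{p_1,p_2}(t)\setminus B_{s_0,+}(t)$, compensates by adding and subtracting $M(\rho)\nabla\mu^\infty:\nabla\mu^\infty$ so that what remains can be completed to $\mathscr D\ge 0$ on the full set $\Omega_{p_1,p_2}(t)$; the excess terms go into $\mathscr E^m$. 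Third, the term $\int_{B_{s_0,+}}J^\infty:\nabla(\tilde\mu-\mu^\infty)$ is integrated by parts using that $(\rho^\infty,p^\infty,v^\infty)$ is a strong solution, giving $\int_{B_{s_0,+}}(\partial_t\rho^\infty+\divv(\rho^\infty v^\infty))\cdot(\tilde\mu-\mu^\infty)\,dx$; then on $B_{s_0,+}(t)$ one uses $\tilde\mu=g^m(p)+Dk(\rho)$ and the splittings \eqref{splitmu}, \eqref{moreover} exactly as in the type-I derivation, isolating $\mathcal R^{m,4}$ and the terms $\chi_{B_{s_0,+}}\eta^\infty\cdot(g^m(p)-\calv p)$ and $-(\rho\cdot\calv-1)(\partial_tp^\infty+v^\infty\cdot\nabla p^\infty)$, while the complementary piece on $B^{\rm c}_{s_0,+}(t)$ is absorbed into $\mathscr E^m$. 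Finally I would collect all terms, identify $\tilde{\mathcal E}^m(t)=\mathcal E^m(t)-\int_\Omega(\rho\cdot\calv-1)p^\infty\,dx$ as in Section \ref{SStorm}, and read off the claimed inequality.

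\textbf{Main obstacle.} The delicate point is the bookkeeping on the set $\Omega_{p_1,p_2}(t)\setminus B_{s_0,+}(t)$: there $\tilde\mu$ need not agree with $\hat\mu(w)$, so one cannot simply reproduce the identity $\zeta^{\rm Diff}+J:\nabla\mu^\infty=\mathscr D+\dots$, and one must instead keep $-J:\nabla\mu^\infty-M(\rho)\nabla\mu^\infty:\nabla\mu^\infty$ as an error and verify that $\mathscr D$ is still well-defined and nonnegative there using only $\sqrt w\in L^2W^{1,2}$ and the boundedness of $M(\rho)D^2f^m(w)$ by a function of $(p,\varrho)$ (which follows from \eqref{lesquotients} and the Maxwell--Stefan structure \eqref{maxstefreg}). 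A second subtlety is to make sure that $J^\infty:\nabla(\tilde\mu-\mu^\infty)$ is integrable on all of $B^{\rm c}_{s_0,+}(t)$ — here one uses that $J^\infty$ is smooth and bounded by (C) and that $\tilde\mu\in L^2W^{1,2}\cap L^\infty(Q)$ by construction of the truncation \eqref{ERSATZII}. Once these two integrability/sign checks are in place, the remaining algebra is a routine rearrangement identical in form to the type-I case.
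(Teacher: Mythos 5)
Your proposal follows the paper's own route: start from \eqref{in6tris}, which is valid for any weak solution, split the dissipation terms over $\Omega^{\rm c}_{p_1,p_2}(t)$, $\Omega_{p_1,p_2}(t)\setminus B_{s_0,+}(t)$ and $B_{s_0,+}(t)$, complete to the nonnegative quantity $\mathscr{D}$ built from the $w$-representation \eqref{JMS} on $\Omega_{p_1,p_2}(t)$, identify the flux through the truncated potential $\tilde{\mu}$ of \eqref{ERSATZII}, \eqref{PimuMS} on $B_{s_0,+}(t)$, and then reuse \eqref{splitmu}, \eqref{moreover} exactly as in the type-I derivation; your algebra on $B_{s_0,+}(t)$, producing $\mathcal{R}^{m,3}$ and the $J^{\infty}$-term, is consistent with the statement. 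One cosmetic difference: the paper never invokes the pointwise identity $\zeta^{\rm Diff}=-J:\nabla\mu$ on $B_{s_0,+}(t)$ (which is not part of the type-II solution concept); it only uses $\zeta^{\rm Diff}\geq 0$ together with the inequality $\zeta^{\rm Diff}+J:\nabla\mu^{\infty}\geq \chi_{\Omega_{p_1,p_2}(t)}\,(\mathscr{D}-J:\nabla\mu^{\infty}-M(\rho)\nabla\mu^{\infty}\cdot\nabla\mu^{\infty})+\chi_{\Omega_{p_1,p_2}^{\rm c}(t)}\,J:\nabla\mu^{\infty}$, which is slightly weaker but suffices.

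One step is not legitimate as you wrote it: you cannot integrate $\int_{B_{s_0,+}(t)}J^{\infty}:\nabla(\tilde{\mu}-\mu^{\infty})\,dx$ by parts \emph{over the set} $B_{s_0,+}(t)$, which is merely a measurable super-level set of the weak solution and has no usable boundary. The paper instead adds and subtracts the complementary integral so that the identity $\int_{\Omega}J^{\infty}:\nabla(\tilde{\mu}-\mu^{\infty})\,dx=\int_{\Omega}(\partial_t\rho^{\infty}+\divv(\rho^{\infty}v^{\infty}))\cdot(\tilde{\mu}-\mu^{\infty})\,dx$ is applied over the whole domain, where it is justified by the no-flux condition $J^{\infty}\cdot\nu=0$ on $\partial\Omega$ and by $\tilde{\mu}\in L^2(0,\bar{\tau};W^{1,2}(\Omega))\cap L^{\infty}(Q)$; the two pieces discarded in this manoeuvre, namely $\int_{B^{\rm c}_{s_0,+}(t)}J^{\infty}:\nabla(\tilde{\mu}-\mu^{\infty})\,dx$ and $\int_{B^{\rm c}_{s_0,+}(t)}(\partial_t\rho^{\infty}+\divv(\rho^{\infty}v^{\infty}))\cdot(\tilde{\mu}-\mu^{\infty}-D^2k(\rho^{\infty})(\rho-\rho^{\infty}))\,dx$ together with the $(p^{\infty}-p)\,\divv v^{\infty}$ contribution, are precisely the $B^{\rm c}_{s_0,+}$ integrals that make up $\mathscr{E}^m$ in the statement. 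Your remarks about integrability on $B^{\rm c}_{s_0,+}(t)$ and about absorbing ``the complementary piece'' show you have this mechanism in mind, but step 3 as literally written (integration by parts confined to $B_{s_0,+}(t)$) would be invalid and would generate neither of these error terms; once it is restated as the global integration by parts with the add-and-subtract bookkeeping, your argument coincides with the paper's proof.
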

\begin{proof}
To start with, we as previously obtain  the identity \eqref{in6tris}. In the present context we next write
\begin{align*}
 \zeta^{\rm Diff} + J \, : \, \nabla \mu^{\infty} = & \underbrace{\zeta + 2 \, J \, : \, \nabla \mu^{\infty} + M(\rho) \, \nabla \mu^{\infty} \cdot \nabla \mu^{\infty}}_{=: \mathscr{D}}- J \, : \, \nabla \mu^{\infty} - M(\rho) \, \nabla \mu^{\infty} \cdot \nabla \mu^{\infty} \,.
\end{align*} 
We cannot identify $\mathscr{D}$ everywhere in $\Omega$. However, the non-negativity of $\zeta^{\rm Diff}$ implies that $ \mathscr{D} \geq  2 \, J \, : \, \nabla \mu^{\infty} + M(\rho) \, \nabla \mu^{\infty} \cdot \nabla \mu^{\infty}$. Thus
\begin{align*}
 &  \zeta^{\rm Diff} + J \, : \, \nabla \mu^{\infty} \geq   \chi_{\Omega_{p_1,p_2}(t)} \, (\mathscr{D} - J \, : \, \nabla \mu^{\infty} - M(\rho) \, \nabla \mu^{\infty} \cdot \nabla \mu^{\infty} )
  + \chi_{\Omega_{p_1,p_2}^{\rm c}(t)} \, J \, : \, \nabla \mu^{\infty} \\
  & \quad \geq   \chi_{\Omega_{p_1,p_2}(t)} \, \mathscr{D} - \chi_{B_{s_0,+}(t)} \, (J \, : \, \nabla \mu^{\infty} + M(\rho) \, \nabla \mu^{\infty} \cdot \nabla \mu^{\infty} )
   \\
  & \qquad  + \chi_{\Omega_{p_2,p_1}^{\rm c}(t)} \, J \, : \, \nabla \mu^{\infty} -\chi_{\Omega_{p_1,p_2}(t) \setminus B_{s_0,+}(t)} \, (J \, : \, \nabla \mu^{\infty} + M(\rho) \, \nabla \mu^{\infty} \cdot \nabla \mu^{\infty} ) \, . 
 \end{align*}
We obtain that
\begin{align*}
 & \tilde{\mathcal{E}}^m(t) + \int_{0}^t\int_{\Omega} \mathbb{S}(\nabla (v-v^{\infty}))  \, : \,\nabla (v-v^{\infty})+\chi_{\Omega_{p_1,p_2}(t)} \, \mathscr{D} \, dxd\tau\nonumber\\
 & \leq \tilde{\mathcal{E}}^m(0) +\int_0^t\int_{\Omega} \chi_{B_{s_0,+}(t)} (J \, : \, \nabla \mu^{\infty} + M(\rho)\nabla \mu^{\infty} \, : \, \nabla \mu^{\infty})  \, dxd\tau \nonumber\\
& + \int_0^t\int_{\Omega} -  p \, \divv v^{\infty} - \nabla p^{\infty} \cdot v^{\infty} -\partial_t \hat{\mu}(\rho^{\infty}) \cdot (\rho - \rho^{\infty}) - (\rho-\rho^{\infty}) \, v^{\infty} \, : \, \nabla \mu^{\infty}\nonumber\\
 & \phantom{\int_0^t\int_{\Omega}}- (\rho \cdot \calv - 1) \, \partial_t p^{\infty} \, dxd\tau + \int_{0}^t \mathscr{E}^{m,1}(\tau) + \mathcal{R}^{m,1}(\tau)+\mathcal{R}^{m,2}(\tau) \, d\tau \, ,\\
 & \mathscr{E}^{m,1}(t) := -\int_{\Omega} \chi_{\Omega_{p_1,p_2}^{\rm c}(t)} \,  J \, : \, \nabla \mu^{\infty} - \chi_{\Omega_{p_1,p_2}(t) \setminus B_{s_0,+}(t)} \, (J \, : \, \nabla \mu^{\infty} + M(\rho) \, \nabla \mu^{\infty} \cdot \nabla \mu^{\infty} ) \, dx 
\end{align*} 
We recall \eqref{ERSATZII} and \eqref{PimuMS} to see that, in the set $B_{s_0,+}(t)$,
\begin{align*}
  J \, : \, \nabla \mu^{\infty} + M(\rho) \nabla \mu^{\infty} \, : \, \nabla \mu^{\infty} 
 = &  (M(\rho)-M(\rho^{\infty}))\nabla \mu^{\infty}  \,  : \, \nabla (\tilde{\mu}- \mu^{\infty})\\
 & + J^{\infty} \, : \, \nabla (\tilde{\mu}- \mu^{\infty}) \,,
\end{align*}
We define $\mathcal{R}^{m,3}(t) =  - \int_{B_{s_0,+}(t)} (M(\rho)-M(\rho^{\infty})) \, \nabla ( \tilde{\mu}-\mu^{\infty}) \cdot \nabla \mu^{\infty} \, dx$ and together with $\mathscr{E}^{m,2}(\tau) :=  \int_{B_{s_0,+}^{\rm c}(t)}\,  J^{\infty} \, : \, \nabla( \tilde{\mu} - \mu^{\infty})  \, dx$, we obtain that
 \begin{align*}\begin{split}
 & \tilde{\mathcal{E}}^m(t) + \int_{Q_t} \mathbb{S}(\nabla (v-v^{\infty}))\, : \,\nabla (v-v^{\infty}) +\chi_{\Omega_{p_1,p_2}(\tau)} \, \mathscr{D} \, dxd\tau\leq  \tilde{\mathcal{E}}^m(0) \\
 & + \int_{Q_t} J^{\infty} \, : \, \nabla( \tilde{\mu} - \mu^{\infty})-  p \, \divv v^{\infty} - \nabla p^{\infty} \cdot v^{\infty} -\partial_t \hat{\mu}(\rho^{\infty}) \cdot (\rho - \rho^{\infty}) \, dxd\tau \\
 & - \int_{Q_t} (\rho-\rho^{\infty}) \, v^{\infty} \, : \, \nabla \mu^{\infty}+  (\rho \cdot \calv - 1) \, \partial_t p^{\infty} \, dxd\tau + \int_{0}^t\sum_{i=1,2} \mathscr{E}^{m,i}(\tau) + \sum_{i=1}^3   \mathcal{R}^{m,i}(\tau) \, d\tau \, .
 \end{split}
 \end{align*} 
From now on the steps are essentially the same as for the type--one weak solution. First
\begin{align*}
 \int_{\Omega} J^{\infty} \, : \, \nabla (\tilde{\mu} - \mu^{\infty}) \, dx = \int_{\Omega} (\partial_t \rho^{\infty} + \divv (\rho^{\infty} \, v^{\infty})) \cdot (\tilde{\mu}- \mu^{\infty}) \, dx \, .
\end{align*}
On $B_{s_0,+}(t)$, the identity $\mathcal{P}(\tilde{\mu} - \mu) = 0$ is valid and, since $\partial_t\varrho^{\infty} + \divv(\varrho^{\infty} \, v^{\infty}) = 0$, hence
\begin{align*}
&  \eta^{\infty} \cdot (\tilde{\mu}- \mu^{\infty}) = \eta^{\infty}\cdot (\mu - \mu^{\infty}) = \eta^{\infty} \cdot (Dk(\rho) - Dk(\rho^{\infty}) + g^m(p)) - \divv v^{\infty}\, p^{\infty} \, .
\end{align*}
After a few steps (cp.\ \eqref{moreover})
\begin{align*}
&  \int_{\Omega} J^{\infty} \, : \, \nabla (\tilde{\mu} - \mu^{\infty}) - \partial_t\hat{\mu}(\rho^{\infty}) \cdot (\rho-\rho^{\infty}) \, dx = \int_{\Omega} v^{\infty} \cdot \nabla Dk(\rho^{\infty}) \cdot (\rho-\rho^{\infty}) \, dx\\
 + & \int_{B_{s_0,+}(t)} \eta^{\infty} \cdot (Dk(\rho) - Dk(\rho^{\infty}) - D^2k(\rho^{\infty}) \, (\rho-\rho^{\infty}))+  \eta^{\infty} \cdot g^m(p) - \divv v^{\infty} \, p^{\infty}  \, dx\\
 & + \int_{B_{s_0,+}^{\rm c}(t)} \eta^{\infty} \cdot  \big(\tilde{\mu} - \mu^{\infty} - D^2k(\rho^{\infty}) \cdot (\rho-\rho^{\infty})\big) \, dx\, .
\end{align*}
With $\mathcal{R}^{m,4}(t) = \int_{B_{s_0,+}(t)} \eta^{\infty}\cdot (Dk(\rho) - Dk(\rho^{\infty}) - D^2k(\rho^{\infty}) \, (\rho-\rho^{\infty})) \, dx$ and with
\begin{align*}
 \mathscr{E}^{m,3}(t) := & \int_{B_{s_0,+}^{\rm c}(t)} \eta^{\infty} \cdot  (\tilde{\mu} - \mu^{\infty}- D^2k(\rho^{\infty}) \cdot (\rho-\rho^{\infty})) + (p^{\infty}-p) \, \divv v^{\infty} \, dx \, ,
\end{align*}
the claim follows.
\end{proof}
Finally let us motivate why, relying on Prop.\ \ref{calculMS}, the proof of Theorem \ref{rigolo} can disclaim the positivity assumption \eqref{UNIFORMPOS}.
%
We first note that the assumption \eqref{TurevesII} yields $|\Omega \setminus \Omega_{p_1,p_2}(t)| = 0$. At second, we observe that the estimation of the remainder $\mathcal{R}^m$ can be essentially performed as in the simplified proof. So we have only to discuss the error functional $\mathscr{E}^m$.
Certain of the contributions in $\mathscr{E}^m$ can be controlled, even without assuming \eqref{TurevesII}. On $\Omega_{p_1,p_2}(t)$, we can employ Young's inequality and estimate
 \begin{align*}
 |J \, : \, \nabla \mu^{\infty} + M(\rho) \, \nabla \mu^{\infty} \cdot \nabla \mu^{\infty}| = |M(\rho) \, (2D^2f^m(w) \, W^{\frac{1}{2}} \, \nabla \sqrt{w}  - \nabla \mu^{\infty}) \, :\, \nabla \mu^{\infty}|\\
 \leq \epsilon \, \mathscr{D} + \frac{1}{4\epsilon} \,  M(\rho) \, \nabla \mu^{\infty} \cdot \nabla \mu^{\infty} \, .
 \end{align*}
Hence, recalling also \eqref{Erhobelow1bar},
 \begin{align*}
 & \Big|\int_{\Omega}\chi_{\Omega_{p_1,p_2}(t) \setminus B_{s_0,+}(t)} \, (J \, : \, \nabla \mu^{\infty} + M(\rho) \, \nabla \mu^{\infty} \cdot \nabla \mu^{\infty} ) \, dx\Big| \leq \epsilon \, \int_{\Omega_{p_1,p_2}(t)} \mathscr{D} \, dx \\
 & \quad + \frac{\bar{\lambda}}{4\epsilon} \, \|\nabla \mu^{\infty}\|_{L^{\infty}}^2 \,   \int_{\Omega_{p_1,p_2}(t) \setminus B_{s_0,+}(t)} |\rho| \, dx \leq \epsilon \, \int_{\Omega_{p_1,p_2}(t)} \mathscr{D} \, dx + \frac{C}{\epsilon} \, \mathcal{E}^m(t) \, .
\end{align*}
Similar observations and \eqref{ERSATZII} are used to show with $\epsilon_0$ from \eqref{wposit} that
\begin{align*}
 & \Big| \int_{B_{s_0,+}^{\rm c}(\tau)}  (\partial_t \rho^{\infty} + \divv (\rho^{\infty} \, v^{\infty})) \cdot  (\tilde{\mu} - \mu^{\infty}  - D^2k(\rho^{\infty}) (\rho-\rho^{\infty}))dxd\tau\Big|\\ 
 & \leq \|\partial_t \rho^{\infty} + \divv (\rho^{\infty} \, v^{\infty})\|_{L^{\infty}} \, \left(\|\tilde{\mu} - \mu^{\infty}\|_{L^{\infty}} \,  |B_{s_0,+}^{\rm c}(\tau)| + c \, \int_{B_{s_0,+}^{\rm c}(\tau)} \,  |\rho-\rho^{\infty}| \, dx \right) \, \\
 & \leq c \, \|\partial_t \rho^{\infty} + \divv (\rho^{\infty} \, v^{\infty})\|_{L^{\infty}} \, (\ln \epsilon_0^{-1} + \|\mu^{\infty}\|_{L^{\infty}}) \, \mathcal{E}^m(\tau) \, .
 \end{align*}
There are however terms in $\mathscr{E}^m$ that we cannot control otherwise than by the assumption that $p_1 < p^m < p_2$. At first we consider $\int_{B_{s_0,+}^{\rm c}(t)}  J^{\infty} \, : \, \nabla( \tilde{\mu} - \mu^{\infty}) \, dxd\tau$. In order to estimate this term, we recall that $B_{s_0,+}^{\rm c}(t) = \Omega_{p_1,p_2}(t) \setminus B_{s_0,+}(t)$. 
We also note that the construction \eqref{ERSATZII} implies that $\nabla \tilde{\mu}_i$ has support over the set $\omega^i_{\epsilon_0,+}(t) := \{x \, : \, \hat{x}_{i}(w)(x,t) > \epsilon_0\}$. We choose $i_1$ such that $\hat{x}_{i_1}(w) = \min_{i} \hat{x}_i(w)$. For all $i \neq i_1$, $\tilde{\mu}_{i_1} =  1/M_{i_1} \, \ln \epsilon_0$ in $\omega_{\epsilon_0,-}^i(t) := \Omega \setminus \omega_{\epsilon_0,+}^i(t)$. Hence $\nabla \tilde{\mu}_{i_1} = 0$ in $\omega_{\epsilon_0,-}^i(t)$. This allows to express
\begin{align*}
&  J^{\infty}\, : \, \nabla (\tilde{\mu} - \mu^{\infty}) =  J^{\infty}\, : \, (\nabla \tilde{\mu} - \nabla \tilde{\mu}_{i_1} \, 1^N  - \nabla \mu^{\infty} + \nabla \mu^{\infty}_{i_1} \, 1^N)\\
&  = \sum_{i = 1}^N J^{\infty,i} \cdot \Big( \chi_{\omega^i_{\epsilon_0,+}(t)} \, (\nabla \tilde{\mu}_i - \nabla \tilde{\mu}_{i_1}) - (\nabla \mu_i^{\infty} - \nabla \mu_{i_1}^{\infty})\Big) \, \\
&  = \sum_{i = 1}^N  \chi_{\omega^i_{\epsilon_0,+}(t)} \, J^{\infty,i} \otimes (e^i-e^{i_1})  \, : \,  (\nabla \tilde{\mu} - \nabla \mu^{\infty})- \sum_{i = 1}^N  \chi_{\omega^i_{\epsilon_0,-}(t)} \, J^{\infty,i} \cdot (\nabla \mu_i^{\infty} - \nabla \mu_{i_1}^{\infty}) \,  .
\end{align*}
In $\omega^i_{\epsilon_0,+}(t)$, we have $\tilde{x}_i\geq \epsilon_0$, hence $w_i \geq \epsilon_0 \, \min\{M\calv\}/\max M $, and \eqref{lesquotients} shows that there is a certain function $\calr_0$ such that $\rho_i \geq \calr_0(p_1,p_2,\epsilon_0) > 0$. Then
\begin{align*}
| J^{\infty,i}  \otimes (e^i-e^{i_1}) \, : \,  (\nabla (\tilde{\mu} - \mu^{\infty}))|  = \Big|\frac{J^{\infty,i} \otimes (e^i-e^{i_1}) }{\sqrt{\rho_i}} \, : \,  \sqrt{\rho}_i \, (\nabla (\tilde{\mu}-\mu^{\infty}))\Big|\\
\leq \epsilon \, \rho_i \, |(\nabla \tilde{\mu} - \nabla \mu^{\infty})\cdot (e^{i}-e^{i_1})|^2 + \frac{|J^{\infty,i}|^2}{4 \, \rho_i \, \epsilon} \, .
\end{align*}
On $\Omega_{p_1,p_2}(t)$, this allows for the estimates
\begin{align*}
| J^{\infty,i} \cdot \nabla (\tilde{\mu}_i - \mu_i^{\infty})| \leq  \epsilon \,  \rho_i \, |(\nabla \tilde{\mu} - \nabla \mu^{\infty}) \cdot (e^i-e^{i_1})|^2 + \frac{C(p_1,p_2,\epsilon_0)}{\epsilon} \,  |J^{\infty,i}|^2 \, \, .
\end{align*}
Overall, after exploiting that $\mathscr{D} \geq d_0 \, \sum_{i=1}^N \rho_i \, |\mathdutchcal{P}_{i}(\nabla \tilde{\mu} - \nabla \mu^{\infty})|^2$, we see that
\begin{align*}
\left| \int_{B_{s_0,+}^{\rm c}(t)}  J^{\infty} \, : \, \nabla( \tilde{\mu} - \mu^{\infty}) \, dxd\tau\right| \leq \int_{ B_{s_0,+}^{\rm c}(t)} \epsilon \, \mathscr{D} + \frac{C}{\epsilon} \,|J^{\infty}|^2 \, dx \, .
\end{align*}
Finally it remains to consider
\begin{align*}
\Big|\int_{B_{s_0,+}^{\rm c}(t)}  (p^{\infty}-p) \, \divv v^{\infty} \, dx \Big| \leq (|p^{\infty}|_{L^{\infty}} + p_2-p_1) \, |\divv v^{\infty}|_{L^{\infty}} \, |B_{s_0,+}^{\rm c}(t)| \leq \psi^m(t) \, \mathcal{E}^m(t) \, .
\end{align*}
Hence, we can obtain the Gronwall inequality and the convergence argument needed for Theorem \ref{heuriheura}, without having assumed the strict positivity of the densities. It remains to show only the convergence of the pressure. Here we cannot rely on \eqref{MuM} over the whole domain $\Omega$, but only in $B_{s_0,+}(t)$.
Accordingly, we define $\bar{\zeta}^m :=  -(\eta\cdot\tilde{\mu}^m)_M$, $p^m_* :=  p^m +\bar{\zeta}^m \, \chi_{B_{s_0,+}(t)}$, and $\zeta^m := \eta\cdot\tilde{\mu}^m - (\eta\cdot\tilde{\mu}^m)_M$, and instead of \eqref{grouldei} we get
\begin{align}\label{grouldeinew}
p^m_* = \chi_{B_{s_0,+}(t)} \, \left(\zeta^m - \eta \cdot \Big(\frac{RT}{M} \, \ln \hat{x}(\rho^m) + h^m\Big) \right) + p^m \, \chi_{B_{s_0,+}(t)} \, .
\end{align}
As above, we show that $\nabla \zeta^m \rightarrow \eta\cdot\nabla \mu^{\infty}$ in $L^2(Q)$. Hence 
$\zeta^m \rightarrow \eta \cdot \mu^{\infty} - (\eta\cdot \mu^{\infty})_M$ in $L^2(0,\bar{\tau}; \, W^{1,2}(\Omega))$, and \eqref{grouldeinew} $p^m_* \rightarrow p^{\infty}$ in $L^2(Q)$. 
\end{document}